\theoremstyle{plain}
\newtheorem{thm}{Theorem}[section]
\newtheorem{lem}[thm]{Lemma}
\newtheorem{conj}{Conjecture}
\newtheorem{cor}[thm]{Corollary}
\newtheorem{prop}[thm]{Proposition}
\newtheorem{assump}[thm]{Assumption}
\newtheorem*{thm*}{Theorem}
\newtheorem*{problem}{Problem}
\newtheorem*{question}{Question}
\newtheorem*{metatheorem}{Meta-theorem}
\theoremstyle{definition}
\newtheorem{defn}[thm]{Definition}
\newtheorem{exmp}[thm]{Example}
\newtheorem{construction}[thm]{Construction}
\theoremstyle{definition}
\newtheorem{note}[thm]{Note}
\newtheorem{rk}[thm]{Remark}
\newtheorem{fact}[thm]{Fact}
\newcommand{\bC}{{\mathbb C}}
\newcommand{\bG}{{\mathbb G}}
\newcommand{\bK}{{\mathbb K}}
\newcommand{\bN}{{\mathbb N}}
\newcommand{\bP}{{\mathbb P}}
\newcommand{\bQ}{{\mathbb Q}}
\newcommand{\bR}{{\mathbb R}}
\newcommand{\bZ}{{\mathbb Z}}
\newcommand{\cA}{{\mathcal A}}
\newcommand{\cB}{{\mathcal B}}
\newcommand{\cC}{{\mathcal C}}
\newcommand{\cD}{{\mathcal D}}
\newcommand{\cE}{{\mathcal E}}
\newcommand{\cH}{{\mathcal H}}
\newcommand{\cI}{{\mathcal I}}
\newcommand{\cM}{{\mathcal M}}
\newcommand{\cN}{{\mathcal N}}
\newcommand{\cO}{{\mathcal O}}
\newcommand{\cQ}{{\mathcal Q}}
\newcommand{\cR}{{\mathcal R}}
\newcommand{\cS}{{\mathcal S}}
\newcommand{\cT}{{\mathcal T}}
\newcommand{\cW}{{\mathcal W}}
\newcommand{\scrE}{\EuScript E}
\newcommand{\scrF}{\EuScript F}
\newcommand{\scrG}{\EuScript G}
\newcommand{\hocolim}{\operatornamewithlimits{hocolim}\limits}
\newcommand{\colim}{\operatornamewithlimits{colim}\limits}
\newcommand{\Z}{\mathbb{Z}}
\newcommand{\R}{\mathbb{R}}
\newcommand{\B}{\mathcal{B}}
\newcommand{\fc}{\mathfrak{c}}
\newcommand{\fT}{\mathfrak{T}}
\newcommand{\fD}{\mathfrak{D}}
\newcommand{\defi}{\begin{defn}}
\newcommand{\edefi}{\end{defn}}
\newcommand{\mc}{\mathcal}
\newcommand{\eq}{\begin{equation}}
\newcommand{\eeq}{\end{equation}}
\newcommand{\bu}{\bullet}
\newcommand{\pf}{\begin{proof}}
\newcommand{\epf}{\end{proof}}
\newcommand{\op}[1]{\operatorname{#1}}
\newcommand{\sub}{\subseteq}
\newcommand{\ov}{\overline}
\newcommand{\gr}{\operatorname{\gamma}}
\newcommand{\red}[1]{\textcolor{red}{#1}}
\newcommand{\tw}{\operatorname{Tw}}
\DeclareTextFontCommand{\textemph}{\em}
\DeclareTextFontCommand{\emph}{\bfseries}
\renewcommand{\l}{\lambda}
\renewcommand{\a}{\alpha}
\renewcommand{\d}{\partial}
\newcommand{\e}{\epsilon}
\newcommand{\g}{\gamma}
\newcommand{\s}{\sigma}
\renewcommand{\l}{\lambda}
\renewcommand{\O}{\Omega}
\newcommand{\Si}{\Sigma}
\newcommand{\fk}{\mathfrak}
\newcommand{\fstop}{\mathfrak{f}}
\newcommand{\lt}{\left}
\newcommand{\rt}{\right}
\newcommand{\tms}{\times}
\newcommand{\sheafhom}{\mathcal{H} \kern -.5pt \mathit{om}}
\numberwithin{equation}{section}
\numberwithin{figure}{section}
\title[Categorical filtrations via localization]{Categorical action filtrations via localization and the growth as a
	symplectic invariant}
\author[Laurent C\^ot\'e]{Laurent C\^ot\'e}
\address{Department of Mathematics, Harvard University, Cambridge, MA 02138, USA}
\email{lcote@math.harvard.edu}
\author[Yusuf Bar\i\c{s} Kartal]{Yusuf Bar\i\c{s} Kartal}
\address {School of Mathematics, University of Edinburgh, Edinburgh, UK}
\email {ykartal@ed.ac.uk}
\subjclass[2010]{Primary 53D37; Secondary 18E30, 14F05, 53D40}
\date{}
\begin{document}

\maketitle
\begin{abstract}
We develop a purely categorical theory of action filtrations and their associated growth invariants. When specialized to categories of geometric interest, such as the wrapped Fukaya category of a Weinstein manifold, and the bounded derived category of coherent sheaves on a smooth algebraic variety, our categorical action filtrations essentially recover previously studied filtrations of geometric origin.

Our approach is built around the notion of a smooth categorical compactification. We prove that a smooth categorical compactification induces well-defined growth invariants, which are moreover preserved under zig-zags of such compactifications. The technical heart of the paper is a method for computing these growth invariants in terms of the growth of certain colimits of (bi)modules. In practice, such colimits arise in both geometric settings of interest. 

The main applications are: (1) A ``quantitative" refinement of homological mirror symmetry, which relates the growth of the Reeb-length filtration on the symplectic geometry side with the growth of filtrations on the algebraic geometry side defined by the order of pole at infinity (often these can be expressed in terms of the dimension of the support of sheaves).
(2) A proof that the Reeb-length growth of symplectic cohomology and wrapped Floer cohomology on a Weinstein manifold are at most exponential. (3) Lower bounds for the entropy and polynomial entropy of certain natural endofunctors acting on Fukaya categories.

\end{abstract}

\tableofcontents

\section{Introduction}

\subsection{Filtrations arising from algebraic and symplectic geometry}
Let $U$ be a smooth algebraic variety over $\bC$. The ring of regular functions $\mc{O}(U)$ can be endowed with a filtration by the order of pole at infinity. More precisely, consider a smooth compactification $U\subset X$ such that $D:=X\setminus U$ is a divisor. Define a filtration by setting $F^p\cO(U)$ to be the set of regular functions with a pole along $D$ of order at most $p$. Any two such compactifications are related by a sequence of blow-ups and blow-downs with center in the complement of $U$, and using this, one can show that this filtration is well-defined.

With a little more work, one can extend this filtration to other vector bundles, and in fact, to the entire bounded derived category of coherent sheaves, i.e.\ one can construct a filtration on $RHom_U(\scrF,\scrF')$, for $\scrF,\scrF'\in D^bCoh(U)$.


Symplectic topology provides another source of filtered vector spaces. Here is an example that will be important to us. Given a Liouville manifold $(M, \l)$, one can define invariants involving the Reeb dynamics in the contact boundary at infinity. For instance, if $K,L\subset M$ are exact Lagrangians (cylindrical at infinity) then one can define the wrapped Floer cohomology $HW(K,L)$. This is the cohomology of a chain complex which is essentially generated by the (finitely many) intersection points of $K$ and $L$, and the Reeb chords at the ideal contact boundary of $M$ from $\partial_\infty K$ to $\partial_\infty L$. The length of the Reeb chords defines a filtration on $HW(K,L)$. Of course, there is an ambiguity, because this definition involves a choice of contact form. Nevertheless, the resulting filtration is also known to be well-defined ``up to scaling". 

Mirror symmetry links algebraic and symplectic geometry. It has long been speculated that the above filtrations are related under mirror symmetry. The following result puts this expectation on rigorous footing. 

\begingroup
\def\thethm{\ref*{corollary:hms-main-equiv}}
\begin{cor}
Suppose that $(M, \fk{f})$ and $(X, D)$ are homologically mirror pairs and set $U=X\setminus D$. 
Let $K, L \in \mc{W}(M)$ be objects and let $\scrF_K, \scrF_L$ be their image in $D^bCoh(U)$. Then the growth of the Reeb-length filtration on $HW(K,L)$ and the growth of the pole-order filtration on $RHom_U(\scrF_K, \scrF_L)$ asymptotically coincide.\footnote{We define the growth function of a filtered vector space below, and state a more precise version of \Cref{corollary:hms-main-equiv} in \Cref{sec:applications}.
}
\end{cor}
\addtocounter{thm}{-1}
\endgroup

The purpose of this paper is to develop a categorical theory of action filtrations which recovers the Reeb-length filtration and pole-order filtration when specialized to wrapped Fukaya categories and derived categories of coherent sheaves, respectively. \Cref{corollary:hms-main-equiv} is an immediate consequence of the existence of such a theory. Our approach is based on the notion a \emph{smooth categorical compactification}, which is due to Efimov \cite{efimov2013homotopy} and is recalled below. 


While our aims are mostly foundational, we also mention the following purely symplectic application. 
\begingroup
\def\thethm{\ref*{corollary:growth-most-exponential}}
\begin{cor}
If $M$ is a Liouville manifold (equipped with orientation/grading data as in \Cref{assumption:orientation-grading}), the Reeb-length growth of wrapped Floer cohomology \cite{mclean} is at most exponential. Similarly, the growth of symplectic cohomology \cites{seidel-biased-view, mclean-gafa} is at most exponential.
\end{cor}
\addtocounter{thm}{-1}
\endgroup


\Cref{corollary:growth-most-exponential} is an immediate consequence of the general theory developed in this paper, combined with the (easy) fact that the categorical entropy in the sense of Dimitrov--Haiden--Kontsevich--Katzarkov \cite{dhkk} is always finite. It can be shown that any contact manifold of dimension at least $3$ admits a non-degenerate contact form with the property that the number of Reeb orbits grows super-exponentially with respect to length. In such situations, one learns from \Cref{corollary:growth-most-exponential} that most orbits cancel cohomologically. Note that this also implies that there are plenty of holomorphic curves (e.g. the number of Floer trajectories grows super-exponentially in total boundary length).

In the remainder of the introduction, we explain the key features of our categorical approach to constructing action filtrations, as well as various difficulties which need to be overcome in order to implement it.


\subsection{Filtrations via categorical compactifications}
%
Given varieties $U\subset X$ as above, one has an induced categorical localization map $D^bCoh(X)\to D^bCoh(U)$ whose kernel is given by the sheaves whose hypercohomology groups are set theoretically supported on $D=X\setminus U$. This is an example of a smooth categorical compactification. More precisely, given a (homologically) smooth category $\cC$, a smooth categorical compactification is a smooth, proper category $\cB$ and a localization functor $\cB\to \cC$ whose kernel is split-generated by a finite set of objects; see e.g.\ \cite[Def.\ 1.7]{efimov2013homotopy}.

A natural way to obtain smooth categorical compactifications in symplectic geometry is the following: given a Weinstein manifold $M$, one can endow it with a Lefschetz fibration $M\to \bC$ and define the associated Fukaya-Seidel category $\cW(M,\fstop)$. There is a natural ``stop removal" functor $\cW(M,\fstop)\to\cW(M)$, and this data is an example of a smooth categorical compactification. In this case, the kernel of the functor is generated by finitely many Lagrangian discs. The stop $\fstop$ is an example of what we will call a \emph{full stop}. 

Given a smooth categorical compactification $\cB\to \cC$, the basic idea for defining a filtration on $\cC(K,L)$ (or rather on $H(\cC)(K,L)$) is as follows: fix lifts $\ov K,\ov L$ of $K,L$ to $\cB$. As $\cC$ is equivalent to the quotient of $\cB$ by the category generated by a finite set $\cD$, giving a (closed) morphism $f\in\cC(K,L)$ is essentially the same as giving a filtration 
\begin{equation}\label{eq:introreso}
	\xymatrix{ K'=K_p \ar[r]& K_{p-1}\ar[r]& K_{p-2}\ar[r]&\dots \ar[r]&  K_0=\ov K } 	
\end{equation}
such that each $K_i\to K_{i-1}$ has cone in $\cD$ (or is at least quasi-isomorphic to a shifted direct sum of objects of $\cD$), and a closed morphism $f'\in \cB(K',\ov L)$. Then define $F^pH(\cC(K,L))$ to be the set of morphisms for which such a resolution \eqref{eq:introreso} of length at most $p$ exists. 

For instance, if $U$ is a smooth affine curve, we can construct a smooth (geometric!) compactification $U \subset X$ by adding finitely many points. Let $\cD$ denote the set of skyscraper sheaves of these points and let $K=L=\cO_U$. Let $\ov K=\ov L=\cO_X$. A function $f\in\cO(U)=RHom_U(\cO_U,\cO_U)$ has a pole of order at most $p$ if and only if it extends to a section of $\cO_X(x_1+\dots x_p)$, for $x_i\in D=X\setminus U$, i.e. to a map $K'= \cO_X(-x_1-\dots -x_p)\to \cO_X$. Then a resolution is given by 
\begin{equation}
		\xymatrix{ K'=\cO_X(-x_1-\dots -x_p) \ar[r]& \cO_X(-x_2-\dots -x_p)\ar[r]& \dots \ar[r]&  \cO_X=\ov K } 	
\end{equation}
with cones given by the skyscraper sheaves $\cO_{x_i}$, and $f$ is represented by the holomorphic extension $f':K'\to \ov L$. In higher dimensions, one needs to use a finite collection of vector bundles supported on $D=X\setminus U$.

\begin{figure}
	\centering
	\includegraphics[height=2.5 cm]{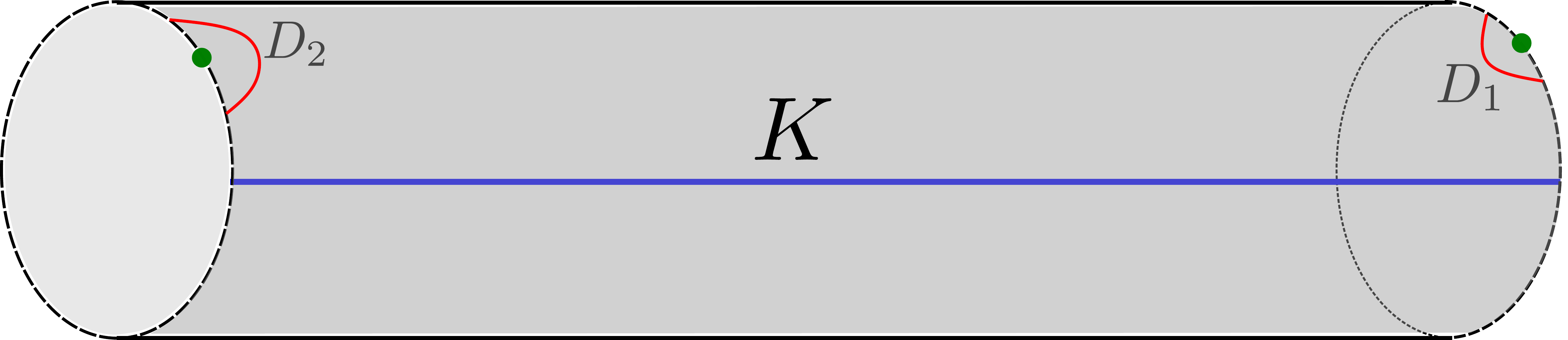}
	\caption{}
	\label{figure:cylinderwithtwostops}
\end{figure}

Similarly, let $M= T^*S^1$ be a cylinder. It turns out that the mirror to compactifying an algebraic variety is to add a ``stop".\footnote{A stop $\fstop$ in a Liouville manifold $M$ is an arbitrary closed subset of its ideal boundary $\d_\infty M$.} Let us choose a stop $\fstop$ which consists of one point in each boundary component. Each component of the stop has a ``Lagrangian linking disk" associated to it, well-defined up to isotopy in $\mc{W}(M)$. We denote these by $D_1, D_2$ and let $\cD= \{D_1, D_2\}$ (see \Cref{figure:cylinderwithtwostops}). Then the kernel of the natural map $\mc{W}(M, \fstop) \to \mc{W}(M)$ is generated by $\cD$. 

Let $K=L$ be a cotangent fiber (disjoint from the stop) and choose the lift $\ov K=\ov L\in \cW(M,\fstop)$ to be the same Lagrangian, considered as an object of $\cW(M,\fstop)$. Recall that an element of $HW(K, K)$ can be viewed as a linear combination of Reeb chords from $\d_\infty K$ to (a small perturbation of) itself. Consider the right boundary component. On this component there is a unique chord corresponding to every natural number $1,2,\dots $ (namely the one that travels the circle once, twice, and so on). Similarly on the other boundary component. However, these chords, considered as morphisms, do not lift to $\cW(M,\fstop)$, and $HW_{(M,\fstop)}(K,K)$ is one dimensional. To represent the shortest chord on the right hand side, one has to ``wrap'' the Lagrangian once past the stop. In $\cW(M,\fstop)$, this gives a surgery exact triangle $K'\to K\to D_1\to K'[1]$, and the corresponding morphism in $\cW(M)$ is represented by a morphism in $\cW(M,\fstop)$ from $K'$ to $K=L$. To obtain the chord that travels around the right component twice, one needs to wrap once more, obtaining a filtration as in \eqref{eq:introreso} of length $2$. To obtain the chords on the left side, one needs to use similar filtration, where cones are in $D_2$. Morally, we filter $HW(K, K)$ by declaring that an element $f \in F^p HW(K, L)$ if it can be represented by a linear combination of chords, each of which crosses $\fstop$ at most $p$ times. 

To connect this to the algebraic geometry picture via mirror symmetry, we note that the category $\cW(M)$ is derived equivalent to $D^bCoh(\bG_m)$, and the categorical compactification above is mirror to $D^bCoh(\bP^1)\to D^bCoh(\bG_m)$. Depending on how one sets up the equivalence, $D_1$ corresponds to $\cO_\infty$ and $D_2$ corresponds to $\cO_0$. Also, $K=L$ correspond to $\cO_{\bG_m}$, and the lifts $\ov K=\ov L$ correspond to $\cO_{\bP^1}$. The derived equivalence gives us $HW(K,K)\cong \bC[x,x^{-1}]$, where the chords on the left correspond to $x^i,i=-1,-2,\dots$ and chords on the right correspond to $x^i,i=1,2,\dots$. Thus, for instance, to obtain the chord corresponding to $x^{2}$, one wraps twice on the right boundary component of $M$, which correspond to taking $\cO_{\bP^1}(-2.\infty)$ on the B-side. One represents $x^2$ as a morphism  from $\cO_{\bP^1}(-2.\infty)$ to $\cO_{\bP^1}$, and uses the filtration $\cO_{\bP^1}(-2.\infty)\to \cO_{\bP^1}(-\infty)\to \cO_{\bP^1}$ to show $x^2$ is in $F^2HW(K,K)$.

Going back to the abstract setting where $\cC$ is a non-proper, smooth $A_\infty$-category over an algebraically closed field $\bK$ of characteristic $0$, and $\phi:\cB\to \cC$ is a smooth categorical compactification, we construct a chain level filtration. More precisely, we consider the Lyubashenko--Ovsienko model of quotient $A_\infty$-categories $\cB/\cD$ (where $\cD$ is a finite set of generators of $ker(\phi)$ as above), and this category is automatically filtered. More precisely, the hom-complexes of this $A_\infty$-category are similar to bar constructions, and we filter them by length (length$+1$ to be precise). Given $K,L\in\cC$, and lifts $\ov{K}$, $\ov{L}$, one has $\cB/\cD(\ov K,\ov L)\simeq \cC(K,L)$, and using this quasi-isomorphism, one obtains a filtration at the cohomology level $H(\cB/\cD)(\ov K,\ov L)\cong H(\cC)(K,L)$. 

\subsection{Growth functions}\label{subsection:growth-functions-intro}
We study filtered vector spaces through a specific invariant called the \emph{growth function}: given a filtered vector space $V$, let $\gr_V(p):=dim(F^pV)$. We write $\gr_{K,L}$ if $V=H(\cB/\cD)(\ov K,\ov L)\cong H(\cC)(K,L)$. We will refer to $\gr_{K,L}$ as the growth function associated to $K, L$.  As $\cB$ is proper, this function is finite; however, the growth function is non-canonical in the following ways:
\begin{enumerate}[topsep=0pt]
	\item\label{introitem:compactification} it depends on the choice of smooth categorical compactification $(\cB,\phi)$;
	\item\label{introitem:choiceoflifts} it depends on the choice of lifts of $K$, $L$;
	\item\label{introitem:choiceofgenerators} it depends on the choice of generators $\cD$ of the kernel of $\phi:\cB\to\cC$.
\end{enumerate}

In fact, the growth functions do depend on the above choices, but only up to a rather mild notion of equivalence.

The following definition will be crucial in this paper. Two weakly increasing functions $\gr_1, \gr_2:\bN\to \bR_{\geq 0}\cup \{\infty\}$ are called \emph{scaling equivalent} (or just equivalent), if there are constants $a\geq 1, b\geq 0$ such that $\gr_1(p)\leq a\gr_2(ap+b)+b$ and vice versa. If the factor $a$ can be taken to be $1$, we call them \emph{translation equivalent}. For instance, $p^{d_1}$ is scaling or translation equivalent to $p^{d_2}$ ($d_1,d_2\geq 0$) if and only if $d_1=d_2$. On the other hand, $e^{d_1 p}$ is always scaling equivalent to $e^{d_2 p}$ as long as $d_1,d_2>0$, but never translation equivalent unless $d_1=d_2$. The functions $p$, $e^{\sqrt{p}}$ and $e^p$ are pairwise inequivalent for both notions.

We now have the following theorem (a geometric instantiation of a purely categorical meta-theorem stated below). 



\begingroup
\def\thethm{\ref*{thm:welldefinedsymplecticgrowth}}
\begin{thm} Given a Weinstein manifold $M$ and a pair of objects $K, L \in \tw^\pi \mc{W}(M)$, the graded vector space $HW(K,L)=H(\cW(M))(K,L)$ admits a filtration such that the associated growth function $\gr_{K,L}:=\gr_{H(\cW(M))(K,L)}$ is well-defined up to scaling equivalence.	
\end{thm}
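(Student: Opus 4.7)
The plan is to apply the general abstract framework developed earlier in the paper to a geometrically natural class of smooth categorical compactifications of $\cW(M)$ arising from full stops, and then to show that the resulting growth functions are independent of all auxiliary choices up to scaling equivalence.

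First, I would invoke the existence of a full stop $\fstop\subset\d_\infty M$ for any Weinstein manifold $M$, e.g.\ by choosing a Lefschetz fibration (which exists by Giroux--Pardon) and taking $\fstop$ to be the union of the fibers at infinity. The resulting Fukaya--Seidel category $\cW(M,\fstop)$ is smooth and proper, being split-generated by a finite collection of Lefschetz thimbles. The stop-removal functor $\phi:\cW(M,\fstop)\to\cW(M)$ is a localization whose kernel is split-generated by the finite set $\cD_\fstop$ of Lagrangian linking discs at the components of $\fstop$. This exhibits $(\cW(M,\fstop),\phi)$ as a smooth categorical compactification. For $K,L\in\tw^\pi\cW(M)$, lifts $\ov K,\ov L\in\tw^\pi\cW(M,\fstop)$ exist (since $\phi$ is essentially surjective up to summands after taking twisted complexes), and the Lyubashenko--Ovsienko model of the quotient $\cW(M,\fstop)/\cD_\fstop(\ov K,\ov L)\simeq\cW(M)(K,L)$ carries a natural length filtration, yielding the desired filtration on $HW(K,L)$.

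Second, I would verify that for a fixed compactification, the growth function is independent up to scaling of items \eqref{introitem:choiceoflifts} and \eqref{introitem:choiceofgenerators}. This should be a purely algebraic statement about smooth categorical compactifications: if $\cD'$ is another finite split-generating set of $\ker(\phi)$, then every object of $\cD'$ admits a finite-length resolution by objects of $\cD$ and vice versa, yielding a bounded multiplicative distortion between the two resulting lengths of resolutions \eqref{eq:introreso}. Similarly, two lifts $\ov K_1,\ov K_2$ of $K$ differ by an object of the split-closure of $\cD$, which changes any resolution length by a bounded additive constant. Both comparisons manifestly fall within scaling equivalence.

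The main obstacle is item \eqref{introitem:compactification}: showing independence of the choice of full stop $\fstop$. My strategy is to compare any two full stops $\fstop_1,\fstop_2$ to a common refinement obtained by taking the union $\fstop_1\cup\fstop_2$ (after a Legendrian isotopy making them disjoint), which is again a full stop. This fits into a diagram of stop-removal functors, and the linking discs $\cD_{\fstop_1\cup\fstop_2}$ split-generate both $\cD_{\fstop_1}$ and $\cD_{\fstop_2}$ after finitely many extensions. Concretely, one has to show that each $D\in\cD_{\fstop_i}$ admits a bounded-length resolution in terms of $\cD_{\fstop_1\cup\fstop_2}$, and conversely that each additional linking disc coming from $\fstop_{3-i}$ is ``killed'' in $\cW(M,\fstop_i)$ by a bounded-length filtration. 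Both statements reduce to finiteness properties of Fukaya--Seidel categories together with the fact that linking discs of a full stop generate its kernel. Combining these two comparisons yields scaling equivalence $\gr_{K,L}^{\fstop_1}\sim\gr_{K,L}^{\fstop_1\cup\fstop_2}\sim\gr_{K,L}^{\fstop_2}$, completing the proof.

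The hardest step I expect is the last one: the quantitative bounds on resolution lengths between linking discs of different full stops. Everything else is either a direct application of the abstract framework or a mild algebraic manipulation, but the geometric input needed to compare compactifications — in particular, the uniform boundedness of the complexity of mutual resolutions — is the crux of the theorem.
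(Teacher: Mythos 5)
Your overall plan matches the paper's, but the crucial step — comparing two full stops $\fstop_1,\fstop_2$ — contains a gap that the paper explicitly flags and circumvents. You assert that, after making them disjoint, $\fstop_1 \cup \fstop_2$ is ``again a full stop.'' A full stop requires $\cW(M,\fstop_1\cup\fstop_2)$ to be \emph{proper}, and the authors state plainly in the introduction: ``Unfortunately, we cannot prove that $\cW(M,\fc_1\cup \fc_2)$ is itself proper.'' Your strategy would then reduce to varying generating sets inside one proper compactification, which is not available. In fact even if properness were granted, your reduction would not be a direct application of \Cref{corollary:indep-gen-set}, since that result compares two generating sets of $\ker\phi$ inside a single fixed compactification $\cB$, whereas here one has three genuinely different categories $\cW(M,\fc_1)$, $\cW(M,\fc_2)$, $\cW(M,\fc_1\cup\fc_2)$ with different stop-removal functors.

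What the paper does instead is keep $\cW(M,\fc_1\cup\fc_2)$ as a \emph{pseudo}-compactification and prove \Cref{prop:sodofcompactifications}: if $f:\cB'\to\cB$ is a morphism of smooth pseudo-compactifications and $\cB$ (but not necessarily $\cB'$) is proper, then since perfect modules over the proper $\cB$ restrict to proper modules over $\cB'$, which by smoothness of $\cB'$ are perfect, the stop-removal functor $f$ admits a right adjoint $g$ with $fg\simeq 1_\cB$. This gives a semi-orthogonal decomposition $\cB'\simeq\langle\cB,\ker f\rangle$, and carefully enlarging the generating sets $\cD,\cD'$ to be compatible with this decomposition produces filtration-preserving quasi-inverses $\ov f:\cB'/\cD'\to\cB/\cD$ and $\ov g:\cB/\cD\to\cB'/\cD'$. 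This is the content of \Cref{corollary:growth-function-well-def}, which then combines with \Cref{lemma:tame-induces-compactification}, \Cref{lemma:full-stop} and \Cref{proposition:weinstein-zigzag} to finish the proof. The ``quantitative bounds on resolution lengths between linking discs of different full stops'' that you identify as the hard step is precisely what the semi-orthogonal decomposition argument lets one avoid: the comparison is done at the level of filtered categories via the adjoint, not by estimating resolution lengths disc-by-disc. You should rework the last step to follow the adjoint/SOD route (or to otherwise justify why $\cW(M,\fc_1\cup\fc_2)$ would be proper, which would be a substantial new result).
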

\addtocounter{thm}{-1}
\endgroup
The filtration in \Cref{thm:welldefinedsymplecticgrowth} is an increasing integral filtration which is constructed according to the categorical procedure described in the previous section. Therefore, contrary to filtrations and growth functions which are defined from the wrapped Floer homology, our construction allows us to relate them to growth in algebraic geometry. 


As explained above, a class of smooth categorical compactifications is given by considering the partially wrapped category $\cW(M,\fc)$ with the stop removal functor $\cW(M,\fc)\to \cW(M)$. Here, we assume $\fc$ is almost Legendrian, and its Legendrian locus has finitely many components. We also need $\cW(M,\fc)$ to be proper. To obtain such a stop $\fc$ (which we call a \emph{full stop}), one appeals to \cite{giroux-pardon} to endow $M$ with the structure of a Lefschetz fibration. Then, the fiber of the fibration (pushed to infinity) is a Weinstein stop, and its core $\fc$  is a full stop. 

To get around \eqref{introitem:compactification}, we have to relate different categorical compactifications constructed in this way. Given $\fc_1,\fc_2$, which (by a small perturbation) can be assumed not to intersect, there is a category which relates to both  $\cW(M,\fc_1)$ and $\cW(M,\fc_2)$: namely $\cW(M,\fc_1\cup \fc_2)$, through localization functors $\cW(M,\fc_1\cup \fc_2)\to\cW(M,\fc_i)$, $i=1,2$. Unfortunately, we cannot prove that $\cW(M,\fc_1\cup \fc_2)$ is itself proper; however, it turns out the properness of each $\cW(M,\fc_i)$ is sufficient. Namely, using this, one obtains an adjoint $\cW(M,\fc_i)\to \cW(M,\fc_1\cup\fc_2)$, and thus a semi-orthogonal decomposition of $\cW(M,\fc_1\cup \fc_2)$ with one component given by $\cW(M,\fc_i)$. This is the main ingredient for proving the independence of the choice of categorical compcatification. We emphasize that the properness of $\cW(M,\fc_i)$ is absolutely necessary for this argument to work.

Our intuition for why the growth functions should be independent of  \eqref{introitem:compactification} comes from algebraic geometry. As mentioned before, if $U$ is a smooth open variety, then any two compactifications of $U$ differ by a sequence of blowups and blowdowns with center disjoint from $U$ (this is called the ``weak factorization theorem"). Given a regular function on $U$, the order of pole at infinity is not affected by blowups/blowdowns.   This trick that we use to relate $\cW(M,\fc_1)$ and $\cW(M,\fc_2)$ can be seen as a symplectic/Fukaya categorical analogue of the weak factorization theorem. 

Also compare this to the following: by a theorem of Orlov, blowups of varieties along smooth centers give rise to semi-orthogonal decompositions. One can think of a semi orthogonal decomposition as a categorical version of a blowup. We cannot relate any two categorical compactifications by a zigzag of compactifications, but we can prove independence from the choice in \eqref{introitem:compactification} when the compactifications we start with happen to be related by such a zigzag.

Also note the following algebro-geometric counterpart of \Cref{thm:welldefinedsymplecticgrowth}:
\begingroup
\def\thethm{\ref*{prop:welldefinedalggeogrowth}}
\begin{prop}
Given a smooth algebraic variety $U$ over $\bK$, and $\scrF,\scrF'\in D^bCoh(U)$, the graded vector space $RHom_U(\scrF,\scrF')$ admits a filtration such that the associated growth function $\gr_{RHom_U(\scrF,\scrF')}$ is well-defined up to scaling equivalence.
\end{prop}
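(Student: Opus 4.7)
The plan is to mirror the proof of \Cref{thm:welldefinedsymplecticgrowth}, with smooth projective compactifications of $U$ playing the role of full stops on a Weinstein manifold, and the weak factorization theorem playing the role of the Lefschetz fibration construction used to compare different full stops. I would begin by fixing a smooth projective compactification $U\sub X$ with simple normal crossings boundary $D=X\setminus U$, which exists by Hironaka. The restriction $j^*:D^bCoh(X)\to D^bCoh(U)$ is then a Verdier localization whose kernel -- the subcategory $D^b_DCoh(X)$ of complexes with cohomology set-theoretically supported on $D$ -- is split-generated by a finite set $\cD$ of objects, for instance structure sheaves of the components of $D$ twisted by sufficiently positive line bundles. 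Thus $j^*$ is a smooth categorical compactification. Choosing lifts $\ov{\scrF},\ov{\scrF'}\in D^bCoh(X)$ (which exist since $j^*$ is essentially surjective) and applying the Lyubashenko--Ovsienko model of the quotient $D^bCoh(X)/\langle\cD\rangle\simeq D^bCoh(U)$ equips a chain complex computing $RHom_U(\scrF,\scrF')$ with a length filtration, which descends to the claimed filtration on cohomology.

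Independence of the resulting growth function from the choices of $\cD$ and of lifts reduces to purely abstract statements about smooth categorical compactifications, already used in the proof of \Cref{thm:welldefinedsymplecticgrowth}: replacing $\cD$ by another finite generating set of the same thick subcategory changes a length-$p$ resolution into one of length at most $ap+b$, since each new generator is a finite iterated extension of the old ones in the proper category $D^bCoh(X)$; and replacing a lift $\ov{\scrF}$ by another shifts the filtration by only a uniformly bounded amount, because any two lifts differ by an object in the thick subcategory generated by $\cD$.

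The substantive step is independence from the choice of compactification $X$. By the weak factorization theorem, any two smooth projective compactifications of $U$ with SNC boundary are connected by a finite zigzag of blow-ups and blow-downs along smooth centers contained in their respective boundaries, so it suffices to compare the growth function produced by $X$ to the one produced by $\pi:\tilde X=\op{Bl}_Z X\to X$ with $Z\sub D$ smooth. Orlov's blow-up formula provides a semi-orthogonal decomposition $D^bCoh(\tilde X)=\langle D^bCoh(Z)_1,\dots,D^bCoh(Z)_{c-1},\pi^*D^bCoh(X)\rangle$ where $c$ is the codimension of $Z$, and this is the direct algebro-geometric counterpart of the semi-orthogonal decomposition of $\cW(M,\fc_1\cup\fc_2)$ exploited in the symplectic case. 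Taking $\pi^*\ov{\scrF},\pi^*\ov{\scrF'}$ as lifts on $\tilde X$ and enlarging $\pi^*\cD$ by the finitely many extra exceptional objects, the fully faithful embedding $\pi^*$ combined with the finite complexity of the extra components translates a length-$p$ resolution on one side into a resolution of length at most $ap+b$ on the other.

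The main obstacle is precisely this length estimate: one must verify that conversion between resolutions on $X$ and $\tilde X$ costs only a constant linear factor. This amounts to showing that each of the finitely many exceptional objects in Orlov's decomposition admits a twisted-complex presentation of uniformly bounded length in terms of the other generators, and that the truncation functors of the semi-orthogonal decomposition respect the length filtration up to a constant shift. Both follow from smoothness and properness of $X$ and $\tilde X$ together with the finiteness of the exceptional collection, and constitute the precise algebraic mirror of the properness-of-$\cW(M,\fc_i)$ argument that is essential in the proof of \Cref{thm:welldefinedsymplecticgrowth}.
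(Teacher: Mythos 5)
Your proposal follows essentially the same route as the paper: compactify $U$ via Nagata--Hironaka, compare compactifications via the weak factorization theorem, and use Orlov's blowup formula to supply the semi-orthogonal decomposition of $D^bCoh(\tilde X)$ that plays the role of the decomposition of $\cW(M,\fc_1\cup\fc_2)$ in the symplectic case. You also correctly flag properness of $D^bCoh(X)$ as the crucial input, mirroring the properness of $\cW(M,\fc_i)$.

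Where your argument has a gap is the final comparison step. You frame it as a ``length estimate'': a length-$p$ resolution on $\tilde X$ should give one on $X$ of length at most $ap+b$, and you attribute this to the exceptional objects of Orlov's decomposition admitting bounded-length twisted complex presentations ``in terms of the other generators.'' This mechanism does not apply: the exceptional factors lie in the kernel of the projection to $D^bCoh(X)$ and cannot be assembled from the generators of the complementary component, and no estimate with $a>1$ is actually needed here. The paper's \Cref{prop:sodofcompactifications} is sharper and bypasses length estimates entirely. Because $D^bCoh(X)$ is proper and $D^bCoh(\tilde X)$ is smooth, the localization $f=\pi_*$ admits a right adjoint $g$ landing in perfect complexes with $f\circ g\simeq 1$. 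After replacing $\tilde\cD$ so that $f(\tilde\cD)\subset\cD$ and $g(\cD)\subset\tilde\cD$ hold simultaneously, both induced functors $\ov f:D^bCoh(\tilde X)/\tilde\cD\to D^bCoh(X)/\cD$ and $\ov g:D^bCoh(X)/\cD\to D^bCoh(\tilde X)/\tilde\cD$ respect the length filtration \emph{exactly} and are quasi-inverse, yielding translation equivalence with no scaling. The factor $a>1$ only enters when one subsequently changes the generating set $\cD$ inside its twisted envelope (\Cref{prop:scalinginverse}), which is orthogonal to changing the compactification. One small additional point: in the algebraic setting every intermediate $D^bCoh(\tilde X)$ in the zigzag is already proper, so you never need the weaker notion of pseudo-compactification that \Cref{corollary:growth-function-well-def} was designed to accommodate for the symplectic case.
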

\addtocounter{thm}{-1}
\endgroup
As before, we denote the growth function for $\scrF,\scrF'$ by $\gr_{\scrF,\scrF'}$. The filtration in this case is defined using the categorical compactification defined by $D^bCoh(X)\to D^bCoh(U)$. The independence makes genuine use of Orlov's theorem mentioned above. 

The dependence of the growth function on the choices in \eqref{introitem:choiceoflifts} and \eqref{introitem:choiceofgenerators} can be resolved at a more abstract level. 
In this introductory section, we only explain the independence from \eqref{introitem:choiceofgenerators}. Let $\cD$ and $\cD'$ be two finite collections of generators of $ker(\phi)$, where $\phi:\cB\to \cC$ as before. 
In the $D^bCoh(\bG_m)$ example, where we consider the compactification $D^bCoh(\bP^1)\to D^bCoh(\bG_m)$, simplest $\cD$ is the set of skyscraper sheaves $\cO_0,\cO_\infty$, but for $\cD'$ one can also add the double points at $0$ and $\infty$. The point is that, in the general abstract setting, there is no canonical choice of $\cD$.

The moral idea is the following: since $\cD$ and $\cD'$ each generate the same set, one can express every element of $\cD'$ by taking shifts, direct sums/summands, and cones of objects of $\cD$ finitely many times. Moreover, as $\cD'$ is finite, there is a number $l\geq 0$ that bounds the number of times one must take cones. Roughly, every object of $\cD'$ can be expressed as a ``complex'' of objects of $\cD$ of length at most $l$. In other words, $\cD$ generates $\cD'$ in finite time. if one represents a morphism in $\cB/\cD'$ by a filtration as in \eqref{eq:introreso} of length $p$ and a morphism in $\cB(K',L)$ such that the cones of $K_i\to K_{i-1}$ are in $\cD'$, then one can refine this filtration with successive cones in $\cD$ but of length at most $pl$. In other words, under the identification $H(\cB/\cD)(K,L)\cong H(\cB/\cD')(K,L)$, one has $F^pH(\cB/\cD')(K,L)\subset F^{pl}H(\cB/\cD)(K,L)$. Similarly, $F^pH(\cB/\cD)(K,L)\subset F^{pl'}H(\cB/\cD')(K,L)$ for some $l'$. This implies the corresponding growth functions are scaling equivalent. Observe that for this argument, one does not actually need to assume $\cD$ or $\cD'$ are finite: it is sufficient that they are generated by a finite subset in finite time (i.e. by taking cones a bounded number of times).

As it happens, \Cref{thm:welldefinedsymplecticgrowth} and \Cref{prop:welldefinedalggeogrowth} are both special cases of a purely categorical result which we state as follows. 
\begin{metatheorem}\label{metatheorem:main-abstract-thm}
Let $\phi: \cB \to \cC$ be a smooth categorical compactification. Given a pair of objects $K, L \in \tw^\pi \cC$, the graded vector space $H(\cC)(K,L)$ admits a filtration such that the associated growth function $\gr_{K, L}:= \gr_{H(\cC)(K, L)}$ is well-defined up to scaling equivalence, and independent of $\cB$ up to zigzag of smooth categorical compactifications. 
\end{metatheorem}

This meta-theorem is not stated explicitly in the sequel, but it follows by combining various results proved in Sections \ref{sec:localizationfiltration} and \ref{sec:categoricalpseudocompact} (see in particular \Cref{corollary:growth-function-well-def}). 

\subsection{Computations via spherical functors}\label{subsection:intro-spherical}
It is not a priori obvious that the growth functions introduced in the previous section can be explicitly described or computed except in trivial cases. 

Arguably the technical heart of this paper is to develop a method for computing these growth invariants in terms of colimits of (bi)modules. This is absolutely crucial for all of our applications, in particular \Cref{corollary:hms-main-equiv}. The key steps are carried out in \Cref{subsection:spherical-filtration}, and rely on a healthy amount of filtered homological algebra developed in \Cref{sec:filteredalgebra}, much of which appears to be new.

Let us now summarize the key statements. There is a notion of a spherical functor $f: \cA \to \cB$, where $\cA$, $\cB$ are (for simplicity) pre-triangulated $A_\infty$ categories. Rather than stating the definition, we mention two key examples: the first is the map $j_*: D^bCoh(D) \to D^bCoh(X)$, where $X$ is a variety and $j:D\hookrightarrow X$ is a Cartier divisor. The second is the Orlov functor $\tw \mc{W}(\hat{\fstop}) \to \tw \mc{W}(M, \fstop)$, where $(M, \fstop)$, where $\fstop$ is a Lefschetz fiber (pushed to infinity).

A spherical functor $\cA\to\cB$ induces auto-equivalences of $\cA$ and $\cB$, respectively called the spherical cotwist and the spherical twist. Let $S: \cB \to \cB$ denote the spherical twist. It comes with a natural transformation $1 \to S$.  Now $S^k$ defines an $A_\infty$-bimodule via $\cS^k=\cB(\cdot, S^k(\cdot))$ and $s$ induces bimodule homomorphisms $\cB=\cS^0\to \cS^1\to \cS^2\to \dots$. Let $\cD$ denote the image $f(\cA)\subset \cB$. The key computational theorem of this paper is the following:
\begingroup
\def\thethm{\ref*{thm:spherical}}
\begin{thm}
$\cB/\cD$, considered as a filtered bimodule over $\cB$, is $E_1$-quasi-isomorphic to $\hocolim_k \cS^k$. \footnote{The filtration on the latter and the terminology are explained below.} 
\end{thm}
\addtocounter{thm}{-1}
\endgroup
Here $\hocolim_k$ denotes the homotopy colimit, for which we present a simple model. Instead of explaining every term in the statement, we state a key corollary:
\begingroup
\def\thethm{\ref*{cor:spherical}}
\begin{cor}
The complexes $\hocolim_k \cB(K,S^k(L))$ and $\cB/\cD(K,L)$ are $E_1$-quasi-isomorphic.	
\end{cor}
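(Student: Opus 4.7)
The plan is to deduce the corollary from Theorem~\ref{thm:spherical} by evaluating the asserted $E_1$-equivalence of filtered $\cB$-bimodules $\cB/\cD \simeq \hocolim_k \cS^k$ at the pair of objects $(K,L)$. The underlying principle is that $A_\infty$-bimodules are assembled from their values on pairs of objects, so evaluation at $(K,L)$ is an exact operation that commutes with shifts, direct sums, and mapping cones, and hence with any reasonable model of $\hocolim$.

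Unwinding definitions, by construction $\cS^k(K,L) = \cB(K, S^k(L))$, and the bimodule maps $\cS^k \to \cS^{k+1}$ induced by the natural transformation $1 \to S$ evaluate to the chain maps $\cB(K,S^k(L)) \to \cB(K, S^{k+1}(L))$ of the defining directed system. The right-hand side $(\hocolim_k \cS^k)(K,L)$ therefore coincides, as a filtered complex, with the telescope $\hocolim_k \cB(K,S^k(L))$, since the ``simple model'' of $\hocolim$ referenced in the paper is built from iterated cones and shifts that commute with evaluation, and the associated ``stage of the telescope'' filtration is inherited pointwise. On the other side, $\cB/\cD(K,L)$ is literally the evaluation of the filtered Lyubashenko--Ovsienko quotient bimodule $\cB/\cD$ at $(K,L)$, with its length filtration.

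Finally I would invoke the fact that an $E_1$-equivalence of filtered $A_\infty$-bimodules restricts to an $E_1$-equivalence on evaluation at any pair of objects, since the associated graded and the $E_1$-differential are computed pairwise. The only step that genuinely requires care, and which I regard as the main obstacle, is verifying that the filtration on $\hocolim_k \cS^k$ used in Theorem~\ref{thm:spherical} agrees, after evaluating at $(K,L)$, with the telescope filtration on $\hocolim_k \cB(K, S^k(L))$; this is a bookkeeping check internal to whichever concrete model of homotopy colimit the paper selects, after which the corollary is immediate.
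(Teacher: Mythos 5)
Your proof is correct and is exactly the intended deduction: the paper states the corollary immediately after \Cref{thm:spherical} without a separate argument, because the paper's definition of $E_1$-equivalence of bimodules is precisely a zigzag of bimodule morphisms each of which is an $E_1$-quasi-isomorphism on chain level, i.e.\ at every pair of objects, so evaluating the zigzag at $(K,L)$ directly gives the claim. The bookkeeping point you flag is genuine but trivial: the filtration on $\hocolim_k\cS^k$ is defined termwise as in \eqref{equation:hocolim-filtration}, so it matches the telescope filtration on $\hocolim_k\cB(K,S^k(L))$ on the nose.
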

\addtocounter{thm}{-1}
\endgroup
We have already mentioned that $\cB/\cD(K,L)$ carries a natural filtration. The homotopy colimit is a complex such that its cohomology is the ordinary colimit of cohomologies $H(\cB(K,S^k(L)))$. The filtration on it induces the colimit filtration 
\begin{equation}
	F^p\colim_k H(\cB(K,S^k(L)))=\op{im}(H(\cB(K,S^p(L)))\to \colim_k H(\cB(K,S^k(L))))
\end{equation}
 
A morphism of chain complexes is said to be an \emph{$E_k$-quasi-isomorphism} if it induces a quasi-isomorphism on the $E_k$-page of the associated spectral sequences. Two bimodules are said to be \emph{$E_k$-equivalent}, or \emph{$E_k$-quasi-isomorphic}, if they are joined by a zigzag of morphisms of bimodules, each of which is an $E_k$ quasi-isomorphism on chain level. 

For us, the most important thing about $E_k$-quasi-isomorphisms is that they preserve growth functions. Thus, given objects $K, L \in \cB$, the growth functions $\gr_{K,L}=\gr_{H(\cB/\cD)(K, L)}$ and $\gr_{\colim_k H(\cB)(K, S^k L)}$ are the same. The upshot is that the filtration induced by taking the colimit of spherical twists matches the filtration induced by localization. 

\Cref{thm:spherical} can be used for computations of $\gr_{K,L}$; for instance, for the algebro-geometric example above, it implies
\begingroup
\def\thethm{\ref*{cor:growthsheaves}}
\begin{cor}
$\gr_{\scrF,\scrF'}(p)$ is equivalent to the function given by \eq p\mapsto dim(F^p\colim_n RHom_X(\scrF,\scrF'(nD)))\eeq 
\end{cor}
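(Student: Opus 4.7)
The plan is to derive this statement as a direct specialization of \Cref{cor:spherical} to the algebro-geometric categorical compactification underlying \Cref{prop:welldefinedalggeogrowth}.

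First I would fix the smooth compactification $U \subset X$ so that $D := X \setminus U$ is a divisor, and consider the localization $\phi: \cB := D^bCoh(X) \to D^bCoh(U)$. Assuming first that $D$ is smooth and irreducible (the general case is discussed below), the pushforward $j_*: D^bCoh(D) \to D^bCoh(X)$ along $j: D \hookrightarrow X$ is a spherical functor whose image generates $\ker(\phi)$, as noted in \Cref{subsection:intro-spherical}. Accordingly, I would apply \Cref{cor:spherical} with $\cA = D^bCoh(D)$, $f = j_*$, and $\cD = j_*(\cA)$.

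Next I would identify the spherical twist $S$ associated to $j_*$. A standard computation for the inclusion of a Cartier divisor yields $S(\scrF') \simeq \scrF' \otimes \cO_X(D)$, up to a fixed cohomological shift that is immaterial for the growth function; hence $S^k(\scrF') \simeq \scrF'(kD)$. Plugging this into \Cref{cor:spherical} gives an $E_1$-equivalence of filtered complexes
\begin{equation*}
\cB/\cD(\scrF,\scrF') \simeq \hocolim_k RHom_X(\scrF,\scrF'(kD)).
\end{equation*}
The left-hand side carries the Lyubashenko--Ovsienko filtration whose growth function is $\gr_{\scrF,\scrF'}(p)$ by definition; the right-hand side carries the colimit filtration $F^p = \op{im}(RHom_X(\scrF,\scrF'(pD)) \to \colim_n RHom_X(\scrF,\scrF'(nD)))$ described right after \Cref{cor:spherical}. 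Since $E_1$-equivalences preserve growth functions, the two growth functions are equivalent, which is the content of the corollary.

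The main obstacle I expect is the case when $D = D_1 \cup \dots \cup D_r$ is reducible, as occurs for a general smooth compactification with $D$ a simple normal crossings divisor. One clean workaround is to replace $D$ by an effective divisor $D'$ supported on $D$ whose associated line bundle is sufficiently positive along each component, so that sheaves set-theoretically supported on $D$ remain generated in $\ker(\phi)$ by iterated cones of the map $\cO_X \to \cO_X(D')$; the single spherical-functor argument then applies verbatim with $S(\scrF') = \scrF'(D')$, and the independence of $\gr_{\scrF,\scrF'}$ from the choice of generators $\cD$ (already established in the paper) ensures the same growth function is computed. Alternatively one can iterate \Cref{cor:spherical} one component at a time and collapse the resulting multi-parameter homotopy colimit, which is conceptually transparent but notationally heavier.
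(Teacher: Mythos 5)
Your core argument is exactly the paper's: apply \Cref{cor:spherical} with the spherical functor $j_*: D^bCoh(D) \to D^bCoh(X)$ from \Cref{exmp:sphericaldivisor}, identify the twist $S$ with $(\cdot)\otimes\cO_X(D)$, and conclude via invariance of growth under $E_1$-equivalences. (Minor point: the paper records the twist as $(\cdot)\otimes\cO_X(D)$ with no shift, not ``up to a cohomological shift''.) That part is fine.

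Where you go astray is the final paragraph. Your worry about reducible $D$ is a non-issue, because \Cref{exmp:sphericaldivisor} is stated and holds for an arbitrary \emph{Cartier} divisor $j:D\hookrightarrow X$, i.e.\ $\cO(D)$ a line bundle with a section $\sigma$ cutting out $D$; smoothness and irreducibility of $D$ are never assumed. Since $X$ is smooth, any divisor $D=X\setminus U$ (simple normal crossings or not) is automatically Cartier, so the spherical-functor argument applies verbatim. The ``workaround'' you propose is therefore unnecessary, and it also has its own gaps if one did try to run it: it is not clear why iterated cones of $\cO_X\to\cO_X(D')$ would split-generate the kernel of restriction, and, more seriously, your modified colimit would be over $RHom_X(\scrF,\scrF'(nD'))$ rather than $RHom_X(\scrF,\scrF'(nD))$, so an additional scaling-equivalence argument comparing the two filtrations would still be required to recover the statement as written. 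None of this affects the validity of your first three paragraphs, which, after deleting the smoothness/irreducibility caveat, already give the paper's proof.
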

\addtocounter{thm}{-1}
\endgroup
%
We should also mention that \Cref{thm:spherical} is actually deduced from a slightly more general statement, namely \Cref{theorem:spherical-tensor-equiv}. As will be discussed later, this more general statement is needed to compare $\gr_{K,L}$ with growth functions arising from Hamiltonian dynamics.
%

We would also like to mention that in the case where $U$ is affine and $D=X\setminus U$ is an ample divisor, \Cref{thm:spherical} can be combined with \cite[\S81 Prop.6]{serrefaisceaux} to show:
\begingroup
\def\thethm{\ref*{thm:growthisdimsupp}}
\begin{thm}
Given $\scrF,\scrF'\in D^bCoh(U)$, the growth function $\gr_{\scrF,\scrF'}$ is a polynomial of degree $d=dim(supp(\scrF^\vee\otimes \scrF'))=dim(supp(\scrF)\cap supp(\scrF'))$. In other words, $\gr_{\scrF,\scrF'}(p)$ is scaling equivalent to $p^d$. 
\end{thm}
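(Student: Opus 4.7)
The plan is to combine Corollary~\ref{cor:growthsheaves} with Serre vanishing for ample divisors and the Hilbert polynomial estimate from Serre's FAC~\S81, Prop.~6. By Corollary~\ref{cor:growthsheaves}, identifying the spherical twist associated to $j:D\hookrightarrow X$ with tensoring by $\cO_X(D)$, $\gr_{\scrF,\scrF'}(p)$ is scaling equivalent to $p\mapsto\dim F^p\colim_n RHom_X(\overline{\scrF},\overline{\scrF}'(nD))$ for any coherent extensions $\overline{\scrF},\overline{\scrF}'\in\dbcoh(X)$, which by Proposition~\ref{prop:welldefinedalggeogrowth} may be chosen with no torsion supported on $D$. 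Setting $\scrG:=R\sheafhom_X(\overline{\scrF},\overline{\scrF}')$, one has $RHom_X(\overline{\scrF},\overline{\scrF}'(nD))=R\Gamma(X,\scrG(nD))$, and using $\cO_X(D)|_U\cong\cO_U$ together with the affineness of $U$ the colimit identifies with $R\Gamma(U,\scrG|_U)\cong RHom_U(\scrF,\scrF')$.

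Since $D$ is ample, Serre vanishing gives $H^{>0}(X,\scrH^k(pD))=0$ for $p\gg 0$ and each cohomology sheaf $\scrH^k:=\scrH^k(\scrG)$, so the hypercohomology spectral sequence collapses to $H^k R\Gamma(X,\scrG(pD))=H^0(X,\scrH^k(pD))$. I then consider the canonical short exact sequence $0\to T_k\to\scrH^k\to\scrH^k/T_k\to 0$, where $T_k\subset\scrH^k$ is the maximal subsheaf with set-theoretic support in $D$. Since $\scrH^k/T_k$ has no sections supported on $D$, the natural map $(\scrH^k/T_k)(pD)\hookrightarrow j_*(\scrH^k|_U)$ induces an injection on global sections; and $H^1(X,T_k(pD))=0$ for $p\gg 0$ by Serre vanishing applied to $T_k$, so the image $F^p H^k$ coincides with $H^0(X,(\scrH^k/T_k)(pD))$. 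By Serre's FAC~\S81, Prop.~6, this has dimension polynomial in $p$ of degree $\dim\op{supp}(\scrH^k/T_k)=\dim\op{supp}(\scrH^k|_U)$ for $p$ large.

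Summing over the finitely many nonzero $k$, $\gr_{\scrF,\scrF'}(p)=\sum_k\dim F^p H^k$ is a polynomial in $p$ for $p$ large, of degree $\max_k\dim\op{supp}(\scrH^k|_U)=\dim\op{supp}(\scrG|_U)=\dim\op{supp}(\scrF^\vee\otimes\scrF')=\dim(\op{supp}(\scrF)\cap\op{supp}(\scrF'))=d$, yielding the claimed scaling equivalence to $p^d$. The main technical obstacle is the torsion analysis above: parts of $\scrG$ whose supports sit in $D$ contribute to $H^0(X,\scrH^k(pD))$ but vanish over $U$, and one must excise these cleanly before reading off the degree; the injectivity of $(\scrH^k/T_k)(pD)\hookrightarrow j_*(\scrH^k|_U)$ combined with Serre vanishing for $T_k$ is what makes this separation exact at large $p$, identifying the polynomial degree with $\dim\op{supp}(\scrH^k|_U)$ rather than the a priori larger $\dim\op{supp}(\scrH^k)$.
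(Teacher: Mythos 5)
Your proposal is correct and takes essentially the same route as the paper: reduce via the truncation/hypercohomology spectral sequence to the coherent-sheaf case, excise the $D$-torsion subsheaf, and apply Serre's FAC~\S81 Prop.~6. The one place the paper is more careful is the equality $\dim\op{supp}(\scrH^k/T_k)=\dim\op{supp}(\scrH^k|_U)$, which you assert but which requires showing that $\op{supp}(\scrH^k/T_k)$ has no irreducible component contained entirely in $D$ (the paper's $\scrE/\scrE_D$ argument); also, the preliminary remark about choosing torsion-free extensions is not actually used, since $\scrG$ can acquire $D$-torsion regardless.
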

\addtocounter{thm}{-1}
\endgroup

\subsection{Relations to Hamiltonian filtrations}\label{subsection:intro-ham}
We now return to the ``Reeb length filtration" on wrapped Floer cohomology. The study of this filtration (and its closed string cousin) was pioneered by Seidel \cite{seidel-biased-view} and McLean \cites{mclean-gafa,mclean}. There are actually multiple essentially equivalent ways of setting up the theory, and for technical reasons, one usually avoids working directly with Reeb chords. The easiest setup is probably the following: given cylindrical Lagrangians $K, L$ in a Liouville manifold $(M, \l)$, and a (cylindrical at infinity) Hamiltonian $H$ with positive slope, recall that $\colim_n HF(\phi_{nH}K, L) \cong HW(K, L)$. Define the filtration on $HW(K, L)$ by  $ F^p HW(K, L):= im( HF(\phi_{pH}K, L) \to  HW(K, L))$. In other words, this is the colimit filtration on $HW(K, L)$ mentioned above.

We let $\gr_{K, L}^{ham}$ denote the resulting growth function. This growth function depends on $H$, but as we have already emphasized, it is independent of $H$ up to scaling. Nevertheless, this Hamiltonian action filtration leads to powerful applications: it is used most notably to prove that certain Weinstein manifolds are not affine varieties. It is also quite computable, at least for cotangent bundles: indeed, if $P$ is a (closed, connected) manifold and $F \sub T^*P$ is a cotangent fiber, then McLean showed \cite[Lem.\ 2.10]{mclean} (using work of Abbondandolo--Schwarz--Portaluri \cite{abbondandolo2008homology}) that the Hamiltonian filtration essentially matches the filtration $F^pH(\Omega P)$ induced by a choice of Riemannian metric. 

It is natural to ask how $\gr_{K, L}^{ham}$ is related to the categorical growth functions considered in the present work. In fact, we will prove that these growth functions are the same.

\begingroup
\def\thethm{\ref*{theorem:tensor-equiv-hamiltonian}}
\begin{thm}
With the notation as above, the growth functions $\gr_{K, L}^{ham}$ and $\gr_{K, L}$ are scaling equivalent.
\end{thm}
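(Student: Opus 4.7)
The plan is to identify the two filtrations on $HW(K,L)$ by going through the homotopy colimit description of $\cB/\cD$ afforded by \Cref{cor:spherical} (more precisely its bimodule-level refinement, \Cref{theorem:spherical-tensor-equiv}). Fix a Lefschetz fibration on $M$ as in \cite{giroux-pardon}, let $\fstop$ denote the induced full stop (the core of the Lefschetz fiber pushed to infinity), and let $\cB = \tw^\pi \cW(M,\fstop)$, $\cC = \tw^\pi \cW(M)$, and $\cD \subset \cB$ be the split-closed subcategory generated by the linking disks. The corresponding Orlov functor $\tw\cW(\hat{\fstop}) \to \cB$ is spherical; let $S : \cB \to \cB$ denote its spherical twist. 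By \Cref{cor:spherical} applied to $\ov K, \ov L$, the complex $\cB/\cD(\ov K,\ov L)$ is $E_1$-equivalent to $\hocolim_k \cB(\ov K, S^k(\ov L))$, and consequently $\gr_{K,L}$ is scaling equivalent to the growth function of the colimit filtration
\begin{equation*}
F^p \colim_k H(\cB)(\ov K, S^k(\ov L)) = \op{im}\bigl(H(\cB)(\ov K, S^p(\ov L)) \to \colim_k H(\cB)(\ov K, S^k(\ov L))\bigr).
\end{equation*}

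The key geometric step is to identify the spherical twist $S$ with the effect of a positive wrapping Hamiltonian. Geometrically, $S$ is the ``wrap once past $\fstop$'' operation: applied to a cylindrical Lagrangian $\ov L$, $S(\ov L)$ is isotopic (as an object of $\cB$) to the image of $\ov L$ under the time-one flow of a cylindrical Hamiltonian $H$ whose slope is chosen so that Reeb chords at infinity are forced to cross $\fstop$ exactly once. This identification is essentially the content of the description of Orlov's functor on the wrapped Fukaya category in terms of the Lefschetz monodromy, combined with the standard invariance of Floer complexes under compactly supported Hamiltonian isotopies. Assuming this, one obtains quasi-isomorphisms $\cB(\ov K, S^k(\ov L)) \simeq HF(\ov K, \phi_{kH}(\ov L))$ compatible with the continuation maps $S^k \to S^{k+1}$ and with the analogous continuation maps on the Hamiltonian side. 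Passing to the colimit, and using the fact that continuation maps $HF(\phi_{kH}K, L) \to HW(K, L)$ are independent up to scaling of the choice of cylindrical Hamiltonian $H$ of positive slope, we conclude that the colimit filtration computed through $S$ agrees, after passing to scaling equivalence, with the Hamiltonian action filtration $F^p HW(K,L) = \op{im}(HF(\phi_{pH}K, L) \to HW(K,L))$.

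Combining these identifications yields the scaling equivalence $\gr_{K,L} \sim \gr_{K,L}^{ham}$. To make this rigorous at the filtration level, I would in fact appeal to the bimodule version \Cref{theorem:spherical-tensor-equiv}: the two-pointed open-closed/continuation data give a continuation bimodule $\mathcal{P}$ over $\cW(M,\fstop)$ whose iterated tensor powers $\mathcal{P}^{\otimes k}$ compute $\cW(M,\fstop)(\cdot, \phi_{kH}(\cdot))$, and $\mathcal{P}$ is $E_1$-equivalent to $\cS^1 = \cB(\cdot, S(\cdot))$ via the geometric identification above. The homotopy colimit $\hocolim_k \mathcal{P}^{\otimes k}$ is then $E_1$-equivalent as a filtered bimodule both to $\hocolim_k \cS^k \simeq \cB/\cD$ (by \Cref{theorem:spherical-tensor-equiv}) and to the Hamiltonian model of the wrapped category with its action filtration. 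Since $E_1$-equivalences preserve growth functions, the theorem follows.

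The main obstacle is the geometric identification in the second paragraph: matching the spherical twist $S$ coming from the Orlov functor with the wrap-once Hamiltonian flow at the level of filtered $A_\infty$-bimodules (not merely cohomology), and ensuring that the scaling ambiguity on the Hamiltonian side (choice of slope and profile of $H$) matches exactly with the scaling ambiguities inherent in the categorical construction (choice of compactification and of generators of $\cD$). Once this identification is in place, the theorem is a formal consequence of \Cref{theorem:spherical-tensor-equiv} and the invariance of growth functions under $E_1$-equivalence.
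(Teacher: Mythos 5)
Your plan is exactly the ``naive'' approach that the paper itself describes at the start of Section~\ref{section:ham-comparison} and explains why it breaks down. The geometric identification you flag as ``the main obstacle'' — realizing the spherical twist $S$ of the Orlov functor as the time-one flow $\phi_H$ of a cylindrical Hamiltonian, at the filtered bimodule level, so that $S^k(\ov L) \simeq \phi_{kH}(\ov L)$ compatibly with the natural transformations $S^k \to S^{k+1}$ — is not just an obstacle but a step that fails in general. The reason is that $\partial_\infty M$ carries an open book decomposition with nontrivial monodromy $\phi$, and there is no contact Hamiltonian on $\partial_\infty M$ whose flow rotates the pages of the open book and brings the page $\fstop$ back to itself setwise. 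The contact mapping torus $(\bR \times W, dt + \lambda)/\!\sim$ with identification $(t,x)\sim(t-f(x),\phi(x))$ makes this visible: the ``rotation'' $\partial_t$ does not descend to a periodic flow on $\mathcal T$ when $f$ is nonconstant. So the quasi-isomorphism $\cB(\ov K, S^k(\ov L)) \simeq HF(\ov K, \phi_{kH}(\ov L))$, compatible with continuation maps, that your argument needs is not available, and \Cref{cor:spherical} cannot be applied the way you propose.

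The paper's actual proof routes around this in two ways you should compare against. First, it does not invoke the spherical twist $S$ at all in the geometric argument; instead it applies the more flexible \Cref{prop:colimisloc} (of which \Cref{thm:spherical} is a special case) to a sequence of Lagrangians $K = K^0 \rightsquigarrow K^1 \rightsquigarrow \dots$ obtained by iterating a carefully constructed Hamiltonian $\{H_s\}$ that merely \emph{compresses} a neighborhood $\cR_0$ of a page into itself (\Cref{prop:hamiltgood}), so that each $K^k$ has ideal boundary confined to $\cR_0$. Second, the two hypotheses of \Cref{prop:colimisloc} — that $\op{cone}(K^{k+1}\to K^k)$ lies in $\cD = \op{im}(\mathrm{Orlov})$, and that $H(\cB)(K^k,D)\to H(\cB)(K^{k+1},D)$ vanishes for $D\in\cD$ — are verified geometrically (\Cref{lemma:bar-cond-2}, \Cref{lemma:bar-cond-1}) using the stop-doubling trick with an auxiliary stop $P_1$ at a different $t$-value and the cofinal wrapping analysis of \Cref{lemma:cofinal-wrappings}, rather than derived from the formal properties of $S$ (as \Cref{lem:sdvanish} and \Cref{lem:discimagevanish} would give). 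This is precisely what replaces the unavailable identification $S^k(\ov L)\simeq \phi_{kH}(\ov L)$. Finally, \Cref{corollary:partially-wrapped-hf} trades the partially wrapped directed system for the fully wrapped one $\{HF(K^k,L)\}$, after which \Cref{lemma:positivity-continuation} gives the independence from the chosen Hamiltonian. Your proposal correctly identifies the target statement and the role of \Cref{theorem:spherical-tensor-equiv}, but the bridge you are missing is the stop-doubling / compression argument, not a bimodule-level match between $S$ and a Hamiltonian flow.
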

\addtocounter{thm}{-1}
\endgroup

%

To see why this should be the case, the reader may find it helpful to return to \Cref{figure:cylinderwithtwostops}: if we apply $\phi_{nH}$ to the Lagrangian $K$, then this has the the effect of wrapping $K$ in the positive direction. The number of times that $K$ hits the stop is proportional to $n$ (the proportionality constant depends on the slope of $H$). Thus, up to scaling, one expects to recover the same filtrations.  

To make this intuition rigorous in higher dimension is significantly more delicate. The argument involves multiple steps. First of all, we take advantage of the fact that we are free to choose a convenient stop $\fstop$ to define our growth functions. Thus, we choose as our stop the page of an open book on the ideal boundary of our Liouville manifold $M$. We can assume that $\d_\infty K, \d_\infty L$ are pairwise disjoint, and disjoint from the stop and from the binding. 

One is then tempted to let $H$ to be such that its Hamiltonian flow rotates the pages of the open book (such as a Hamiltonian pulled back from the base of Lefschetz fibration). Unfortunately, such a Hamiltonian is not cylindrical at infinity, and the comparison with cylindrical Hamiltonians is hard. Instead, we construct a cylindrical Hamiltonian whose flow approximately rotates one fixed page of the open book.  This suffices to show that the number of times $\d_\infty K$ passes the stop (i.e.\ the page of the open book) is proportional to the number of times one iterates the Hamiltonian. We obtain a sequence of Lagrangians $K=K_0\rightsquigarrow K_1\rightsquigarrow K_2\rightsquigarrow \dots$ which wrap in the positive direction, and where the wrapping $K^i\rightsquigarrow K^{i+1}$ passes the stop once (note this induces a sequence of maps $\dots \to K^2 \to K^1 \to K^0=K $).  


We now appeal to the key algebraic comparison results introduced in \Cref{subsection:intro-spherical}; in this case, the criterion we use is \Cref{theorem:spherical-tensor-equiv}. To apply this criterion, one needs to verify two conditions: the first is that the cone of $ K^{i+1}\to K^i$ is in the image of the Orlov functor $\tw \cW(\hat{\fstop}) \to \tw \cW(M, \fstop)$; the second is that the cone of $\cB(K^i, D) \to \cB(K^{i+1}, D)$ vanishes in cohomology whenever $D$ is in the image of the Orlov functor.

To verify these conditions, the wrapping exact triangle \cite[Thm.\ 1.10]{gps2} plays a key role. We also rely crucially on the ``stop doubling trick", which we learned from \cite[Sec.\ 7.3]{gps3} and \cite{sylvan-orlov}. This is the focus of \Cref{section:ham-comparison} of this paper.

\subsection*{Acknowledgments}
We would like to thank Sheel Ganatra for introducing us to smooth categorical compactifications, how to construct them in symplectic topology, and for many other helpful correspondences. We would also like to thank John Pardon for suggesting to us how to construct a Hamiltonian as in \Cref{section:ham-comparison}, Zack Sylvan for patiently explaining to us his work \cite{sylvan-orlov}, Roman Bezrukavnikov for pointing out to us the notion of Orlov spectra and generation time, Charles Weibel for helpful homological algebra conversations, Semon Rezchikov for helpful comments about the super-exponential growth in Hamiltonian dynamics, and Sasha Efimov for helpful conversations about relating general smooth categorical compactifications by zigzags. Finally, we wish to thank the anonymous referees for many helpful comments. 

LC was supported by the National Science Foundation under Grant No.\ DMS-1926686 (via the Institute for Advanced Study). Both authors are grateful to the Institute for Advanced Study for providing a  productive working environment.

\section{Background material}\label{sec:ainftybackground}

\subsection*{Conventions}
We adhere to the following conventions unless otherwise indicated.

All manifolds and maps between them are smooth. 

We consider chain complexes of vector spaces over a field $\bK$. Our chain complexes are $\Z$-graded and the differential raises the degree. 
Given a chain complex $C$, we define the \emph{shift} $C[1]^k= C^{k+1}$ with differential $d_{C[1]}= -d_C$.  

Given a morphism $f: C_1 \to C_2$ of chain complexes, the \emph{cone} is the chain complex 
\begin{equation} \op{cone}(f) = \lt(C_1[1] \oplus C_2, \begin{pmatrix} d_{C_1[1]} & 0 \\ f & d_{C_2} \end{pmatrix} \rt). 
\end{equation}

\subsection{$A_\infty$-categories and bimodules}
Throughout the paper $\bK$ will denote a field of characteristic $0$. For definitions of $A_\infty$-categories and modules, we refer the reader to \cite{seidel-book}.

Given an $A_\infty$ functor $F: \mc{B} \to \mc{B}'$, the kernel $\op{ker} F \sub \mc{B}$ is defined to be the largest full subcategory of $\mc{A}$ whose image under $H^*(F)$ is the zero subcategory of $H^*(\mc{B}')$.

\begin{defn}
Let $\cB$ be an $A_\infty$ category over $\bK$. Recall that \emph{a right $A_\infty$-module $\cN$ over $\cB$} is an assignment of a graded vector space $\cN(L)$ to each $L\in ob(\cB)$ and for every $k$ and for every $L_0,\dots, L_k$ a map 
\begin{equation}
\mu_\cN^{1|k}:	\cN(L_k)\otimes \cB(L_{k-1},L_k)\otimes \dots \cB(L_0,L_1)\to \cN (L_0)[1-k]
\end{equation}	
satisfying the standard $A_\infty$-equations. A \emph{left $A_\infty$-module} is defined similarly. Given $A_\infty$ categories $\cB$ and $\cB'$, an \emph{$A_\infty$-bimodule over $\cB'$-$\cB$} is an assignment of a graded vector space $\cM(L,L')$ to each $(L,L')\in ob(\cB)\times ob(\cB')$, and for every $L_0,\dots ,L_k\in ob(\cB)$, $L'_0,\dots ,L'_l\in ob(\cB')$ a map 
\begin{equation}
\mu_\cM^{l|1|k}:	\cB'(L_1',L_0')\otimes \dots \cB'(L_l',L_{l-1}')\otimes \cM(L_k,L_l')\otimes \cB(L_{k-1},L_k)\dots \cB(L_0,L_1)
\to \cM(L_0,L_0')[1-k-l]
\end{equation}
satisfying standard $A_\infty$-equations. Right, resp. left modules, and bimodules form dg categories, where the hom-complexes are given by pre-morphisms. For right modules $\cN_1$ and $\cN_2$, a \emph{pre-morphism} of degree $d$ is a collection of maps 
\begin{equation}
f^k:\cN_1(L_k)\otimes \cB(L_{k-1},L_k)\otimes \dots \cB(L_0,L_1)\to \cN_2 (L_0)[d-k]
\end{equation}
without any relation. See \cite{seidel-book} for the differential and composition of pre-morphisms. Closed pre-morphisms are $A_\infty$-module homomorphisms. The pre-morphisms, their differential and composition are defined similarly for left modules and bimodules. 
\end{defn}
\begin{rk}
One can see a right $A_\infty$-module, resp. a left $A_\infty$-module as a functor from $\cB$, resp. $\cB^{op}$ to the category of chain complexes over $\bK$. Similarly, a $\cB'$-$\cB$-bimodule is an $A_\infty$-bifunctor from $\cB'\times \cB$ to chain complexes.
\end{rk}
\begin{exmp}
Given $L \in \cB$, one can define a right module $h_L=\cB(\cdot, L)$ as the assignment $L_0\mapsto \cB(L_0,L)$, and with $A_\infty$-structure maps given by the $A_\infty$-products on $\cB$. The module $h_L$ is called \emph{the (right) Yoneda module associated to $L$}. Note that one has a natural cohomologically fully faithful functor from $\cB$ to the chains that sends $L$ to $h_L$. This functor is called \emph{the Yoneda embedding}.
\end{exmp}
The compact objects of $\op{Mod}\cB$ are called \emph{perfect modules}. Equivalently, $\cN\in \op{Mod}\cB$ is perfect if it can be expressed as a direct summand of a finite complex of Yoneda modules; see \cite{kellerdg}. The category of perfect modules is denoted by $\op{Perf} \cB$. A module $\cN$ is called \emph{proper} if $H^*(\cN(L))$ is finite dimensional for all $L\in ob(\cB)$. The category of proper modules is denoted by $\operatorname{Prop}(\cB)$. Similar definitions apply to left modules and bimodules. 

An $A_\infty$ category $\cB$ is said to be \emph{smooth} if its diagonal bimodule $\cB$ is perfect. 
Similarly, $\cB$ is called \emph{proper} if the diagonal bimodule $\cB$ is proper. More explicitly, this means that $\cB(K,L)$ has finite dimensional cohomology for every $K,L$.

Note that if $\cB$ is proper, then $\op{Perf} \cB \sub \op{Prop} \cB$; if $\cB$ is smooth then $\op{Prop} \cB \sub \op{Perf} \cB$ (see \cite[Lem.\ A.8]{gps3}).

Given an $A_\infty$-functor $f: \cB \to \cB'$, there is a \emph{pullback (restriction) functor} $f^*: \op{Mod} \cB' \to \op{Mod} \cB$. The pullback admits a left adjoint $f_!: \op{Mod} \cB \to \op{Mod} \cB'$ called \emph{induction} (which corresponds concretely to convolving with the graph bimodule). The induction functor sends a representable over $K \in \cB$ to a representable over $f(K) \in \cB'$, so it induces a functors from triangulated closure of Yoneda image of $\cB$ to that of $\cB'$, as well as from $\op{Perf} \cB$ to $\op{Perf} \cB'$. 
\begin{defn}
 An $A_\infty$ category $\cB$ is said to be \emph{pre-triangulated} if its image under the Yoneda functor $\cB\to Mod(\cB)$ is closed up to quasi-isomorphism under taking cones and translations. It is called \emph{idempotent complete} if it is closed under taking direct summands. Note that $\cB$ is pre-triangulated and idempotent complete if and only if the canonical embedding $\cB \hookrightarrow \op{Perf} \cB$ is a quasi-equivalence.  The homotopy category $H^0(\cB)$ of a pre-triangulated category is a triangulated category in the classical sense. 
\end{defn}

\begin{defn}
Let $\cB$ be an $A_\infty$ category and let $\cD$ be a full subcategory. We say $K\in ob(\cD)$ is \emph{generated} by $\cD$ if it can be represented as an iterated cone of objects of $\cD$. Similarly, we say $K\in ob(\cD)$ is \emph{split-generated} by $\cD$ if it can be represented as a direct summand of an iterated cone of objects of $\cD$. We say that $\cD$ \emph{generates}, resp. \emph{split-generates} $\cB$ if every object of $\cB$ is generated, resp. split-generated by $\cD$.
\end{defn}
Note that the latter is equivalent to natural functor $\op{Perf}\cD\to \op{Perf}\cB$ being a quasi-equivalence. The former is equivalent to the similar functor between pre-triangulated closures of Yoneda images being a quasi-equivalence. Next we define a natural pre-triangulated closure $\operatorname{Tw}\cB$, and the former can also be phrased as $\tw\cD\to \operatorname{Tw}\cB$ being a quasi-equivalence.

Any $A_\infty$-functor $f:\cB\to \cB'$ induces a $\cB$-$\cB'$-bimodule as explained above. Hence, one can extend the set of functors $\cB\to\cB'$ to $\cB$-$\cB'$-bimodules, obtaining the derived Morita category. Any bimodule $X$ induces a functor $(\cdot)\otimes_\cB X:Mod(\cB)\to Mod(\cB')$, and composition in this category is also given by convolution. Categories equivalent in this category are called \emph{Morita equivalent}, and the bimodule $X$ is called a \emph{Morita equivalence}. Morita equivalences identify $\op{Perf}(\cB)$ with $\op{Perf}(\cB')$. When the bimodule induced by $f:\cB\to\cB'$ is a Morita equivalence, we refer $f$ as a Morita equivalence. Concretely, this condition is equivalent to $f$ being fully faithful in cohomology and its essential image split-generating $\cB'$. Hence, we also refer to such $f$ as a \emph{derived equivalence}. See \cite{kellerdg} for more details on Morita category. 
\subsection{Twisted complexes and generation time}
Let $\cB$ be an $A_\infty$ category. We define the \emph{additive enhancement} $\Si \cB$ exactly as in \cite[(3k)]{seidel-book}, except that we only allow multiplicity spaces of dimension one for notational simplicity. Thus, the elements of $\Si \cB$ are formal finite sums $\oplus_{i=1}^m K_i [n_i]$, where $K_i \in \cB$ and $n_i \in \Z$. 

We define the category of twisted complexes $\tw \cB$ as in \cite[(3l)]{seidel-book}, again allowing only one-dimensional multiplicity spaces (cf.\ \cite[Rmk.\ 3.26]{seidel-book}). This category forms a pre-triangulated envelope of $\cB$, and it is canonically quasi-equivalent to the triangulated closure of the Yoneda embedding. In particular, its idempotent completion is quasi-equivalent to $\op{Perf} \cB$.

It will be useful to briefly describe the objects of $\tw \cB$, following \cite[(3l)]{seidel-book}. First of all, a \emph{pre-twisted complex} is a pair
\eq\label{equation:tw-complex} (E=\oplus_{i=1}^m K_i [n_i], \delta_E=(\delta_{ij})_{ij}) \eeq
where $\delta_{ij} \in \cB^1(K_i[n_i], K_j[n_j])$ ($\delta_E$ is called the \emph{differential}). 
%

A differential on a pre-twisted complex is said to be \emph{$l$-lower triangular} for some $l \in \mathbb{N}$ if there exists a filtration $0=E_0 \sub E_1 \sub \dots \sub E_{l-1} \sub E_l= E$ by sub-sums such that $\delta_E$ sends $E_i$ to $E_{i-1}$. A differential is said to be \emph{lower-triangular} if it is $l$-lower triangular for some $l \in \mathbb{N}$. 

Finally, a pre-twisted complex $(E, \delta_E)$ is a \emph{twisted complex} if $\delta_E$ is lower-triangular, and satisfies the Maurer--Cartan equation:
\eq\label{eq:mcequation} \sum \mu_\cB^n(\delta_E, \dots, \delta_E) = 0. \eeq
By the lower triangular condition, \eqref{eq:mcequation} is a finite sum. Twisted complexes form a pre-triangulated $A_\infty$ category, denoted by $\tw\cB$, and will also be referred as \emph{the twisted envelope of $\cB$}. An $A_\infty$-functor between two categories induce one between their twisted envelopes.

Note that the $l$-lower triangular condition basically means the twisted complex has length less than or equal to $l$. Equivalently, these are complexes that can be obtained by taking cones with direct sums of objects of $\cB$, $l-1$ times. Define, for every $l \in \mathbb{N}$, the full subcategory $\tw_{\leq l} \mc{B} \sub \tw \mc{B}$ of \emph{$l$-lower triangular twisted complexes}. The objects of $\tw_{\leq l} \cB$ are twisted complexes $(E, \delta_E)$ such that $\delta_E$ is $l$-lower triangular. Note that $\tw_{\leq l} \cB$ is not pre-triangulated, even though it is closed under translation and direct sums. The cone of a morphism from an object of $\tw_{\leq k} \cB$ to an object of $\tw_{\leq l} \cB$ lies in $\tw_{\leq k+l} \cB$. For instance, an object $L\in\cB$ is an element of $\tw_{\leq 1}\cB$, a twisted complex of the form $\{K\to L\}, K,L\in \cB$ is an element of $\tw_{\leq 2}\cB$, and a complex of the form $\{K\to L\to M\}, K,L,M\in\cB$ is an element of  $\tw_{\leq 3}\cB$.

Twisted complexes admit a natural split-closure, where the objects are triples $(E,\delta_E, \pi_E)$, where $(E,\delta_E)$ is a twisted complex, and $\pi_E$ is an (homotopy) idempotent of $(E,\delta_E)$. This triple represents a direct summand of $(E,\delta_E)$. Denote this split-closure by $\tw^\pi\cB$. Clearly, $\tw^\pi\cB \simeq \op{Perf}\cB$. One can define $\tw^\pi_{\leq l}\cB$ as the subcategory spanned by triples with $(E,\delta_E)\in \tw_{\leq l}\cB$. 
\begin{lem}
If $\mc{B} \to \mc{B}'$ is a functor of $A_\infty$ categories, then the induced functor $\tw \mc{B} \to \tw \mc{B}'$ sends the subcategory $\tw_{\leq l} \mc{B}$ to $\tw_{\leq l} \mc{B}'$.
\qed
\end{lem}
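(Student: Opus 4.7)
The plan is to unwind the definition of the induced functor $\tw F: \tw \mathcal{B} \to \tw \mathcal{B}'$ on objects and track how the triangular filtration is preserved. Recall that on an object $(E=\oplus_{i=1}^m K_i[n_i], \delta_E)$, the functor $\tw F$ produces the underlying sum $F(E):=\oplus_{i=1}^m F(K_i)[n_i]$ equipped with the differential
\begin{equation}
\delta_{F(E)} = \sum_{k\geq 1} F^k(\delta_E, \dots, \delta_E),
\end{equation}
where $F^k$ denotes the $k$-th structure map of the $A_\infty$-functor $F$, applied componentwise to the matrix entries of $\delta_E$.

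First I would fix an $l$-lower triangular object $(E, \delta_E) \in \tw_{\leq l}\mathcal{B}$, together with a witnessing filtration by sub-sums $0 = E_0 \subset E_1 \subset \dots \subset E_l = E$ such that $\delta_E(E_i) \subseteq E_{i-1}$. Since $F$ acts summand-wise on the additive enhancement, I can transport this filtration directly to obtain sub-sums $0 = F(E_0) \subset F(E_1) \subset \dots \subset F(E_l) = F(E)$, which is my candidate filtration witnessing that $\tw F(E,\delta_E) \in \tw_{\leq l}\mathcal{B}'$.

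The core step is to verify that each term $F^k(\delta_E, \dots, \delta_E)$ in the sum defining $\delta_{F(E)}$ respects the filtration appropriately. Since $\delta_E$ has matrix components that send $E_i$ to $E_{i-1}$, composing $k$ such components sends $E_i$ into $E_{i-k}$ (with the convention $E_j = 0$ for $j\leq 0$). The higher $A_\infty$-functor components $F^k$ act on a matrix of morphisms entry-by-entry in the sense that they produce maps whose matrix coefficient between summands $K_{i_0}[n_{i_0}]$ and $K_{i_k}[n_{i_k}]$ is a $\mathbb{K}$-linear combination of applications of $F^k$ to the corresponding entries of $\delta_E$. In particular, for any $k \geq 1$, the map $F^k(\delta_E, \dots, \delta_E)$ sends $F(E_i)$ into $F(E_{i-k}) \subseteq F(E_{i-1})$. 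Summing over $k$ gives $\delta_{F(E)}(F(E_i)) \subseteq F(E_{i-1})$, which is exactly the $l$-lower triangularity condition.

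I do not expect any genuine obstacle here; the only subtlety is the bookkeeping to confirm that the higher components $F^k$ of an $A_\infty$-functor indeed act componentwise with respect to a direct sum decomposition of the source, so that a filtration by sub-sums of $E$ gets carried to a filtration by sub-sums of $F(E)$. This is immediate from the fact that the additive enhancement $\Sigma \mathcal{B}$ is defined so that $F$ extends formally by distributing over direct sums, and the differential on $\tw F(E,\delta_E)$ is assembled out of applications of $F^k$ to matrix entries of $\delta_E$.
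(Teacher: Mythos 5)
Your proof is correct, and it fills in the routine verification that the paper leaves implicit (the lemma is stated with a $\qed$ and no written argument). Carrying the witnessing filtration forward along $F$ and noting that each term $F^k(\delta_E,\dots,\delta_E)$, $k\geq 1$, strictly lowers the filtration index by at least one is precisely the obvious unwinding the paper has in mind.
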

As mentioned, a full subcategory $\cD\subset \cB$ generates $\cB$ if and only if $\tw\cD\to\tw\cB$ is essentially surjective (equivalently a quasi-equivalence). 
\begin{defn}
We say that \emph{$\cD$ generates $\cB$ in time $l \in \mathbb{N}_+$} if every object of $\cB$ can be represented as a twisted complex in objects of $\cD$ of length at most $l$. More precisely, this means every $L\in ob(\cB)$ is quasi-isomorphic to an object of $\tw_{\leq l}(\cD)$. We say it \emph{split-generates in time $l$}, if every object of $\cB$ is a direct summand of an object of $\tw_{\leq l}(\cD)$. 
%
We say that $\cD$ \emph{generates (resp.\ split-generates) $\cB$ in finite time} if it generates (resp.\ split-generates) in time $l$ for some $l \in \mathbb{N}_+$.
\end{defn}
If $\cB$ is generated by a single object in finite time, this object is called \emph{a strong generator}. Existence of a strong generator may initially sound too strong; however, for instance by \cite[Theorem 3.1.4]{bondalvandenbergh}, we know that $D^bCoh(X)$ has a strong generator for any smooth variety $X$. Later we will show this is true for the wrapped Fukaya category of a Weinstein manifold too. 
\begin{lem}\label{lemma:finite-time-ess-surjective}
Let $F: \cB \to \cB'$ be an $A_\infty$ functor, and suppose the induced functor $\tw \mc{B} \to \tw \mc{B}'$ is essentially surjective. If $\mc{E}$ is essentially finite and generates $\tw\mc{B}$ in time $l$, then $F(\mc{E})$ generates $\tw\mc{B}'$ in time $l$. 
\qed
\end{lem}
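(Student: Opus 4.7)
The plan is to combine the preceding lemma (functors of $A_\infty$ categories send $\tw_{\leq l}$ into $\tw_{\leq l}$) with essential surjectivity in a direct chase. Fix an object $X' \in \tw\mathcal{B}'$; I want to produce a quasi-isomorphic object lying in $\tw_{\leq l}(F(\mathcal{E}))$.

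First, since $\tw F: \tw\mathcal{B} \to \tw\mathcal{B}'$ is essentially surjective, choose $X \in \tw\mathcal{B}$ with $\tw F(X) \simeq X'$. Next, using that $\mathcal{E}$ generates $\tw\mathcal{B}$ in time $l$, find a twisted complex $Y \in \tw_{\leq l}\mathcal{E}$ together with a quasi-isomorphism $X \simeq Y$ inside $\tw\mathcal{B}$. Applying the functor $\tw F$ to this quasi-isomorphism produces a quasi-isomorphism $\tw F(X) \simeq \tw F(Y)$ in $\tw\mathcal{B}'$, and by the previous lemma, the lower-triangular structure of $Y$ is preserved on the nose, so $\tw F(Y) \in \tw_{\leq l}(F(\mathcal{E}))$. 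Composing, $X' \simeq \tw F(Y)$ lies in $\tw_{\leq l}(F(\mathcal{E}))$, which is exactly the statement that $F(\mathcal{E})$ generates $\tw\mathcal{B}'$ in time $l$.

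There is no real obstacle here: the argument is a formal concatenation of three facts, namely essential surjectivity, finite-time generation of $X$ by $\mathcal{E}$, and preservation of the length filtration under $A_\infty$-functors. The essential finiteness hypothesis on $\mathcal{E}$ is not needed for the existence of the time-$l$ resolution of an individual object, but ensures that the statement ``$F(\mathcal{E})$ generates in finite time'' is meaningful in the same sense used elsewhere in the paper (a finite set of generators with bounded resolution length), so one might also remark that $F(\mathcal{E})$ is essentially finite because $F$ is well-defined on isomorphism classes. The only mild subtlety to flag is that the quasi-isomorphism $X \simeq Y$ takes place in $\tw\mathcal{B}$ rather than in the (non-pre-triangulated) subcategory $\tw_{\leq l}\mathcal{E}$, but this is harmless: generation in time $l$ is phrased up to quasi-isomorphism in the ambient pre-triangulated category, and applying $\tw F$ respects this.
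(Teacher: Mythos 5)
Your proof is correct, and it is the straightforward argument the paper omits by marking the lemma with $\square$: essential surjectivity produces a preimage $X$, generation in time $l$ replaces $X$ by $Y \in \tw_{\leq l}\cE$ up to quasi-isomorphism, and the preceding lemma together with the fact that $A_\infty$-functors preserve quasi-isomorphisms (since $H^0$ of a functor is an ordinary functor) pushes this forward to $X' \simeq \tw F(Y) \in \tw_{\leq l}(F(\cE))$. Your closing remarks are also accurate — essential finiteness of $\cE$ is not actually used in establishing the time-$l$ bound, and $F(\cE)$ inherits essential finiteness automatically.
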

\begin{lem}\label{lemma:exception-finite-time}
If $\mc{B}$ admits an exceptional collection $\mc{E} =\{E_1,\dots, E_l\}$ (meaning that $H(\cB)(E_i, E_i)=\mathbb{K}$ and $H(\cB)(E_i, E_j)=0$ if $j<i$) which is full (meaning that $\mc{E}$ generates), then $\mc{E}$ generates $\tw \mc{B}$ in time $l$.
\end{lem}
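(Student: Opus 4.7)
The plan is to use the semi-orthogonal decomposition (SOD) induced by the full exceptional collection $\mc{E}$ to express any $L \in \tw \cB$ as an iterated extension with exactly $l$ terms, each of which is a direct sum of shifts of a single $E_i$.

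First, I would check that for each $i$, the triangulated subcategory $\langle E_i \rangle \sub \tw \cB$ generated by $E_i$ consists (up to quasi-isomorphism) of finite direct sums of shifts of $E_i$, and in particular lies inside $\Sigma \cB \sub \tw_{\leq 1} \cB$. This follows because the condition $H(\cB)(E_i, E_i) = \bK$ in degree $0$ implies that the endomorphism $A_\infty$-algebra of $E_i$ has minimal model $\bK$, so the pre-triangulated envelope generated by $E_i$ is equivalent to $\tw \bK$, which is exactly the category of finite direct sums of shifts.

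Next, the exceptional collection condition $H(\cB)(E_i, E_j) = 0$ for $j < i$ is precisely the semi-orthogonality condition for the ordered collection $\langle \langle E_1 \rangle, \dots, \langle E_l \rangle \rangle$, and combined with fullness this gives a genuine SOD of $\tw \cB$. For any $L \in \tw \cB$, I would iteratively use the SOD by peeling off the $\langle E_l \rangle$-component (via the evaluation map from $E_l \otimes_{\bK} \cB(E_l, L)$), then the $\langle E_{l-1}\rangle$-component, and so on. This produces a tower
\eq 0 = L^{(0)} \to L^{(1)} \to \dots \to L^{(l)} \simeq L \eeq
with successive cones $L_i := \op{cone}(L^{(i-1)} \to L^{(i)}) \in \langle E_i \rangle$, hence each $L_i$ is a finite direct sum of shifts of $E_i$. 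The gluing data of this tower then packages into a twisted complex with underlying graded sum $\bigoplus_i L_i$ and strictly lower-triangular differential relative to the filtration $L^{(1)} \sub L^{(2)} \sub \dots \sub L^{(l)}$, exhibiting $L$ as quasi-isomorphic to an object of $\tw_{\leq l} \cB$ built from $\mc E$.

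The main technical point is verifying at each inductive step that the remainder $L^{(i-1)}$ still lies in the triangulated subcategory generated by $\{E_1, \dots, E_{i-1}\}$: a priori the SOD only guarantees that $L^{(i-1)}$ is right-orthogonal to $\{E_i, \dots, E_l\}$, but fullness of $\mc E$ is exactly what upgrades this to membership in $\langle E_1, \dots, E_{i-1}\rangle$, allowing the recursion to continue. Once this is in place, the length of the resulting filtration is literally $l$, matching the size of the exceptional collection and giving the stated generation time.
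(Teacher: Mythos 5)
Your proposal is correct and follows essentially the same approach as the paper's proof (which simply cites Bondal's Lemma~3.1 for the decomposition that you instead construct by hand via iterated mutations along the semi-orthogonal decomposition). One small inconsistency worth flagging: peeling the $E_l$-component first via the evaluation map $E_l \otimes \cB(E_l, L) \to L$ places sums of shifts of $E_l$ at the \emph{bottom} of the resulting tower and sums of shifts of $E_1$ at the top, so with your indexing $L_i := \op{cone}(L^{(i-1)} \to L^{(i)})$ actually lands in $\langle E_{l-i+1}\rangle$ rather than $\langle E_i\rangle$ as written; the tower nevertheless has length $l$, so the stated generation-time bound is unaffected.
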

\pf
It is elementary to verify (cf.\ \cite[Lem.\ 3.1]{bondal-rep-algebras}) that any $T \in H^0(\tw \mc{C})$ admits an (essentially unique) decomposition of the form
\eq
0 = T_l \to T_{l-1} \to \dots \to T_1 \to T_0=T,
\eeq
where $\op{cone}(T_i \to T_{i-1})$ is a finite direct sum of shifts of $E_i$. This implies the desired statement. 
\epf

\subsection{Quotients}

Let $\mc{B}$ be a small $A_\infty$ category. Given a full subcategory $\cD \sub \cB$, one can form the \emph{quotient category} $\cB / \cD$, which comes equipped with a canonical functor 
\eq q: \cB \to \cB / \cD. \eeq
There are various models of the quotient category in the literature (see \cite{lyubashenko-manzyuk, lyubashenko-ovsienko} in the $A_\infty$ case and \cite{drinfeld} for the dg case). It will be useful in the sequel to perform certain constructions using the model \cite{lyubashenko-ovsienko}, which is discussed further in \Cref{subsection:tensor-length-filtration}.

The following lemma collects some general properties of quotient categories (valid for any of the above models). 

\begin{lem}\label{lemma:quotient-properties}
Let $\mc{B}$ be a small $A_\infty$ category and let $\cD \sub \cB$ be a full subcategory. 
\begin{enumerate}
\item\label{item:univ-prop-quotient} A functor $F: \mc{B} \to \mc{B}'$ which sends $\cD$ to an acyclic subcategory factors canonically through the quotient. More precisely, precomposition with $q: \cB \to \cB/\cD$ induces a fully faithful embedding of the $A_\infty$ category of functors $\op{Fun}(\cB/\cD, \cB') \hookrightarrow \op{Fun}(\cB, \cB')$ whose image consists precisely of those functors which annihilate $\cD$; 
\item\label{item:split-closure-quotient} if $\mc{D} \sub \mc{D}' \sub \mc{B}$ and $\mc{D}'$ is split-generated by $\mc{D}$, then the canonical map $\mc{B}/\mc{D} \to \mc{B}/\mc{D}'$ is a quasi-equivalence;
\item\label{item:quotient-can-map} Quotients of pre-triangulated categories remain pre-triangulated, but quotients of idempotent complete categories may fail to be idempotent complete. The canonical map $\tw\mc{B}/\mc{D} \to \tw(\mc{B}/\mc{D})$ is a quasi-equivalence; the canonical map $\op{Perf} \mc{B}/ \mc{D} \to \op{Perf} (\mc{B}/\mc{D})$ is cohomologically fully faithful and its image has split-closure $\op{Perf} (\mc{B}/\mc{D})$;\footnote{One instance of this phenomenon is the singularity category $D^bCoh(X)/D^{perf}(X)$ which is not in general idempotent complete (even though $D^bCoh(X)$ is idempotent complete); see \cite[Remark 2.4]{pavic2021k} for further discussion.}
\item\label{item:properties-pres-qcat} if $F: \cB \to \cB'$ is cohomologically fully-faithful (resp.\ a quasi-equivalence, Morita equivalence), then $\mc{B}/\cD \to \cB'/ F(\mc{D})$ is also cohomologically fully-faithful (resp.\ a quasi-equivalence, Morita equivalence);
\item\label{item:split-generation-quotient} if $\mc{B}/\mc{D}$ is the zero category, then $\mc{D}$ split-generates $\mc{B}$. More generally, if $K \in \mc{B}$ is sent to an acyclic object by the quotient map $\mc{B} \to \mc{B}/\mc{D}$, then $K$ is split-generated by $\mc{D}$.
\end{enumerate}
\end{lem}

%

We say that a map of $A_\infty$ categories 
\eq F: \cB \to \cC \eeq
is a \emph{localization} iff the induced map $\cB/ker F \to \cC$ is a Morita equivalence.

\begin{cor}\label{cor:quotientstronggenerator}
	If $\cD\subset \cB$	and $\tw\cB$ admits a strong generator, then so does $\tw(\cB/\cD)$.
\end{cor}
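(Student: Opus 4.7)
The plan is to transport a strong generator of $\tw\cB$ through the induced functor on twisted envelopes, using \Cref{lemma:finite-time-ess-surjective} as the main tool. Let $G \in \tw\cB$ be a strong generator, and fix $l \in \bN$ such that $\{G\}$ generates $\tw\cB$ in time $l$. Write $q: \cB \to \cB/\cD$ for the quotient functor and $\tw(q): \tw\cB \to \tw(\cB/\cD)$ for the induced functor on twisted envelopes; I will show that $\tw(q)(G)$ is a strong generator of $\tw(\cB/\cD)$.

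First I would verify that $\tw(q)$ is essentially surjective. By \Cref{lemma:quotient-properties}(\ref{item:quotient-can-map}), the canonical map $\tw\cB/\cD \to \tw(\cB/\cD)$ is a quasi-equivalence. The localization functor $\tw\cB \to \tw\cB/\cD$ is bijective on objects and therefore essentially surjective. Composing yields the desired essential surjectivity of $\tw(q)$.

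Then I would apply \Cref{lemma:finite-time-ess-surjective} to the $A_\infty$ functor $\tw(q): \tw\cB \to \tw(\cB/\cD)$ with $\mc{E} = \{G\}$, a singleton subset of the source $\tw\cB$ and hence essentially finite. Since $\{G\}$ generates $\tw(\tw\cB) \simeq \tw\cB$ in time $l$ (using that $\tw\tw$ is canonically quasi-equivalent to $\tw$), the lemma produces that $\{\tw(q)(G)\}$ generates $\tw(\tw(\cB/\cD)) \simeq \tw(\cB/\cD)$ in time $l$. This exhibits $\tw(q)(G)$ as a strong generator, completing the argument.

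I do not anticipate any real obstacle here: the corollary is essentially a direct combination of \Cref{lemma:finite-time-ess-surjective} with the compatibility between twisted envelopes and quotients recorded in \Cref{lemma:quotient-properties}(\ref{item:quotient-can-map}). The one point to be mildly careful about is that a strong generator is a priori only an object of $\tw\cB$ rather than of $\cB$, but the lemma applies equally well to any $A_\infty$ functor of source and target, so taking the source to be $\tw\cB$ causes no difficulty.
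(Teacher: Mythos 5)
Your argument is correct and is essentially the same as the paper's: both invoke \Cref{lemma:quotient-properties}\eqref{item:quotient-can-map} to obtain essential surjectivity of $\tw\cB \to \tw(\cB/\cD)$ and then apply \Cref{lemma:finite-time-ess-surjective}. The only added value in your version is that you explicitly address the minor point (left implicit in the paper) that the strong generator lives in $\tw\cB$ rather than $\cB$, which is handled by $\tw\tw \simeq \tw$.
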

\begin{proof}
	By \Cref{lemma:quotient-properties}\eqref{item:quotient-can-map}, $\tw\cB\to \tw(\cB/\cD)$ is essentially surjective; hence, \Cref{lemma:finite-time-ess-surjective} applies. 
\end{proof}
\begin{exmp}\label{example:finite-time-weinstein}
	Let $(M, \l)$ be a Weinstein manifold. According to work of Giroux and Pardon \cite{giroux-pardon}, $(M, \l)$ admits a Lefschetz fibration with Weinstein fiber. Let $\fstop\subset \partial_\infty M$ denote the corresponding Weinstein stop, and let $\mc{W}(M, \fstop )$ be the associated Fukaya--Seidel category. We can choose a basis of thimbles $E_1,\dots, E_k$ that form a full exceptional collection for $\cW(M,\fstop)$ (cf.\ \cite[Ex.\ 1.4]{gps1}). Also the stop-removal functor $\cW(M,\fstop) \to \cW(M)$ is a localization (cf.\ \cite[Ex.\ 1.23]{gps2}). Hence it follows by combining \Cref{cor:quotientstronggenerator} and \Cref{lemma:exception-finite-time} that $\op{Tw}\mc{W}(M)$ is generated in finite time by some finite collection of objects. 
\end{exmp}

\subsection{Spherical functors}
In this section, we briefly recall the notion of spherical functors. Let $f: \cA\to\cB$ be a functor of pre-triangulated $A_\infty$ categories with left, resp. right adjoints $l$, resp. $r$. Given this data, one has natural transformations $fr\to id_\cB$, $id_\cB\to fl$, $id_\cA\to rf$, and $lf\to id_\cA$, and by taking their cones one obtains endo-functors of $\cA$ and $\cB$. The functor $f$ is called \emph{spherical} if 
\begin{itemize}
	\item $cone(fr\to id_\cB)$ and $cone(id_\cB\to fl)[-1]$ are quasi-equivalences and inverse to each other
	\item $cone(id_\cA\to rf)[-1]$ and $cone(lf\to id_\cA)$ are quasi-equivalences and inverse to each other
\end{itemize}
See \cite[Def.\ 1.1]{annologvinenkospherical} or \cite[Def.\ 2.1]{segalallspherical} for the precise definition. (Strictly speaking, one further requires $r\simeq l\circ cone(fr\to id_\cB)[-1]$ and $r\simeq cone(id_\cA\to rf)\circ l$, but this is unnecessary due to \cite[Thm.\ 1.1]{annologvinenkospherical}). The functor $S=cone(fr\to id_\cB)$ is called \emph{the twist functor}, whereas $(id_\cA\to rf)[-1]$ is called \emph{the cotwist}.
\begin{exmp}\cite{seidelthomas}
Let $\cB$ be as above and $E$ be a spherical object. In other words, $H^*(\cB(E,E))\cong H^*(S^d)$, and the Serre functor acts on $E$ by shift by $d$ (\cite{seidelthomas} gives a slightly more general definition). Then, $f:D^b(\bK)\to\cB$ such that $f(\bK)=E$ is a spherical functor. In this case, a right adjoint is given by $r(L)=\cB(E,L)$ and the twist functor is the Seidel-Thomas spherical twist. In particular, it fits into $\cB(E,L)\otimes E\to L\to S(L)\to \cB(E,L)[1]$.
\end{exmp}

\begin{exmp}[Exmpl.\ 3.5 in \cite{segalallspherical}]\label{exmp:sphericaldivisor}
Let $X$ be a variety and $j:D\hookrightarrow X$ be an effective Cartier divisor, i.e. $\cO(D)$ is a line bundle with a section $\sigma$ that cuts out $D$. Then, $j_*:D^bCoh(D)\to D^bCoh(X)$ is a spherical functor. In this example, the inverse of the twist is easier to describe: $j_*$ has a left adjoint $j^*$, which is equivalent to tensoring with $cone(\sigma)=cone(\cO(-D)\to\cO)\simeq \cO_D$. It is easy to see that the inverse twist is tensoring by $\cO(-D)$ and $S$ is tensoring by $\cO(D)$.\footnote{Since \cite{segalallspherical} does not provide a proof, here is a sketch: in view of \cite[Thm.\ 1.1]{annologvinenkospherical}, it suffices to verify (i) the existence of a left adjoint $j^*$ and right adjoint $j^!$, (ii) that the twist and cotwist are quasi-equivalences. For (i), a sufficient condition is for $j$ to be proper and perfect (see \cite[Sec.\ 2.1, 2.2]{anno2012adjunctions}), which is automatic here. For (ii), 
\cite[Thm 3.1, equations (26)-(27)]{bodzenta_bondal_2022} proves that the (inverse) twist, resp. cotwist are given by tensoring with the ideal sheaf of $D$, resp. its pullback. These are auto-equivalences as $D$ is Cartier.}
\end{exmp}

\begin{exmp}[\cites{sylvan-orlov, abouzaidganatra}]\label{example:symplectic-spherical}
Let $M$	be a Weinstein manifold, with a Lefschetz fibration structure, and let $\fstop \subset \partial_\infty X$ be the stop corresponding to the fibration. Then there is a functor $\cW(\hat{\fstop})\to \cW(M)$ (often called the \textemph{Orlov functor}) that was shown by Sylvan to be spherical. The corresponding twist is the ``\textemph{wrap once negatively functor}'' and the corresponding cotwist is the (inverse) monodromy of the fibration. (In fact, these statements hold under the more general assumption that $\fstop$ is a \textemph{swappable stop}). 

\end{exmp}
\begin{rk}
One does not have to assume $\cA$ and $\cB$ are pre-triangulated, as long as they have enough objects so that the corresponding twist and cotwist functors are defined. Nevertheless, we will have pre-triangulated assumption for simplicity.	
\end{rk}
$S$ fits into an exact triangle
\begin{equation}
    id_\cB\to S\to fr[1]\to id_\cB[1]
\end{equation}
In particular, there is a natural transformation $s:id_\cB\to S$.

Let $\cD$ denote $f(\cA)\subset \cB$.
Observe
\begin{lem}\label{lem:sdvanish}
If $D\in \cD=f(\cA)$, then $[s_D]\in Hom(D,S(D))$ is $0$, i.e. $s_D\in \cB (D,S(D))$ vanishes in cohomology.
\end{lem}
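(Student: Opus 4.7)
The plan is to exploit the triangle identities for the adjunction $f \dashv r$. By definition, $S = \mathrm{cone}(\epsilon \colon fr \to \mathrm{id}_\cB)$, where $\epsilon$ is the counit, and $s$ is the tautological map from $\mathrm{id}_\cB$ to this cone. In particular, for any $D \in \cB$ we have an exact triangle
\begin{equation}
fr(D) \xrightarrow{\epsilon_D} D \xrightarrow{s_D} S(D) \to fr(D)[1].
\end{equation}
The associated long exact sequence on cohomology then reduces the lemma to showing that $\epsilon_D$ is (split) surjective on cohomology whenever $D = f(A)$ for some $A \in \cA$.

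To produce this splitting, I would invoke the triangle identity for the adjunction $f \dashv r$. Denoting the unit by $\eta \colon \mathrm{id}_\cA \to rf$, the identity says that the composition
\begin{equation}
f(A) \xrightarrow{f(\eta_A)} frf(A) \xrightarrow{\epsilon_{f(A)}} f(A)
\end{equation}
is homotopic to $\mathrm{id}_{f(A)}$. Concretely, at the level of $A_\infty$ adjunctions in this paper one should invoke this identity on $H^0$, where adjunctions behave in the classical manner (alternatively one can appeal to the fact that $\epsilon_{f(A)}$ admits a one-sided inverse as a homotopy-coherent adjunction data). Either way, this shows that $\epsilon_{f(A)}$ has a cohomological section, and so induces a surjection on every $H^k$.

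Feeding this surjectivity into the long exact sequence for the exact triangle above yields that $H^*(s_D) = 0$ for $D = f(A)$, which is exactly the claim. The only subtle point is the homotopical bookkeeping involved in asserting the triangle identity at the $A_\infty$ level, but since we only need the consequence in cohomology and all the relevant constructions descend to the homotopy categories $H^0(\cA)$, $H^0(\cB)$ (which are ordinary triangulated categories with adjoint functors in the usual sense), this is the routine part of the argument rather than the main obstacle.
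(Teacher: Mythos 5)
Your argument is essentially identical to the paper's: both reduce the claim to producing a (cohomological) section of $\epsilon_D$ for $D=f(A)$ and obtain it from the triangle identity via $u=f(\eta_A)$. One small caveat: mere surjectivity of $\epsilon_D$ on some cohomology theory would not suffice to kill $s_D$ — what is needed, and what you actually supply, is a one-sided inverse to $\epsilon_D$ in $H^0(\cB)$, so the parenthetical ``(split)'' is doing all the work and the ``surjective on cohomology'' phrasing is best dropped in favor of ``split epi in the homotopy category.''
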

\begin{proof}
Let $D=f(L)$. As $s_{f(L)}$ is induced by $f(L)\xrightarrow{1_{f(L)}}f(L)\subset cone (frf(L)\to f(L))$,  in order to show that the induced map $f(L)\xrightarrow{s_{f(L)}} cone (frf(L)\to f(L))$ vanishes in cohomology, we only have to show it lifts to a map $f(L)\to frf(L)$, i.e. 
\begin{equation}
	\xymatrix{ & f(L)\ar[d]^{1_{f(L)}}\ar@{-->}[ld]_{\exists u} \\ frf(L)\ar[r]& f(L) }
\end{equation}
On the other hand, since $r$ is right adjoint to $f$, we have a natural transformation $s':id_\cA\to rf$. It is easy to check that $u=f(s'_L)$ is the desired lift. Indeed, by definition of adjoint functors (via the counit-unit adjunction), the composition $f\to frf\to f$ is the identity. 
\end{proof}
Observe $s:id_\cB\to S$ induces natural transformations $S^k\to S^{k+1}$, by composing with $S^k$ on the right, i.e. we consider the maps $s_{S^k(L)}: S^k(L)\to S^{k+1}(L)$. 
\begin{rk}
This is not a priori the same thing as what one obtains by composing on the left, i.e. $S^k(s_L)$. 
\end{rk}

Observe that $cone(S^k(L)\to S^{k+1}(L))$ has image in $\cD= f(\cA)$. More precisely, given $L$, $cone(s_{S^k(L)})$ is quasi-isomorphic to $fr(S^k(L))[1]$. 

First, we show 
\begin{lem}\label{lem:discimagevanish}
Given $D\in \cD$, $L\in\cB$, the map $\cB(D,S^k(L))\to \cB(D,S^{k+1}(L))$ vanishes in cohomology.
\end{lem}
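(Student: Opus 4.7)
The plan is to reduce the statement to two basic ingredients: the definition of the spherical twist $S$ as a cone, and the factorization of maps out of objects in the image of a left adjoint through the adjunction counit. Concretely, the bimodule map $\cS^k \to \cS^{k+1}$ evaluated on $(D,L)$ is post-composition with $s_{S^k(L)} \colon S^k(L) \to S^{k+1}(L)$, so it suffices to show that for every $\alpha \in \cB(D, S^k(L))$ the composition $s_{S^k(L)} \circ \alpha$ is null-homotopic.

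The first ingredient is immediate from the definitions. For any $M \in \cB$ the natural transformation $s_M$ is by construction the map in the exact triangle
\[
fr(M) \xrightarrow{\ \epsilon_M\ } M \xrightarrow{\ s_M\ } S(M) \to fr(M)[1],
\]
where $\epsilon$ denotes the counit of the adjunction $f \dashv r$. In particular, $s_M \circ \epsilon_M$ is canonically null-homotopic.

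The second ingredient uses the hypothesis $D \in \cD = f(\cA)$. Writing $D = f(A)$, the adjunction quasi-isomorphism $\cB(f(A), M) \simeq \cA(A, r(M))$ sends $\alpha$ to $\tilde\alpha := r(\alpha)\circ \eta_A$ (with $\eta$ the unit), and its inverse sends $\tilde\alpha$ back to $\epsilon_M \circ f(\tilde\alpha)$. Hence any $\alpha \colon D \to M$ is cohomologous to $\epsilon_M \circ f(\tilde\alpha)$, i.e.\ it factors up to homotopy through the counit $\epsilon_M$.

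Combining the two ingredients with $M = S^k(L)$, we find that $s_{S^k(L)} \circ \alpha$ is cohomologous to $s_{S^k(L)} \circ \epsilon_{S^k(L)} \circ f(\tilde\alpha)$, which is null-homotopic because its middle factor is. This yields the vanishing, and also recovers \Cref{lem:sdvanish} as the special case $k=0$, $L=D$, $\alpha = \op{id}_D$. I do not anticipate a serious obstacle here: the argument is essentially a repackaging of the definition of the spherical twist together with the universal property of the counit, and the only point requiring care is to perform the adjunction factorization homotopically in the $A_\infty$ setting, which is standard.
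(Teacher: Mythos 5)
Your proof is correct, but it takes a genuinely different route from the paper's. The paper proves the case $k=0$ separately as \Cref{lem:sdvanish} (showing $[s_D]=0$ by lifting the identity of $D=f(A)$ through $\epsilon_D$ via the unit), and then deduces the general case from it by a "rotation" trick: it sets up a square that commutes in cohomology, with vertical arrows $S^{-k}$ and $S^{k+1}$, and uses the naturality of $s$ to identify $\mu^2(s_{S^k(L)},\,\cdot\,)$ with $\mu^2(S(\,\cdot\,), s_D)$ after unwinding; since $s_D$ is null by \Cref{lem:sdvanish}, the composite vanishes. This requires using $S^{-k}$ as a cohomological autoequivalence. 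You instead generalize the argument of \Cref{lem:sdvanish} directly: any $\alpha\colon f(A)\to M$ factors in cohomology as $\epsilon_M\circ f(\tilde\alpha)$ by the adjunction triangle identity, and $s_M\circ\epsilon_M$ is null by the defining triangle $fr(M)\xrightarrow{\epsilon_M} M \xrightarrow{s_M} S(M)$. This avoids both the naturality-of-$s$ computation and any use of $S^{-k}$, and it recovers \Cref{lem:sdvanish} as a special case rather than using it as input; the trade-off is that you are working a bit more directly with the $A_\infty$-adjunction, whereas the paper isolates the adjunction use in the $k=0$ case and handles the rest by formal functoriality. Your caveat at the end is the right thing to flag, but since only cohomological vanishing is needed, one may pass to the cohomology categories, where the adjunction factorization $\alpha = \epsilon_M\circ f(\tilde\alpha)$ holds on the nose and the triangle gives $s_M\circ\epsilon_M = 0$; so there is no real obstacle.
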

\begin{proof}
Consider the diagram
\begin{equation}\label{eq:sqrdiagramcompchange}
	\xymatrixcolsep{7pc}
	\xymatrix{ \cB(D,S^k(L))\ar[r]^{\mu^2(s_{S^k(L)},\cdot)}\ar[d]_{S^{-k}}& \cB(D,S^{k+1}(L))\\ \cB(S^{-k}(D),L)\ar[r]^{\mu^2(\cdot,S^{-k-1}(s_D))}& \cB(S^{-k-1}(D),L)\ar[u]^{S^{k+1}} }
\end{equation}
that commutes in cohomology. Indeed, given a closed $a\in \cB(D,S^k(L))$, if we apply the triple composition along the bottom part, we find an element cohomologous to $\mu^2(S(a), s_D)\in \cB(D,S^{k+1}(L))$. This element is cohomologous to $\mu^2(s_{S^k(L)}, a)$, as follows from the fact that $s$ is a natural transformation; therefore, (\ref{eq:sqrdiagramcompchange}) is commutative in cohomology. 

By Lemma \ref{lem:sdvanish}, $S^{-k-1}(s_D)$ is null-homologous; hence, the bottom horizontal arrow in (\ref{eq:sqrdiagramcompchange}) induces trivial map in cohomology. Therefore, 
\begin{equation}
	\cB(D,S^k(L))\xrightarrow{\mu^2(s_{S^k(L)},\cdot)} \cB(D,S^{k+1}(L))
\end{equation}
also vanishes in cohomology. This is just rephrasing what the lemma asserts.
\end{proof}

\subsection{Background in symplectic topology}\label{sec:symplecticbackground}
Our main source for this \namecref{sec:symplecticbackground} is \cite{gps1, gps2}. When discussing purely geometric notions (such as Liouville manifolds, Liouville sectors, stops, etc.) we follow the conventions of \textit{loc}.\ \textit{cit}.\ unless otherwise indicated (in contrast, as indicated previously, our conventions for $A_\infty$ categories mostly follow \cite{seidel-book}). We limit ourselves to some brief reminders for the purpose of fixing definitions. 

\subsubsection{Basic notions}
A \emph{Liouville manifold} $(M, \l)$ is an exact symplectic manifold which is modeled near infinity on the symplectization of its ideal contact boundary $(\d_\infty M, \xi_\infty)$. We say that $(M, \l)$ is \emph{Weinstein} if, after possibly replacing $\l$ with $\l + df$ for $f: X \to \R$ compactly supported, the Liouville vector field is gradient-like with respect to a proper Morse function. (One also says that $M$ is Weinstein up to deformation, but we will suppress this distinction in the sequel.)

A closed subset $\fk{c} \sub \d_\infty M$ is called a \emph{stop}. A pair $(M, \fk{c})$, where $M$ is a Liouville manifold and $\fk{c} \sub \d_\infty M$ is a stop, is called a \emph{stopped Liouville manifold}. If $\fk{c} \sub \d_\infty M$ is a Liouville domain, this is often called a \emph{Liouville pair}. 

There is also a notion of a Liouville sector, which is an exact symplectic manifold with boundary modeled at infinity on the positive symplectization of a contact manifold with convex boundary. Similarly, one can consider stopped Liouville sectors, etc.\ 

A closed subset $\fk{c}$ of a symplectic manifold of dimension $2n$ is called \emph{mostly Lagrangian} if it admits a decomposition $\fk{c}= \fk{c}^{\op{crit}} \cup \fk{c}^{\op{subcrit}}$ where $\fk{c}^{\op{crit}}$ is Lagrangian and $\fk{c}^{\op{subcrit}}$ is closed and contained in the smooth image of a second countable manifold of dimension $\leq n-1$. There is an analogous notion of a \emph{mostly Legendrian} subset of a contact manifold.

We say that a mostly Lagrangian (resp.\ Legendrian) stop is \emph{tame} if there exists a decomposition $\fk{c}= \fk{c}^{\op{crit}} \cup \fk{c}^{\op{subcrit}}$ with the additional property that $\fk{c}^{\op{crit}}$ has finitely many connected components. Observe that the union of two \textit{disjoint} tame stops is a tame stop.

\begin{exmp}\label{example:tame-construction}
Let $V \sub \d_\infty M$ be a Weinstein hypersurface. After possibly perturbing $V$, we can assume that the cocores of the critical handles are properly embedded. Let $\fk{c}^{\op{subcrit}} \cup \fk{c}^{\op{crit}}$ be the union of the subcritical and critical handles respectively. Then $\fk{c}^{\op{subcrit}}$ is closed and $\fk{c}^{\op{crit}}$ has finitely many components. Hence $\fk{c} \sub \d_\infty M$ is a tame mostly Legendrian stop.
\end{exmp}


\subsubsection{Wrapped Fukaya categories}\label{subsubsection:fukaya-conventions}


The theory of wrapped Fukaya categories for (possibly stopped) Liouville manifolds/sectors was developed by Ganatra--Pardon--Shende \cite{gps1,gps2}, following earlier work of Sylvan \cite{sylvan-wrapped}. We collect some structural properties which will be needed in the sequel. The only deviation from \cite{gps1,gps2} is that we follow Seidel's composition conventions, namely $\mu^2$ is defined as a map
\begin{equation}
{\mc{W}(M, \fk{c})}(L_1,L_2)\otimes {\mc{W}(M, \fk{c})}(L_0,L_1)	\to {\mc{W}(M, \fk{c})}(L_0,L_2)
\end{equation} 
and so on. 
In order for the wrapped Fukaya category to be $\mathbb{Z}$-graded and defined over a field of characteristic zero (which we assume throughout this paper), we must fix \emph{grading and orientation data}. Such data also allows one to define symplectic cohomology in characteristic zero and to endow it with a $\mathbb{Z}$-grading. We refer to \cite[Sec.\ 5.3]{gps3} for a comprehensive summary. Be warned that grading and orientation data need not exist, and need not be unique when it exists. For example, the existence of grading data on $M$ requires that $2c_1(M)=0$. 

\begin{assump}\label{assumption:orientation-grading}
Whenever we consider the wrapped Fukaya category of a Liouville manifold/pair in this paper, we assume that the ambient manifold and the Lagrangians are equipped with grading and orientation data in the sense of \cite[Sec.\ 5.3]{gps2}. The same assumption also applies when considering symplectic cohomology of a Liouville manifold.
\end{assump}


If $\fk{c} \sub \fk{c}'$ is an inclusion of stops, there is an induced functor $\mc{W}(M, \fk{c}') \to \mc{W}(M, \fk{c})$ called \emph{stop removal}. More generally, given a finite diagram of stops with arrows corresponding to inclusions of stops, there is an induced diagram of partially wrapped Fukaya categories. 

\begin{fact}[Thm.\ 1.20 in \cite{gps2}]\label{fact:stop-removal}
If $\fk{c}' \setminus \fk{c} \sub \d_\infty M$ is mostly Legendrian, then the stop removal functor induces a quasi-equivalence 
\begin{equation}
    \mc{W}(M, \fk{c}')/ \mc{D} \to \mc{W}(M, \fk{c}),
\end{equation}
where $\mc{D}$ is the full subcategory of linking disks of $(\fk{c}' \setminus \fk{c})^{\op{crit}}$. 
\end{fact}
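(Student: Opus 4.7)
The plan is to invoke the universal property of $A_\infty$ quotients to construct the functor, and then to verify that it is a quasi-equivalence. For the construction, I would first show that every linking disk $D \in \mc{D}$ is sent to a zero object by the stop removal functor $\mc{W}(M, \fk{c}') \to \mc{W}(M, \fk{c})$. A linking disk to a component $\Lambda$ of $(\fk{c}' \setminus \fk{c})^{\op{crit}}$ is a Lagrangian disk whose ideal boundary is a small meridianal Legendrian linking $\Lambda$; once $\Lambda$ no longer belongs to the stop, the positive Reeb flow sweeps this meridian off to infinity without trapped chords, so $HW_{(M, \fk{c})}(D, L) = 0$ for every object $L$. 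Applying \Cref{lemma:quotient-properties}\eqref{item:univ-prop-quotient} then yields the induced functor $\mc{W}(M, \fk{c}')/\mc{D} \to \mc{W}(M, \fk{c})$.

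Essential surjectivity is comparatively direct: any generator of $\mc{W}(M, \fk{c})$ is a cylindrical exact Lagrangian $L$ with $\d_\infty L$ disjoint from $\fk{c}$, and since $\fk{c}' \setminus \fk{c}$ is mostly Legendrian and hence of positive codimension in $\d_\infty M$, a small Hamiltonian perturbation of $L$ at infinity makes $\d_\infty L$ disjoint from $\fk{c}'$ as well, producing a lift of $L$ to $\mc{W}(M, \fk{c}')$ whose image under the quotient functor is $L$.

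The main obstacle, and the technical heart of the argument, is cohomological full faithfulness. My strategy is to realize both sides as direct limits of ordinary Floer cohomologies under positive wrapping:
\begin{equation}
HW_{(M, \fk{c})}(K, L) \cong \colim_n HF(K_n^{\fk{c}}, L), \qquad HW_{(M, \fk{c}')}(K, L) \cong \colim_n HF(K_n^{\fk{c}'}, L),
\end{equation}
where $K_n^{\bullet}$ denotes a cofinal sequence of positive isotopies of $K$ relative to the given stop. The wrappings can be chosen so that each passage $K_n^{\fk{c}'} \rightsquigarrow K_{n+1}^{\fk{c}'}$ either proceeds by an isotopy disjoint from $\fk{c}'$ (contributing identically on both sides) or pushes $K$ once through a single component of $(\fk{c}' \setminus \fk{c})^{\op{crit}}$. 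The key input is then the \emph{wrapping exact triangle} of \cite[Thm.\ 1.9]{gps2}: each elementary push across a critical component $\Lambda$ sits in an exact triangle in $\mc{W}(M, \fk{c}')$ whose third term is a twisted complex of linking disks of $\Lambda$. Consequently, after quotienting by $\mc{D}$ all such continuation morphisms become isomorphisms, and the direct system for $\mc{W}(M, \fk{c}')/\mc{D}$ collapses onto the one computing $HW_{(M, \fk{c})}(K, L)$.

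The most delicate step, and where I expect the real work to lie, is organizing these elementary pushes into a coherent diagram compatible with the full $A_\infty$ structure, so that the comparison of cofinal systems upgrades to a chain-level quasi-isomorphism of morphism complexes rather than merely an isomorphism of graded vector spaces. This requires the chain-level model of $\mc{W}$ as a homotopy colimit over positive wrapping categories from \cite{gps1}, together with a chain-level refinement of the wrapping exact triangle that identifies the continuation cone with an explicit $\mc{D}$-twisted complex; these ingredients package the geometric inputs above into a statement about localizing a small zigzag of $A_\infty$ functors.
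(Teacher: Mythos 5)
Note first that the paper does not prove this statement: it is quoted verbatim as a ``Fact'' from Ganatra--Pardon--Shende (Thm.~1.16 in \cite{gps2}) and is used as a black box, so there is no internal argument to compare against. Evaluating your sketch on its own terms, the overall plan is sound and does track the GPS proof: show linking disks go to zero so that the universal property (\Cref{lemma:quotient-properties}\eqref{item:univ-prop-quotient}) yields the functor; handle essential surjectivity by a general position perturbation of $\d_\infty L$ (which does work, since $\dim \d_\infty L + \dim(\fk{c}'\setminus\fk{c}) \leq (n-1)+(n-1) < 2n-1 = \dim \d_\infty M$, and a small isotopy disjoint from the closed set $\fk{c}$ preserves the isomorphism class in $\cW(M,\fk{c})$); and prove full faithfulness by comparing positive-wrapping colimits using the wrapping exact triangle of \cite[Thm.~1.9]{gps2}.

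The genuine gap is in the vanishing step. The claim that ``the positive Reeb flow sweeps this meridian off to infinity without trapped chords'' is not a correct description of what happens: $\d_\infty M$ is compact, so nothing goes to infinity, and a naive positive wrapping of the linking disk will, once past $\Lambda$, generically keep running into the residual stop $\fk{c}$. The actual mechanism in GPS is that the linking disk of $\Lambda$ admits a \emph{cofinal} family of positive wrappings crossing $\Lambda$ and eventually becoming (and remaining) disjoint from any fixed $L$; establishing cofinality is a nontrivial geometric lemma that uses the mostly-Legendrian hypothesis to guarantee there is room to wrap while avoiding $\fk{c}$, and is not a one-line Reeb-flow assertion. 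Your final paragraph is right to identify the chain-level assembly of elementary wrappings into an $A_\infty$-coherent comparison as the technical core (this is where GPS rely on their homotopy-colimit-over-wrapping-categories presentation of $\cW$), but as written this step, like the vanishing step, is an outline of where work would happen rather than an argument.
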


The wrapped Fukaya category is invariant under certain deformations of stops. For our purposes, we will only need the following easy case which follows e.g.\ from \cite[Prop.\ 2.10]{sylvan-orlov}.
\begin{lem}\label{lemma:isotopy-fuk-map} 
Let $(M, \l)$ be a Liouville manifold. Suppose that $(\fk{c}_t)_{t \in [0,1]}$ is an isotopy of stops induced by a global contact isotopy. Then there is an induced homotopy-commutative diagram 
\begin{equation}
\begin{tikzcd}
\mc{W}(M, \fk{c}_1)  \ar{rr}{\simeq} \ar{dr} & & \mc{W}(M, \fk{c}_2) \ar{dl} \\
& \mc{W}(M) &
\end{tikzcd}
\end{equation}
\qed
\end{lem}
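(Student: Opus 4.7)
The plan is to promote the contact isotopy of stops into a Liouville isotopy of $(M,\l)$ itself, and then invoke the invariance of the wrapped Fukaya category under such isotopies. More precisely, let $\psi_t : \d_\infty M \to \d_\infty M$ be the contactomorphism isotopy with $\psi_0 = \op{id}$ and $\psi_1(\fk{c}_1) = \fk{c}_2$. Using the standard identification of a neighborhood of infinity in $M$ with the positive symplectization $\d_\infty M \times [1,\infty)$ via the Liouville flow, we first extend $\psi_t$ to an isotopy of symplectomorphisms on a cylindrical neighborhood of infinity by setting $\widetilde{\psi}_t(y,r) = (\psi_t(y), r)$. We then cut this off in the interior using a bump function in the Liouville coordinate to obtain a compactly supported Hamiltonian isotopy $\Psi_t : M \to M$ whose behavior at infinity agrees (after rescaling) with $\widetilde{\psi}_t$. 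By construction $\Psi_1$ is a Liouville automorphism (up to completion of a compactly supported exact modification of $\l$) with $\Psi_1(\fk{c}_1) = \fk{c}_2$.

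Next, I would appeal to the invariance of the wrapped Fukaya category under such ambient Liouville isomorphisms of stopped manifolds; this is exactly the content of \cite[Prop.\ 2.10]{sylvan-orlov} (alternatively, it follows from the functoriality properties proved in \cite{gps1,gps2}, since $\Psi_1$ defines a morphism in the relevant category of stopped Liouville manifolds). Pushing forward Lagrangians by $\Psi_1$ thus yields a quasi-equivalence
\begin{equation}
(\Psi_1)_* : \mc{W}(M, \fk{c}_1) \xrightarrow{\simeq} \mc{W}(M, \fk{c}_2).
\end{equation}

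To see the diagram homotopy commutes, observe that after forgetting stops both stop-removal functors land in $\mc{W}(M)$, and the composite $\mc{W}(M,\fk{c}_1) \to \mc{W}(M,\fk{c}_2) \to \mc{W}(M)$ is induced by the same ambient automorphism $\Psi_1$ viewed now as acting on the unstopped wrapped category. Since $\Psi_1$ is isotopic to the identity through compactly supported Hamiltonian diffeomorphisms (namely $\Psi_t$), and $\mc{W}(M)$ is invariant under such Hamiltonian isotopies (this again is a standard consequence of the construction, and is stated in \cite[Prop.\ 2.10]{sylvan-orlov}), the induced endofunctor on $\mc{W}(M)$ is homotopic to the identity. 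This yields the required homotopy making the triangle commute.

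The main obstacle, in my view, is not the existence of the equivalence itself but the homotopy-coherence of the triangle: one needs to exhibit the homotopy between the two legs as induced by an actual Lagrangian (Hamiltonian) isotopy, which in turn requires a careful verification that the extension $\Psi_t$ can be chosen so that both the stop-removal functors and the equivalence $(\Psi_1)_*$ are simultaneously defined by compatible choices of cofinal wrapping sequences. Invoking \cite[Prop.\ 2.10]{sylvan-orlov} in black-box form absorbs most of this bookkeeping; without it, one would have to reprove the invariance statement by directly comparing the Floer-theoretic data for $\fk{c}_1$ and $\fk{c}_2$ along the isotopy.
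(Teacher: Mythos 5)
Your proposal matches the paper's (unwritten, but sketched) argument: lift the contact isotopy to an ambient Hamiltonian isotopy of $M$, invoke \cite[Prop.\ 2.10]{sylvan-orlov} to get the top equivalence, and observe that the induced endofunctor of $\mc{W}(M)$ is naturally isomorphic to the identity, which is exactly what makes the triangle homotopy-commute. One small wrinkle: you twice call $\Psi_t$ a \emph{compactly supported} Hamiltonian isotopy, but by construction it agrees at infinity with the cylindrical lift $\widetilde\psi_t$, so it is \emph{cylindrical at infinity}, not compactly supported (a compactly supported isotopy could not move the stop at all); the invariance you cite indeed applies to the cylindrical-at-infinity case, so the argument goes through once the terminology is fixed.
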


It is a fundamental fact, due independently to Chantraine--Dimitroglou Rizell--Ghiggini--Golovko \cite{cdgg} and Ganatra--Pardon--Shende \cite[Thm.\ 1.13]{gps2} that wrapped Fukaya categories of Weinstein manifolds are generated by cocores. This can be seen rather easily to imply, due to deep work of Ganatra \cite{sheel-thesis}, that the wrapped Fukaya category of a Weinstein manifold is (homologically) smooth. 

We will need the following more general version these facts, holding for pairs $(M, \fk{c})$ where $M$ is Weinstein and $\fk{c}$ is mostly-Legendrian (up to deformation). 

\begin{fact}[cf.\ Cor.\ 1.19 in \cite{gps2}]\label{fact:smoothness}
Suppose that $(M, \l)$ is Weinstein and $\fk{c} \sub \d_\infty M$ is mostly Legendrian up to deformation, then $\mc{W}(M, \fk{c})$ is smooth.
\end{fact}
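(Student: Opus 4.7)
The plan is to adapt Ganatra's proof from \cite{sheel-thesis}, which handles the case $\fk{c}=\emptyset$. Recall that an $A_\infty$ category $\cB$ is smooth precisely when the diagonal bimodule is perfect over $\cB^{op}\otimes\cB$, and that a general criterion (essentially the ``Cardy condition'' used by Ganatra) states: if $\mc{E}\sub\cB$ is a finite collection which split-generates $\cB$, and if the open--closed map
\[
\op{OC}:HH_*(\mc{E})\longrightarrow SH^*(M,\fk{c})
\]
hits the unit of symplectic cohomology, then $\cB$ is smooth and $\mc{E}$ split-generates. The strategy is to verify both hypotheses in our setting.

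First I would invoke \cite[Thm.\ 1.10]{gps2} to fix a finite collection $\mc{E}\sub\mc{W}(M,\fk{c})$ consisting of the cocores of the critical handles of $M$ together with linking disks of the (finitely many) components of $\fk{c}^{\op{crit}}$; this is allowed because $\fk{c}$ is assumed mostly Legendrian and tame. By \cite{gps2}, $\mc{E}$ split-generates $\mc{W}(M,\fk{c})$. Then, following Ganatra, it suffices to produce a cycle in the Hochschild chain complex of $\mc{E}$ whose image under $\op{OC}$ is the unit class in $SH^*(M,\fk{c})$. Geometrically, this means constructing, for each $L_i\in\mc{E}$, a ``copairing'' cycle supported on $L_i\times L_i^\vee$ coming from moduli of holomorphic discs with one interior output puncture, and showing that the sum of these classes represents the unit. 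For cocores this is precisely Ganatra's construction; for linking disks one exploits that each linking disk is, locally near its intersection with the stop, modelled on a cocore in a standard Weinstein neighborhood of a point of $\fk{c}^{\op{crit}}$, and so the same local computation applies.

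The main obstacle will be verifying that the moduli spaces and continuation/PSS arguments of \cite{sheel-thesis} remain well-behaved in the stopped setting. The key technical inputs that must be transported are: (i) transversality and compactness for the open--closed moduli problems with boundary on cocores and linking disks, when the ambient Hamiltonian is cylindrical and the stop is present; (ii) a Morse--Bott model of $SH^*(M,\fk{c})$ near the skeleton $\op{Skel}(M)\cup\fk{c}^{\op{crit}}$, which is what allows the ``local-to-global'' identification of $\op{OC}$ of the copairing with the unit. Both of these go through using standard GPS machinery (Hamiltonians and almost complex structures which are cylindrical and respect the stop), together with the tameness assumption on $\fk{c}$, which ensures that the critical locus contributes only finitely many Morse--Bott families.

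Once these foundations are in place, the remaining argument is essentially combinatorial and follows \cite{sheel-thesis} verbatim: one unwinds the Cardy relation between $\op{OC}$, $\op{CO}$, and the diagonal bimodule to conclude that the diagonal is split-generated by $\{L_i^\vee\boxtimes L_j\}_{L_i,L_j\in\mc{E}}$, hence perfect, hence $\mc{W}(M,\fk{c})$ is smooth. An alternative, less direct approach would be to embed the stopped category into an unstopped one via attaching handles along $\fk{c}^{\op{crit}}$ and quoting smoothness there, but the direct approach above seems cleaner and makes transparent which geometric features of the stop are being used.
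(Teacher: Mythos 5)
Your proposal is the approach the paper intends: the paper gives no proof of its own, pointing only to generation by cocores and linking disks \cite[Thm.\ 1.10]{gps2} and to ``slightly adapting the arguments of \cite{sheel-thesis}'', and the Cardy/open--closed nondegeneracy mechanism you outline is exactly what Ganatra's argument rests on. Two minor points: in the Abouzaid--Ganatra criterion, split-generation of $\mc{E}$ is a \emph{conclusion} of nondegeneracy rather than a hypothesis (your phrasing lists it on both sides); and, as you implicitly recognize by invoking tameness, one needs $\fk{c}^{\op{crit}}$ to have finitely many components so that $\mc{E}$ is finite --- the Fact as printed omits ``tame'', but every use of it in the paper is for tame stops.
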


(To deduce \Cref{fact:smoothness} from \cite[Cor.\ 1.19]{gps2}, note that the assumption that $\fk{c}$ is mostly-Legendrian ensures that in can be positively displaced from itself, which allows us to apply \cite[Cor.\ 1.19]{gps2} in view of the paragraph directly following the statement of this corollary in loc.\ cit.)


\section{Filtered homological algebra}\label{sec:filteredalgebra}
\subsection{Categories of filtered complexes}

\subsubsection{Basic definitions and conventions}
Our main references for filtered homological algebra are \cite{ciricispectral} and \cite[Sec.\ 5]{weibel}.

A \emph{filtration $F^p(-)$} on a chain complex $C$ is the data of an increasing sequence of chain subcomplexes
\begin{equation}
    \dots \sub F^{p-1} C \sub F^p C \sub F^{p+1} \sub \dots.
\end{equation}
Such a filtration is said to be \emph{integral} if $p \in \Z$.

A filtration is said to be \emph{bounded below} (resp.\ \emph{bounded above}) if $F^pC=0$ for $p \ll 0$ (resp.\ $F^pC= C$ for $p \gg 0$). A filtration is said to be \emph{non-negative} if $F^pC=0$ for $p<0$. A filtration is \emph{exhausting} if $\cup_p F^pC=C$. 

Given a filtration $F^p(-)$ on a chain complex $C$, it naturally induces a filtration on its cohomology, which will also be denoted by $F^p(-)$: 
\begin{equation}\label{equation:filtration-cohomology}
 F^pH(C):= \op{im}(H(F^pC) \to H(C)). 
\end{equation} 

A \emph{filtered chain complex} $C= (C, F^p(-))$ is the data of a chain complex along with a filtration. A \emph{morphism} of filtered chain complexes is a chain morphism $f: C_1 \to C_2$ of which respects the filtration, meaning that $f(F^pC_1) \sub F^pC_2$. 

\textemph{Unless otherwise indicated, all filtrations considered in this paper are increasing, integral and non-negative}. 

There is also a notion of a bifiltration which is defined similarly. The following definition will be used later:
\begin{defn}
Consider a complex $C$ with a bifiltration $F^{k,l}C, k,l\in \bZ_{\geq 0}$, that is increasing in each degree and such that $\bigcup_{k,l}F^{k,l}C=C$. We assume the following compatibility condition: $F^{i,j}C\cap F^{i',j'}C=F^{\min\{i,i'\},\min\{j,j'\}}C$. This allows one to find a non-canonical decomposition $C=\bigoplus_{i,j\geq 0}G_{i,j}$ such that $F^{k,l}C=\bigoplus_{0\leq i \leq k\atop 0\leq j\leq l}G_{i,j}$. One can turn $C$ into a filtered complex by letting $F^pC=\sum_{k+l\leq p} F^{k,l}C$. We call this filtration the \emph{total filtration}.	
\end{defn}
\begin{rk}
Conversely, if one is given two filtrations $F_1$ and $F_2$ on $C$, one can construct a bifiltration $F^{k,l}C=F_1^kC\cap F_2^lC$. 	
\end{rk}
We need to recall the notion of boundary depth for acyclic chain complexes. 
\begin{defn}
Let $C$ be an acyclic chain complex with a filtration. \emph{The boundary depth} of $C$ is the smallest $d$ such that for any $p$ and any $x\in F^pC$ satisfying $dx=0$, there exists $y\in F^{p+d}C$ such that $dy=x$. If there is no such integer, we define it to be infinite.
\end{defn}

Finally, we wish to consider the following notions of equivalence of filtered complexes. 

\begin{defn}\label{definition:s-t-equiv-complex}
	Let $f:C_1\to C_2$ be a morphism of filtered chain complexes. We call it a \emph{scaling equivalence} if it is a quasi-isomorphism and there are constants $a\geq 1, b\geq 0$ such that the pre-image of $F^pH(C_2)$ under the induced isomorphism $H(C_1)\to H(C_2)$ is contained in $F^{ap+b}H(C_1)$. We call it a \emph{translation equivalence} if the factor $a$ can be taken $1$. 
	
	We call two complexes \emph{scaling equivalent} (resp.\ \emph{translation equivalent}) if they can be related by a zigzag of scaling equivalences (resp.\ translation equivalences). Note that we do not demand the existence of a filtered quasi-inverse.
\end{defn}

We record the following (rather obvious) criterion for a filtered morphism to be a scaling equivalence.
\begin{lem}\label{lemma:scaling-criterion}
Let $f:C_1 \to C_2$ be a (filtered) quasi-isomorphism of filtered chain complexes. Suppose that there exists a chain map $g: C_2 \to C_1$ satisfying the following properties:
\begin{itemize}
\item $g \circ f =\op{id}_{C_1}$;
\item $g$ respects the filtration up to scaling (i.e.\ $g(F^pC_2) \sub F^{ap+b}C_1$ for some $a \geq 1, b\geq 0$). 
\end{itemize}
Then $f$ is a scaling equivalence. 
\qed
\end{lem}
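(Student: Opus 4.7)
The plan is to verify the definition of scaling equivalence by directly producing representatives in the appropriate filtration levels, using $g$ as a right inverse at the chain level. Note that since $f$ is already assumed to be a filtered quasi-isomorphism, the inclusion $f_*(F^pH(C_1)) \sub F^pH(C_2)$ is automatic, so only the reverse containment (up to scaling) needs attention.

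First I would observe that the hypothesis $g \circ f = \op{id}_{C_1}$ implies $g_* \circ f_* = \op{id}_{H(C_1)}$, so $g_*$ is literally the inverse of the isomorphism $f_*: H(C_1) \to H(C_2)$. Now fix a class $\alpha \in H(C_1)$ such that $f_*(\alpha) \in F^p H(C_2)$. By the definition \eqref{equation:filtration-cohomology} of the induced filtration on cohomology, $f_*(\alpha)$ is represented by some cycle $x \in F^p C_2$.

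Next, apply $g$ to $x$. Since $g$ is a chain map, $g(x)$ is a cycle in $C_1$; since $g$ respects the filtration up to scaling, $g(x) \in F^{ap+b}C_1$. The cohomology class of $g(x)$ equals $g_*([x]) = g_*(f_*(\alpha)) = \alpha$, so $\alpha$ admits a representative in $F^{ap+b}C_1$, and therefore $\alpha \in F^{ap+b}H(C_1)$. Combined with the automatic containment mentioned above, this shows that $f$ is a scaling equivalence with the same constants $a,b$.

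I do not expect any step here to be the main obstacle — the entire argument is essentially just chasing the definitions, with the key observation being that a chain-level retraction $g$ of $f$ provides a bona fide choice of representative in $C_1$ for the inverse image of any cohomology class in $C_2$, and this choice automatically sits in the correct scaled filtration level. The lemma is stated in this form precisely because in later applications (e.g.\ to verify that localization maps or induced maps on quotient categories are scaling equivalences) it is easier to produce an honest one-sided splitting than to directly bound the representatives of preimage classes.
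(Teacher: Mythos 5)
Your argument is correct and is exactly the direct definition-chase the authors had in mind when they left this with a bare \verb|\qed|: use $g$ to choose a cycle representative of $g_*f_*(\alpha)=\alpha$ sitting in $F^{ap+b}C_1$, which is precisely what the definition of the induced filtration on cohomology requires. No gaps, and nothing meaningfully different from the (omitted) paper proof.
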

In general, it may be hard to characterize equivalent complexes. For this, we require invariants preserved under equivalence.
\begin{defn}\label{definition:s-t-equiv-function}
Consider two weakly increasing functions $\gr_1,\gr_2:\bN\to\bR\cup\{\infty\}$. We call them \emph{translation equivalent}, if there exists $b\geq 0$ such that $\gr_1(p)\leq \gr_2(p+b)+b$ and $\gr_2(p)\leq \gr_1(p+b)+b$ for all $p\in \bN$. We call two functions \emph{scaling equivalent} (or simply \emph{equivalent}), if there exists $a\geq 1$, $b\geq 0$ such that $\gr_1(p)\leq a\gr_2(ap+b)+b$ and $\gr_2(p)\leq a\gr_1(ap+b)+b$ for all $p\in \bN$. 
\end{defn}
\begin{exmp}
The functions $\gr_1(p)=p^{r_1}$ and $\gr_2(p)=p^{r_2}$, $r_1,r_2\geq 0$ are translation or scaling equivalent if and only if $r_1=r_2$. The functions $\gr_1(p)=e^{\sqrt{p}}$ and $\gr_2(p)=e^{p}$ are neither translation, nor scaling equivalent. The functions $\gr_1(p)=e^{r_1p}$ and $\gr_2(p)=e^{r_2p}$, $r_1,r_2\geq 0$ are always scaling equivalent, but they are translation equivalent if and only if $r_1=r_2$. 
\end{exmp}
\begin{defn}\label{definition:growth-function-chain}
	Given a filtered complex $C$, we define the \emph{growth function $\gr_C$} of $C$ by 
	\begin{equation}
	    \gr_C(p)=dim(F^pH(C))=dim(im(H(F^pC)\to H(C))).
	\end{equation}
\end{defn}
\begin{lem}\label{lem:equivalentcxequivalentfunction}
If $C_1$, $C_2$ are related by a zigzag of translation equivalences, then $\gr_{C_1}$ and $\gr_{C_2}$ are translation equivalent. If they are related by a zigzag of scaling equivalences, then their growth functions are equivalent.\qed
\end{lem}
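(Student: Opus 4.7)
The plan is to reduce the statement to the single-arrow case by invoking transitivity of the equivalence relations on functions. Specifically, I will show that any single scaling (respectively translation) equivalence $f\colon C_1 \to C_2$ induces a scaling (respectively translation) equivalence of growth functions with the \emph{same} constants, and then check that scaling and translation equivalence of monotone functions in the sense of Definition \ref{definition:s-t-equiv-function} are transitive relations; the zigzag then collapses.

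For the single-arrow case, let $\phi = H(f)\colon H(C_1) \xrightarrow{\cong} H(C_2)$ be the induced isomorphism. Since $f$ respects the filtration, $\phi(F^p H(C_1)) \sub F^p H(C_2)$, yielding $\gr_{C_1}(p) \leq \gr_{C_2}(p)$. The scaling hypothesis asserts $\phi^{-1}(F^p H(C_2)) \sub F^{ap+b} H(C_1)$; as $\phi$ is an isomorphism of vector spaces, $\gr_{C_2}(p) = \dim \phi^{-1}(F^p H(C_2)) \leq \dim F^{ap+b} H(C_1) = \gr_{C_1}(ap+b)$. Combining with $a \geq 1,\ b \geq 0$ and monotonicity of $\gr_{C_i}$, these two inequalities immediately give the two-sided scaling bounds of Definition \ref{definition:s-t-equiv-function} with constants $(a,b)$. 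The translation case is identical with $a=1$.

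For transitivity, if $\gr_1(p) \leq a\gr_2(ap+b)+b$ and $\gr_2(p) \leq a'\gr_3(a'p+b')+b'$, then substituting gives $\gr_1(p) \leq a\bigl[a'\gr_3(a'(ap+b)+b')+b'\bigr]+b = aa'\gr_3(aa'p + a'b + b') + (ab'+b)$, and symmetrically for the reverse direction. Hence scaling equivalence of functions is transitive (with new constants obtained by taking maxima where needed); the translation version is the same with $a=a'=1$ and composed shift $b+b'$. Given a zigzag $C_1=D_0, D_1,\dots, D_n=C_2$ of scaling (respectively translation) equivalences, each arrow produces a scaling (respectively translation) equivalence of the associated growth functions by the previous paragraph, and repeated application of transitivity then yields the desired equivalence between $\gr_{C_1}$ and $\gr_{C_2}$.

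There is no serious obstacle; the argument is essentially bookkeeping with constants. The only small point worth noting is that the orientation of the zigzag is immaterial: scaling (or translation) equivalence of functions is a symmetric relation by definition, so an arrow $D_i \leftarrow D_{i+1}$ contributes in exactly the same way as $D_i \to D_{i+1}$, and one need not invert anything at the level of chain maps.
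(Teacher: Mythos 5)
Your proof is correct, and since the paper marks this lemma with $\qed$ and supplies no argument, it is treating the claim as routine; what you have written out is precisely the expected verification. The key observations — that a single scaling (resp.\ translation) equivalence $f\colon C_1 \to C_2$ gives $\gr_{C_1}(p) \leq \gr_{C_2}(p)$ and $\gr_{C_2}(p) \leq \gr_{C_1}(ap+b)$, hence two-sided bounds using monotonicity of $\gr_{C_i}$; that scaling/translation equivalence of functions is transitive (with composed constants, taking maxima); and that symmetry of the function-level relation makes the orientation of arrows in the zigzag irrelevant — are all stated and justified correctly.
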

\subsubsection{Spectral sequences}

In this section, we follow the notation of \cite{ciricispectral}, except we use upper indexing for filtrations. Given a filtered chain complex $C$, there is a canonically associated spectral sequence $\{E_r(C), \delta_r\}_{r \geq 0}$. The $r$-th page is a bigraded complex defined as follows:
\begin{equation}
E^{p,q}_r= Z^{p,q}_r/ B^{p,q}_r.
\end{equation}
where the \emph{r-cycles} are
$Z_r^{p, n+p}:= F^pC^n \cap d^{-1}(F^{p-r} C^{n+1})$ 
and the \emph{r-boundaries} are
$B_0^{p, n+p}(C):= Z_0^{p-1, n+p-1}(C)$ for $r=0$ and $
B_r^{p, n+p}(C):= Z^{p-1, n+p-1}_{r-1}(C)+ dZ_{r-1}^{p+r-1, n+p+r-2}(C)$ for $ r \geq 1$.

We now record some standard properties of spectral sequences drawn from \cite[Sec.\ 5]{weibel}, to which we also refer for the relevant definitions. (Although \cite[Sec.\ 5]{weibel} is phrased in terms of homologically graded chain complexes, one can pass to our cohomological conventions via the usual grading swap $C^* \leadsto C_{-*}$.)

\begin{prop}\label{proposition:convergence}
Let $C_1, C_2$ be filtered chain complexes with filtrations bounded below and exhausting. 
\begin{enumerate}
\item The spectral sequence $\{E_r(C_i)\}$ is bounded below and converges to $H^*(C_i)$; 
\item If $f: C_1 \to C_2$ is a morphism of filtered chain complexes such that the induced map $f_r: E_r^{p,q}(C_1) \to E_r^{p,q}(C_2)$ is an isomorphism for some $r$ and all $p,q$, then $f$ is a quasi-isomorphism.
\end{enumerate}
\end{prop}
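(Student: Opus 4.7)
The plan is to deduce both statements from classical results about spectral sequences of filtered complexes, essentially reducing everything to \cite[Sec.\ 5]{weibel} via our sign/grading conventions (converting homological to cohomological indexing by $C^* \leadsto C_{-*}$).

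For part (1), the hypothesis that the filtration is bounded below means that for every $n$ there exists $p(n)$ with $F^{p(n)}C_i^n = 0$. This implies that $Z_r^{p,q}(C_i) = F^pC_i^{p+q}$ and $B_r^{p,q}(C_i) \subseteq Z_{r-1}^{p-1,p+q-1}(C_i)$ stabilize once $r$ is large enough (depending on $p,q$), so the spectral sequence is bounded below (regular) in the sense of Weibel. Combined with the exhausting hypothesis, the classical convergence theorem (\cite[Thm.\ 5.5.1]{weibel}, translated to cohomological indexing) then yields that $E_\infty^{p,q}(C_i)$ is the associated graded of a bounded-below exhaustive filtration on $H^{p+q}(C_i)$; this is precisely the filtration induced on cohomology as in \eqref{equation:filtration-cohomology}.

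For part (2), the morphism $f$ is filtered, hence induces a morphism of spectral sequences $f_r: E_r(C_1) \to E_r(C_2)$ commuting with the differentials $\delta_r$. If $f_{r_0}$ is an isomorphism for some $r_0$, then a standard inductive argument shows that $f_r$ is an isomorphism for all $r \geq r_0$, and hence so is $f_\infty$. By part (1), both spectral sequences converge to a bounded-below exhaustive filtration on the respective cohomology, so by the classical comparison theorem for convergent spectral sequences (\cite[Thm.\ 5.2.12, Thm.\ 5.5.11]{weibel}) $f$ induces an isomorphism on each associated graded, and finally on each $H^n(C_i)$ itself. The boundedness-below plus exhaustion are exactly what is needed to rule out the usual pathologies (e.g.\ failure of $E_\infty$ to compute $H^*$ when the filtration is not regular).

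The main thing worth being careful about is the index translation: our filtrations are increasing and non-negative with upper indexing, while Weibel works with decreasing filtrations with upper indexing (or equivalently increasing with lower indexing). This is handled by the substitution $p \mapsto -p$ on the filtration together with the grading swap $C^* \leadsto C_{-*}$, which identifies our $Z_r^{p,q}, B_r^{p,q}$ with the standard formulas in \cite[Sec.\ 5.4]{weibel} and preserves the notions of boundedness below and exhaustion. Once this bookkeeping is in place, there is no new content beyond the references cited.
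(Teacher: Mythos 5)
Your proof is correct and takes essentially the same approach as the paper: both parts are deduced from Weibel's convergence theorem (Thm.\ 5.5.1) and comparison theorem (Thm.\ 5.2.12) after translating the grading/filtration conventions. The paper's own proof is even terser, consisting of just these two citations. (One small inaccuracy in your writeup: with the paper's conventions the $r$-cycles $Z_r^{p,\,n+p}$ stabilize for large $r$ to $F^pC^n \cap \ker d$, not to $F^pC^n$ itself; but this does not affect the argument.)
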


\pf
The first part is (an abridged version of) \cite[Thm.\ 5.5.1]{weibel}. The second part follows from \cite[Thm.\ 5.2.12]{weibel}.
\epf
As a special case of \Cref{proposition:convergence}, a morphism of filtered chain complexes with bounded below filtration is a quasi-isomorphism if it induces an isomorphism on associated gradeds (this corresponds to the case $r=1$). This fact will be used repeatedly in the sequel. 

The simplest type of map that induces a translation equivalence is a filtered equivalence, i.e.\ a map that induced a quasi-isomorphism $H(\op{Gr}^pC_1)\to H(\op{Gr}^pC_2)$. Unfortunately, this notion is too strong. Hence, following \cite{ciriciguillen}, \cite{ciricispectral}, we define 
\begin{defn}
	A filtered map $C_1\to C_2$ is called an \emph{$E_r$-quasi-isomorphism} if the induced map between $E_r$-page of the corresponding spectral sequences is a quasi-isomorphism (meaning that the induced map between $E_{r+1}$-pages is an isomorphism). If $C$ is a filtered complex and $E_r(C)$ is acyclic, i.e.\ $E_{r+1}(C)=0$, we say that $C$ is \emph{$E_r$-acyclic}. 
\end{defn}
\begin{rk}
If the spectral sequences of $C_1$ and $C_2$ converge (which will always be the case for us), then an $E_r$-quasi-isomorphism is a translation equivalence; thus, $\gr_{C_1}$ and $\gr_{C_2}$ are translation equivalent. Moreover, in this case the growth functions are actually equal, not just equivalent.  
\end{rk}
\begin{lem}\label{lem:bdrdepthimplieseracyclic}
	Let $C$ be an acyclic filtered complex with boundary depth $r\in\bN$. Then $C$ is $E_r$-acyclic.
\end{lem}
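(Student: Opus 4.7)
The goal is to show that $E_{r+1}(C)=0$, which by the stated definition is exactly what it means for $C$ to be $E_r$-acyclic. The plan is to take an arbitrary representative of a class in $E_{r+1}^{p,n+p}$ and, using the boundary depth hypothesis twice, write it as a sum of an element of $Z_r^{p-1,n+p-1}$ and an element of $dZ_r^{p+r,n+p+r-1}$; in other words, to show that every $(r{+}1)$-cycle is already an $(r{+}1)$-boundary.

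More precisely, take $x \in Z_{r+1}^{p,n+p}$, so that $x \in F^pC^n$ with $dx \in F^{p-r-1}C^{n+1}$. First, observe that $dx$ is itself a cycle of $C$ (since $d^2=0$) lying in filtration level $p-r-1$. Since $C$ is acyclic, the boundary depth hypothesis supplies some $z \in F^{(p-r-1)+r}C^n = F^{p-1}C^n$ with $dz = dx$. Consequently $z \in Z_r^{p-1,n+p-1}$, since $dz = dx \in F^{p-r-1}C^{n+1} = F^{(p-1)-r}C^{n+1}$.

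Next, the element $x-z$ lies in $F^pC^n$ and is a cycle, since $d(x-z)=dx-dz=0$. Applying the boundary depth hypothesis again, there exists $y \in F^{p+r}C^{n-1}$ with $dy = x-z$. Because $dy = x-z \in F^pC^n$, we have $y \in Z_r^{p+r,n+p+r-1}$. Rearranging, $x = z + dy$, so
\begin{equation}
    x \in Z_r^{p-1,n+p-1} + dZ_r^{p+r,n+p+r-1} = B_{r+1}^{p,n+p}.
\end{equation}
Hence $Z_{r+1}^{p,n+p} = B_{r+1}^{p,n+p}$ for all $p,n$, giving $E_{r+1}(C)=0$.

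There is no real obstacle here; the only bookkeeping to be careful about is the precise indexing of $Z_r^{\bullet,\bullet}$ and $B_r^{\bullet,\bullet}$, and the fact that the boundary depth hypothesis must be invoked twice — once on the cycle $dx$ to correct $x$ into a cycle, and once on the resulting cycle $x-z$ to produce the primitive $y$ with the correct filtration shift.
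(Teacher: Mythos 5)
Your proof is correct and follows essentially the same route as the paper's: apply the boundary-depth hypothesis first to the closed element $dx$ to produce $z\in F^{p-1}C$ with $dz=dx$, then to the cycle $x-z$ to produce $y\in F^{p+r}C$ with $dy=x-z$, giving $x=z+dy\in B_{r+1}^{p,n+p}$. The only difference is that you track the bigraded indices of $Z_r$ and $B_r$ more explicitly, which is a harmless (and arguably clarifying) elaboration.
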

\begin{proof}
	Let $x\in F^pC$ be an $(r+1)$-cocycle, i.e.\ $dx\in F^{p-r-1}C$. As $dx$ is closed, there exists $z\in F^{p-r-1+r}C=F^{p-1}C$ such that $dz=dx$. Hence, $x-z$ is closed and there exists $y\in F^{p+r}C$ such that $dy=x-z$, i.e. $x=dy+z$. This shows $x$ is an $(r+1)$-coboundary, i.e.\ it vanishes in $E_{r+1}$-page. Since $x$ was arbitrary, this implies $E_{r+1}(C)=0$. 
\end{proof}
\begin{lem}\label{lemma:E_r-equiv-map}
Let $C_1, C_2$ be filtered chain complexes with bounded below, exhausting filtrations, and let $f: C_1 \to C_2$ be an $E_r$-quasi-isomorphism. Then for any $p$, $f$ induces an isomorphism $F^pH(C_1)\to F^pH(C_2)$. In particular, an $E_r$-quasi-isomorphism induces a translation equivalence.
\end{lem}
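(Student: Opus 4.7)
The plan is to prove the stronger statement that $f$ induces an isomorphism $F^pH(C_1) \xrightarrow{\cong} F^pH(C_2)$ for every $p \in \Z$; translation equivalence then follows immediately from Definition \ref{definition:s-t-equiv-complex} with $b=0$, since the preimage of $F^pH(C_2)$ is precisely $F^pH(C_1)$ (note that by \Cref{proposition:convergence}(2), $f$ is already known to be a quasi-isomorphism, so the induced map $H(C_1)\to H(C_2)$ is an isomorphism).

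First I would upgrade the hypothesis from ``iso on $E_{r+1}$'' to ``iso on $E_\infty$''. Since $f$ is an $E_r$-quasi-isomorphism, the induced map $f_{r+1}: E_{r+1}(C_1)\to E_{r+1}(C_2)$ is an isomorphism of bigraded vector spaces compatible with $d_{r+1}$. Because $E_{s+1}$ is computed as the homology of $(E_s, d_s)$, a trivial induction shows that $f_s$ is an isomorphism for every $s \geq r+1$. The spectral sequences are bounded below and convergent by \Cref{proposition:convergence}, so for each fixed $(p,q)$ the differentials $d_s^{p,q}$ and $d_s^{p+s, q-s+1}$ vanish for $s$ sufficiently large, giving $E_\infty^{p,q}(C_i) = E_s^{p,q}(C_i)$ for $s \gg 0$. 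Therefore $f_\infty: E_\infty(C_1)\to E_\infty(C_2)$ is an isomorphism.

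Next I would invoke the convergence statement (\Cref{proposition:convergence}(1)), which identifies $E_\infty^{p,q}(C_i)$ canonically with the associated graded $\op{Gr}^p H^{p+q}(C_i)=F^pH^{p+q}(C_i)/F^{p-1}H^{p+q}(C_i)$ of the image filtration \eqref{equation:filtration-cohomology} on cohomology. Thus $f$ induces isomorphisms $\op{Gr}^p H(C_1)\xrightarrow{\cong}\op{Gr}^p H(C_2)$ for every $p$. I would now induct on $p$. Since the filtrations on $C_i$ are bounded below, so are the induced filtrations on $H(C_i)$, giving the base case $F^pH(C_1)=F^pH(C_2)=0$ for $p$ sufficiently negative. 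For the inductive step, apply the five lemma to the commutative diagram with exact rows
\begin{equation}
\begin{tikzcd}
0 \ar[r] & F^{p-1}H(C_1) \ar[r]\ar[d,"f"] & F^pH(C_1) \ar[r]\ar[d,"f"] & \op{Gr}^p H(C_1) \ar[r]\ar[d,"\cong"] & 0 \\
0 \ar[r] & F^{p-1}H(C_2) \ar[r] & F^pH(C_2) \ar[r] & \op{Gr}^p H(C_2) \ar[r] & 0,
\end{tikzcd}
\end{equation}
where the left vertical arrow is an isomorphism by induction and the right one by the previous paragraph. This yields the required isomorphism on $F^pH$, completing the induction.

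The only real subtlety is the identification of $E_\infty$ with the associated graded of the image filtration on cohomology; this is precisely the content of convergence as stated in \Cref{proposition:convergence}, so no additional work is required. Everything else is a standard diagram chase, so I do not anticipate a serious obstacle.
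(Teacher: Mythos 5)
Your proof is correct and takes essentially the same route as the paper's: pass from an isomorphism on $E_r$ to one on $E_\infty$, use convergence to identify $E_\infty^{p,*}$ with the associated graded $\op{Gr}^pH$, and conclude that a map inducing an isomorphism on associated gradeds of bounded-below filtrations is an isomorphism on each $F^pH$. The only difference is that where the paper simply appeals to this last fact as standard, you spell out the page-by-page induction and the five-lemma step; that is extra detail, not a different argument.
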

\begin{proof}
For any such filtered complex $C$, convergence implies that $\op{Gr}^pH(C)$, corresponding to the filtration $F^pH(C)$, is equal to $\bigoplus_j E_\infty^{p,j}(C)$ (to clarify, one has $E_\infty^{p,n+p}(C)=\op{Gr}^pH^n(C)$). Moreover, $f$ induces an isomorphism of $E_\infty$-pages by \Cref{proposition:convergence}. As the maps of filtered vector spaces that induce isomorphism on associated graded vector spaces are isomorphisms, the result follows. 
\end{proof}

\subsubsection{Cones}
Given a filtered map $f:C_1\to C_2$, one can construct its \emph{$r$-cone} as the ordinary cone, with filtration $F^p cone_r(f)=F^{p-r}C_1[1]\oplus F^pC_2$, and with the usual differential. 
\begin{rk}\label{rk:ciriciquasicone}
One motivation for this definition is that $f$ is an $E_r$-quasi-isomorphism if and only if $cone_r(f)$ is $E_r$-acyclic, i.e. its $E_{r+1}$-page vanishes (see \cite[Remark 3.6]{ciricispectral}). Moreover, in \cite{ciricispectral}, the authors construct model structures on the category of filtered complexes whose weak equivalences are given by $E_r$-quasi-isomorphisms. We do not need this. 
\end{rk}
	$f$ is an $E_r$-quasi-isomorphism if and only if $cone_r(f)$ is $E_r$-acyclic, i.e. its $E_{r+1}$-page vanishes. 
\begin{lem}\label{lem:conewithacyclic}
	Let $f:C_1\to C_2$ be a filtered chain map and $C_1$ be $E_r$-acyclic. Then the natural map $C_2\to cone_r(f)$ is an $E_r$-quasi-isomorphism.
\end{lem}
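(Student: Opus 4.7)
The plan is to exploit the strict short exact sequence of filtered complexes
\[
0 \to C_2 \xrightarrow{\iota} cone_r(f) \xrightarrow{\pi} C_1[1]\{r\} \to 0,
\]
where $\pi$ is the projection to the first summand and $C_1[1]\{r\}$ denotes $C_1[1]$ with filtration reindexed by $F^p(C_1[1]\{r\}) := F^{p-r}(C_1[1])$. This sequence is strict (i.e.\ exact on each filtration piece $F^p$), directly from the definition of the $r$-cone as $F^p cone_r(f) = F^{p-r}C_1[1] \oplus F^p C_2$.

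The first step is the elementary observation that both the homological shift $[1]$ and the filtration shift $\{r\}$ preserve $E_r$-acyclicity. The homological shift only reindexes chain degree; a direct computation from the formulas defining $Z_s^{p,q}$ and $B_s^{p,q}$ shows the filtration shift satisfies $E_s^{p,q}(C\{r\}) = E_s^{p-r,q+r}(C)$ on every page, and hence preserves the vanishing of $E_{r+1}$. Therefore $C_1[1]\{r\}$ is $E_r$-acyclic under the hypothesis on $C_1$.

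The second step is to invoke the long exact sequence at each page $E_s$ associated to the strict short exact sequence of filtered complexes above. Specializing to $s = r+1$, the vanishing of $E_{r+1}(C_1[1]\{r\})$ on both sides of $E_{r+1}(\iota)$ forces the latter to be an isomorphism, which by definition means that $\iota$ is an $E_r$-quasi-isomorphism.

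The main obstacle, and really the only non-mechanical ingredient, is justifying the long exact sequence on each page of the spectral sequence coming from a strict short exact sequence of filtered complexes. This can be cited as a standard consequence of the exact-couple formalism, or proved directly: the base case is immediate from the short exact sequence $0 \to E_0(C_2) \to E_0(cone_r(f)) \to E_0(C_1[1]\{r\}) \to 0$ of chain complexes, and one derives the LES on higher pages inductively using compatibility of the derived exact couple with morphisms. An alternative approach avoiding the LES entirely is to use \Cref{rk:ciriciquasicone} to reduce to showing that $cone_r(\iota)$ is $E_r$-acyclic, and exhibit a natural filtered surjection $cone_r(\iota) \twoheadrightarrow C_1[1]\{r\}$ (projecting $(b',a,b) \mapsto a$) whose kernel is the algebraic mapping cone of $\op{id}_{C_2}$ with filtration shift, easily seen to have boundary depth $\leq r$ and therefore $E_r$-acyclic by \Cref{lem:bdrdepthimplieseracyclic}.
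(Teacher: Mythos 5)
Your proposal takes a genuinely different route from the paper's proof, which verifies by a direct diagram chase that $E_{r+1}(\iota)$ is injective and surjective. Unfortunately there is a real gap: the long exact sequence on every page of the spectral sequence that you invoke for a strict short exact sequence of filtered complexes does not exist in the generality you claim. It holds on $E_1$ (from the short exact sequence of associated gradeds), but a long exact sequence of complexes does not yield a long exact sequence of cohomologies, so nothing propagates to $E_2$ or beyond; the exact-couple formalism does not repair this, since a strict short exact sequence produces only a long, not short, exact sequence of $D$-terms. A counterexample already for $r=1$: let $A$ and $C$ each have underlying complex $\bK\xrightarrow{\sim}\bK$ in degrees $0,1$, with $F^0A=0$, $F^1A=A^1$, $F^2A=A$ and $F^0C=C^1$, $F^1C=F^2C=C$; both are acyclic of boundary depth $1$ and hence $E_1$-acyclic. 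Let $B=A\oplus C$ with the direct-sum filtration $F^pB=F^pA\oplus F^pC$ but with differential
\begin{equation*}
\begin{pmatrix} d_A & g\\ 0 & d_C\end{pmatrix},
\end{equation*}
where $g\colon C^0\to A^1$ is any nonzero (automatically filtration-preserving) map. Then $0\to A\to B\to C\to 0$ is a strict short exact sequence, yet a short computation gives $E_1(B)=0$ in filtration degree $1$ while $E_1(B)\cong\bK$ in filtration degrees $0$ and $2$, so $d_1=0$ and $E_2(B)\cong\bK^2\neq 0$, even though $E_2(A)=E_2(C)=0$. No long exact sequence on the $E_2$-page can therefore exist.

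The alternative route at the end of your proposal inherits the same gap: having shown $K$ and $C_1[1]\{r\}$ are $E_r$-acyclic, concluding that $cone_r(\iota)$ is $E_r$-acyclic is exactly the two-out-of-three property the example above refutes. What rescues the paper's specific sequence $0\to C_2\to cone_r(f)\to C_1[1]\{r\}\to 0$ is extra structure your argument never uses: because of the reindexing by $r$, the off-diagonal piece $f$ of the cone differential strictly \emph{drops} the cone's filtration by $r$ (whereas $g$ above merely preserves it). It is precisely this extra room that the paper's explicit manipulations of $(r+1)$-cocycles and $(r+1)$-coboundaries exploit, via the $F^{p-r}C_1[1]$-versus-$F^pC_2$ bookkeeping. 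To argue abstractly one could instead invoke the $E_r$-triangulated structure on filtered complexes from \cite{ciricispectral}, under which $r$-cones (but not arbitrary strict short exact sequences) give rise to cofiber sequences; but \Cref{rk:ciriciquasicone} explicitly says the paper prefers to avoid that machinery, which is why it gives a hands-on proof instead.
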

\begin{proof}
	This presumably follows from \Cref{rk:ciriciquasicone}, but we prefer to give a direct argument to show the induced map between $E_{r+1}$-pages is an isomorphism. 
	
	For injectivity, let $x\in F^pC_2$	be an $(r+1)$-cocycle and assume $(0,x)\in F^pcone_r(f)$ is an $(r+1)$-coboundary, i.e. there exists $(y,z)\in F^{p+r}cone_r(f)=F^pC_1[1] \oplus F^{p+r}C_2$ such that $d(y,z)\in F^{p}cone_r(f)$ and $d(y,z)-(0,x)\in F^{p-1}cone_r(f)$ (which also implies $d(y,z)-(0,x)\in Z_r^{p-1}(cone_r(f))$). In other words, $dy\in F^{p-1-r}C_1$ and $f(y)+dz-x\in F^{p-1}C_2$, i.e. $f(y)+dz=x$ modulo $F^{p-1}C_2$. In particular, $y$ is an $(r+1)$-cocycle as well. As $C_1$ has vanishing $E_{r+1}$-page, there exists $w\in F^{p+r}C_1$ such that $dw\in F^{p}C_1$ and $y=dw$ modulo $F^{p-1}C_1$. Hence, $x=f(y)+dz=d(f(w)+z)$ modulo $F^{p-1}C_2$. This shows $x\in F^pC_2$ is an $(r+1)$-coboundary, completing the proof of injectivity.
	
	For surjectivity, let $(x,y)\in F^pcone_r(f)=F^{p-r}C_1[1] \oplus F^pC_2$ be an $(r+1)$-cocycle, i.e. $d(x,y)=(-dx,f(x)+dy)\in F^{p-2r-1}C_1[1]\oplus F^{p-r-1}C_2$. Therefore, $x\in F^{p-r}C_1$ is an $(r+1)$-cocycle itself and hence is an $(r+1)$-coboundary because $C_1$ is $E_r$-acyclic. As a result, there exists $z\in Z_r^{p}(C_1)$ such that $x=dz$ modulo $F^{p-r-1}C_1$. One can check $(z,0)\in Z_r^{p+r}(cone_r(f))$ and $(x-dz,0)\in Z_r^{p-1}(cone_r(f))$. Therefore, $(x,y)$ has the same class as $(x,y)+d(z,0)-(x-dz,0)$ in the $E_{r+1}$-page and it is of the form $(0,y')$. It is easy to check $y'\in F^p C_2$ is an $(r+1)$-cocycle. This shows the surjectivity of the map induced on the $E_{r+1}$-page as $y'$ maps to $(0,y')$. 
\end{proof}

One can similarly construct iterated $r$-cones. For instance, assume $A_n\to A_{n-1}\to \dots \to A_0$ is a finite sequence of filtered chain complexes, with filtered chain maps such that two adjacent maps compose to $0$. This is an example of a twisted complex of the simplest kind. The iterated $r$-cone has the same underlying complex as the iterated cone, i.e. it is equal to $B=\bigoplus_{i=0}^n A_i[i]$ as a graded vector space and carries the usual lower triangular cone differential. On the other hand, we filter it as 
\begin{equation}\label{eq:bdirectsum}
	F^pB=\bigoplus_{i=0}^n F^{p-ir}A_i[i]
\end{equation}
More generally, consider a sequence of filtered chain maps $A_n\to A_{n-1}\to \dots \to A_0$, that is not required the vanishing of adjacent composition condition, but enhanced into a genuine twisted complex by adding homotopies $A_i\to A_j[j-i+1]$, $j<i-1$. This is the same data as a sequence of chain maps $A_1\to B_0:=A_0$, $A_2\to B_1:=cone (A_1\to B_0)$, $A_3\to B_2:=cone (A_2\to B_1)$, and one can consider the iterated cone as the complex with underlying graded vector space $\bigoplus_{i=0}^n A_i[i]$, and this complex can be filtered as in (\ref{eq:bdirectsum}). One can construct this filtered complex as an iterated $r$-cone: namely as above define $B_0:=A_0, B_1:=cone_r(A_1\to B_0)$. Inductively, one has a map $T_r^{i}A_{i+1}\to B_i$, where $T_r$ is the translation operator ($F^p(T_rC)=F^{p-r}C$). Define $B_{i+1}:=cone_r(T_r^iA_{i+1}\to B_i)$. 

One can generalize this to infinite sequences $\dots \to A_3\to A_2\to A_1\to A_0$ as well. Its underlying graded filtered vector space is 
$B=\bigoplus_{i=0}^\infty A_i[i]$ with filtration $F^pB:=\bigoplus_{i=0}^\infty F^{p-ir}A_i[i]$. Then one has 
\begin{lem}\label{lem:iteratedconequasi}
	Assume $\dots \to A_3\to A_2\to A_1\to A_0$ is as above and let $B$ denote the corresponding iterated $r$-cone. Assume for all $i>0$ that $A_i$ is $E_r$-acyclic and non-negatively filtered. Then the natural map $A_0\to B$ is an $E_r$-quasi-isomorphism.
\end{lem}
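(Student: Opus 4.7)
The plan is to reduce the infinite statement to a finite one by exploiting the non-negativity hypothesis, and then apply \Cref{lem:conewithacyclic} inductively. First I would observe that since each $A_i$ is non-negatively filtered, $F^{p-ir}A_i = 0$ whenever $i > p/r$. Consequently, although the underlying graded vector space $B = \bigoplus_i A_i[i]$ is infinite, at each filtration level
\begin{equation}
F^p B = \bigoplus_{i=0}^{\lfloor p/r\rfloor} F^{p-ir} A_i[i]
\end{equation}
is a \emph{finite} direct sum. Let $B_n$ denote the iterated $r$-cone associated to the finite truncated sequence $A_n \to A_{n-1} \to \dots \to A_0$, built inductively via $B_0 = A_0$ and $B_n = \op{cone}_r(T_r^{n-1} A_n \to B_{n-1})$ as in the discussion preceding the lemma. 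Then $F^p B_n = F^p B$ whenever $n \geq \lfloor p/r \rfloor$.

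Next I would handle the inductive step. The translation operator $T_r$ acts on spectral sequences by reindexing $E_s^{p,q}(T_r C) \cong E_s^{p-r,q+r}(C)$, so $E_r$-acyclicity is preserved under $T_r$, and in particular each $T_r^{n-1} A_n$ (for $n \geq 1$) is $E_r$-acyclic. Therefore \Cref{lem:conewithacyclic} applies at every stage and gives that the natural map $B_{n-1} \to B_n$ is an $E_r$-quasi-isomorphism. Composing, the natural map $A_0 = B_0 \to B_n$ induces an isomorphism on $E_{r+1}$-pages for every finite $n$.

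Finally I would pass to the limit by a standard finiteness observation: for fixed $p, q$, the group $E_{r+1}^{p,q}$ of a filtered complex depends only on the pieces $F^j$ with $j$ in a bounded range around $p$ (explicitly, one needs $Z_{r}^{p,*}$ and $B_{r}^{p,*}$, both of which involve $F^{p-r}$ through $F^{p+r}$). By the first paragraph, for $n$ sufficiently large (depending on $p$ and $r$), the filtered pieces $F^j B_n$ agree with $F^j B$ in the entire relevant range, so the canonical map $B_n \to B$ induces an isomorphism $E_{r+1}^{p,q}(B_n) \xrightarrow{\sim} E_{r+1}^{p,q}(B)$. Combining with the inductive step, the map $E_{r+1}^{p,q}(A_0) \to E_{r+1}^{p,q}(B)$ is an isomorphism for all $p, q$, which by definition means that $A_0 \to B$ is an $E_r$-quasi-isomorphism.

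The main point requiring care is the non-negativity hypothesis, without which $F^p B$ would genuinely involve infinitely many summands and the reduction to finite-stage arguments would fail. Once finiteness at each filtration level is in hand, the argument is essentially formal: it combines the one-step result \Cref{lem:conewithacyclic} with the behavior of $T_r$ on spectral sequences and the bounded range of indices entering the $E_{r+1}$-page computation.
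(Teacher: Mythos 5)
Your proof is correct and follows essentially the same approach as the paper: truncate to finite iterated $r$-cones $B_n$, apply \Cref{lem:conewithacyclic} inductively, and pass to the limit using the fact that non-negativity of the filtrations forces $F^pB_n$ to stabilize in $n$. (You fill in the small point that $T_r$ preserves $E_r$-acyclicity, which the paper leaves implicit; your stated window $F^{p-r}$ through $F^{p+r}$ for computing $E_{r+1}^{p,q}$ should really be $F^{p-r-1}$ through $F^{p+r}$, but this does not affect the argument.)
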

\begin{proof}
	Consider the finite iterated cone of $A_n\to \dots \to A_1$, denote it by $B_n$. By an iterated application of Lemma \ref{lem:conewithacyclic}, the natural map $A_0\to B_n$ is an $E_r$-quasi-isomorphism. $B_n$ is a filtered subcomplex of $B$ and $B=\bigcup B_n$. Moreover, for a finite set of $p$, $F^pB_n=F^pB$ for $n\gg 0$ (this uses the fact that the $A_i$ carry non-negative filtrations, $i>0$). This implies that every entry of $E_r(B_n)$ stabilizes for $n\gg 0$; hence, $B_0\to B$ is also an $E_r$-quasi-isomorphism.
\end{proof}

\subsection{Filtered directed systems and homotopy colimits}\label{subsection:filtered-ds} 

A \emph{filtered directed system} indexed by $\mathbb{N}_+$ is the data of vector spaces $\{V_\s\}_{\s \in \mathbb{N}_+}$ along with morphisms $V_\s \to V_{\s'}$ for $\s \leq \s'$ which behave naturally under composition (i.e.\ a functor $\mathbb{N}_+ \to \op{Vect}_k$). A \emph{weak morphism} of filtered direct systems $\{V_\s \} \to \{W_\s\}$ consists in the following data: (i) a natural number $C\geq 1$; (ii) for each $\s \in \mathbb{N}_+$, a map $V_\s \to W_{C\s}$ such that the following diagram commutes:
\begin{equation}
\begin{tikzcd}
V_{\s} \ar{r}  \ar{d} &V_{\s'} \ar{d}\\
W_{C\s} \ar{r} & W_{C\s'}
\end{tikzcd}
\end{equation}
for all $\s \leq \s' $. One defines the composition of weak morphisms in the obvious way, and it is easy to see that filtered directed systems with weak morphisms form a category. 

For any given $C\in\bN, C\geq 1$, one can define a weak morphism $h_C:\{V_\s\}\to \{V_\s\}$ by just using the maps $V_\sigma\to V_{C\sigma}$ of the filtered system. One can check that $h_C\circ h_{C'}=h_{CC'}$, and for a given weak morphism $f:\{V_\sigma\}\to \{W_\sigma\}$, $f\circ h_C=h_C\circ f$. We call two weak morphisms $f_1,f_2:\{V\}\to \{W\}$ \emph{equivalent} if there exist $C_1,C_2\geq 1$ such that $h_{C_1}\circ f_1=h_{C_2}\circ f_2$. It is clear that filtered complexes, with weak morphisms up to equivalence also form a category (one only needs to check the composition is well-defined, which follows from properties above). An isomorphism in this category is called a \emph{weak isomorphism}. Concretely, this is a weak morphism $f:\{V_\sigma\}\to \{W_\sigma\}$ such that there exists $g:\{W_\sigma\}\to \{V_\sigma\}$ satisfying $f\circ g=h_C$, $g\circ f=h_C$ for some $C$ (the factors are the same for the right and the left compositions). Note that the former category is not a linear category, whereas the latter is. 

A filtered directed system induces a filtration on its colimit:
\eq\label{equation:d-s-filtration} F^p (\colim_\tau V_\tau) = \bigcup_{\s \leq p} \op{im}(V_\s \to \colim_\tau V_\tau), \eeq
from which we can extract a growth function as in \Cref{definition:growth-function-chain}. 

\begin{lem}\label{lemma:fds-equivalence}
A weak isomorphism of filtered directed systems induces an isomorphism of vector spaces on the colimits. Moreover, the associated growth functions are scaling equivalent. 
\qed
\end{lem}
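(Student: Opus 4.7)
The strategy is to transport the whole statement to the level of colimits, where the scaling constant $C$ appearing in a weak morphism becomes a multiplicative shift of the filtration index.

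First I would construct the functor from filtered directed systems (with weak morphisms up to equivalence) to filtered vector spaces that sends $\{V_\sigma\}$ to $\colim_\sigma V_\sigma$ equipped with the filtration \eqref{equation:d-s-filtration}. Given a weak morphism $f \colon \{V_\sigma\} \to \{W_\sigma\}$ with constant $C$, the composites $V_\sigma \to W_{C\sigma} \to \colim_\sigma W_\sigma$ assemble into a cone under $\{V_\sigma\}$ by the naturality square in the definition, so by the universal property they induce a linear map $f_\ast \colon \colim V \to \colim W$. The key observation is that $h_C \colon \{V_\sigma\} \to \{V_\sigma\}$ induces the identity on the colimit, because the map $V_\sigma \to V_{C\sigma} \to \colim V$ agrees with the structural map $V_\sigma \to \colim V$. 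Consequently, if $f_1$ and $f_2$ are equivalent weak morphisms, say $h_{C_1} \circ f_1 = h_{C_2} \circ f_2$, then $(f_1)_\ast = (f_2)_\ast$, so $f \mapsto f_\ast$ descends to the category of weak equivalence classes. Functoriality (compatibility with composition of weak morphisms) is an immediate diagram chase using the same identity $(h_C)_\ast = \op{id}$.

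Once functoriality is in hand, an isomorphism in the source category must go to an isomorphism in the target category, which proves the first assertion: if $f \colon \{V_\sigma\} \to \{W_\sigma\}$ and $g \colon \{W_\sigma\} \to \{V_\sigma\}$ satisfy $f \circ g = h_C$ and $g \circ f = h_C$, then $f_\ast \circ g_\ast = \op{id}$ and $g_\ast \circ f_\ast = \op{id}$, so $f_\ast$ is an isomorphism of vector spaces.

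For the second assertion I would track how $f_\ast$ and $g_\ast$ interact with the filtration. By construction, the image in $\colim W$ of $V_\sigma$ under $f$ lies in the image of $W_{C\sigma}$, so $f_\ast\bigl(F^\sigma(\colim V)\bigr) \subseteq F^{C\sigma}(\colim W)$; injectivity of $f_\ast$ then gives $\gr_V(\sigma) \leq \gr_W(C\sigma)$. The same argument applied to $g$ yields $\gr_W(\sigma) \leq \gr_V(C\sigma)$ (after possibly enlarging $C$ so that a single constant works in both directions). Taking $a = C$ and $b = 0$ in \Cref{definition:s-t-equiv-function} gives the required scaling equivalence. No serious obstacle arises; the only thing to watch is that the identity $(h_C)_\ast = \op{id}$ is used both to get a well-defined functor and to turn the relations $f \circ g = h_C$, $g \circ f = h_C$ into genuine inverses on the colimit.
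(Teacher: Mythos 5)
Your proof is correct. The paper actually omits the proof entirely (the lemma is stated with a bare \qed, treating the argument as routine), so there is nothing to compare against; the argument you give is the natural one: functoriality of the colimit construction kills the reindexing $h_C$, making $f_*$ an isomorphism, and then the inclusion $f_*(F^\sigma \colim V) \subseteq F^{C\sigma}\colim W$ together with injectivity gives the scaling inequality in each direction, with a single constant after taking the maximum. The only small point worth stating explicitly in a polished write-up is that $(g \circ f)_* = g_* \circ f_*$ — i.e.\ the assignment $f \mapsto f_*$ is compatible with composition of weak morphisms — which you use implicitly when passing from $f \circ g = h_C$ to $f_* g_* = \mathrm{id}$; this is a one-line check from the universal property.
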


\begin{rk}\label{remark:filtrations-ds}
If $C$ is a filtered complex with exhausting filtration, then $H(F^pC)$ is a directed system and we have $\colim_p H(F^pC)= H(\colim_p F^pC)= H(C)$. Then the filtration induced on $C$ via \eqref{equation:d-s-filtration} coincides tautologically with the filtration induced on $C$ via \eqref{equation:filtration-cohomology}. 
\end{rk}

Next, we want to consider a chain-level enhancement of the above discussion. 

\begin{defn}\label{definition:hocolim}
Let $C_0\xrightarrow{f_0} C_1\xrightarrow{f_1} C_2\xrightarrow{f_2} \dots$ be a sequence of chain maps. Define the \emph{homotopy colimit} $\hocolim_i C_i$  to be the cone of the morphism $\bigoplus_{i=0}^\infty C_i\xrightarrow{f-1} \bigoplus_{i=0}^\infty C_i$, where $f$ is the homomorphisms whose components are given by $f_i:C_i\to C_{i+1}$, and $1$ is the identity map. 
\end{defn}
Observe that $H^*(\hocolim_i C_i)\cong \colim_i H^*(C_i)$. 
Also notice that $\hocolim_i C_i$ carries a natural filtration, namely let $F^0 \hocolim_i C_i=cone (0\to C_0)=C_0$ and 
\begin{equation}\label{equation:hocolim-filtration}
F^p \hocolim_i C_i= \op{cone} \Bigg(\bigoplus_{i=0}^{p-1}C_i\xrightarrow{f-1} \bigoplus_{i=0}^{p}C_i\Bigg)\subset \hocolim_i C_i	
\end{equation}
If all $C_i$ are filtered and the $f_i$ are filtered morphisms, then these can be incorporated into homotopy colimit as well.

\begin{lem}
With the notation of \Cref{definition:hocolim}, there is a canonical isomorphism $H(\hocolim_i C_i) =\colim_i H(C_i)$. Moreover, the cohomological filtration (in the sense of \eqref{equation:filtration-cohomology}) induced from \eqref{equation:hocolim-filtration} coincides with the directed system filtration as defined in \eqref{equation:d-s-filtration}.
\end{lem}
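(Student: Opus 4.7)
The plan is to derive both statements from analyzing the long exact sequence in cohomology associated to the defining cone
\[
0 \to \bigoplus_i C_i \longrightarrow \hocolim_i C_i \longrightarrow \Big(\bigoplus_i C_i\Big)[1] \to 0,
\]
whose connecting homomorphism is the map $f-1$ induced on cohomology, together with the analogous short exact sequence for each truncation $F^p \hocolim_i C_i = \op{cone}\big(\bigoplus_{i<p} C_i \xrightarrow{f-1} \bigoplus_{i \leq p} C_i\big)$.

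First, for the identification $H(\hocolim_i C_i) \cong \colim_i H(C_i)$, it suffices to show that the map $f-1 : \bigoplus_i H(C_i) \to \bigoplus_i H(C_i)$ has kernel $0$ and cokernel $\colim_i H(C_i)$. The cokernel statement is the usual presentation of a sequential colimit as a quotient of the direct sum. The kernel vanishes by a standard telescoping argument: if $(a_0,a_1,\ldots)$ has only finitely many nonzero entries and $(f-1)(a) = 0$, then comparing components forces $a_0 = 0$, then $a_1 = f(a_0) = 0$, and so on inductively. Together with the long exact sequence this gives the desired identification.

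Next, I would apply the analogous analysis to each truncation. The kernel of $f-1: \bigoplus_{i < p} H(C_i) \to \bigoplus_{i \leq p} H(C_i)$ is still zero by the same telescoping argument, while the cokernel is canonically isomorphic to $H(C_p)$ via the \emph{push-to-the-last-index} map
\[
(b_i)_{i=0}^{p} \longmapsto \sum_{i=0}^{p} f^{p-i}(b_i).
\]
This is an explicit elementary check: adding $(f-1)(-b_0, 0, \ldots, 0) = (b_0, -f(b_0), 0, \ldots, 0)$ kills the $0$-th coordinate at the cost of modifying the first coordinate, and iterating reduces any element modulo the image to one supported in the last index, yielding exactly the formula above. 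Thus $H(F^p \hocolim_i C_i) \cong H(C_p)$.

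Finally, I would chase naturality: the inclusion $F^p \hocolim \hookrightarrow \hocolim$ induces, under the two identifications above, the structure map $H(C_p) \to \colim_i H(C_i)$ of the directed system, because on both sides the canonical maps arise from the inclusion of the $p$-th summand $C_p \hookrightarrow \bigoplus_i C_i$ followed by passing to the relevant cokernel. Consequently, $F^p H(\hocolim) := \op{im}\big(H(F^p \hocolim) \to H(\hocolim)\big)$ coincides with $\op{im}\big(H(C_p) \to \colim_i H(C_i)\big)$, which is precisely the directed system filtration \eqref{equation:d-s-filtration}. The only mildly subtle point is the explicit cokernel computation in the truncated case; everything else is a formal consequence of the long exact sequence and naturality.
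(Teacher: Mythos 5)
Your proposal is correct and takes essentially the same route as the paper: both arguments proceed via the long exact sequence of the defining cone, note that $f-1$ is injective on cohomology, identify the cokernel with the colimit, and then repeat the analysis on the truncation $F^p\hocolim_i C_i$ to see that its cohomology is $H(C_p)$. Your explicit ``push-to-the-last-index'' formula for the cokernel isomorphism is just a more spelled-out version of the paper's observation that $H(F^p\hocolim_i C_i)$ is the colimit of the finite system $H(C_0)\to\cdots\to H(C_p)$, and the naturality chase is implicit in the paper.
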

\begin{proof}
By definition, one has a long exact sequence 
\begin{equation}
\dots\to H^*\bigg(\bigoplus_{i=0}^\infty C_i\bigg)\xrightarrow{f-1} H^*\bigg(\bigoplus_{i=0}^\infty C_i\bigg)\to H^*(\hocolim_i C_i)\to 	H^{*+1}\bigg(\bigoplus_{i=0}^\infty C_i\bigg)\to\dots 
\end{equation}	
and the map 
\begin{equation}\label{eq:sumtosumcolim}
	H^*\bigg(\bigoplus_{i=0}^\infty C_i\bigg)=\bigoplus_{i=0}^\infty H^*(C_i)\xrightarrow{f-1} H^*\bigg(\bigoplus_{i=0}^\infty C_i\bigg)=\bigoplus_{i=0}^\infty H^*(C_i)\end{equation} 
is clearly injective. Therefore, $H(\hocolim_i C_i)$ identifies with the cokernel of \eqref{eq:sumtosumcolim}. On the other hand, this cokernel, by definition, is spanned by the sequences $(0,\dots, 0, x_i , 0,\dots  )$ modulo the relation $(0,\dots, 0, x_i , 0,\dots  )\sim (0,\dots, 0,0, f(x_i) ,\dots  )$ (in the latter $(i+1)^{th}$ slot is non-zero). More generally, it is the set of sequences $(x_1,x_2,\dots,0,\dots)$ modulo the relation $(x_1,x_2,0,\dots)\sim (0,f(x_1),f(x_2),\dots, 0,\dots )$. This is how one can define colimits in the category of vector spaces; hence, the first claim follows. 

For the second claim, notice that similarly to above, $H(F^p\hocolim_i C_i)$ is the colimit of the finite system $H(C_1)\to\dots \to H(C_p)$; hence, isomorphic to $H(C_p)$. This finishes the proof.  
\end{proof}

The following will also be useful:
\begin{lem}\label{lem:colimitdepth}
Let $d\in\mathbb{N}_+$. If for any $i \geq 0$, the composition $C_i\to \dots\to C_{i+d}$ induces the $0$-map in cohomology, then $\hocolim_i C_i$ has boundary depth at most $d$. In particular, it is $E_d$-acyclic according to \Cref{lem:bdrdepthimplieseracyclic}.
\end{lem}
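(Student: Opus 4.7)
The hypothesis immediately forces $\colim_i H^*(C_i)=0$, so $\hocolim_i C_i$ is acyclic; the task is to control the boundary depth. My plan is (i) to normalize any cocycle in $F^p\hocolim_i C_i$ to an element supported only on the top $C_p$-summand, and then (ii) to use the vanishing of the $d$-fold composition in cohomology to produce an explicit primitive in $F^{p+d}$. The $E_d$-acyclicity then follows immediately from \Cref{lem:bdrdepthimplieseracyclic}.

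For the normalization step, I would unpack what it means for $(a,b) \in F^p\hocolim_i C_i = \op{cone}\bigl(\bigoplus_{j=0}^{p-1}C_j \xrightarrow{f-1} \bigoplus_{j=0}^{p}C_j\bigr)$ to be a cocycle. Writing $a=(a_j)_{j<p}$ and $b=(b_j)_{j\leq p}$, the cone differential yields the conditions $da_j=0$ for all $j$, together with $a_0 = db_0$, $a_j = f_{j-1}(a_{j-1}) + db_j$ for $1\leq j\leq p-1$, and $f_{p-1}(a_{p-1}) + db_p = 0$. Iterating the recursion gives $a_j = dc_j$ with $c_j := \sum_{k=0}^{j} f_{j-1}\cdots f_k(b_k) \in C_j$. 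A direct computation of $d(-c,0)$, where $c = (c_0,\dots,c_{p-1})$, then produces
\[
(a,b) \;-\; d(-c,\,0) \;=\; \bigl(\,0,\;(0,\dots,0,\beta)\,\bigr), \qquad \beta := b_p + f_{p-1}(c_{p-1}) \in C_p,
\]
and the last cocycle relation forces $d\beta = 0$. Note that $(-c,0) \in F^p\hocolim_i C_i$.

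For the primitive, I would invoke the hypothesis to pick $\gamma \in C_{p+d}$ with $d\gamma = -f_{p+d-1}\cdots f_p(\beta)$, and then set $(a',b') \in F^{p+d}\hocolim_i C_i$ by
\[
a'_j = \begin{cases} -f_{j-1}\cdots f_p(\beta) & p\leq j\leq p+d-1,\\ 0 & \text{otherwise,}\end{cases} \qquad b'_j = \begin{cases} -\gamma & j=p+d,\\ 0 & \text{otherwise,}\end{cases}
\]
with the convention that for $j=p$ the ``empty'' composition $f_{j-1}\cdots f_p(\beta)$ is $\beta$ itself. Using only that each $f_k$ is a chain map and the defining property of $\gamma$, one verifies directly that $d(a',b') = (0,(0,\dots,0,\beta))$, so that the element $(-c,0)+(a',b')$, which lies in $F^{p+d}\hocolim_i C_i$, is a primitive for $(a,b)$. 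This proves the boundary depth is at most $d$.

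The argument is essentially a bookkeeping computation with the cone differential, and I do not expect any real obstacle; the only point requiring care is the identification of the cocycle $\beta \in C_p$ underlying $(a,b)$, after which the construction of the primitive is forced once the primitive $\gamma$ of $f_{p+d-1}\cdots f_p(\beta)$ is in hand.
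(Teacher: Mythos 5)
Your proof is correct, and it follows the same overall template as the paper's (normalize the cocycle, then push forward and invoke the $d$-fold vanishing in cohomology), but your normalization step is genuinely sharper. The paper kills the $C_j[1]$-component of the cocycle by subtracting the boundary of $(\{z_j\},0)$ for \emph{arbitrary} primitives $z_j$ of the $a_j$; this leaves all $p+1$ of the $b$-slots potentially nonzero, so the paper must then decompose the remaining element into atoms $(0,(0,\dots,y_j,\dots,0))$, one per slot, and push each one to slot $p+d$ by a chain of slot-by-slot cohomologous replacements. You instead choose the \emph{specific} primitive $c_j = \sum_{k\leq j} f_{j-1}\cdots f_k(b_k)$, whose recursion $c_j = f_{j-1}(c_{j-1}) + b_j$ forces $(f-1)(c)_j = -b_j$ for $j<p$; subtracting $d(-c,0)$ therefore zeros out the whole $C_j[1]$-component \emph{and} every $b$-slot below $p$ at once, concentrating the cocycle into a single closed $\beta\in C_p$. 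This avoids both the decomposition into atoms and the iterated replacement and lets you exhibit the $F^{p+d}$-primitive in one closed formula. Both approaches prove the same bound; yours is a bit tighter in bookkeeping at the cost of having to verify the explicit formula for $c_j$.
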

\begin{proof}
Let $y\in F^p\hocolim_i C_i=\bigoplus_{i=0}^{p-1}C_i[1]\oplus \bigoplus_{i=0}^{p}C_i$ be a closed element. Write $y=(\{x_i\},\{y_i\})$, where $x_i\in C_i[1]$ and $y_i\in C_i$. We need to show $y$ is exact in $F^{p+d}\hocolim_i C_i$. 

As $y$ is closed $dx_i=0$ and $f(x_{i-1})-x_i+dy_i=0$ for all $i$. In particular, $x_0=dy_0$ is exact, and the same holds for all $x_i$ by induction. Choose primitives $z_i$ for all $x_i$ such that $z_i=0$ if $x_i=0$. Then, $(\{z_i\},\{0\})\in F^p\hocolim_i C_i$, and all first components of $y+d(\{z_i\},\{0\})$ vanish. Hence, without loss of generality, we can assume the same for $y$, i.e.\ we can assume $x_i=0$. Moreover, $y$ can be seen as the sum of closed elements $(\{0\},\{0,\dots, 0, y_i,0,\dots \})$; hence, we can assume only one $y_i$ is non-zero, and $i\leq p$ as $y\in F^p\hocolim_i C_i$. 

Notice that if $i<p+d$, then $(\{0\}, \{0,\dots, 0, y_i,0,\dots \})$ is cohomologous to 
\begin{equation}
	(\{0\}, \{0,\dots, 0, 0,f(y_i),\dots \})
\end{equation}
where $f(y_i)$ is put into $(i+1)^{th}$ slot. Indeed, their difference is given by the differential $d(\{0,\dots, 0, y_i,0,\dots \},\{0\})$, where $y_i$ is in the $i^{th}$ slot of the first component. Therefore, the element $(\{0\}, \{0,\dots, 0, y_i,0,\dots \})$ is cohomologous in $F^{p+d}\hocolim_i C_i$ to $(\{0\}, \{0,\dots, 0, f^{p+d-i}(y_i),0,\dots \})$, where $f^{p+d-i}(y_i)$ is in the $(p+d)$-th slot. By assumption, $f^d(y_i)$ is exact in $C_{i+d}$ and, as $p-i\geq 0$, the same is true for $f^{p+d-i}(y_i)\in C_{p+d}$. This implies  $(\{0\}, \{0,\dots, 0, f^{p+d-i}(y_i),0,\dots \})$ is exact in $F^{p+d}\hocolim_i C_i$. Thus, the cohomologous element $(\{0\}, \{0,\dots, 0, y_i,0,\dots \})$ is also exact in $F^{p+d}\hocolim_i C_i$, finishing the proof of the boundary depth assertion.
\end{proof}

\subsection{Filtered $A_\infty$ categories and modules}

\begin{defn}
A \emph{filtered $A_\infty$ category $\cC=(\cC, F^p(-))$} is an $A_\infty$ category $\cC$ such that 
\begin{itemize}
	\item for every $K,L\in ob(\cC)$, $\cC(K,L)$ is a filtered complex
	\item $A_\infty$ operations respect the filtration, i.e. $\mu^k_{\cC}(x_k,\dots,x_1) \in F^{p_1+\dots+p_k}{\cB}(L_0, L_k)$ if $x_i \in F^{p_i}{\cC}(L_{i-1}, L_i)$
\end{itemize}
Unless specified otherwise, the filtration on each $\cC(K,L)$ is assumed to be integral, non-negative and exhausting. 
\end{defn}
A filtered functor $\mc{F}: \cB \to \cB'$ between filtered $A_\infty$ categories is a functor which preserves the filtration, i.e.\  $\mc{F}^k(a_k,\dots,a_1) \in F^{p_1+\dots+p_k}\hom_{\cB'}(\mc{F}(L_0), \mc{F}(L_k))$ if $a_i \in F^{p_i}\hom_{\cB}(L_{i-1}, L_i)$. 

The cohomology category of a filtered $A_\infty$ category is a filtered linear graded category. A filtered functor between filtered $A_\infty$ categories induces a filtered functor on the associated filtered cohomological categories. 

\begin{lem}\label{lemma:trans-equiv-isom}
Let $\cB$ be a filtered $A_\infty$ category. Assume $K, K'$ (resp.\ $L, L'$) are isomorphic objects in $H^*(\cB)$. Then the growth functions of $\hom_{\cB}(K, L)$ and $\hom_{\cB}(K', L')$ are translation equivalent. 
\qed
\end{lem}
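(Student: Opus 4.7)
The plan is to lift the assumed cohomology-level isomorphisms to closed morphisms in $\cB$, and then to use $\mu^2$-composition with these lifts to produce mutually inverse chain maps between $\cB(K,L)$ and $\cB(K',L')$ that respect the filtration up to a bounded translation. This is essentially the filtered analogue of the familiar fact that isomorphic objects have quasi-isomorphic hom-complexes.

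\smallskip

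First, I would choose closed representatives $\phi \in \cB(K,K')$ and $\psi \in \cB(K',K)$ of the mutually inverse cohomology classes $[K] \simeq [K']$, and similarly $\alpha \in \cB(L,L')$ and $\beta \in \cB(L',L)$. Since the filtration on each hom-complex is non-negative and exhausting, these lie in some finite filtration levels, say $\phi \in F^{p_\phi}\cB(K,K')$, $\psi \in F^{p_\psi}\cB(K',K)$, $\alpha \in F^{p_\alpha}\cB(L,L')$, and $\beta \in F^{p_\beta}\cB(L',L)$. Define
\begin{equation}
T \colon \cB(K,L) \to \cB(K',L'), \qquad T(x) := \mu_\cB^2\bigl(\alpha,\, \mu_\cB^2(x, \psi)\bigr),
\end{equation}
and define $T' \colon \cB(K',L') \to \cB(K,L)$ by the analogous formula using $\beta$ and $\phi$ in place of $\alpha$ and $\psi$.

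\smallskip

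Next, I would verify that $T$ is an honest chain map (up to a global sign). Expanding $\mu^1(T(x))$ via the $A_\infty$ relation
\begin{equation}
\mu^1\mu^2(a,b) \pm \mu^2(\mu^1 a, b) \pm \mu^2(a, \mu^1 b) = 0
\end{equation}
twice, and using that $\alpha$ and $\psi$ are closed, one obtains $\mu^1(T(x)) = \pm T(\mu^1 x)$; similarly for $T'$. By definition of a filtered $A_\infty$ category, $T(F^p\cB(K,L)) \subseteq F^{p+p_\alpha+p_\psi}\cB(K',L')$ and $T'(F^p\cB(K',L')) \subseteq F^{p+p_\beta+p_\phi}\cB(K,L)$, so both maps respect the filtration up to a bounded shift.

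\smallskip

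Passing to cohomology, the induced maps $H^*T$ and $H^*T'$ implement composition with the cohomology classes $[\alpha], [\psi]$ and $[\beta], [\phi]$ respectively, which are mutually inverse in $H^*\cB$. By associativity of $\mu^2$ at the cohomology level, $H^*T' \circ H^*T$ and $H^*T \circ H^*T'$ are the identity maps. It follows that $H^*T$ is an isomorphism satisfying $H^*T(F^p H^*\cB(K,L)) \subseteq F^{p+c} H^*\cB(K',L')$ and $H^*T^{-1}(F^p H^*\cB(K',L')) \subseteq F^{p+c'} H^*\cB(K,L)$ with $c := p_\alpha+p_\psi$ and $c' := p_\beta+p_\phi$. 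Taking $b := \max(c, c')$ yields $\gr_{\cB(K,L)}(p) \leq \gr_{\cB(K',L')}(p+b) + b$ and the symmetric bound, which is precisely translation equivalence in the sense of \Cref{definition:s-t-equiv-function}. There is no serious obstacle here; the only mild care needed is in the verification that $H^*T$ and $H^*T'$ are cohomology-level inverses, which is a direct computation using the associativity of $\mu^2$ on $H^*\cB$ and the fact that $[\alpha][\beta]$, $[\beta][\alpha]$, $[\phi][\psi]$, $[\psi][\phi]$ all equal identity classes.
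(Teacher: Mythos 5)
Your proof is correct and is essentially the standard argument that the paper treats as immediate (it states the lemma with $\qed$ and no proof). Composing with closed representatives of the cohomology isomorphisms, verifying the chain-map property via the $A_\infty$ equation with closed inputs, noting the filtration is scaled only by the fixed filtration levels of the chosen representatives, and then passing to cohomology where associativity and unitality give inverse maps — this is exactly the intended reasoning, carried out carefully.
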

Here, isomorphic means isomorphic in the underlying unfiltered category. As the filtration is exhausting, the isomorphisms will all lie in some $F^p$, and the growth functions will be equivalent up to a translation factor of $2p$. 
\begin{defn}
A filtered functor $\mc{F}:\cB \to \cB'$ is called a \emph{scaling equivalence} (resp.\ an \emph{$E_k$-equivalence}) if the induced filtered morphism $\hom_\cB(K,L) \to \hom_{\cB'}(\mc{F}(K), \mc{F}(L))$ is a scaling equivalence (resp.\ an $E_k$ quasi-isomorphism).
\end{defn}
\begin{rk}
We do not require scaling, resp. $E_k$, equivalences to be quasi-equivalences, and perhaps scaling, resp. $E_k$, fully faithful would be a more accurate term. In practice, scaling, resp. $E_k$-equivalences we encounter will be Morita equivalences. 
\end{rk}

\section{The localization filtration}\label{subsection:tensor-length-filtration}\label{sec:localizationfiltration}

\subsection{Construction}\label{subsection:construction-LO-quotient}
\begin{defn}[\cite{lyubashenko-ovsienko},\cite{gps2}]\label{defn:quotientcategory}
Let $\mc{B}$ be an $A_\infty$ category and let $\mc{D}$ be a set of objects. The \emph{(Lyubashenko--Ovsienko) quotient category $\mc{B}/\mc{D}$} is the $A_\infty$ category with $\op{Ob} \mc{B}/\mc{D} = \op{Ob} \mc{B}$ and morphisms given by the bar complex:
\eq
\mc{B}/\mc{D}(K, L):= \bigoplus_{E_1,\ldots,E_k \in \mc{D}} \mc{B}(E_k, L) \otimes \dots \otimes {\mc{B}}(K, E_1)[k]
\eeq
The summand at $k=0$ is $\cB(K,L)$. 
\end{defn}

The differential of $\mc{B}/\mc{D}(K, L)$ is similar to the standard bar differential, namely \eq \mu^1_{\cB/\cD}(y'\otimes x_{k-1}\otimes \dots x_2\otimes y  )\eeq is the signed sum of terms $\mu_\cB(y',\dots x_i)\otimes x_{i-1}\otimes \dots \otimes y$, $y'\otimes \dots \mu_\cB(x_j,\dots,x_i ) \dots\otimes y$ and $y'\otimes x_{k-1}\otimes \dots\otimes \mu_\cB( x_i,\dots,y)$. The higher products are defined similarly. For instance, given $y'''\otimes x_l'\dots x_1'\otimes y''$ and $y'\otimes x_k\otimes \dots y$, to obtain their product, one must apply the $A_\infty$-product to all substrings of \eq y'''\otimes x_l'\dots x_1'\otimes y''\otimes y'\otimes x_k\otimes \dots y\eeq that contain $y''\otimes y'$. See \cite[Sec.\ 3.1.2]{sylvan-orlov} for precise formulas.

Observe that $\cB/\cD$ is filtered by $k$. More precisely, it is a filtered category with filtration given by 
\begin{equation}
F^p\mc{B}/\mc{D}(K, L):= \bigoplus_{0\leq k\leq p\atop E_1,\ldots,E_k \in \mc{D}} \mc{B}(E_k, L) \otimes \dots \otimes {\mc{B}}(K, E_1)[k]	
\end{equation}
The localization as a filtered category is functorial. In other words, if $f:\cB\to\cB'$ is an $A_\infty$-functor such that $f(\cD)\subset\cD'$, then $f$ induces a functor $\cB/\cD\to \cB'/\cD'$ that respect the filtration.

We also have
\begin{lem}\label{lemma:summand-e1} 
Let $\cD\subset\cD'\subset \cB$ be full subcategories (with $\mc{D}, \mc{D}'$ small) and assume every object of $\cD'$ is a shift of a direct summand of an object of $\cD$. Then the natural functor $\cB/\cD\to \cB/\cD'$ is an \emph{$E_1$-equivalence}, i.e. it is a quasi-equivalence inducing an $E_1$-quasi-isomorphism of the underlying complexes.
\end{lem}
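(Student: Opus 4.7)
The plan is to compute the $E_0$ and $E_1$ pages of the length-filtered complexes $\cB/\cD(K,L)$ and $\cB/\cD'(K,L)$, identify the $E_1$-page with the morphism complex of an LO quotient of the graded linear category $H(\cB)$, and then invoke \Cref{lemma:quotient-properties}\eqref{item:split-closure-quotient}.

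First I would unpack the differential $\mu^1_{\cB/\cD}$ on the bar-type complex. The only component that preserves the length filtration is the one applying $\mu^1_\cB$ to a single tensor factor; the $\mu^2_\cB$ component decreases length by one, and $\mu^j_\cB$ for $j \geq 3$ decreases length by $j-1 \geq 2$. Consequently, the $E_0$-differential is the tensor-product differential induced by $\mu^1_\cB$ on each factor, and — since we work over a field — the Künneth formula yields
\begin{equation}
E_1^p\,\cB/\cD(K,L) \;\cong\; \bigoplus_{E_1, \ldots, E_p \in \cD} H\cB(E_p, L)\otimes H\cB(E_{p-1}, E_p)\otimes\cdots\otimes H\cB(K, E_1)[p].
\end{equation}
The $E_1$-differential, being induced by the $\mu^2$-component of $\mu^1_{\cB/\cD}$, is precisely the bar differential built from graded composition in $H(\cB)$. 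In other words, $E_1\,\cB/\cD(K,L)$ is canonically isomorphic to the morphism complex $(H\cB)/\cD(K,L)$ of the LO quotient, where $H(\cB)$ is regarded as an $A_\infty$-category with $\mu^j = 0$ for $j \neq 2$; the same identification holds for $\cD'$.

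Next, I would observe that the natural functor $\cB/\cD \to \cB/\cD'$ is tautologically length-filtered, and the induced map on $E_1$-pages is precisely the canonical comparison $(H\cB)/\cD \to (H\cB)/\cD'$ between LO quotients of $H(\cB)$. Since every object of $\cD'$ is a shift of a direct summand of an object of $\cD$, the collection $\cD'$ is split-generated by $\cD$ inside $H(\cB)$, so \Cref{lemma:quotient-properties}\eqref{item:split-closure-quotient} implies that $(H\cB)/\cD \to (H\cB)/\cD'$ is a quasi-equivalence. In particular the induced map on morphism complexes is a quasi-isomorphism, which is exactly the statement that $\cB/\cD \to \cB/\cD'$ is an $E_1$-equivalence.

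The only subtle point is the bookkeeping with the differentials: once the $E_0$-differential is correctly isolated as the purely internal $\mu^1_\cB$'s, identifying the $E_1$-differential with composition in $H(\cB)$ and ruling out contributions from $\mu^j$ with $j \geq 3$ is a degree-counting matter (they live on later pages of the spectral sequence). With that in hand, the lemma reduces to the already-known invariance of LO quotients under enlarging the subcategory of objects by shifts and direct summands.
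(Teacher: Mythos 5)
Your proposal is correct and takes essentially the same route as the paper: identify the $E_1$-page of the length filtration on $\cB/\cD(K,L)$ with the hom-complex of the LO quotient of the (formal) cohomology category $H(\cB)$ by the objects of $\cD$, observe that the shift-and-summand hypothesis persists at the cohomology level, and apply \Cref{lemma:quotient-properties}\eqref{item:split-closure-quotient} to conclude the induced map on $E_1$-pages is a quasi-isomorphism. The paper's write-up is terser about isolating the $E_0$- and $E_1$-differentials, but the argument is the same.
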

\begin{proof}
Let $K,L\in \cB$ and consider the filtered chain map 
\eq\label{eq:e1smdprecoh} (\cB/\cD)(K,L)\to (\cB/\cD')(K,L).\eeq
The induced map of the $E_1$ pages is 
\eq\label{equation:e-1-summand}
H(\cB)/H(\cD)(K,L) \to H(\cB)/H(\cD')(K,L). \eeq
spread over different bidegrees. Here, $H(\cB)$, $H(\cD)$ and $H(\cD')$ are cohomology categories, considered as formal $A_\infty$ categories, and $H(\cB)/H(\cD)$, $H(\cB)/H(\cD')$ are quotients as in \Cref{defn:quotientcategory} (in particular, they are not formal). \eqref{equation:e-1-summand} is induced by the natural functor \eq\label{eq:e-1smdfunctor} H(\mc{B})/ H(\mc{D}) \to H(\mc{B})/ H(\mc{D}')\eeq 
Notice, the cohomology categories inherit the property of $\cD$ and $\cD'$: namely every object of $H(\cD')$ is a shift of a direct summand of an object of $H(\cD)$. Therefore, \eqref{eq:e-1smdfunctor} is a quasi-equivalence, which follows from \Cref{lemma:quotient-properties}\eqref{item:split-closure-quotient} applied to cohomology categories. Note that this quasi-equivalence is not a priori an $E_1$-equivalence; on the other hand, we still have a quasi-isomorphism \eqref{equation:e-1-summand} of chain complexes. As \eqref{equation:e-1-summand} is the map induced by \eqref{eq:e1smdprecoh} on the $E_1$-page, \eqref{eq:e1smdprecoh} is an $E_1$-quasi-isomorphism. 
\end{proof}
\begin{rk}
Note that Lemma \ref{lemma:summand-e1} fails if one merely assumes $\cD$ split-generates $\cD'$. The passage to cohomological categories break exact triangles; therefore, such an assumption does not necessarily imply $H(\cD)$ split-generates $H(\cD')$. Indeed, if this were true, one would have filtered equivalences at the cohomology level, and as we will see next, this only holds after scaling. Note on the other hand, one can also include in $\cD'$ direct sums of (shifts of summands of) objects of $\cD$, and the same proof would work. 	
\end{rk}

The following proposition plays a fundamental role in the sequel.

\begin{prop}\label{prop:scalinginverse}
Let $\cB$ be an $A_\infty$ category and $\cD\subset \cB$. Let $\cD'$ be a subcategory of $\tw_{\leq l} \cD\subset \tw\cB$ such that $\cD\subset \cD'$. Then,  
\begin{equation}
\iota:\tw\cB/\cD\to \tw\cB/\cD'	
\end{equation}
is a scaling equivalence, i.e. for every $K,L\in \tw\cB/\cD$, the map $(\tw\cB/\cD)(K,L)\to (\tw\cB/\cD')(K,L)$ is a scaling equivalence, with a scaling factor of $l$.
\end{prop}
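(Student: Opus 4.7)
The approach is to apply \Cref{lemma:scaling-criterion}: one produces a chain-level left inverse $r: (\tw\cB/\cD')(K, L) \to (\tw\cB/\cD)(K, L)$ to $\iota$ which sends $F^p$ into $F^{lp}$. The map $\iota$ is manifestly filtered and, by \Cref{lemma:quotient-properties}\eqref{item:split-closure-quotient} applied to $\tw\cB$ (noting that $\cD$ generates, hence split-generates, $\cD'$ there), it is a quasi-isomorphism on hom-complexes. So only the existence of such an $r$ remains.

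To build $r$, I would fix for each $E \in \cD'$ a chosen presentation of $E$ as an object of $\tw_{\leq l}\cD$: a filtration $0 = E^0 \subset E^1 \subset \cdots \subset E^l = E$ by sub-twisted-complexes whose successive quotients $D^i_E := E^i / E^{i-1}$ are direct sums of shifts of objects of $\cD$. For $E \in \cD$ we take the trivial $1$-step filtration. Given a bar element
\[ \alpha = y_k \otimes x_{k-1} \otimes \cdots \otimes x_1 \otimes y_0 \in \tw\cB(E_k, L) \otimes \tw\cB(E_{k-1}, E_k) \otimes \cdots \otimes \tw\cB(K, E_1)[k] \]
of length $k$ with $E_i \in \cD'$, the refined element $r(\alpha)$ is obtained by inserting, at each intermediate $E_i$, the chain of $l$ sub-twisted-complexes $E_i^0 \subset \cdots \subset E_i^l$, and then decomposing each factor $\tw\cB(E_i, E_{i+1})$ into its matrix components relative to the additive decompositions of $E_i$ and $E_{i+1}$ into the constituents $D_{E_i, a} \in \cD$. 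This produces a sum of bar expressions all of whose intermediate objects lie in $\cD$ and whose length is at most $lk$; hence $r(F^k) \subseteq F^{lk}$. Because the trivial $1$-step filtration was chosen for $E \in \cD$, the refinement acts as the identity on bar-factors already indexed by $\cD$, so $r \circ \iota = \mathrm{id}$.

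The main obstacle will be verifying that $r$ strictly commutes with the bar differential (and is not merely chain-homotopic to something that does). This will require careful bookkeeping of the contributions of the twisted differentials $\delta_{E_i}$ together with the higher $A_\infty$-operations of $\tw\cB$; the key algebraic input is the Maurer--Cartan equation for each $\delta_{E_i}$ combined with the $A_\infty$-relations, ensuring that the pieces of the refined differential arising from breaking up an $E_i$ fit together compatibly with the bar-differential on $\alpha$. A more conceptual alternative would be to express $r$ as arising from a change-of-resolution in a two-sided bar model computing the LO quotient, thereby bypassing explicit combinatorial formulas. Either way, the heart of the argument is the scaling bound: every ``bar-chord'' passing through a $\cD'$-object can be refined into a chord through $\cD$-objects using at most $l$ times as many waypoints.
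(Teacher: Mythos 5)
Your proposal is correct and follows essentially the same route as the paper: both apply \Cref{lemma:scaling-criterion} by constructing an explicit chain-level left inverse $r$ with $r\circ\iota=\mathrm{id}$, which refines each bar factor passing through a $\cD'$-object into factors passing through $\cD$-objects, with the $l$-lower-triangularity of the objects of $\cD'$ bounding the number of insertions and hence yielding $r(F^p)\subseteq F^{lp}$. The paper gives the closed formula $r(x_p\otimes\cdots\otimes x_0)=\sum_{j_1,\dots,j_p}x_p\otimes\delta_{E_p}^{\otimes j_p}\otimes\cdots\otimes\delta_{E_1}^{\otimes j_1}\otimes x_0$ (interpreted componentwise), which is the precise version of your "insert the chain of sub-twisted-complexes and decompose into matrix components," and verifies the chain-map property via the Maurer--Cartan equation exactly as you anticipate.
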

Note that $\tw\cB/\cD$ refers to the quotient of $\tw\cB$ by $\cD$, and not $\tw(\cB/\cD)$, even though these categories are quasi-equivalent by \Cref{lemma:quotient-properties}\eqref{item:quotient-can-map}.
\begin{proof}	
We prove this by writing an explicit quasi-inverse
\begin{equation}
	r_{K,L}: (\tw\cB/\cD')(K,L) \to (\tw\cB/\cD)(K,L)
\end{equation} 
 to $\iota:(\tw\cB/\cD)(K,L)\to (\tw\cB/\cD')(K,L)$ that does not preserve the filtration, but scales it by at most $l$. The complex underlying $(\tw\cB/\cD')(K,L)$ is a direct sum of terms \begin{equation}\label{eq:quotsummandtwisted}
 	(\tw\cB)(E_p,L)\otimes (\tw\cB)(E_{p-1},E_p)\otimes\dots \otimes(\tw\cB)(K,E_1) 
 \end{equation} 
 where $E_i\in\cD'\subset\tw_{\leq l}\cD$ and $r_{K,L}$ sends $x_p \otimes \dots \otimes x_0 \in\eqref{eq:quotsummandtwisted}$ to 
\begin{equation}\label{equation:splitting-map}
 \sum_{j_1,\dots,j_p\in\bN} x_p \otimes \delta_{E_p}^{\otimes j_p} \otimes x_{p-1} \otimes \dots \otimes \delta_{E_1}^{\otimes j_1} \otimes x_0,
\end{equation}
To explain this further, recall that each $E_i$ is a formal direct sum of objects of $\cD$, and hom-sets such as $(\tw\cB)(E_{i-1},E_i)$ are direct sums of hom-sets between the summands. One can find it convenient to think of these hom-sets consisting of matrices with values in hom-sets of $\cB$. One has $\delta_{E_k}\in (\tw\cB)(E_k,E_k)$, and we use $\delta_{E_k}^{\otimes i}$ to denote  $\delta_{E_k} \otimes \dots \otimes \delta_{E_k}$ ($i$ times). More precisely, this refers to tensor product of components of $\delta_{E_k}$. 

For instance, assume $p=1$ and $E_1=\{X\xrightarrow{a} Y\xrightarrow{b} Z \}$ be the given twisted complex (call the $X\to Z[-1]$ component $h$). The components of $\delta_{E_1}$ are given by $a,b,h$, and this twisted complex lies in $\tw_{\leq 3}\cD$. For simplicity, assume $K,L\in\cB$, and for a given $m\in\cB(K,E_1)=\cB(K,X)[2]\oplus \cB(K,Y)[1]\oplus \cB(K,Z)$, denote the respective components by $m_X,m_Y,m_Z$ (and similarly for $n\in\cB(E_1,L)$). Consider $n\otimes m\in \cB(E_1,L)\otimes \cB(K,E_1)$. Then, $\delta_{E_1}^{\otimes 0}$ components of $r_{K,L}(n\otimes m)$ are given by $n_X\otimes m_X\in \cB(X,L)\otimes \cB(K,X)$, $n_Y\otimes m_Y\in \cB(Y,L)\otimes \cB(K,Y)$ and $n_Z\otimes m_Z\in \cB(Z,L)\otimes \cB(K,Z)$. The $\delta_{E_1}^{\otimes 1}$ components of $r_{K,L}(n\otimes m)$ are given by
\begin{align*}
	n_Y\otimes a\otimes m_X\in \cB(Y,L)\otimes\cB(X,Y)\otimes \cB(K,X) \\
	n_Z\otimes b\otimes m_Y \in \cB(Z,L)\otimes\cB(Y,Z)\otimes \cB(K,Y) \\
	n_Z\otimes h\otimes m_X\in \cB(Z,L)\otimes\cB(X,Z)\otimes \cB(K,X)
\end{align*}
and the only $\delta_{E_1}^{\otimes 2}$ component is given by 
\begin{equation*}
		n_Z\otimes b\otimes a\otimes m_X\in \cB(Z,L)\otimes\cB(Y,Z)\otimes \cB(X,Y)\otimes \cB(K,X)
\end{equation*}
More general case is similar. 

Since each $E_i$ is $l$-lower triangular, $r_{X,Y}$ maps $F^p(\tw\cB/\cD')(K,L)$ to $F^{pl}(\tw\cB/\cD)(K,L)$. Note that if instead $E_i\in\cB$ and we were working with bounding cochains $\delta_{E_i}\in \cB^1(E_1,E_1)$, \eqref{equation:splitting-map} would be a more precise notation, whereas in our situation it should be understood as a matrix multiplication. 

The fact that $r_{K,L}$ is a chain map is a rather tedious verification and ultimately follows from the Maurer-Cartan equation. It is also clear that $r_{K,L}\circ \iota_{K, L}$ is the identity, and $\iota_{K, L}$ is a quasi-isomorphism. Therefore, $\iota_{K, L}$ is a scaling equivalence by \Cref{lemma:scaling-criterion}. 
\end{proof}

\subsection{Filtrations via colimits and comparison with the localization filtration}\label{subsection:spherical-filtration}
Let $\cB$ be an $A_\infty$ category and let $f:\cA\to \cB$ be a spherical functor with right, resp.\ left adjoint $r$, resp.\ $l$. For simplicity assume $\cB$ is pre-triangulated. Let $S$ denote the corresponding spherical twist $cone (fr\to id_\cB)$ and let $s:id_\cB\to S$ be the canonical natural transformation. $S^k$ defines an $A_\infty$-bimodule via $\cS^k=\cB(\cdot, S^k(\cdot))$ and $s$ induces bimodule homomorphisms $\cB=\cS^0\to \cS^1\to \cS^2\to \dots$. Let $\cD$ denote the image $f(\cA)$. The goal of this section is to prove the following;
\begin{thm}\label{thm:spherical}
$\cB/\cD$, considered as a filtered bimodule over $\cB$, is $E_1$-quasi-isomorphic to $\hocolim_k \cS^k$.  
\end{thm}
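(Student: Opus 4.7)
The plan is to construct an explicit filtered bimodule map $\Phi: \hocolim_k \cS^k \to \cB/\cD$ (or a zigzag of such) and then verify that $\Phi$ is an $E_1$-quasi-isomorphism by comparing associated gradeds. The underlying unfiltered quasi-isomorphism $\cB/\cD(K,L) \simeq \colim_k \cB(K, S^k L)$ is essentially standard for a spherical functor, since the spherical twist $S$ models the ``wrap-once'' operation in the quotient; the content of the theorem is that this equivalence can be refined to an equivalence of filtered bimodules that becomes an isomorphism already on the $E_1$-page.

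First, I would analyze the two filtrations level-by-level. The length filtration of the Lyubashenko--Ovsienko model gives
$$\op{Gr}^p \cB/\cD(K,L) \;=\; \bigoplus_{E_1,\dots,E_p \in \cD} \cB(E_p,L) \otimes \cB(E_{p-1},E_p) \otimes \cdots \otimes \cB(K,E_1)[p],$$
equipped only with the internal differentials; the $E_1$-page is obtained by passing to cohomology in $\cB$ term-by-term. For the hocolim, using the definition $F^p = \op{cone}\bigl(\bigoplus_{i<p}\cS^i \to \bigoplus_{i\leq p}\cS^i\bigr)$, one computes $\op{Gr}^p \hocolim_k \cS^k \simeq \op{cone}(\cS^{p-1} \to \cS^p)$, and the defining spherical triangle $fr \to \op{id}_\cB \to S$ applied to $S^{p-1}$ identifies this (up to shift) with the bimodule $\cB(\cdot,\, fr\, S^{p-1}(\cdot))$.

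Next I would construct the map $\Phi$ using the spherical adjunction $\cB(f(A),B) \simeq \cA(A,r(B))$ together with the natural transformations $s_{S^k}: \cS^k \to \cS^{k+1}$. The key observation is that iterating the spherical triangle $p$ times equips $S^{p}L$ with a Postnikov-type filtration whose successive cones lie in $\cD$, since each application of $fr$ lands in the image of $f$; unraveling this filtration together with the adjunction reproduces exactly the length-$p$ bar piece on the quotient side. At chain level one can make this precise by choosing, inductively in $k$, compatible cone representatives for $S^k L$ built out of iterated $fr$-insertions; the resulting data assemble into a bimodule map that strictly respects the filtrations by construction, with the coherence needed to define it as an $A_\infty$-bimodule map supplied by the $A_\infty$-structure of the adjunction.

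The main obstacle is the verification that $\op{Gr}^p \Phi$ induces an isomorphism in cohomology for every $p$, which by \Cref{proposition:convergence} is equivalent to $\Phi$ being an $E_1$-equivalence. This reduces to showing that the natural map from the length-$p$ bar piece to $H^*(\cB)(\cdot,\, fr\, S^{p-1}\cdot)$ is an isomorphism, which I would prove by induction on $p$ using the spherical triangle and the adjunction. The essential algebraic input is \Cref{lem:sdvanish} and \Cref{lem:discimagevanish}: the fact that $s: \op{id} \to S$ becomes null-homotopic after pre- or post-composition with an object of $\cD$ ensures that the successive extensions in the Postnikov tower split cohomologically, so that no ``cross terms'' appear that would prevent the comparison map from being an isomorphism on each associated graded. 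The remaining work is careful bookkeeping of shifts, signs, and summation conventions when all the adjunctions are unfolded; this is genuinely $A_\infty$-theoretic rather than triangulated, since coherent higher homotopies must be tracked at every stage of the iteration.
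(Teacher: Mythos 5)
Your reduction to associated gradeds does not work, and the failure is not a bookkeeping issue but a structural one. An isomorphism on $E_1$-pages (that is, $\op{Gr}^p\Phi$ a quasi-isomorphism for all $p$) is a strictly stronger condition than the $E_1$-equivalence asserted in the theorem: by the paper's definition, an $E_1$-quasi-isomorphism need only induce an isomorphism on the $E_2$-page, i.e.\ after taking cohomology of the $E_1$-page with respect to its internal $\delta_1$-differential. Your citation of \Cref{proposition:convergence} conflates the two. More seriously, the stronger statement you aim for is false: the $E_1$-pages of the two sides genuinely differ. On the quotient side, the $E_1$-page in filtration degree $p$ is the length-$p$ bar term $\bigoplus H(\cB)(E_p,L)\otimes H(\cB)(E_{p-1},E_p)\otimes\cdots\otimes H(\cB)(K,E_1)$, whereas on the hocolim side it is $H\bigl(\cB(K, frS^{p-1}L)\bigr)[1]$. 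Already for $\cD=\{D\}$ a single spherical object and $p=2$, the former has an extra factor of $H(\cB)(D,D)$ that the latter lacks (one sees $H\bigl(\cB(K, frS(L))\bigr)$ via the triangle $rfr(L)\to r(L)\to r(S(L))$, and the map $rfr(L)\to r(L)$ is split surjective, so no doubling occurs). So no filtered map $\Phi$, however cleverly constructed, can induce an isomorphism on associated gradeds; the two spectral sequences only agree from the $E_2$-page onward.

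This is precisely why the paper does not attempt a direct map at all, but instead proves the more flexible \Cref{prop:colimisloc} and zigzags through an auxiliary object: it shows $\hocolim_k \cS^k \to \hocolim_k(\cS^k/\cD)$ and $\cB/\cD \to \hocolim_k(\cS^k/\cD)$ are each $E_1$-quasi-isomorphisms, using \Cref{lem:colimitdepth} (boundary-depth of the hocolim from \Cref{lem:discimagevanish}) and \Cref{lem:iteratedconequasi} (iterated $r$-cones over $E_r$-acyclic pieces) to kill the excess on the $E_2$-page rather than the $E_1$-page. Your analysis correctly identifies the essential algebraic inputs (\Cref{lem:sdvanish}, \Cref{lem:discimagevanish}) and the associated-graded computation on the hocolim side, but the comparison must happen one page later, and the intermediate object $\hocolim_k(\cS^k/\cD)$ is exactly what makes that possible. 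I would also flag that your chain-level construction of $\Phi$ (choosing ``compatible cone representatives for $S^kL$ built out of iterated $fr$-insertions'') is far from routine: producing a genuine $A_\infty$-bimodule map out of this data requires exhibiting all higher homotopies, and the paper's zigzag, which only ever uses the canonical quotient map and the inclusion at $k=0$, sidesteps this entirely.
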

\begin{cor}\label{cor:spherical}
The complexes $\hocolim_k \cB(K,S^k(L))$ and $\cB/\cD(K,L)$ are $E_1$-quasi-isomorphic.	
\end{cor}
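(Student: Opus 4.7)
The corollary is immediate from \Cref{thm:spherical} by evaluating the filtered bimodule equivalence at the pair $(K,L)$, so the substance lies in proving \Cref{thm:spherical}. The plan is to match the associated gradeds of both sides. Since both filtrations are non-negative and exhausting, convergence of the spectral sequences is automatic (\Cref{proposition:convergence}), and by \Cref{lemma:E_r-equiv-map} it suffices to produce a filtered bimodule morphism (possibly via a zigzag) that induces a quasi-isomorphism on the $E_1$-page.

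On the hocolim side, using the model of \Cref{definition:hocolim}, the $p$-th graded piece $F^p/F^{p-1}$ of $\hocolim_k \cS^k$ is quasi-isomorphic to the bimodule cofiber $\op{cone}(\cS^{p-1} \to \cS^p)$. Applying $\cB(\cdot,-)$ to the fundamental spherical triangle $fr \to \op{id} \to S$ composed with $S^{p-1}$ identifies this cofiber with $\cB(\cdot, frS^{p-1}(\cdot))[1]$. Iterating the same triangle exhibits $S^{p-1}$ as a successive extension of the $k$-fold convolutions of $fr$ (with appropriate shifts) for $k = 0, \dots, p-1$, so the $p$-th graded piece is built from a model of the $p$-fold convolution of $fr$, shifted by $[p]$.

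On the bar side, the $p$-th graded piece of the length filtration on $\cB/\cD$ is
\begin{equation*}
\bigoplus_{E_1, \dots, E_p \in \cD} \cB(E_p, \cdot) \otimes \cB(E_{p-1}, E_p) \otimes \cdots \otimes \cB(\cdot, E_1)[p],
\end{equation*}
carrying only the internal $\mu^1$ differentials at the $E_0$ stage. Since $\cD = f(\cA)$, the adjunction $\cB(f(A), -) \simeq \cA(A, r(-))$ rewrites this piece as an iterated bimodule convolution over $\cA$ of the graph bimodule associated to $f$; after taking cohomology in the internal direction, it assembles into a model of the same $p$-fold convolution of $fr$ shifted by $[p]$. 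Thus both $E_1$-pages are manifestly built from the same data.

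To upgrade this matching of associated gradeds into an honest filtered bimodule equivalence, the cleanest approach is to produce a direct map $\cB/\cD \to \hocolim_k \cS^k$: a bar element $x_p \otimes \phi_{p-1} \otimes \cdots \otimes \phi_1 \otimes x_0$ with each intermediate object in $\cD$ can be assembled, using the adjunction and iterated application of the natural transformation $fr \to \op{id}$, into an element of $\cB(\cdot, S^p(\cdot))$ placed in filtration level $p$ of the hocolim. The main obstacle will be the chain-level bookkeeping: signs, shifts, and the interaction of this map with higher $A_\infty$ structure must all be checked carefully. An alternative that bypasses some of this complexity is to route the comparison through a common two-sided bar complex formed from iterated convolutions of $fr$, which comes equipped with filtered $E_1$-equivalences to both $\cB/\cD$ and $\hocolim_k \cS^k$ by the computations above. \Cref{cor:spherical} then follows by evaluating at $(K,L)$.
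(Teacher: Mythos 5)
Your opening observation is correct and matches the paper: \Cref{cor:spherical} is stated without separate argument precisely because it is obtained by evaluating the filtered bimodule equivalence of \Cref{thm:spherical} at the pair $(K,L)$. The substance is indeed in the theorem, and there your sketch both diverges from the paper and has genuine gaps. The paper does not attempt a direct comparison of the two $E_1$-pages, nor does it build a one-step map from bar elements to elements of $\cB(K, S^p(L))$. Instead it proves the more general \Cref{prop:colimisloc}, routing the comparison through a specific intermediary $\hocolim_k \cB(\cdot, L^k)/\cD$ (with $L^k = S^k(L)$) equipped with its total bifiltration: the two arms of the zigzag
\begin{equation*}
\cB/\cD(\cdot,L) \longrightarrow \hocolim_k \cB(\cdot,L^k)/\cD \longleftarrow \hocolim_k \cB(\cdot,L^k)
\end{equation*}
are each shown to be $E_1$-quasi-isomorphisms by boundary-depth estimates and iterated $r$-cone arguments (\Cref{lem:colimitdepth}, \Cref{lem:iteratedconequasi}). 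This intermediary is different from, and in practice more tractable than, the bar complex of $(fr)$-convolutions you propose; both maps in the zigzag are the obvious ones, and the work is in showing they are $E_1$-quasi-isomorphisms.

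The central missing idea in your sketch is the cohomological vanishing input. Beyond the triangle condition (which you do use via $fr \to \op{id} \to S$ to place the successive cones in $\cD$), \Cref{prop:colimisloc} requires that $\cB(D, L^k) \to \cB(D, L^{k+1})$ vanish in cohomology for every $D \in \cD$; in the spherical case this is supplied exactly by \Cref{lem:sdvanish} and \Cref{lem:discimagevanish}. Your argument never invokes this, and without it the two filtered objects are not $E_1$-equivalent: the $E_1$-page of $\cB/\cD$ carries an uncollapsed bar complex over $H(\cD)$ while the $E_1$-page of $\hocolim_k \cS^k$ carries $H(\cB(\cdot, frS^{p-1}(\cdot)))[1]$, and the bridge between them runs precisely through this vanishing. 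Finally, you never actually construct the filtered bimodule morphism that you say would upgrade the associated-graded comparison into a proof; you flag the chain-level bookkeeping as the obstacle, but that bookkeeping \emph{is} the content of the theorem, and the paper circumvents it by choosing the intermediary above rather than attempting the map you describe.
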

\Cref{thm:spherical} will follow from \Cref{lem:sdvanish} and \Cref{prop:colimisloc} below. 
\begin{rk}\label{rk:generalfunctor}
Note that one can generalize \Cref{thm:spherical} to any $S$ equipped with a transformation $s:id_\cB\to S$ satisfying
\begin{enumerate}
	\item $[s_D]\in H(\cB)(D,S(D))$ vanishes
	\item $cone (s_L)\in \cD$
\end{enumerate}
for some subcategory $\cD\subset \cB$. Also notice this result is exact: no scaling or shift is needed. As we have seen in \Cref{prop:scalinginverse}, changing $\cD$ in its twisted envelope normally changes the filtration on cohomology by scaling. The first condition above can be thought as $\cD$ being small enough: including extensions of elements of $\cD$ by themselves will break this condition. On the other hand, the second condition means $\cD$ is sufficiently large.
\end{rk}

\begin{prop}\label{theorem:spherical-tensor-equiv}\label{prop:colimisloc}
Let $\cD \sub \cB$ be a set of objects. Let $L=L^0 \to L^1 \to L^2 \to \dots$ be a sequence of morphisms. Suppose that this data satisfies the following properties:  
\begin{enumerate}
\item\label{colimpropcond:vanishing} for every $D\in \cD$, the induced map $\cB(D, L^k) \to \cB(D, L^{k+1})$ vanishes in cohomology
\item\label{colimpropcond:cone} the cone of $L^k \to L^{k+1}$ is quasi-isomorphic to an object of $\cD$ 
\end{enumerate}
Then the filtered modules $\hocolim_k \cB(\cdot, L^k)=\hocolim_k h_{L_k}$ and $(\cB/\cD)(\cdot, L)$ are $E_1$-quasi-isomorphic. Similarly, if we instead consider a sequence $\dots \to L^2 \to L^1 \to L^0=L$ such that $\cB(L^k, D) \to\cB(L^{k+1}, D)$ vanishes in cohomology and the cone of $L^{k+1} \to L^k$ is contained in $\cD$, then $\hocolim_k \cB(L^k,\cdot)=\hocolim_k h^{L_k}$ and the left quotient $(\cB/\cD) (L,\cdot)$ are $E_1$-quasi-isomorphic.
\end{prop}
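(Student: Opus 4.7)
The plan is to construct a filtered bimodule morphism
\[
\Phi \colon \hocolim_k \cB(\cdot, L^k) \longrightarrow (\cB/\cD)(\cdot, L)
\]
and show it is an $E_1$-quasi-isomorphism by bounding the boundary depth of $\op{cone}_1(\Phi)$ by one, so that \Cref{lem:bdrdepthimplieseracyclic} forces $E_1$-acyclicity. For the construction, I will use hypothesis (2) to iteratively realize each $L^k$ as a twisted complex $T^k \in \tw_{\leq k+1}(\{L\} \cup \cD)$, with $L$ at the bottom level and the successive cones $D^0, \ldots, D^{k-1} \in \cD$ on the levels above; the components of $\delta_{T^k}$ are chosen lifts of the connecting maps $D^{i-1} \to L^{i-1}[1]$ coming from the triangles $L^{i-1} \to L^i \to D^{i-1}$. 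A closed morphism $K \to L^k$ then corresponds to a tuple of components $\alpha \in \cB(K, L)$ and $\beta_i \in \cB(K, D^i)$ (up to shifts) satisfying the Maurer--Cartan constraints coming from $\delta_{T^k}$. I will define
\[
\Phi_k \colon \cB(\cdot, L^k) \to F^k(\cB/\cD)(\cdot, L)
\]
by the signed sum, over $0 \leq j \leq k$, of length-$j$ bar elements obtained by concatenating $\beta_{k-j}$ with the successive connecting morphisms of $\delta_{T^k}$ ending at $L$. The $\Phi_k$ should commute with the inclusion maps $\cB(\cdot, L^k) \to \cB(\cdot, L^{k+1})$ and $F^k \hookrightarrow F^{k+1}$ up to natural homotopies, and hence assemble, via the universal property of $\hocolim$ (\Cref{definition:hocolim}), into the desired filtered bimodule morphism $\Phi$.

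First, I will verify that $\Phi$ induces an isomorphism on abutments: since the cone of $L^k \to L^{k+1}$ lies in $\cD$, this map becomes invertible in $\cB/\cD$, so $L \simeq L^k$ in $\cB/\cD$ for every $k$, and
\[
H(\hocolim_k \cB(K, L^k)) = \colim_k H(\cB)(K, L^k) \xrightarrow{\sim} H(\cB/\cD)(K, L),
\]
with this last map realized by $\Phi$. Next, I will bound the boundary depth of $\op{cone}_1(\Phi)$. By hypothesis (1) and \Cref{lem:colimitdepth}, the evaluation $\hocolim_k \cB(D, L^k)$ has boundary depth $1$ for each $D \in \cD$; dually, $(\cB/\cD)(D, L)$ is contractible in a comparable fashion, since any closed bar element lying over $D$ in $F^p$ can be bounded by an element in $F^{p+1}$ by propagating hypothesis (1) through the bar differential. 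An induction on bar length will glue these two depth estimates to produce, for every closed $x \in F^p \op{cone}_1(\Phi)$, a primitive $y \in F^{p+1} \op{cone}_1(\Phi)$; \Cref{lem:bdrdepthimplieseracyclic} then concludes the $E_1$-acyclicity.

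The main obstacle I expect is the rigorous construction of $\Phi$ in the first step, specifically the bookkeeping for signs, shifts, and higher $A_\infty$-products needed to promote the bar-level formula to an honest bimodule morphism rather than just a chain map on hom-complexes. A potentially more efficient alternative is to present $\hocolim_k \cB(\cdot, L^k)$ as an iterated filtered $r$-cone via \Cref{lem:iteratedconequasi} applied to the successive cones $\cB(\cdot, D^i)$, and then produce a zigzag of $E_1$-equivalences with the bar model of $(\cB/\cD)(\cdot, L)$ without writing $\Phi$ explicitly; the hypothesis (1) would still intervene at the same point to guarantee that the pieces entering the iterated cone are $E_1$-acyclic. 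The dual statement involving $\hocolim_k \cB(L^k, \cdot)$ and the left quotient follows by the symmetric argument, interchanging sources and targets throughout.
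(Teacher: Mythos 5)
Your primary plan is to construct a direct filtered morphism $\Phi\colon \hocolim_k \cB(\cdot, L^k) \to (\cB/\cD)(\cdot, L)$ and then show that $\op{cone}_1(\Phi)$ has boundary depth $\leq 1$, invoking \Cref{lem:bdrdepthimplieseracyclic}. This is strictly stronger than what is being claimed (and than what the paper proves): boundary depth $\leq 1$ implies $E_1$-acyclicity, but not conversely, and in fact the filtered complexes in play here are not expected to have bounded boundary depth. In particular your assertion that ``$(\cB/\cD)(D, L)$ is contractible in a comparable fashion, since any closed bar element lying over $D$ in $F^p$ can be bounded by an element in $F^{p+1}$ by propagating hypothesis~(1)'' is where this breaks: hypothesis~(1) governs the transition maps $\cB(D,L^k)\to\cB(D,L^{k+1})$ in the directed system, but $(\cB/\cD)(D,L)$ is the bar complex with $D\in\cD$ as the \emph{source}, whose acyclicity is a formal consequence of the bar resolution and has nothing to do with~(1). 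What the paper actually establishes about that complex is only $E_1$-acyclicity (via comparison of $E_1$-pages with the bar resolution over $H(\cD)$), not boundary depth $\leq 1$; the boundary-depth-$1$ statement that hypothesis~(1) genuinely delivers (via \Cref{lem:colimitdepth}) is for $\hocolim_k \cB(D,L^k)$, a different complex. Consequently the ``induction on bar length'' that is supposed to glue the two depth estimates on $\op{cone}_1(\Phi)$ has no viable base, and $E_r$-acyclicity does not inherit under cone-gluing the way boundary depth would, so this inductive scheme cannot recover even the weaker conclusion.

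The alternative route you sketch at the end is much closer to the paper's actual argument, but as written it also has a gap. You propose to present $\hocolim_k\cB(\cdot,L^k)$ as an iterated $r$-cone of the pieces $\cB(\cdot, D^i)$ and apply \Cref{lem:iteratedconequasi}. But $\cB(\cdot, D^i)$ for $D^i\in\cD$ is a representable module, not $E_1$-acyclic in general, so the hypothesis of \Cref{lem:iteratedconequasi} fails on the nose. The missing ingredient is the intermediary $\hocolim_k\big((\cB/\cD)(\cdot,L^k)\big)$: one should first show (using hypothesis~(1) plus \Cref{lem:colimitdepth} to get boundary depth $1$ on the building blocks, hence $E_1$-acyclicity of the positive-length part of the bar complex) that $\hocolim_k\cB(\cdot,L^k)\to \hocolim_k(\cB(\cdot,L^k)/\cD)$ is an $E_1$-quasi-isomorphism, and separately that the cones of $(\cB/\cD)(\cdot,L^k)\to(\cB/\cD)(\cdot,L^{k+1})$ are $E_1$-acyclic because $h_{D^k}/\cD$ is. Only then does \Cref{lem:iteratedconequasi} apply, relating $(\cB/\cD)(\cdot,L)$ to $\hocolim_k(\cB(\cdot,L^k)/\cD)$ and closing the zigzag. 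In short: the quotient by $\cD$ must be performed \emph{before} invoking the iterated-cone lemma, and the comparison is a zigzag of $E_1$-equivalences, not a single morphism with a boundary-depth estimate.
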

\begin{rk}
The conditions (\ref{colimpropcond:vanishing}) and (\ref{colimpropcond:cone}) are analogous to the conditions in \Cref{rk:generalfunctor}, the former is a smallness condition on $\cD$, whereas the latter is a largeness.
\end{rk}
\begin{rk}
The left module version is more naturally stated in terms of the left quotient by $\cD$, namely, the above mentioned colimit is $E_1$-quasi-isomorphic to the left quotient of $h^L$ by $\cD$. On the other hand, left and right quotients of the diagonal bimodule $\cB$ coincide, and it does not matter whether we plug in $L$ first or take the quotient first; therefore, this left quotient is the same as $(\cB/\cD) (L,\cdot)$. 
\end{rk}
We will prove \Cref{prop:colimisloc} in three steps:
\begin{enumerate}
	\item\label{colimitfiltrpropsteps:colimiscolimquot} show that $\hocolim_k \cB(\cdot, L^k)$ is $E_1$-quasi-isomorphic to $\hocolim_k \cB(\cdot, L^k)/\cD$
	\item\label{colimitfiltrpropsteps:cones} the cones of maps $\cB(\cdot, L^k)/\cD\to \cB(\cdot, L^{k+1})/\cD$ are $E_1$-acyclic 
	\item\label{colimitfiltrpropsteps:trivialcolimit} use this to deduce $h_L/\cD=\cB(\cdot, L^0)/\cD$ is $E_1$-quasi-isomorphic to $\hocolim_k(\cB(\cdot, L^k)/\cD)$, where the latter is filtered both by length and $k$
\end{enumerate}
We start with \eqref{colimitfiltrpropsteps:colimiscolimquot}:
\begin{lem}\label{lem:colimitfiltrpropstepscolimiscolimquot}
The natural induced map $\hocolim_k\cB(\cdot,L^k)\to \hocolim_k\cB(\cdot,L^k)/\cD$ is an $E_1$-quasi-isomorphism.
\end{lem}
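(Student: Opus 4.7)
My plan is to identify the cone of the inclusion with a filtered complex that is $E_1$-acyclic with respect to the length filtration. Endow the target $\hocolim_k \cB(\cdot, L^k)/\cD$ with the filtration $F^p \hocolim_k(\cB(\cdot, L^k)/\cD) := \hocolim_k F^p(\cB(\cdot, L^k)/\cD)$ inherited from the length filtration on each quotient bar complex, and endow the source with the trivial filtration (all concentrated in length $0$). The natural map is then filtered since $\cB(\cdot, L^k) = F^0(\cB(\cdot, L^k)/\cD)$. Because the length filtration on each $\cB(\cdot, L^k)/\cD$ splits degreewise as a direct sum of length-$m$ summands and our cone model of $\hocolim$ is additive, $\hocolim_k$ commutes with the formation of associated gradeds:
\begin{equation*}
\mathrm{Gr}^p \hocolim_k (\cB(\cdot, L^k)/\cD) \;\simeq\; \hocolim_k \mathrm{Gr}^p(\cB(\cdot, L^k)/\cD).
\end{equation*}

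For $p = 0$, the right-hand side is precisely $\hocolim_k \cB(\cdot, L^k)$, and the induced map is tautologically the identity. For $p \geq 1$, the bar construction yields
\begin{equation*}
\mathrm{Gr}^p(\cB(\cdot, L^k)/\cD)(K) \;=\; \bigoplus_{D_1, \dots, D_p \in \cD} \cB(D_p, L^k) \otimes \cB(D_{p-1}, D_p) \otimes \dots \otimes \cB(K, D_1)[p],
\end{equation*}
so all the $k$-dependence sits in the leftmost factor $\cB(D_p, L^k)$. Since $\hocolim_k$ commutes with direct sums and with tensoring by a fixed complex, I obtain
\begin{equation*}
\hocolim_k \mathrm{Gr}^p(\cdots) \;\simeq\; \bigoplus_{D_1,\dots,D_p} \bigl(\hocolim_k \cB(D_p, L^k)\bigr) \otimes \cB(D_{p-1}, D_p) \otimes \dots \otimes \cB(K, D_1)[p].
\end{equation*}

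The key input is now hypothesis \eqref{colimpropcond:vanishing}: each structure map $\cB(D, L^k) \to \cB(D, L^{k+1})$ vanishes in cohomology for every $D \in \cD$. Applying \Cref{lem:colimitdepth} with $d = 1$, the complex $\hocolim_k \cB(D_p, L^k)$ has boundary depth $1$, and (working over a field) combining with \Cref{lem:bdrdepthimplieseracyclic} and convergence this forces acyclicity. Tensoring an acyclic complex with any complex over a field preserves acyclicity, so $\mathrm{Gr}^p$ of the target is acyclic for every $p \geq 1$, while the map induces the identity on $\mathrm{Gr}^0$. The map is therefore a quasi-isomorphism on every associated graded piece, an even stronger statement than $E_1$-quasi-isomorphism. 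The only bookkeeping to watch is the compatibility of $\hocolim_k$ with the length-graded decomposition and with tensor product by a fixed complex; both hold because the cone model is additive and the length filtration splits degreewise, so there is no real obstacle beyond assembling these observations.
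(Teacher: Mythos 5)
Your argument uses the wrong pair of filtrations, and this is not a cosmetic issue: it means you prove a different (and, for the purposes of \Cref{prop:colimisloc}, insufficient) statement. In the lemma the source $\hocolim_k\cB(\cdot,L^k)$ is supposed to carry the colimit filtration \eqref{equation:hocolim-filtration} (this is the filtration whose growth we ultimately want to compute), and the target $\hocolim_k\cB(\cdot,L^k)/\cD$ is supposed to carry the \emph{total} filtration combining the colimit degree $k$ and the bar length $l$. Indeed, Step (3) of the surrounding proof explicitly says the intermediate object ``is filtered both by length and $k$''; the chain of $E_1$-equivalences $\hocolim_k\cB(\cdot,L^k) \rightsquigarrow \hocolim_k\cB(\cdot,L^k)/\cD \rightsquigarrow (\cB/\cD)(\cdot,L)$ only composes if the middle object is given this total filtration and the outer objects carry their respective colimit and length filtrations. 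You instead put the trivial filtration on the source and the length filtration alone on the target. With those choices your computation is correct and shows the map is a quasi-isomorphism on length-associated-gradeds, hence a quasi-isomorphism; but that is not the $E_1$-quasi-isomorphism with respect to (colimit, total) that the proposition actually needs.

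The missing content is exactly the extra precision the paper tracks. You do observe, correctly, that $\hocolim_k\cB(D,L^k)$ has boundary depth $1$ by \Cref{lem:colimitdepth}; but you then discard that information and keep only acyclicity, tensor with a fixed complex, and conclude acyclicity of the length-graded pieces. The paper's proof keeps the boundary depth: it shows that each length-$l$ piece $A_l$ (for $l>0$), \emph{with the colimit filtration inherited from $\cN=\hocolim_k\cB(\cdot,L^k)$}, is $E_1$-acyclic (boundary depth $1$), which is strictly stronger than acyclicity. This is the hypothesis of \Cref{lem:iteratedconequasi}, which is then invoked to conclude that $A_0 = \cN(L)$ maps by an $E_1$-quasi-isomorphism into the iterated $1$-cone, and the paper then identifies the filtration on that iterated $1$-cone with the total filtration on $(\cN/\cD)(L)$. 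To repair your proof you would need to (i) carry the colimit filtration on the source and the total filtration on the target throughout, (ii) upgrade ``$\hocolim_k \cB(D_p,L^k)$ is acyclic, so the tensor is acyclic'' to ``each $A_l$ has boundary depth $1$ for the colimit filtration'' (this is where you should use \Cref{lem:colimitdepth} applied to the hocolim of the full tensor product $\cB(D_l,L^k)\otimes\cdots$, not just to $\cB(D_p,L^k)$ alone), and (iii) invoke \Cref{lem:iteratedconequasi} to produce the $E_1$-quasi-isomorphism with the correct filtration on the target. As written, your argument shows a quasi-isomorphism but cannot feed into the filtered conclusion that \Cref{prop:colimisloc} requires.
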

\begin{proof}
Denote the non-negatively filtered $A_\infty$-module $\hocolim_k\cB(\cdot, L^k)$ by $\cN$. By Lemma \ref{lem:colimitdepth}, the filtered complex
\begin{equation}
\cN(D_l)\otimes 	\cB(D_{l-1},D_l)\otimes \dots \otimes \cB(L,D_1)\cong \hocolim_k (\cB(D_l,L^k)\otimes 	\cB(D_{l-1},D_l)\otimes \dots \otimes \cB(L,D_1))
\end{equation}
is $E_1$ acyclic, for a fixed set of $D_1,\dots ,D_l\in\cD$, if $l\geq 1$. The vanishing condition in \Cref{lem:colimitdepth} follows from the first assumption in \Cref{prop:colimisloc}.

Define $A_l$ to be the filtered complex
\begin{equation}
\bigoplus_{D_i\in \cD} \cN(D_l)\otimes 	\cB(D_{l-1},D_l)\otimes \dots \otimes \cB(L,D_1)	
\end{equation}
Here the sum varies over $D_1,\dots, D_l\in \cD$, where $l$ is fixed (define $A_0:=\cN(L)$). In other words, this is the length $l$ part of $(\cN/\cD)(L)$. Clearly, $A_i$ is also $E_1$-acyclic if $i>0$. 

One has a sequence of filtered chain maps 
\begin{equation}
\dots A_3\to A_2\to A_1 \to A_0	
\end{equation}
and higher homotopies extending this to an ``infinite twisted complex'' (which are just the components of the differential on $\bigoplus A_l[l]=(\cN/\cD)(L)$). This sequence satisfies the conditions of Lemma \ref{lem:iteratedconequasi}; hence, its $1$-iterated cone is $E_1$-quasi-isomorphic to $A_0=\cN(L)$. 

Moreover, the $1$-iterated cone satisfies
\begin{equation}
	F^p(\bigoplus A_l[l])=\bigoplus F^{p-l}A_l[l]
\end{equation}
Hence, the filtration on it coincides with the total filtration on $(\cN/\cD)(L)$ that combines the length and colimit filtrations. 

This show that $(\cN/\cD)(L)$, with its total filtration, is $E_1$-quasi-isomorphic to $\cN(L)$, where the natural quotient map from the latter is the equivalence. This completes the proof.   
\end{proof}
Now we turn to showing property (\ref{colimitfiltrpropsteps:cones}). By (\ref{colimpropcond:cone}) in \Cref{prop:colimisloc}, $D^k:=cone(L^k\to L^{k+1})\in \cD$. Therefore, the cone of the morphism $\cB(\cdot, L^k)\to \cB(\cdot, L^{k+1})$ is quasi-isomorphic to $h_{D^k}$, and 
\begin{equation}
	\cB(\cdot, L^k)/\cD\to \cB(\cdot, L^{k+1})/\cD\simeq h_{D^k}/\cD
\end{equation}
with the induced localization filtrations. The following lemma finishes the proof:
\begin{lem}
For any $D\in \cD$, $h_D/\cD$ is $E_1$-acyclic. 	
\end{lem}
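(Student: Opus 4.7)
The plan is to identify the $E_1$-page of the length filtration on $(h_D/\cD)(K)$ and exhibit an explicit contracting homotopy on it. Since the $E_0$-differential on the associated graded is the internal $\mu^1$ of $\cB$ acting on each tensor factor, the $E_1$-page at $K$ is the bar-type complex
\[
\bigoplus_{p \geq 0} \bigoplus_{E_1, \ldots, E_p \in \cD} H(\cB)(E_p, D) \otimes H(\cB)(E_{p-1}, E_p) \otimes \cdots \otimes H(\cB)(K, E_1)[p],
\]
with differential $d_1$ coming from the $\mu^2$-piece of the quotient differential. Explicitly, $d_1$ is the standard bar differential formed using the composition (i.e.\ $\mu^2$) in the graded linear category $H(\cB)$; contributions from higher $\mu^n$ with $n \geq 3$ drop the bar length by at least $2$ and so contribute to $d_r$ only for $r \geq 2$. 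Showing $E_1$-acyclicity amounts to showing that this bar complex has trivial cohomology.

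Since $D$ itself lies in $\cD$, I would exploit the standard extra-degeneracy trick and define
\[
s\bigl(a_p \otimes a_{p-1} \otimes \cdots \otimes a_1\bigr) := 1_D \otimes a_p \otimes a_{p-1} \otimes \cdots \otimes a_1,
\]
with $s(a) := 1_D \otimes a$ on the length-$0$ piece $H(\cB)(K, D)$. The prepended factor $1_D \in H(\cB)(D, D)$ makes sense precisely because $D \in \cD$, so $s$ maps $E_1^p$ into $E_1^{p+1}$. The identity $d_1 s + s d_1 = \op{id}$ then follows from the usual bar-complex computation: the ``outer'' term of $d_1 s[a_p|\cdots|a_1]$ is $[1_D \cdot a_p | \cdots | a_1] = [a_p|\cdots|a_1]$, and all of the remaining terms of $d_1 s$ pair with the terms of $s d_1$ and cancel. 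This gives $E_2 = 0$, which is precisely the $E_1$-acyclicity of $h_D/\cD$.

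The only mildly delicate point is making sure the signs conform to the $[p]$-shift on each length-$p$ summand and to the sign conventions for $\mu^n$ being used; this is the classical bar-complex contraction identity and is entirely routine.
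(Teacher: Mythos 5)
Your proof is correct and takes essentially the same approach as the paper: both identify the $E_1$-page with the bar resolution of the $H(\cD)$-module $H(\cB)(K,-)$ evaluated at $D$ and conclude acyclicity. The only difference is that you make explicit the standard contracting homotopy (prepending $1_D$, which is available precisely because $D \in \cD$), whereas the paper simply asserts the bar resolution is acyclic.
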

\begin{proof}
For any $L$, $(h_D/\cD)(L)$ is the standard bar resolution of the left module $h^L|_\cD$ evaluated at $D$. The $E_1$-page of the corresponding spectral sequence can be seen as the bar resolution of the graded module $H(h^L)$ over the linear category $H(\cD)$, spread over different bidegrees. Hence, the $E_1$-page is acyclic, i.e. the $E_2$-page vanishes.
\end{proof}
%
Therefore, to conclude the proof of \Cref{prop:colimisloc}, we only have to complete Step (\ref{colimitfiltrpropsteps:trivialcolimit}):
\begin{lem}
$\cB(\cdot, L)/\cD=\cB(\cdot, L^0)/\cD$ is $E_1$-quasi-isomorphic to $\hocolim_k \cB(\cdot, L^k)/\cD$.	
\end{lem}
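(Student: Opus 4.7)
We want to show the natural filtered map $\cB(\cdot, L)/\cD = \cB(\cdot, L^0)/\cD \hookrightarrow \hocolim_k \cB(\cdot, L^k)/\cD$ is an $E_1$-quasi-isomorphism, where the source carries the length filtration and the target the total filtration combining length and hocolim-index (as in the proof of \Cref{lem:colimitfiltrpropstepscolimiscolimquot}). Write $A_k := \cB(\cdot, L^k)/\cD$ for brevity.

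My plan is to exhaust the hocolim by the stages $T_N := F^N \hocolim_k A_k$ of its hocolim-index filtration \eqref{equation:hocolim-filtration}, each equipped with the restriction of the total filtration. Observe that $T_0 = A_0 = \cB(\cdot, L)/\cD$, that $\bigcup_N T_N$ is the full hocolim, and that there are short exact sequences of filtered complexes
$$0 \to T_N \to T_{N+1} \to T_{N+1}/T_N \to 0.$$
A direct unfolding of the hocolim differential identifies $T_{N+1}/T_N$ with $\op{cone}(A_N \to A_{N+1}) \simeq h_{D^N}/\cD$, where the filtration is the length filtration shifted uniformly by $N+1$: both the base summand (an element of $A_{N+1}$ at length $l$) and the cone summand (an element of $A_N[1]$ at length $l$) sit at total filtration $l + N + 1$.

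Step (\ref{colimitfiltrpropsteps:cones}) of the proof strategy (i.e.\ the preceding lemma) gives that $h_{D^N}/\cD$ is $E_1$-acyclic, and this property is preserved under a uniform filtration shift. Hence the long exact sequence on the $E_{r+1}$-page of the spectral sequences associated to the above short exact sequence forces $T_N \hookrightarrow T_{N+1}$ to be an $E_1$-quasi-isomorphism; by induction, $T_0 \hookrightarrow T_N$ is an $E_1$-quasi-isomorphism for every $N$.

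To conclude, non-negativity of the total filtration ensures that every element of $F^p (\hocolim_k A_k)$ already lies in $T_p$, so $F^p T_N = F^p (\hocolim_k A_k)$ whenever $N \geq p$. In particular, each entry of the $E_r$-pages stabilizes as $N \to \infty$, and so $T_0 \hookrightarrow \hocolim_k A_k$ is an $E_1$-quasi-isomorphism, completing the proof. The main obstacle is the filtration bookkeeping in the second paragraph: matching the total filtration on $T_{N+1}/T_N$ with a uniform $(N+1)$-shift of the length filtration on $h_{D^N}/\cD$, which requires carefully tracking how the hocolim-index contributes $+1$ to the filtration for the cone summand as well as for the base summand.
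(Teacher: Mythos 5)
Your overall strategy—exhaust $\hocolim_k A_k$ by the stages $T_N := F^N\hocolim_k A_k$ of the hocolim-index filtration, show each $T_N\hookrightarrow T_{N+1}$ is an $E_1$-quasi-isomorphism using $E_1$-acyclicity of the quotient, then pass to the limit using non-negativity of the filtrations—is sound and is essentially what the paper does. The paper packages the exhaust-and-pass-to-limit step into \Cref{lem:iteratedconequasi}, expressed in terms of the iterated $1$-cone of a sequence $\dots\to A_2\to A_1\to A_0$, and then identifies that iterated cone with $\hocolim_k(\cB/\cD)(\cdot,L^k)$ with its total filtration. Your $T_N$ coincide with the intermediate finite iterated cones $B_n$ appearing in the proof of \Cref{lem:iteratedconequasi}, so you are unrolling that lemma rather than citing it; that's a legitimate reorganization, not a genuinely different route.

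The one step that needs repair is the sentence ``the long exact sequence on the $E_{r+1}$-page of the spectral sequences associated to the above short exact sequence forces $T_N\hookrightarrow T_{N+1}$ to be an $E_1$-quasi-isomorphism.'' For a short exact sequence of filtered complexes with compatible filtrations one does get a short exact sequence of bigraded complexes on $E_0=\op{Gr}$, and hence a long exact sequence on $E_1$, but there is no general long exact sequence on $E_r$ for $r\geq 2$. What saves you here is the extra structure: the filtered chain complex $T_{N+1}$ is (up to filtered isomorphism) the $1$-cone $\op{cone}_1$ of a filtered chain map from a shift of $\op{cone}(A_N\to A_{N+1})[-1]$ into $T_N$, where $\op{cone}(A_N\to A_{N+1})\simeq h_{D^N}/\cD$ is $E_1$-acyclic and a uniform shift of the filtration preserves $E_1$-acyclicity. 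You should therefore justify the step by identifying $T_{N+1}$ as a $1$-cone and invoking \Cref{lem:conewithacyclic}, which is proved in the paper by a direct cycle/boundary computation precisely because a general long-exact-sequence argument is not available. This is exactly the ``filtration bookkeeping'' you flag at the end; once that identification is made explicit, \Cref{lem:conewithacyclic} does the work and the rest of your argument (non-negativity implies $F^pT_N=F^p\hocolim_k A_k$ for $N\geq p$, hence stabilization of $E_r$-entries) is correct. A final small remark: since the lemma is a statement about modules, you should note that the inclusion $\cB(\cdot,L)/\cD\hookrightarrow\hocolim_k\cB(\cdot,L^k)/\cD$ is natural in the first input, so the chainwise $E_1$-quasi-isomorphism you establish is in fact a module map—as the paper observes in its last sentence.
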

As before, the latter is considered with its total filtration. It does not matter whether we apply the colimit or the localization first.
\begin{proof}
We have shown $(\cB/\cD)(K,L^k)\to(\cB/\cD)(K,L^{k+1})$ has $E_1$-acyclic cones for all $k$. We apply Lemma \ref{lem:iteratedconequasi} again to conclude. More precisely, define $A_0:=(\cB/\cD)(K,L^0)=(\cB/\mc{D})(K,L)$ and 
\begin{equation}
A_k:=cone ((\cB/\cD)(K,L^k)\to(\cB/\cD)(K,L^{k+1}))[-k]	
\end{equation}	
we have shown $A_k$ is $E_1$-acyclic for $k>0$. One has a sequence of filtered chain maps 
\begin{equation}
	\dots\to A_3\to A_2\to A_1\to A_0
\end{equation}
where any two adjacent maps compose to $0$. The map $A_k\to A_{k-1}$ is extended by the identity of $\cB(K,L^k)$ (up to a sign). 

By Lemma \ref{lem:iteratedconequasi}, this iterated cone is $E_1$-quasi-isomorphic to $A_0=(\cB/\mc{D})(K,L)$. Moreover, if one ignores the filtration, the iterated cone is the same as $\hocolim_k (\cB/\cD)(K,L^k)$. The filtration induced by the $r$-iterated cone construction (where $r=1$), on the other hand, matches the total filtration on $\hocolim_k (\cB/\cD)(K,L^k)$. 

This shows $E_1$-quasi-isomorphism after plugging in $K$. However, the natural map $(\cB/\cD)(K,L)\to \hocolim_k (\cB/\cD)(K,L^k)$ is functorial in $K$. This finishes the proof. 
\end{proof}
The conclusion of \Cref{thm:spherical} from \Cref{prop:colimisloc} is almost immediate: we let $L^k=S^k(L)$ and the map $L^k\to L^{k+1}$ to be $s_{S^k(L)}$. The condition (\ref{colimpropcond:cone}) holds by definition. (\ref{colimpropcond:vanishing}) in this case is Lemma \ref{lem:discimagevanish}. This shows $E_1$-quasi-isomorphism of $(\cB/\cD)(\cdot, L)$ and $\hocolim \cS^k(\cdot,L)$. In this setting, the $E_1$-quasi-isomorphisms constructed for the proof of \Cref{prop:colimisloc} are functorial in $L$, showing the desired equivalence.
%
\section{Categorical pseudo-compactifications}\label{sec:categoricalpseudocompact}

\subsection{Smooth categorical pseudo-compactifications}

\subsubsection{Main definitions}
\defi\label{definition:categorical-semi-comp}
Given a smooth $A_\infty$ category $\mc{C}$, a \emph{smooth (categorical) compactification} of $\mc{C}$ is a pair ($\mc{B}, \phi)$ consisting of a smooth, proper $A_\infty$ category $\mc{B}$ and a functor $\phi: \mc{B} \to \mc{C}$ satisfying the following conditions:
\begin{itemize}
    \item[(i)] there exists a finite collection of objects of $ker \phi \sub \mc{B}$ which split-generates $ker(\phi)$;
    \item[(ii)] the quotient map $\mc{B}/ \op{ker}(\phi) \to \mc{C}$ is a Morita equivalence, i.e. it is cohomologically fully faithful and the image split-generates.
\end{itemize}
A \emph{morphism} $(\mc{B}', \phi') \to (\mc{B}, \phi)$ of smooth compactifications of $\mc{C}$ is a functor $f: \mc{B}' \to \mc{B}$ of $\mc{B}$ such that $\phi \circ f\simeq \phi'$ (i.e.\ $\phi \circ f$ and $\phi'$ are homotopic \cite[(1h)]{seidel-book}), and such that $f$ is also a localization functor (i.e. $\cB'/ker(f)\to \cB$ is a Morita equivalence). Similarly, a pair $(\cB,\phi)$ is called a \emph{smooth (categorical) pseudo-compactification} if it satisfies all properties above except properness of $\cB$. 
Morphisms of pseudo-compactifications are defined similarly. Smooth pseudo-compactifications of $\mc{C}$ form a category.
\edefi
When discussing smooth pseudo-compactifications in the sequel, we will typically drop reference to $\phi$ in our notation. We include two major examples here:
\begin{exmp}[see \Cref{subsection:filtration-dbcoh}]\label{exmp:bsidecompactificationwithzigzag}
Let $U$ be a smooth affine variety, and let $\cC=D^bCoh(U)$ (the $A_\infty$ enhanced bounded derived category of coherent sheaves on $U$). Let $U\subset X$ be a smooth, projective compactification of $U$ by a Cartier divisor $D=X\setminus U$. If one lets $\cB$ be (an enhancement of) $D^bCoh(X)$, then $\cB$ with the restriction functor $D^bCoh(X) \to D^bCoh(U)$ is a smooth categorical compactification of $\cC$. If $X'$ is another such compactification, then $X$ and $X'$ are related by a sequence of blowups and blowdowns at smooth centers in the complement of $U$ (weak factorization theorem). \cite{bondalorlovsod} implies that the pushforward along a blowdown map at a smooth center satisfies the conditions of \Cref{definition:categorical-semi-comp}; hence, it is a morphism of smooth categorical compactifications. In other words, any two smooth categorical compactification of $\cC$ obtained geometrically as above are related by a zigzag of morphisms in the category of categorical compactifications.  
\end{exmp}
\begin{rk}
We are mainly concerned about smooth categorical compactifications related by zigzags as above; however, in the examples coming from symplectic geometry, we have to include zigzags with intermediate steps in smooth categorical pseudo-compactifications. This will not prevent us from defining growth invariants as we will see below (see \Cref{corollary:growth-function-well-def} and \Cref{thm:welldefinedsymplecticgrowth}).
\end{rk}
\begin{exmp}[see \Cref{subsection:filtration-wfuk}]\label{exmp:asidecompactificationwithzigzag}
Let $M$ be a Weinstein manifold. As we will see later (see \Cref{lemma:full-stop}), one can endow it with a Lefschetz fibration and consider the core $\fc$ of a fiber (pushed to infinity) as the stop on $M$. Then $\cW(M,\fc)$ is smooth and proper and the stop removal functor $\cW(M,\fc)\to \cW(M)$ is a smooth categorical compactification. Let $\fc'$ be another such stop. By perturbing it by a contact isotopy, we can assume it is disjoint from $\fc$. Then $\cW(M,\fc\cup \fc')$ is a smooth categorical pseudo-compactification. The stop removal maps $\cW(M,\fc\cup \fc')\to \cW(M,\fc)$ and $\cW(M,\fc\cup \fc')\to \cW(M,\fc')$ are morphisms of smooth categorical pseudo-compactifications.
\end{exmp}


Given a smooth pseudo-compactification $\cB$, with a finite collection $\cD\subset ker (\phi )$ split-generating $ker(\phi)$, one can construct a filtered category $\cB/\cD$ as before. We do not know how to endow $\cC$ with a filtration; however, one can endow the linear category $H(\cC)$ with a filtration as follows: by definition, $\cB/\cD\to \cC$ is a derived equivalence, and by extending $\cB$ in its twisted envelope, without loss of generality we can assume every object of $\cC$ is a direct summand of an object in the essential image of $\cB/\cD\to\cC$. The linear category $H(\cB/\cD)$ is filtered too, and so is its Karoubi completion constructed as pairs of objects and idempotents of $H(\cB/\cD)$. There exists a linear subcategory $H\subset H(\cB/\cD)$ such that $H\to H(\cC)$ is an equivalence of categories. By choosing an inverse equivalence $H(\cC)\to H$, we can transfer the filtration on $H$ to $H(\cC)$. For each finite subcategory of $H(\cC)$, the filtration on it is independent of choices above up to a finite shift ambiguity.

As it happens, we will not need this structure: in what follows, we will only be interested in the growth function associated to pairs of objects of $\cC$. Growth functions will be discussed in detail in \Cref{subsubsection:growth-functions-pc}. To set the stage for this discussion, we need some technical results which are the focus of \Cref{subsubsection:partial-independence}.   

\begin{note}
In light of \Cref{lemma:summand-e1}, one can also allow $\cD$ to be \emph{essentially finite}, i.e. it consists of objects in finitely many quasi-isomorphism classes. If $\cD_0\subset \cD$ is a finite subcollection that contains at least one object in every quasi-isomorphism class, then $\cB/\cD_0$ and $\cB/\cD$ are $E_1$-equivalent.
\end{note}

\subsubsection{A partial independence result}\label{subsubsection:partial-independence}

Suppose that $f: \cB' \to \cB$ is a morphism of pseudo-compactifications of $\cC$. If $\cD' \sub \cB'$ and $\cD \sub \cB$ are collections of objects and $f(\cD') \sub \cD$, then there is an induced morphism $\cB'/ \cD' \to \cB / \cD$. We saw below \Cref{defn:quotientcategory} that this is in fact a filtered morphism.

The purpose of this section is to prove a finer result under certain additional smoothness and properness assumptions. The following is the key proposition:

\begin{prop}\label{prop:sodofcompactifications}
Let $f:\cB'\to \cB$ be a morphism of smooth pseudo-compactifications $(\cB,\phi)$, $(\cB',\phi')$ of $\cC$, and assume $\cB$ is proper. For simplicity, assume $\cB$ and $\cB'$ are split-closed and pre-triangulated. Then, for some essentially finite collections $\cD\subset\cB$ (resp.\ $\cD'\subset \cB'$) split-generating $ker(\phi)$ (resp.\ $ker(\phi')$), there are quasi-equivalences $\cB'/\cD' \to \cB/\cD$ and $\cB/\cD \to \cB'/\cD'$, which are quasi-inverses and which respect the filtration. 
\end{prop}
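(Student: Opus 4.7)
The plan is to use the properness of $\cB$ to produce a right adjoint to $f$ that exhibits $\cB$ as a semi-orthogonal summand of $\cB'$, and then to transport chosen split-generators back and forth so the two quotient categories match as filtered $A_\infty$ categories.

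First I would construct a right adjoint $r:\cB\to\cB'$ to $f$. For each $K\in\cB$, the Yoneda module $h_K=\cB(\cdot,K)$ is proper (because $\cB$ is proper) and perfect (because $\cB$ is smooth). Pulling back along $f$ preserves properness, and since $\cB'$ is smooth, $f^*h_K$ is perfect on $\cB'$; as $\cB'$ is split-closed and pre-triangulated, $f^*h_K\simeq h_{r(K)}$ for a unique $r(K)\in\cB'$, defining an $A_\infty$ functor $r$ with $\cB'(X,r(K))\simeq\cB(f(X),K)$. Since $\cB'/\ker(f)\to\cB$ is a Morita equivalence, the counit $fr\to\op{id}_\cB$ is a quasi-isomorphism, so $r$ is cohomologically fully faithful and every $X\in\cB'$ fits in a triangle $X_0\to X\to rf(X)$ with $X_0\in\ker(f)$ --- a semi-orthogonal decomposition with pieces $\ker(f)$ and $r(\cB)$.

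Second, I would pick the collections. Fix essentially finite $\cD\subset\ker(\phi)$ split-generating $\ker(\phi)$ and essentially finite $\cE\subset\ker(f)$ split-generating $\ker(f)$; both exist by the pseudo-compactification hypotheses. Set $\cD':=r(\cD)\cup\cE$. Both subcollections lie in $\ker(\phi')$, since $\phi'(r(D))\simeq\phi(fr(D))\simeq\phi(D)=0$ for $D\in\cD$, and $\phi'(E)=\phi(f(E))=0$ for $E\in\cE$. The semi-orthogonal triangle shows that for any $X\in\ker(\phi')$ one has $fX\in\ker(\phi)$, so $rf(X)$ is split-generated by $r(\cD)$ while $X_0$ is split-generated by $\cE$; hence $\cD'$ split-generates $\ker(\phi')$.

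Third, I would descend the functors. Since $r(\cD)\subset\cD'$ by construction, $r$ induces a filtered functor $\bar r:\cB/\cD\to\cB'/\cD'$, where ``filtered'' is automatic because the induced map on bar complexes applies $r$ factorwise and thus preserves length. Similarly, $f$ sends $\cD'$ into $\cD\cup f(\cE)$; enlarging $\cD$ by the contractible collection $f(\cE)$ is harmless by \Cref{lemma:summand-e1}, and gives a filtered functor $\bar f:\cB'/\cD'\to\cB/\cD$. The composite $\bar f\circ\bar r$ is isomorphic to $\op{id}_{\cB/\cD}$ via the counit $\epsilon$, which sits in bar-length zero and is a filtered quasi-isomorphism. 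For $\bar r\circ\bar f$, the unit $\eta:\op{id}_{\cB'}\to rf$, also a length-zero natural transformation, has cone in $\ker(f)\subset\op{split-generated}(\cE)\subset\op{split-generated}(\cD')$, so $\bar\eta$ becomes a quasi-isomorphism after passing to $\cB'/\cD'$.

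The main obstacle is checking that this last quasi-isomorphism is strictly filtered, rather than merely an $E_r$- or scaling equivalence: one must verify that a homotopy inverse to $\bar\eta$ can be chosen in a way that does not increase bar-length. This is ultimately underwritten by the semi-orthogonal vanishing $H^*\cB'(E,r(K))=0$ for $E\in\ker(f)$, a direct consequence of the adjunction $f\dashv r$, which prevents the correction terms needed to build a filtered inverse from picking up extra bar factors.
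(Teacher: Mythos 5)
Your approach is essentially the one the paper takes --- construct the right adjoint $r$ of $f$ using properness of $\cB$ and smoothness of $\cB'$, observe that $fr\to\op{id}$ is a quasi-isomorphism, and exploit the semi-orthogonal decomposition $\cB'\simeq\langle r(\cB),\ker f\rangle$ --- but you build the generating collections in the opposite direction. The paper starts with $\cD'\subset\cB'$, closes it under the two projections of the semi-orthogonal decomposition (so that both $gf(D')$ and $\op{cone}(D'\to gf(D'))[-1]$ lie in $\cD'$) \emph{and} under quasi-isomorphism, and then defines $\cD:=f(\cD')$ extended by quasi-isomorphism. You instead fix $\cD$ and $\cE$ first and set $\cD':=r(\cD)\cup\cE$, which is a perfectly reasonable alternative. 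However, you have a small gap here: you claim ``$f$ sends $\cD'$ into $\cD\cup f(\cE)$,'' but $f(r(\cD))=fr(\cD)$ is only \emph{quasi-isomorphic} to $\cD$, not literally contained in it, so $\bar f$ is not yet a filtered functor to $\cB/\cD$. You must also enlarge $\cD$ to contain $fr(\cD)$ (or, as the paper does, close $\cD$ and $\cD'$ under quasi-isomorphism so that $f(\cD')\subset\cD$ and $r(\cD)\subset\cD'$ hold literally and simultaneously). This is an easy fix but is needed for the functors you write down to actually exist as filtered functors.

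For the quasi-inverse statement, your direct argument via the unit $\eta:\op{id}\to rf$ --- whose cone lies in $\ker f$, hence is acyclic in $\cB'/\cD'$ --- does give $\bar r\bar f\simeq\op{id}$ in cohomology, and that is all the proposition asks for; the natural transformation realizing the quasi-inverse is not required to respect the filtration. So your ``main obstacle'' at the end is not actually an obstacle. The paper sidesteps even the appearance of one by restricting $\bar f$ to the full subcategory $\operatorname{im}(g)/\cD'\subset\cB'/\cD'$, on which $gf$ is literally homotopy-inverse to the identity, and then using essential surjectivity of $\operatorname{im}(g)/\cD'\hookrightarrow\cB'/\cD'$ (a consequence of the SOD plus $\cE\subset\cD'$). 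Both routes are valid; the paper's is marginally cleaner because it avoids reasoning about the behavior of $\eta$ inside the bar construction.
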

%
In case $\cB, \cB'$ are not split-closed and pre-triangulated, the same conclusion still holds upon passing to the twisted envelope. We will explain this after proving \Cref{prop:sodofcompactifications}. The key to the proof is the following lemma:
\begin{lem}
The functor $f:\cB'\to \cB$ admits a right adjoint $g:\cB\to \cB'$ such that $f\circ g\simeq 1_\cB$. 
\end{lem}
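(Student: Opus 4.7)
The plan is to construct $g$ pointwise by a representability argument and then to check that the counit $fg \to 1_\cB$ is an equivalence by invoking the localization hypothesis on $f$.

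First I would construct $g$ on objects. For each $K \in \cB$, consider the right $\cB'$-module
\[
M_K := \cB(f(-), K) \in \op{Mod}(\cB').
\]
Since $\cB$ is proper, $H^*\bigl(\cB(f(L'), K)\bigr)$ is finite-dimensional for every $L' \in \cB'$, so $M_K$ is a \emph{proper} $\cB'$-module. Since $\cB'$ is smooth, proper modules over $\cB'$ are perfect (this is the inclusion $\op{Prop}\cB' \subset \op{Perf}\cB'$ recalled after the definition of smoothness in \Cref{sec:ainftybackground}). Finally, since $\cB'$ is assumed pre-triangulated and split-closed, the Yoneda embedding $\cB' \hookrightarrow \op{Perf}(\cB')$ is a quasi-equivalence, so $M_K$ is quasi-isomorphic to $h_{g(K)}$ for some object $g(K) \in \cB'$, well-defined up to quasi-isomorphism.

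Next I would upgrade this assignment to an $A_\infty$-functor $g: \cB \to \cB'$ and verify the adjunction. Packaging the $M_K$ together, we get an $A_\infty$-functor $K \mapsto M_K$ from $\cB$ to $\op{Perf}(\cB')$; composing with a chosen quasi-inverse to the Yoneda embedding produces $g: \cB \to \cB'$. The defining quasi-isomorphism
\[
\cB'(L', g(K)) \;\simeq\; h_{g(K)}(L') \;\simeq\; M_K(L') \;=\; \cB(f(L'), K),
\]
natural in $L'$ and $K$, exhibits $g$ as the right adjoint of $f$.

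Finally, I would show $f \circ g \simeq 1_\cB$. Consider the pullback functor $f^*: \op{Mod}(\cB) \to \op{Mod}(\cB')$ and its left adjoint $f_!$. By construction, $g(K)$ represents $f^*h_K$, so under Yoneda
\[
h_{fg(K)} \;\simeq\; f_!\,h_{g(K)} \;\simeq\; f_!\,f^*\,h_K,
\]
and the counit $\epsilon_K : fg(K) \to K$ is identified with the counit of the adjunction $(f_!, f^*)$ at $h_K$. Since $f$ is a localization, the induced functor $\cB'/\ker(f) \to \cB$ is a Morita equivalence; at the module level this forces $f^*$ to be fully faithful, with essential image the modules that vanish on $\ker(f)$. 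Hence the counit $f_! f^* \to 1$ is a natural equivalence on $\op{Mod}(\cB)$, so $fg(K) \simeq K$ for every $K$.

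The substantive point is the combined use of the hypotheses: properness of $\cB$ makes $M_K$ proper, while smoothness of $\cB'$ promotes this to perfectness so that $M_K$ is representable; this is the only nontrivial input, and once it is in hand, everything else is formal from the localization condition. The main technical nuisance, rather than a genuine obstacle, is upgrading the pointwise representatives $g(K)$ to a strict $A_\infty$-functor and checking coherence of the adjunction unit/counit, which is standard.
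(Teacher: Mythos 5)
Your proof is correct and follows essentially the same route as the paper: both recognize $g$ as the restriction of $f^*$ to perfect modules, using $\cB$ proper to land in $\op{Prop}\cB'$, $\cB'$ smooth to promote to $\op{Perf}\cB'$, and the localization hypothesis to make the counit $f_!f^* \to 1$ an equivalence via fully faithfulness of $f^*$. The only cosmetic difference is that you unwind $f^*h_K = \cB(f(-),K)$ pointwise and then reassemble, whereas the paper works directly with the functor $f^*$ on $\op{Perf}\cB \simeq \cB$.
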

\begin{proof}
Note that $f$ induces a functor $f_!=(\cdot)\otimes_{\cB'}\cB :Mod(\cB')\to Mod(\cB)$, which is equivalent to $f$ itself on the Yoneda image of $\cB'$ and $\cB$. The restriction functor $f^*:Mod(\cB)\to Mod(\cB')$ is a natural right adjoint to $f_!$. On the other hand, since $\cB$ is proper, so are the perfect modules over it and $f^*$ sends these to proper modules over $\cB'$. As $\cB'$ is smooth, proper modules over it are perfect. In other words, $f^*$ maps $\cB\simeq \op{Perf}(\cB) $ to $\cB'\simeq \op{Perf}(\cB') $, and the restriction of $f^*$ to this subcategory is the desired right adjoint. 

By definition, the induced functor $\cB'/ker(f)\to \cB$ is a derived (Morita) equivalence, which implies $f^*:Mod(\cB)\to Mod(\cB')$ is fully faithful by \Cref{lemma:quotient-properties}\eqref{item:univ-prop-quotient}. It is also easy to verify that the counit $f_! f^* \to 1_{Mod(\cB)}$ is a quasi-isomorphism of functors (which follows either from the fully faithfulness of $f^*$, or \cite[Lemma 3.15]{gps1} which implies that $\cN\otimes_{\cB'} \cB\simeq \cN\otimes_\cB \cB\simeq \cN$ for every $\cN\in Mod(\cB)$ as $f$ is a localization). 
Clearly, this property is inherited by $f\dashv g$, i.e. $fg\to 1_\cB$ is a quasi-isomorphism.
\end{proof}
\begin{rk}
The adjunction $f \dashv g$ gives a semi-orthogonal decomposition $\cB'=\langle im(g),ker(f) \rangle\simeq \langle \cB,ker(f)\rangle$. In other words, every object $L'\in\cB'$ has a decomposition \begin{equation}
E\to L'\to gf(L')\to E[1]
\end{equation}
where $E\in ker(f)$ that is unique up to quasi-isomorphism.
\end{rk}
Observe that the fact that $im(g)\to \cB$ is a quasi-equivalence implies that $\phi' \circ g\simeq \phi$. Indeed, $\phi\circ f\simeq \phi'$ by assumption, and this holds when restricted to $im(g)$ as well. As $g$ is a quasi-inverse to $f|_{im(g)}$, one has $\phi'|_{im(g)}\circ g\simeq \phi$. But $\phi'|_{im(g)}\circ g=\phi'\circ g$. 

In particular, just like $f$ carries $ker(\phi')$ to $ker(\phi)$, $g$ carries $ker(\phi)$ to $ker(\phi')$. Concretely, in terms of the semi-orthogonal decomposition $\cB'\simeq \langle \cB,ker(f)\rangle$, this means that the $\cB$ component of an object $D\in ker(\phi')$ belongs to $ker(\phi)$. Observe that $ker(f)\subset ker(\phi')$. Therefore, the converse is also true: an object with $\cB$-component in $ker(\phi)$ is also in $ker(\phi')$. 
\begin{proof}[Proof of \Cref{prop:sodofcompactifications}]
Choose an essentially finite subset $\cD'\subset \cB'$ that split-generates $ker(\phi')$. By above, for any $D'\in\cD'$, its $\cB$ and $ker(f)$ components (i.e. $gf(D')$ and $cone(D'\to gf(D'))[-1]$) are in $ker(\phi')$; hence, without loss of generality, we may assume that these components are also in $\cD'$. Observe that the objects of $\cD'$ that are in $ker(f)$ split-generate $ker(f)$. Without loss of generality, assume $\cD'$ contains the whole quasi-isomorphism classes of its objects. 

Consider $f(\cD')$ (which is roughly the set of $\cB$-components of objects in $\cD'$), and extend this set by quasi-isomorphic objects. Call the new subset $\cD\subset\cB$. The purpose of these extensions (of $\cD$ and $\cD'$) by quasi-isomorphism classes is to ensure that $f(\cD')\subset \cD$ and $g(\cD)\subset \cD'$ hold simultaneously. As a result, one obtains induced functors $\ov f:\cB'/\cD'\to\cB/\cD$ and $\ov g:\cB/\cD\to\cB'/\cD'$ and they both respect the filtration. Clearly $f\circ g\simeq 1_\cB$ implies $\ov f\circ \ov g\simeq 1_{\cB/\cD}$. On the other hand, $g\circ f$ is the projection to the $\cB$ component, and its restriction to $im(g)\simeq \cB$ is homotopic to the identity. Therefore, if $im(g)/\cD'$ denotes the subcategory of $\cB'/\cD'$ consisting of objects of $im(g)$, then $\ov f|_{im(g)/\cD'}$ and $\ov g$ are quasi-inverses. Also, the inclusion $im(g)/\cD'\to \cB'/\cD'$ is essentially surjective because of the semiorthogonal decomposition $\langle im(g),ker(f) \rangle$, and $\cD'$ containing a subset that split-generates $ker(f)$. This implies $\ov f$ and $\ov g$ are quasi-inverses, which completes the proof
\end{proof}
\begin{rk}
One can presumably prove the quasi-inverses constructed in the proof of \Cref{prop:sodofcompactifications} are $E_r$-equivalences for some $r\gg 0$, under the assumption that $ker(f)$ has a strong generator (which for instance follows from $\cB'$ having a strong generator).  
\end{rk}
The following lets us extend \Cref{prop:sodofcompactifications} to the non pre-triangulated case:
\begin{lem}
If $\cB\to \cC$ is a smooth categorical compactification, resp. pseudo-compactification, then so are $\tw(\cB)\to \tw(\cC)$ and $\tw^\pi(\cB)\simeq \op{Perf}(\cB)\to \op{Perf}(\cC)\simeq \tw^\pi(\cC)$.
\end{lem}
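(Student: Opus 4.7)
The plan is to reduce the statement to the corresponding facts about the original categorical compactification $\phi: \cB \to \cC$ using Morita invariance of smoothness and properness, together with the comparison results between the quotients of $\tw \cB$ (resp.\ $\op{Perf}\cB$) and the twisted envelope (resp.\ perfect module category) of $\cB/\cD$ recorded in \Cref{lemma:quotient-properties}\eqref{item:quotient-can-map}. Throughout, let $\cD \sub \cB$ be an essentially finite collection split-generating $\ker \phi$.

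First I would dispense with the smoothness and properness assertions. Since $\cB \hookrightarrow \tw\cB \hookrightarrow \op{Perf}\cB$ are Morita equivalences, and smoothness and properness of an $A_\infty$ category are Morita invariants (being defined in terms of the diagonal bimodule in the derived Morita category), $\tw\cB$ and $\op{Perf}\cB$ are smooth (and proper in the compactification case) whenever $\cB$ is. The same reasoning applies on the target side, so $\tw\cC$ and $\op{Perf}\cC$ are smooth whenever $\cC$ is.

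Next I would verify that the Morita equivalence condition is preserved. Applying \Cref{lemma:quotient-properties}\eqref{item:quotient-can-map} to $\cD \sub \tw\cB$ gives a quasi-equivalence $\tw\cB/\cD \xrightarrow{\simeq} \tw(\cB/\cD)$. The hypothesis that $\cB/\cD \to \cC$ is a Morita equivalence implies, by \Cref{lemma:quotient-properties}\eqref{item:properties-pres-qcat} (or directly because twisting preserves Morita equivalences), that $\tw(\cB/\cD) \to \tw\cC$ is a Morita equivalence. Composing, $\tw\cB/\cD \to \tw\cC$ is a Morita equivalence. For the perfect module version, the analogous statement in \Cref{lemma:quotient-properties}\eqref{item:quotient-can-map} gives that $\op{Perf}\cB/\cD \to \op{Perf}(\cB/\cD)$ is cohomologically fully faithful with split-generating image; combined with the Morita equivalence $\op{Perf}(\cB/\cD) \simeq \op{Perf}\cC$, we conclude that $\op{Perf}\cB/\cD \to \op{Perf}\cC$ is a Morita equivalence as well.

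It remains to check that $\cD$ split-generates $\ker(\tw\cB \to \tw\cC)$ and $\ker(\op{Perf}\cB \to \op{Perf}\cC)$; this is the step I expect to require a little care. Suppose $L \in \tw\cB$ is acyclic in $\tw\cC$. Its image in $\tw\cB/\cD$ is then sent to an acyclic object in $\tw\cC$ via the composition $\tw\cB/\cD \xrightarrow{\simeq} \tw(\cB/\cD) \to \tw\cC$; cohomological fully faithfulness of the last arrow forces this image to be acyclic in $\tw\cB/\cD$. Now \Cref{lemma:quotient-properties}\eqref{item:split-generation-quotient} applied to the quotient $\tw\cB \to \tw\cB/\cD$ shows that $L$ is split-generated by $\cD$ inside $\tw\cB$. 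The same argument, using the perfect module version of \Cref{lemma:quotient-properties}\eqref{item:quotient-can-map} together with \eqref{item:split-generation-quotient}, handles $\op{Perf}\cB \to \op{Perf}\cC$. The collection $\cD$ is essentially finite by hypothesis, so in both cases the kernel is split-generated by finitely many objects, completing the verification of all axioms of \Cref{definition:categorical-semi-comp}.
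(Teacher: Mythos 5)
Your proof is correct and follows essentially the same route as the paper's: use \Cref{lemma:quotient-properties}\eqref{item:quotient-can-map} to show that $\tw\cB/\cD \to \tw\cC$ and $\op{Perf}\cB/\cD \to \op{Perf}\cC$ are cohomologically fully faithful with split-generating image, then run the acyclic-image argument via \eqref{item:split-generation-quotient} to show the kernel lies in the split-closure of $\cD$. (One small note: \eqref{item:properties-pres-qcat} concerns how quotient formation interacts with an equivalence of the ambient category, not the fact that $\tw$ preserves Morita equivalences; your parenthetical alternative is the correct justification, and the paper handles this step by simply observing that the image of the composition split-generates.)
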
 
\begin{proof}
Fix a finite collection $\cD\subset \cB$ split-generating $ker(\phi)$. Then, the functors $\tw(\cB)/\cD\to \tw(\cB/\cD)$ and $\tw^\pi(\cB)/\cD\to \tw^\pi(\cB/\cD)$ are cohomologically fully faithful. Indeed, the former map is a quasi-equivalence by \Cref{lemma:quotient-properties}\eqref{item:quotient-can-map}, and the second claim follows from the first by noting that $\tw^\pi(\cB)/\cD$ is still obtained by adding some idempotents to $\tw(\cB)/\cD$, and $\tw^\pi(\cB/\cD)$ is the full idempotent closure of $\tw(\cB/\cD)$.
As a result, the compositions $\tw(\cB)/\cD\to \tw(\cB/\cD)\to \tw(\cC)$ and $\tw^\pi(\cB)/\cD\to \tw^\pi(\cB/\cD)\xrightarrow{\simeq } \tw^\pi(\cC)$ are fully faithful. Their images clearly split-generate their respective targets. Hence, these compositions are Morita equivalences.

To complete the proof it suffices to check $\cD$ split-generates the kernel in each case. For instance, if we denote the kernel of the former by $\tilde\cD$, then clearly $\cD\subset\tilde\cD$, and by above $\tilde\cD/\cD\to \tw(\cC)$ is fully faithful. By definition of the kernel, the image of this functor is quasi-equivalent to $0$. Hence, $\tilde\cD/\cD$ is the $0$ category itself, which by \Cref{lemma:quotient-properties}\eqref{item:split-generation-quotient} implies that $\tilde \cD$ is in the split-closed triangulated envelope of $\cD$. 
\end{proof}
Therefore, to apply \Cref{prop:sodofcompactifications} in the absence of pre-triangulated assumption, one can simply switch to $\tw(\cB)$, $\tw(\cB')$ and $\tw(\cC)$.

In the light of \Cref{prop:sodofcompactifications}, we consider two smooth compactifications $\cB_1$ and $\cB_2$ of $\cC$ to be equivalent if there is a smooth pseudo-compactification $\cB'$ with morphisms of smooth pseudo-compactifications $\cB_1\leftarrow \cB'\to \cB_2$. More generally, one can consider the equivalence relation generated by this, and as we will see, consider the growth function associated to an equivalence class of compactifications.   

\subsubsection{Growth functions from pseudo-compactifications}\label{subsubsection:growth-functions-pc}

\begin{defn}\label{definition:growth-function-general}
Given a pair of objects $K,L\in\cC$, and a smooth categorical compactification $(\cB,\phi)$ with an essentially finite subcategory $\cD\subset\cB$ split-generating $ker(\phi)$, define the \emph{growth function $\gr^\cB_{K,L}$} (or simply $\gr_{K,L}$) as follows: choose objects $\tilde K,\tilde L\in\cB$ such that $\phi(\tilde K)\simeq K$ and $\phi(\tilde L)\simeq L$. Then the chain complex $\cB/\cD(\tilde K,\tilde L)\simeq \cC(K,L)$ is naturally filtered (as described in \Cref{subsection:construction-LO-quotient}). Now set $\gr^\cB_{K,L}=\gr_{\cB/\cD(\tilde{K},\tilde{L})}$, where 
\eq \gr_{\cB/\cD({\tilde{K},\tilde{L}})} =\op{dim} \op{im} ( H(F^p\cB/\cD(\tilde K,\tilde L)) \to H(\cB/\cD(\tilde K,\tilde L)) ) \eeq is the cohomological growth function (see \Cref{definition:growth-function-chain}). 
\end{defn}
\begin{note}\label{note:idempotentextension}
Observe that \Cref{definition:growth-function-general} makes sense only when the objects $K,L$ actually lift to $\tw^\pi(\cB)$, which may not always be the case (see \Cref{lemma:quotient-properties}\eqref{item:quotient-can-map}). In practice, we will only work with compactifications and objects where the lifting holds; however, one can also extend the definition as follows: given $K,L\in\cC$, choose $K',L'$ that lift (say to $\tilde K,\tilde L$ as above) and such that $K$, resp. $L$ is a direct summand of $K'$, resp. $L'$. Concretely, this means that there are maps $a_K:K\to K'$, $b_K:K'\to K$ satisfying $b_K\circ a_K\simeq 1_K$, and similarly $a_L,b_L$. The construction above endows $H(\cC)(K',L')$ with a natural filtration. 
Define a filtration on $H(\cC)(K,L)$ by $F^pH(\cC)(K,L)=\{x:a_L\circ x\circ b_K\in F^pH(\cC)(K',L') \}$. Alternatively, one can use the identity $H(\cC)(K,L)=b_L\circ H(\cC)(K',L')\circ a_K$ and define $F^pH(\cC)(K,L):=b_L\circ F^pH(\cC)(K',L')\circ a_K$. This gives a translation equivalent filtration.  
Define the growth function by $\gamma^\cB_{K,L}(p):=dim(F^pH(\cC)(K,L))$.
\end{note}
%
%
%
We will mostly omit $\cB$ from the notation. As $\cB$ is assumed to be proper,  $\gr_{\cB/\cD({\tilde{K},\tilde{L}})}$ is finite. 

We will show in \Cref{lem:independencelift} and \Cref{corollary:indep-gen-set} below that $\gr^\B_{K,L}$ is well-defined up to equivalence.  Then, we will show in \Cref{corollary:growth-function-well-def} that  $\gr^\B_{K,L}$ actually only depends on $\cB$ up to zig-zag of smooth categorical compactifications. 
\begin{rk}
	In fact, the expression $\gr^\cB_{K,L}=\gr_{\cB/\cD(\tilde{K},\tilde{L})}$ should be interpreted \textit{up to scaling equivalence}, and the growth function $\gr^\cB_{K,L}$ will only be considered \textit{up to scaling equivalence}. In particular, it does not make sense to evaluate $\gr^\cB_{K,L}$; however, one can still talk about its asymptotic properties such as $\limsup \frac{\gr^\cB_{K,L}(p)}{p}$, and compare two growth functions asymptotically. 
\end{rk}

\begin{lem}\label{lem:independencelift}
With the notation of \Cref{definition:growth-function-general}, if one changes any of the lifts $\tilde K$, $\tilde L$, the growth function $\gr_{\cB/\cD({\tilde{K},\tilde{L}})}$ changes only by translation equivalence (see \Cref{definition:s-t-equiv-function}).
\end{lem}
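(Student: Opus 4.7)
The plan is to reduce the statement to \Cref{lemma:trans-equiv-isom}, which gives translation equivalence of hom-complexes under any change of objects within an isomorphism class in a filtered $A_\infty$ category. So it suffices to exhibit $\tilde K$ and $\tilde K'$ as isomorphic objects in the cohomology of the filtered $A_\infty$ category $\cB/\cD$ (and similarly on the $L$ side).

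First I would observe that the composition $\cB/\cD \to \cB/ker(\phi) \to \cC$ is cohomologically fully faithful. The first arrow is a quasi-equivalence by \Cref{lemma:quotient-properties}\eqref{item:split-closure-quotient}, since $\cD$ split-generates $ker(\phi)$; the second is cohomologically fully faithful as part of the Morita equivalence in the definition of a smooth categorical pseudo-compactification (\Cref{definition:categorical-semi-comp}).

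Now fix two lifts $\tilde K, \tilde K' \in \cB$ of $K$, meaning $\phi(\tilde K) \simeq \phi(\tilde K') \simeq K$ in $H^0(\cC)$. Cohomological full faithfulness lets us lift the isomorphism $\phi(\tilde K) \to \phi(\tilde K')$ and its inverse to (unique) morphisms in $H^0(\cB/\cD)(\tilde K, \tilde K')$ and $H^0(\cB/\cD)(\tilde K', \tilde K)$; the same full faithfulness, applied to composition, forces these lifts to compose to the respective identities. Hence $\tilde K \simeq \tilde K'$ in $H^0(\cB/\cD)$. Since $\cB/\cD$ is a filtered $A_\infty$ category with its length filtration from \Cref{subsection:construction-LO-quotient}, \Cref{lemma:trans-equiv-isom} then implies that $\gr_{\cB/\cD(\tilde K, \tilde L)}$ and $\gr_{\cB/\cD(\tilde K', \tilde L)}$ are translation equivalent. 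The argument in the $L$ slot is symmetric, and chaining the two comparisons yields the lemma.

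No serious obstacle is expected; the only mild point of care is making sure cohomological full faithfulness holds at the level of $\cB/\cD$ itself rather than after passing to twisted complexes or idempotent completion, which is precisely what the two-step reduction through $\cB/ker(\phi)$ above accomplishes.
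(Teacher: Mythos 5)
Your proof is correct and takes essentially the same route as the paper: both arguments hinge on observing that $\tilde K$ and $\tilde K'$ become isomorphic in $H^0(\cB/\cD)$ and then deducing translation equivalence from the fact that an isomorphism has finite length. The paper proves this last step inline with an explicit length-$p_0$ computation (which is also the content of the unproved \Cref{lemma:trans-equiv-isom} you invoke), while you add a slightly more careful justification, via cohomological full faithfulness of $\cB/\cD \to \cB/\ker(\phi) \to \cC$, of why the lifts are isomorphic in $H^0(\cB/\cD)$ in the first place.
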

\begin{proof}
If $\tilde L'$ is another lift of $L$, then in the quotient category $\cB/\cD$, $\tilde L$ and $\tilde L'$ become isomorphic. An isomorphism $a\in \cB/\cD(\tilde L,\tilde L')$ has finite length, say $p_0$; hence, composition by $a$ maps $F^pH(\cB/\cD)(\tilde K,\tilde L)$ to $F^{p+p_0}H(\cB/\cD)(\tilde K,\tilde L')$. As $a$ is an isomorphism, this map is injective and \begin{equation}
	\gr_{\cB/\cD(\tilde K,\tilde L)}(p)\leq  \gr_{\cB/\cD(\tilde K,\tilde L')}(p+p_0).
\end{equation}
The other inequality required for translation equivalence is proven similarly. One can also show (by exactly the same argument) that changing $\tilde K$ does not change the growth function up to translation equivalence. 
\end{proof}
\begin{cor}[of \Cref{prop:scalinginverse}]\label{corollary:indep-gen-set}
With the notation of \Cref{definition:growth-function-general}, if $\cD'$ is another essentially finite set split-generating $ker(\phi)$, then the growth functions $\gr_{\cB/\cD(K,L)}$ and $\gr_{\cB/\cD'(K,L)}$ associated to the quotients $\cB/\cD$ and $\cB/\cD'$ respectively are scaling equivalent.
\end{cor}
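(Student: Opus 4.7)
The plan is to reduce to finite collections and then exhibit both $\tw\cB/\cD$ and $\tw\cB/\cD'$ as scaling equivalent to a common ``apex'' $\tw\cB/(\cD_0 \cup \cD_0')$ via a short zigzag alternating \Cref{prop:scalinginverse} and \Cref{lemma:summand-e1}. Since scaling equivalence of filtered complexes implies scaling equivalence of growth functions by \Cref{lem:equivalentcxequivalentfunction}, this will suffice.

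\textbf{Reduction to finite.} First, I will pick finite subcategories $\cD_0 \subset \cD$ and $\cD_0' \subset \cD'$ each containing one representative from every quasi-isomorphism class. By \Cref{lemma:summand-e1} (in the form recorded in the note following \Cref{definition:categorical-semi-comp}), the inclusions $\cB/\cD_0 \hookrightarrow \cB/\cD$ and $\cB/\cD_0' \hookrightarrow \cB/\cD'$ are $E_1$-equivalences, hence scaling equivalences; so it is enough to compare the quotients associated with the finite collections $\cD_0$ and $\cD_0'$.

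\textbf{Main zigzag.} Since $\cD_0$ and $\cD_0'$ are both finite and split-generate $\ker\phi$, each split-generates the other in $\tw\cB$; by finiteness there exists $l$ such that every object of $\cD_0'$ is (up to shift) a direct summand of an object of $\tw_{\leq l}\cD_0$, and symmetrically with the roles of $\cD_0$ and $\cD_0'$ exchanged. Set $\cE := \cD_0 \cup \cD_0'$, and choose a finite subcategory $\cF \subset \tw_{\leq l}\cD_0 \subset \tw\cB$ containing $\cD_0$ such that every object of $\cD_0'$ is (a shift of) a direct summand of an object of $\cF$. Then I will apply, in turn: (i) \Cref{prop:scalinginverse} to the pair $\cD_0 \subset \cF \subset \tw_{\leq l}\cD_0$, yielding a scaling equivalence $\tw\cB/\cD_0 \to \tw\cB/\cF$; (ii) \Cref{lemma:summand-e1} to the inclusion $\cF \subset \cF \cup \cE$ (the only new objects being those of $\cD_0'$, which are summands of objects of $\cF$ by construction), yielding an $E_1$-equivalence $\tw\cB/\cF \to \tw\cB/(\cF \cup \cE)$; and (iii) \Cref{prop:scalinginverse} once more, to the pair $\cE \subset \cF \cup \cE \subset \tw_{\leq l}\cE$ (since $\cF \subset \tw_{\leq l}\cD_0 \subset \tw_{\leq l}\cE$), yielding a scaling equivalence $\tw\cB/\cE \to \tw\cB/(\cF \cup \cE)$. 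Reading this zigzag shows that $\tw\cB/\cD_0$ and $\tw\cB/\cE$ are scaling equivalent. By the symmetric argument (swap $\cD_0 \leftrightarrow \cD_0'$), $\tw\cB/\cD_0'$ is likewise scaling equivalent to $\tw\cB/\cE$, so scaling equivalence of $\tw\cB/\cD_0$ and $\tw\cB/\cD_0'$ follows by transitivity. Finally, since the chosen lifts $\tilde K, \tilde L$ lie in $\cB$ and $\cD_0, \cD_0' \subset \cB$, the filtered complexes $\cB/\cD_0(\tilde K, \tilde L)$ and $\tw\cB/\cD_0(\tilde K, \tilde L)$ coincide verbatim (and likewise for $\cD_0'$), so the growth functions $\gr_{\cB/\cD(K,L)}$ and $\gr_{\cB/\cD'(K,L)}$ are scaling equivalent.

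\textbf{Main obstacle.} The technical heart is choosing the intermediate collection $\cF$ correctly: it must be small enough to lie inside $\tw_{\leq l}\cD_0$ (so that \Cref{prop:scalinginverse} applies on the $\cD_0$ side), yet large enough to contain, as summands, every object of $\cD_0'$ (so that \Cref{lemma:summand-e1} can then add the objects of $\cD_0'$ via an $E_1$-equivalence). The fact that one cannot combine these steps directly -- since \Cref{prop:scalinginverse} needs honest twisted complexes while $\cD_0'$ only sits in $\tw^\pi_{\leq l}\cD_0$ -- is precisely why the zigzag through the auxiliary $\cF$ is necessary, and the finite split-generation provided by finiteness of $\cD_0, \cD_0'$ is what bounds the scaling factor uniformly.
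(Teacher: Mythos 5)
Your proof is correct, and it takes essentially the same route as the paper's: both arguments build a zigzag of filtered quasi-equivalences alternating \Cref{prop:scalinginverse} with \Cref{lemma:summand-e1} to meet at a common quotient. The difference is in the bookkeeping, and yours is arguably the more careful of the two. The paper enlarges $\cD$ directly inside $\tw^\pi(\cD)$ by adding, for each object of $\cD'$, a \emph{quasi-isomorphic} object there, and then invokes \Cref{prop:scalinginverse}; but \Cref{prop:scalinginverse} as stated is about subcategories of $\tw_{\leq l}\cD$ (honest twisted complexes, no idempotents), so the paper is implicitly relying on a $\tw^\pi$-variant. You avoid that entirely: by interposing the auxiliary finite $\cF \subset \tw_{\leq l}\cD_0$ of honest twisted complexes and only afterwards absorbing the objects of $\cD_0'$ (which a priori live only as summands) via the $E_1$-equivalence of \Cref{lemma:summand-e1}, every application of \Cref{prop:scalinginverse} stays within $\tw\cB$ where the proposition literally applies. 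The price is a slightly longer zigzag (through $\tw\cB/\cF$, $\tw\cB/(\cF\cup\cE)$, and $\tw\cB/\cE$ on each side) instead of the paper's single apex $\tw^\pi\cB/(\cD_1\cup\cD_1')$, but the logical dependencies are cleaner. Your reduction at the start to finite $\cD_0, \cD_0'$ and the observation at the end that $\cB/\cD_0(\tilde K,\tilde L) = \tw\cB/\cD_0(\tilde K,\tilde L)$ on the nose are both correct and handled properly.
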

\begin{proof}
This follows from \Cref{prop:scalinginverse} (and \Cref{lemma:summand-e1}). Namely, first enlarge $\cD$ inside $\tw^\pi(\cD)$ by adding, for each object of $\cD'$, a quasi-isomorphic complex in $\tw^\pi(\cD)$ (assume we add the same complex for different but quasi-isomorphic objects of $\cD'$). Call this enlargement $\cD\subset \cD_1\subset \tw^\pi(\cD)\subset \tw^\pi(\cB)$, and observe it adds finitely many objects to $\cD$. By \Cref{prop:scalinginverse}, the growth functions  $\gr_{\cB/\cD(K,L)}=\gr_{\tw^\pi(\cB)/\cD(K,L)}$ and $\gr_{\tw^\pi(\cB)/\cD_1(K,L)}$ are the same up to scaling. Similarly, extend $\cD'$ inside $\tw^\pi(\cD')$ by adding representatives of each quasi-isomorphism class in $\cD$, and call this category $\cD'_1$. The same argument shows that the corresponding growth functions are the same up to scaling. 

Now consider $\cD''=\cD_1\cup \cD_1'\subset \tw^\pi(\cB)$. The quasi-isomorphism classes of objects in $\cD''$, $\cD_1$, and $\cD_1'$ are the same, which implies that $\tw^\pi(\cB)/\cD_1\to \tw^\pi(\cB)/\cD''$ and $\tw^\pi(\cB)/\cD'_1\to \tw^\pi(\cB)/\cD''$ are both $E_1$-equivalences by \Cref{lemma:summand-e1}. Hence, the corresponding growth functions are the same, finishing the proof. 
\end{proof}
\begin{note}\label{note:corollaryextendfinitetime}
An alternative proof of \Cref{corollary:indep-gen-set}, which is carried out entirely at the level of triangulated categories, is given in the appendix (cf.\ \Cref{corollary:appendix-conclusion}). Note that the proof of \Cref{corollary:indep-gen-set} (as well as \Cref{corollary:appendix-conclusion}) is still valid if one drops the assumption that $\cD$ and $\cD'$ are essentially finite, as long as there is some $l\geq 0$ such that every object of $\cD'$ is quasi-isomorphic to an object of $\tw_{\leq l}^\pi\cD$, and vice versa. 
\end{note}
\begin{cor}\label{corollary:growth-function-well-def}
If $\cB$ and $\cB'$ are two smooth categorical compactifications, such that there is a smooth categorical pseudo-compactification $\cB''$ and maps of pseudo-compactifications $f:\cB''\to\cB$ and $f':\cB''\to\cB$, then the growth functions $\gr_{K,L}^\cB$ and $\gr_{K,L}^{\cB'}$ are equivalent.	
\end{cor}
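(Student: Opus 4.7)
The plan is to use the pseudo-compactification $\cB''$ as a bridge and chain together two applications of \Cref{prop:sodofcompactifications} with one application of \Cref{corollary:indep-gen-set} computed internally to $\cB''$.

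First, I would apply \Cref{prop:sodofcompactifications} to the morphism of pseudo-compactifications $f : \cB'' \to \cB$. Since $\cB$ is proper (as a genuine compactification), its hypotheses are satisfied, producing essentially finite subcategories $\cD'' \subset \cB''$ split-generating $\ker(\phi'')$ and $\cD \subset \cB$ split-generating $\ker(\phi)$, together with mutually quasi-inverse filtration-preserving functors $\cB''/\cD'' \simeq \cB/\cD$. Evaluating these on compatible lifts of $K$ and $L$, one obtains filtered quasi-isomorphisms of hom-complexes, so by \Cref{lem:equivalentcxequivalentfunction} and \Cref{lem:independencelift} the growth function $\gr_{K,L}^\cB$ is translation equivalent to the growth function computed in $\cB''$ using $\cD''$. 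Applying the same argument to $f' : \cB'' \to \cB'$ produces an essentially finite $\tilde\cD'' \subset \cB''$ split-generating $\ker(\phi'')$ and $\cD' \subset \cB'$ with $\cB''/\tilde\cD'' \simeq \cB'/\cD'$ as filtered categories, so $\gr_{K,L}^{\cB'}$ is translation equivalent to the growth function computed in $\cB''$ using $\tilde\cD''$.

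It remains to compare the two growth functions computed in $\cB''$ using the two different essentially finite generating sets $\cD''$ and $\tilde\cD''$ of the same subcategory $\ker(\phi'')$. I would argue that the proof of \Cref{corollary:indep-gen-set} applies verbatim in this setting, because it rests only on \Cref{prop:scalinginverse} and \Cref{lemma:summand-e1}, both of which are purely formal assertions about filtered quotients and never invoke properness of the ambient category. Concretely, one enlarges $\cD''$ and $\tilde\cD''$ inside $\tw^\pi \cB''$ by adding representatives from the other set, forms their union, and applies \Cref{prop:scalinginverse} and \Cref{lemma:summand-e1} to the resulting zigzag, obtaining a scaling equivalence between the two growth functions. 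The growth functions are legitimately finite-valued in the first place because the relevant hom-complexes in $\cB''/\cD''$ and $\cB''/\tilde\cD''$ are, by the first two steps, quasi-isomorphic to hom-complexes in the proper categories $\cB$ and $\cB'$ and thus have finite-dimensional cohomology.

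Chaining the three equivalences (translation, then scaling, then translation) and absorbing the weaker notion gives the desired scaling equivalence $\gr_{K,L}^\cB \sim \gr_{K,L}^{\cB'}$. The main obstacle I anticipate is the bookkeeping in Step 3: one has to verify carefully that the arguments of \Cref{corollary:indep-gen-set} transfer to a pseudo-compactification (where one generally lacks properness and hence a priori lacks a well-defined growth function), which hinges on the observation that the hom-complexes inherit finite-dimensionality from the two actual compactifications $\cB$ and $\cB'$ after the first two steps.
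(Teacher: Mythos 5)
Your proof is correct and takes essentially the same route as the paper: apply \Cref{prop:sodofcompactifications} to each of $f$ and $f'$, obtaining filtered quasi-inverse functors $\cB''/\cD'' \simeq \cB/\cD$ and $\cB''/\tilde\cD'' \simeq \cB'/\cD'$, use \Cref{lem:independencelift} to absorb the discrepancy between $\tilde K,\tilde L$ and $gf(\tilde K),gf(\tilde L)$, and then reconcile the two generating sets $\cD''$ and $\tilde\cD''$ inside $\cB''$ via \Cref{corollary:indep-gen-set}. The one place where you are actually \emph{more} explicit than the paper is the last step: the paper writes ``$\gr_{K,L}^{\cB''}$'' as if it is already known to be well-defined for a pseudo-compactification, whereas you correctly observe that this is exactly the content of \Cref{corollary:indep-gen-set} and that its proof (via \Cref{prop:scalinginverse} and \Cref{lemma:summand-e1}) is purely formal and never uses properness, while finiteness of the relevant growth functions is supplied by the comparison with the proper categories $\cB$, $\cB'$. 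This is a sound reading of the argument. The only minor imprecision is your appeal to \Cref{lem:equivalentcxequivalentfunction} in the first step: a single filtration-preserving quasi-isomorphism is not by itself a translation equivalence in the sense of \Cref{definition:s-t-equiv-complex}; one needs the two-sided sandwich of inequalities coming from $\ov f$ and $\ov g$, plus \Cref{lem:independencelift} to close the loop, which is exactly what the paper writes out in its displayed inequalities. Since you do cite \Cref{lem:independencelift} alongside, the logic is there, just compressed.
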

\begin{proof}
By replacing $\cC$, $\cB$, $\cB'$ and $\cB''$ by their split-closed pre-triangulated envelopes, assume without loss of generality that they are pre-triangulated. Choose a right adjoint $g$ to $f$ and $\cD\subset \cB$, $\cD''\subset \cB''$ as in the proof of \Cref{prop:sodofcompactifications}. 
Let $K,L\in\cC$, and choose lifts $\tilde K,\tilde L\in\cB''$. Then $f(\tilde K), f(\tilde L)\in \cB$ are lifts of $K,L$ to $\cB$, and can be used to define $\gr_{K,L}^\cB$. By \Cref{prop:sodofcompactifications}, $\cB''/\cD''(\tilde K,\tilde L)$ and $\cB/\cD(f(\tilde K),f(\tilde L))$ are equivalent, more precisely, there are quasi-isomorphisms
\begin{equation}
	\cB''/\cD''(\tilde K,\tilde L)\to \cB/\cD(f(\tilde K),f(\tilde L))\text{ and } \cB/\cD(f(\tilde K),f(\tilde L))\to \cB''/\cD''(gf(\tilde K),gf(\tilde L))
\end{equation} that respect filtration. As a result, 	
\begin{equation}\label{eq:grleqgrleqgr}
	\gr_{\cB''/\cD''(\tilde K,\tilde L)} \leq \gr_{\cB/\cD(f(\tilde K),f(\tilde L))}\text{ and } \gr_{\cB/\cD(f(\tilde K),f(\tilde L))}\leq \gr_{\cB''/\cD''(gf(\tilde K),gf(\tilde L))}
\end{equation}
In particular this proves $\gr_{K,L}^{\cB''}$ is finite (independently of the lifts $\tilde K,\tilde L$).  \Cref{lem:independencelift} implies $\gr_{\cB''/\cD''(gf(\tilde K),gf(\tilde L))}$ is translation equivalent to $\gr_{\cB''/\cD''(\tilde K,\tilde L)}$. This, combined with \eqref{eq:grleqgrleqgr}, implies that $\gr_{\cB/\cD(f(\tilde K),f(\tilde L))}$ and $\gr_{\cB''/\cD''(\tilde K,\tilde L)}$ are translation equivalent. Hence, $\gr_{K,L}^\cB$ and $\gr_{K,L}^{\cB''}$ are equivalent.

Analogously, $\gr_{K,L}^{\cB'}$ and $\gr_{K,L}^{\cB''}$ are equivalent. This finishes the proof that $\gr_{K,L}^\cB$ and $\gr_{K,L}^{\cB'}$ are equivalent.
\end{proof}

\begin{note}
We explained in \Cref{note:idempotentextension} how to extend the definition of $\gamma^\cB_{K,L}$ when $K,L$ does not lift, but are direct summands of objects that lift (which is always the case). This requires one more choice (the objects that lift and contain them as direct summands); however, one can show the independence of the growth function from this choice similarly to \Cref{lem:independencelift}. To see this, first observe that if $L\to L'$ and $L\to L''$ are maps with cohomological left inverses as in \Cref{note:idempotentextension} such that $L',L''$ lift to $\tw^\pi(\cB)$, then one can find another such lifting object $L'''$ and maps $L'\to L''',L''\to L'''$ such that the compositions $L\to L'\to L'''$ and $L\to L''\to L'''$ agree in cohomology. For instance, if $L'\simeq L\oplus L_0'$, $L''\simeq L\oplus L_0''$, then let $L'''=L\oplus L_0'\oplus L_0''\oplus L$. This object lifts as it is quasi-isomorphic to $L'\oplus L''$; however, we define the map $L''\to L'''$ differently. Namely, include $L'$ as $L\oplus L_0'\oplus 0\oplus 0\subset L'''$ and include $L''$ as $L\oplus 0\oplus L_0''\oplus 0$, so that the desired property holds. Now name the maps $a_L:L\to L'$, $a_{L'}:L'\to L'''$ (hence, the map $L\to L'''$ is $a_{L'}\circ a_L$), and call the left inverse of $a_{L'}$ by $b_{L'}:L'''\to L'$. For some $p_0,p_1$, $a_{L'}\in F^{p_0}H(\cC)(L',L''')$ and $b_{L'}\in F^{p_1}H(\cC)(L''',L')$. Thus, for $b_K:K'\to K$ as in \Cref{note:idempotentextension}
\begin{equation}
	\{x: a_L x b_K\in F^pH(\cC)(K',L') \}\subset 	\{x: a_{L'} a_L x b_K\in F^{p+p_0}H(\cC)(K',L''') \}
\end{equation}
\begin{equation}
\{x: a_{L'} a_L x b_K\in F^p H(\cC)(K',L''') \}\subset 	\{x: b_{L'} a_{L'} a_L x b_K\in F^{p+p_1}H(\cC)(K',L') \}\atop =\{x: a_L x b_K\in F^{p+p_1}H(\cC)(K',L') \}
\end{equation}
This shows the filtrations on $H(\cC)(K,L)$ using $L\to L'$ and $L\to L'''$ coincide up to translation. Same argument works to compare $L\to L''$ and $L\to L'''$ as well; thus, growth functions induced by $L\to L'$ and $L\to L''$ also coincide up to translation. 
\end{note}

\subsection{The localization filtration on derived categories of coherent sheaves}\label{subsection:filtration-dbcoh}
This section is an elaboration on \Cref{exmp:bsidecompactificationwithzigzag}. We prove:
\begin{prop}\label{prop:welldefinedalggeogrowth}
Given smooth algebraic variety $U$ over $\bK$, and $\scrF,\scrF'\in D^bCoh(U)$, the graded vector space $RHom_U(\scrF,\scrF')$ admits a filtration such that the associated growth function $\gr_{RHom_U(\scrF,\scrF')}$ is well-defined up to scaling equivalence.
\end{prop}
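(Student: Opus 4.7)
The plan is to realize $D^bCoh(U)$ as the target of a smooth categorical compactification arising from a geometric compactification of $U$, and then invoke \Cref{corollary:growth-function-well-def} to get independence. Concretely, by combining Nagata's embedding theorem with Hironaka's resolution of singularities (valid since $\bK$ has characteristic $0$), we may choose a smooth projective variety $X$ with $U \subset X$ open and $D := X\setminus U$ a simple normal crossings divisor. The (dg-enhanced) restriction functor $\phi: D^bCoh(X) \to D^bCoh(U)$ has kernel $D^b_D(X)$, the full subcategory of complexes with cohomology set-theoretically supported on $D$.

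The first key step is verifying that $\phi$ is a smooth categorical compactification in the sense of \Cref{definition:categorical-semi-comp}. Smoothness and properness of $D^bCoh(X)$ are standard for smooth projective $X$. That $\overline{\phi}: D^bCoh(X)/\ker \phi \to D^bCoh(U)$ is a Morita equivalence follows from the usual Verdier localization description of coherent sheaves on open subvarieties (e.g.\ \cite[Def.\ 1.7 \& Prop.\ 1.10]{efimov2013homotopy}). The nontrivial point is that $\ker \phi = D^b_D(X)$ is split-generated by an essentially finite set of objects; this is the main obstacle, but it is handled by Efimov, who produces such a finite set of split-generators using powers of an ideal sheaf cutting out $D$ together with pushforwards from thickenings of the irreducible components of $D$.

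Given such a compactification, \Cref{definition:growth-function-general}, together with \Cref{lem:independencelift} and \Cref{corollary:indep-gen-set}, endows $RHom_U(\scrF,\scrF')$ with a growth function $\gr^X_{\scrF,\scrF'}$ well-defined up to scaling equivalence, independent of the chosen finite split-generating set of $\ker \phi$ and of the chosen lifts of $\scrF, \scrF'$ to $D^bCoh(X)$.

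Finally, independence from the compactification $X$ is settled via the weak factorization theorem of Abramovich--Karu--Matsuki--W\l{}odarczyk: any two such smooth compactifications $X_1, X_2$ of $U$ are related by a zigzag of blowups and blowdowns along smooth centers contained in $X_i \setminus U$. By Orlov's blowup formula, a blowdown $\pi: \widetilde{X} \to X$ along a smooth center disjoint from $U$ induces a pushforward $\pi_*: D^bCoh(\widetilde{X}) \to D^bCoh(X)$ which is a localization whose kernel is split-generated by finitely many objects, and which is compatible with the respective restrictions to $D^bCoh(U)$; hence $\pi_*$ is a morphism of smooth categorical compactifications of $D^bCoh(U)$. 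Chaining these along the zigzag and applying \Cref{corollary:growth-function-well-def} shows $\gr^{X_1}_{\scrF,\scrF'}$ and $\gr^{X_2}_{\scrF,\scrF'}$ are scaling equivalent, completing the proof.
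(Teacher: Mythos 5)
Your proposal matches the paper's proof in all essential respects: choose a smooth (projective) compactification via Nagata and Hironaka, verify that restriction $D^bCoh(X)\to D^bCoh(U)$ is a smooth categorical compactification, then use weak factorization together with Orlov's blowup semi-orthogonal decomposition to connect any two choices by a zigzag of morphisms of categorical compactifications, and conclude via \Cref{corollary:growth-function-well-def}. The only deviation is a citation detail: where you appeal to Efimov for finite split-generation of $\ker\phi$, the paper instead invokes Arinkin--Bezrukavnikov to reduce to the split-closure of the image of $D^bCoh(X\setminus U)$ and then cites Ballard--Favero--Katzarkov (or Neeman) for a strong generator of the latter; both routes deliver the same finiteness input.
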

The term ``algebraic variety'' should be understood to mean a separated scheme of finite type over the field $\bK$ (which is assumed throughout this paper to be of characteristic zero).  Also by convention, we use the notation $D^bCoh(X)$ to denote an $A_\infty$-enhancement of the bounded derived category of coherent sheaves. Functors between such categories are also assumed to be $A_\infty$.

Note that one can actually prove that the filtration on the graded vector space $RHom_U(\scrF,\scrF')$ is well-defined up to scaling equivalence. Because of this, one can consider other invariants, such as the growth of each $RHom_U^i(\scrF,\scrF')$.
\subsubsection{Compactifications of algebraic varieties}
\begin{defn}
Let $U$ be a smooth, irreducible variety. A \emph{compactification} of $U$ is the data of a proper variety $X$ and an open embedding $i: U \hookrightarrow X$ such that $i(U)$ is Zariski dense. The compactification is said to be \emph{smooth} if $X$ is smooth.
\end{defn}

\begin{prop}\label{proposition:existence-ag-sc}
Let $U$ be a smooth variety. Then $U$ admits a smooth compactification.
\end{prop}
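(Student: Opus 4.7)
The plan is to combine two classical results: Nagata's compactification theorem and Hironaka's resolution of singularities (available since we are in characteristic $0$). Nagata's theorem guarantees that any separated variety of finite type over $\bK$ admits an open embedding into a proper variety, while Hironaka's theorem allows us to resolve the singularities of this ambient proper variety without disturbing the (already smooth) open subset $U$.

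Concretely, I would proceed as follows. First, apply Nagata's theorem to obtain an open dense embedding $U \hookrightarrow X_0$ with $X_0$ a proper, but possibly singular, variety. Second, apply Hironaka's theorem to produce a proper birational morphism $\pi: X \to X_0$ such that $X$ is smooth and such that $\pi$ restricts to an isomorphism over the smooth locus $(X_0)^{\op{sm}} \sub X_0$. Since $U$ is smooth by hypothesis and open in $X_0$, it is contained in $(X_0)^{\op{sm}}$, so $\pi^{-1}(U) \to U$ is an isomorphism; this identifies $U$ with an open subvariety of $X$. Because $\pi$ is birational and proper and $U$ is dense in $X_0$, the image $\pi^{-1}(U)$ is dense in $X$. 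Composing, we obtain a smooth compactification $U \hookrightarrow X$.

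There is no serious obstacle here: both Nagata's and Hironaka's theorems are deep but well-established, and the argument is essentially formal once they are invoked. One could alternatively phrase the proof by noting that Hironaka's theorem in the relative form gives a resolution of the closed subscheme $X_0 \setminus U$ inside $X_0$, which accomplishes the same thing. If one wished a stronger conclusion, the same line of reasoning (using the stronger form of Hironaka's theorem) would in fact yield a smooth compactification in which $X \setminus U$ is a simple normal crossings divisor, a fact which will be useful in later sections of the paper.
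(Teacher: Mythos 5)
Your proof is correct and follows exactly the paper's approach: apply Nagata's compactification theorem to embed $U$ openly into a proper variety, then apply Hironaka's resolution of singularities (away from the smooth locus) to obtain a smooth proper model containing $U$ as a dense open. The additional remarks about preserving density and the normal-crossings refinement are accurate and match the remark the paper makes immediately after the proposition.
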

\pf
By Nagata's compactification theorem (see \cites{nagataimbedding, nagatageneralimbedding} and \cite[\href{https://stacks.math.columbia.edu/tag/0F3T}{Section 0F3T}]{stacks-project} for instance), $U$ embeds as an open subscheme of a proper variety $X$. Now apply Hironaka's resolution of singularities to $X$.
\epf
\begin{rk}
By blowing up with center $X\setminus U$, one can also assume $X\setminus U$ is a divisor. 	
\end{rk}
Smooth compactifications are unique up to blowups and blowdowns. This is a corollary of the so-called ``weak factorization theorem", which is due to Abramovich, Karu, Matsuki and Wlodarczyk. 
\begin{fact}[see Thm.\ 0.1.1 in \cite{akmw}]\label{fact:weak-factorization}
Let $\phi: X_1 \dashrightarrow X_2$ be a birational map between complete nonsingular algebraic varieties over an algebraically closed field of characteristic zero, and let $U \subset X_1$ be an open set where $\phi$ is an isomorphism. Then $\phi$ can be factored into a sequence of blowups and blowdowns with smooth irreducible centers disjoint from $U$.
\end{fact}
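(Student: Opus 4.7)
The plan is to construct the filtration using a smooth categorical compactification coming from a smooth compactification of $U$, and then deduce independence from the weak factorization theorem combined with \Cref{corollary:growth-function-well-def}.

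First I would invoke \Cref{proposition:existence-ag-sc} to fix a smooth compactification $U \hookrightarrow X$; after an additional blowup in $X\setminus U$ we may assume $D := X \setminus U$ is a divisor. I would then verify that the restriction functor $j^*\colon D^bCoh(X) \to D^bCoh(U)$ is a smooth categorical compactification in the sense of \Cref{definition:categorical-semi-comp}: $D^bCoh(X)$ is smooth and proper because $X$ is; $j^*$ is a localization by standard results; and the kernel, consisting of sheaves set-theoretically supported on $D$, is split-generated by a finite collection (for instance, pushforwards from the irreducible components of $D$, together with finitely many twists by $\cO(-D)$). Applying \Cref{definition:growth-function-general} to lifts $\tilde{\scrF},\tilde{\scrF}' \in D^bCoh(X)$ of $\scrF, \scrF'$ produces a filtration on $RHom_U(\scrF,\scrF')$; by \Cref{lem:independencelift} and \Cref{corollary:indep-gen-set} the associated growth function $\gr^X_{\scrF,\scrF'}$ is well-defined up to scaling equivalence independently of the auxiliary choices of lifts and split-generating set.

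The remaining and essential task is independence of the compactification $X$. Given two smooth compactifications $X_1, X_2$ of $U$, \Cref{fact:weak-factorization} yields a sequence
\begin{equation}
X_1 = Y_0 \dashrightarrow Y_1 \dashrightarrow \cdots \dashrightarrow Y_n = X_2
\end{equation}
in which each $Y_i \dashrightarrow Y_{i+1}$ is either a blowup or blowdown with smooth center disjoint from $U$. For each such step, say $\pi\colon Y' \to Y$ is a blowup along a smooth center $Z \subset Y \setminus U$, Bondal--Orlov's blowup formula \cite{bondalorlovsod} provides a semi-orthogonal decomposition of $D^bCoh(Y')$ whose ``extra'' components are (shifts of) copies of $D^bCoh(Z)$. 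From this it follows that $\pi_*\colon D^bCoh(Y') \to D^bCoh(Y)$ is a localization whose kernel is split-generated by a finite collection of objects supported on the exceptional divisor. Since $Z$ is disjoint from $U$, $\pi_*$ is compatible with the restriction functors to $D^bCoh(U)$, so it defines a morphism of smooth categorical compactifications of $D^bCoh(U)$.

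Finally, for each adjacent pair $(Y_i, Y_{i+1})$, whichever variety is a blowup of the other provides a smooth categorical compactification admitting morphisms to both $D^bCoh(Y_i)$ and $D^bCoh(Y_{i+1})$ (namely $\pi_*$ and the identity). Applying \Cref{corollary:growth-function-well-def} to each adjacent pair yields $\gr^{Y_i}_{\scrF,\scrF'} \sim \gr^{Y_{i+1}}_{\scrF,\scrF'}$, and chaining these equivalences along the zig-zag gives $\gr^{X_1}_{\scrF,\scrF'} \sim \gr^{X_2}_{\scrF,\scrF'}$. The principal technical obstacle is verifying the kernel-generation condition for the blowup pushforward; everything else reduces cleanly to the machinery already developed in the paper.
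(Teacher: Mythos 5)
The statement you were asked to prove is the weak factorization theorem of Abramovich--Karu--Matsuki--W{\l}odarczyk: a purely birational-geometric assertion that a birational map $\phi\colon X_1\dashrightarrow X_2$ of smooth complete varieties which is an isomorphism on an open set $U$ factors into blowups and blowdowns with smooth irreducible centers disjoint from $U$. In the paper this is an imported \emph{Fact}, quoted verbatim from \cite{akmw} (Thm.\ 0.1.1), and its proof lies entirely outside the categorical machinery developed here: it rests on resolution of singularities, torific ideals and birational cobordism, not on anything about derived categories, localizations or growth functions. Your proposal never engages with this statement. What you have written is instead a proof of \Cref{prop:welldefinedalggeogrowth} (well-definedness of $\gr_{\scrF,\scrF'}$ up to scaling equivalence), assembled from \Cref{proposition:existence-ag-sc}, the blowup semi-orthogonal decomposition of \cite{bondalorlovsod} as in \Cref{proposition:ag-compactifications}, and \Cref{corollary:growth-function-well-def} -- and that argument does essentially match how the paper proves those downstream results.

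The decisive gap is that your argument explicitly invokes \Cref{fact:weak-factorization} itself (``\Cref{fact:weak-factorization} yields a sequence $X_1=Y_0\dashrightarrow\cdots\dashrightarrow Y_n=X_2$''), so as a proof of the Fact it is circular, and no amount of verifying kernel-generation for $\pi_*$ or chaining scaling equivalences can supply the missing birational-geometric content. The correct treatment of this particular statement is the one the paper adopts: it is cited from \cite{akmw}, with nothing to prove internally; if you wanted to prove it you would have to reproduce the AKMW argument, which is an entirely different undertaking from the categorical bookkeeping in your write-up.
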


The following proposition relates this discussion to the purely categorical notions considered in the previous section.
\begin{prop}\label{proposition:ag-compactifications}
Let $i_1: U \to X_1$ and $i_2: U \to X_2$ be smooth compactifications of $U$, and let $f: X_1 \to X_2$ be a morphism such that $f\circ i_1=i_2$.
\begin{enumerate}
    \item the induced map $i_k^*: D^bCoh(X_k) \to D^bCoh(U)$ is a categorical compactification of $U$ (see  \Cref{definition:categorical-semi-comp}) with kernel spanned by objects whose set theoretical support is on $X_k\setminus U$; 
    \item suppose that $f: X_1 \to X_2$ is the composition of a sequence of blowdowns with smooth irreducible center disjoint from the image of $U$. Then $f_*: D^bCoh(X_1) \to D^bCoh(X_2)$ is a morphism of categorical compactifications.
\end{enumerate}
\end{prop}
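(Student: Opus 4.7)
The plan for (1) is to check the three clauses of Definition~\ref{definition:categorical-semi-comp} for the functor $i_k^*\colon D^bCoh(X_k)\to D^bCoh(U)$ in turn. Smoothness of $D^bCoh(X_k)$ is classical (via a resolution of the diagonal on the smooth proper variety $X_k\times X_k$), and properness is Serre's finiteness theorem for $\op{Ext}$ groups between coherent sheaves on the proper variety $X_k$. The identification of $\ker(i_k^*)$ with the subcategory of objects set-theoretically supported on $Z_k:=X_k\setminus U$ is immediate: a bounded complex restricts to zero on $U$ if and only if each of its cohomology sheaves does. For finite split-generation of $\ker(i_k^*)$, I would first invoke Hironaka to assume that $Z_k\subset X_k$ is a simple normal crossings divisor with finitely many smooth components $D_1,\ldots,D_r$ (this further blowup has center disjoint from $U$, so is harmless by part (2) applied in the reverse direction); then the finite collection of structure sheaves $\{\mathcal{O}_{D_I}\}_{I\subset\{1,\ldots,r\}}$ of the various intersections $D_I:=\bigcap_{i\in I}D_i$, together with finitely many twists by a fixed ample line bundle to detect infinitesimal thickenings, split-generates $\ker(i_k^*)$. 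The induced functor $D^bCoh(X_k)/\ker(i_k^*)\to D^bCoh(U)$ is then a Morita equivalence because every coherent sheaf on $U$ extends to one on $X_k$ (Grothendieck's extension theorem), yielding essential surjectivity, while the universal property of the quotient (Lemma~\ref{lemma:quotient-properties}\eqref{item:univ-prop-quotient}) takes care of full faithfulness once the kernel has been annihilated.

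For (2), since compositions of morphisms of compactifications are morphisms of compactifications, it suffices to treat the case where $f=\pi\colon X_1\to X_2$ is a single blow-down along a smooth irreducible center $C\subset X_2$ with $C\cap i_2(U)=\emptyset$. The compatibility $i_2^*\circ \pi_*\simeq i_1^*$ follows from flat base change applied to the Cartesian square
\begin{equation}
\begin{tikzcd}
U \ar[r,"i_1"] \ar[d,"="'] & X_1 \ar[d,"\pi"] \\
U \ar[r,"i_2"'] & X_2,
\end{tikzcd}
\end{equation}
which is Cartesian because $\pi$ restricts to an isomorphism over a neighborhood of $i_2(U)\subset X_2\setminus C$. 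Orlov's blow-up formula~\cite{bondalorlovsod} then provides a semi-orthogonal decomposition
\begin{equation}
D^bCoh(X_1)=\langle D^bCoh(C)_{-c+1},\ldots,D^bCoh(C)_{-1},\pi^*D^bCoh(X_2)\rangle,
\end{equation}
where $c$ is the codimension of $C$. Full faithfulness of $\pi^*$ means the unit $\op{id}\to\pi_*\pi^*$ is a natural isomorphism, which is precisely the assertion that $D^bCoh(X_1)/\ker(\pi_*)\to D^bCoh(X_2)$ is a Morita equivalence. The kernel $\ker(\pi_*)$ is split-generated by the remaining components of the decomposition, each a copy of $D^bCoh(C)$ (shifted and twisted), which admits a strong generator by Bondal--Van den Bergh; hence $\ker(\pi_*)$ is finitely split-generated. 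Moreover these generators, being supported on $C$, have set-theoretic support inside $X_1\setminus i_1(U)$, so they lie in $\ker(i_1^*)$ as required. Composing over successive blow-downs completes the proof for general $f$.

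The main obstacle I anticipate is the finite split-generation step in (1). It is tempting to simply invoke Bondal--Van den Bergh on $Z_k^{\op{red}}$ and push forward, but objects of $\ker(i_k^*)$ can be scheme-theoretically supported on arbitrary infinitesimal thickenings of $Z_k^{\op{red}}$, so no uniform choice of finitely many pushforwards from $Z_k^{\op{red}}$ can split-generate. This is what forces the reduction to the SNC situation, where one has explicit control over the relevant sheaves; alternatively, one can appeal to the more delicate finiteness statements of Efimov~\cite{efimov2013homotopy}.
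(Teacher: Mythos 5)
Your treatment of part (2) is essentially the paper's argument, using Orlov's blow-up decomposition and full faithfulness of $\pi^*$ (the paper phrases things through the left adjoint $f_!=S_{X_2}^{-1}f_*S_{X_1}$ and then transfers to $f_*$ via the Serre functors, but both are valid routes). The flat base change argument for $i_2^*\circ\pi_*\simeq i_1^*$ is fine. Part (1), however, has two genuine problems.

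First, for the Morita equivalence: you claim that the universal property of the quotient (\Cref{lemma:quotient-properties}\eqref{item:univ-prop-quotient}) "takes care of full faithfulness once the kernel has been annihilated." It does not. The universal property only says the functor factors through the quotient; it gives no information about whether the induced functor $D^bCoh(X_k)/\ker(i_k^*)\to D^bCoh(U)$ is cohomologically fully faithful. That statement is a substantial theorem — a localization theorem for $D^bCoh$ — and the paper correctly imports it from \cite{arinkinbezrukavnikov} (Lemma 2.12 there). Without a reference or proof, this is a hole.

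Second, your concern about the finite-generation step is misplaced, and your workaround is worse than the thing you were trying to avoid. The definition of categorical compactification (\Cref{definition:categorical-semi-comp}) only asks that $\ker(\phi)$ be split-generated by a finite collection, which allows iterated cones of arbitrary (unbounded) length; there is no "finite time" requirement to worry about. The standard filtration argument therefore works verbatim: an object of $\ker(i_k^*)$ has all cohomology sheaves set-theoretically supported on $Z_k$; each such sheaf is scheme-theoretically killed by some power $\mathcal{I}^n$ of the ideal of $Z_k^{\op{red}}$, and the filtration by $\mathcal{I}^j\scrF$ has subquotients pushed forward from $Z_k^{\op{red}}$; hence $\ker(i_k^*)$ is the triangulated envelope of $j_*D^bCoh(Z_k^{\op{red}})$, and finite split-generation follows from the existence of a strong generator for $D^bCoh(Z_k^{\op{red}})$ (which the paper gets from \cite{ballardfaverokatzarkovorlovspectra} or \cite{neemanstrong}). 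Your proposed SNC reduction does not escape the issue you raised — the structure sheaves $\cO_{D_I}$ and their ample twists are still supported on the reduced locus, so the exact same "infinitesimal thickening" worry would apply to them — and it also introduces a circularity by invoking part (2) inside the proof of part (1).
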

\pf
The first claim is well known. For instance, it follows from \cite[Lemma 2.12]{arinkinbezrukavnikov} (also see the remark afterwards). Moreover, \cite[Lemma 2.13]{arinkinbezrukavnikov} shows that every object in the kernel is scheme theoretically supported on a thickening of the reduced scheme $X_k\setminus U$. As a result, it is in the split-closed triangulated envelope of the image of the pushforward $D^bCoh(X_k\setminus U)\to D^bCoh(X_k)$. The category $D^bCoh(X_k\setminus U)$ is finitely generated by \cite[Theorem 1.4]{ballardfaverokatzarkovorlovspectra} or \cite{neemanstrong}, which implies the same for the kernel of the map $D^bCoh(X_k) \to D^bCoh(U)$. See also \cite[Cor.\ 3.7]{ballard2008sheaves}.

It suffices to show the second claim for a single blowdown. It is clear that $i_2^*\circ f_*\simeq i_1^*$. According to a result of Orlov \cite[Sec.\ 4]{orlov_1993}, if $X_1\to X_2$ is a blowdown map with smooth center $Z\subset X_2$ of codimension $r$, then there exists a semi-orthogonal decomposition
\begin{equation}
D^bCoh(X_1)\simeq \langle D^bCoh(Z)_{-r+1},\dots, D^bCoh(Z)_{-1}, f^* D^bCoh(X_2) \rangle
\end{equation}
Here $D^bCoh(Z)_i$ are subcategories of $D^bCoh(X_1)$ which are equivalent to $D^bCoh(Z)$ \cite[Sec.\ 2]{orlov_1993} and generate the kernel of $f_*$ \cite[Sec.\ 4]{orlov_1993}. The claim follows. 

\epf
\begin{proof}[Proof of \Cref{prop:welldefinedalggeogrowth}]
Let $U$ be a smooth, irreducible variety. Choose a smooth compactification $U \hookrightarrow X$, which exists according to \Cref{proposition:existence-ag-sc}.This induces a smooth categorical compactification according to \Cref{proposition:ag-compactifications}(1). Therefore, by choosing lifts to $\scrF,\scrF'$, one can define a filtration on $RHom_U(\scrF,\scrF')$ from the localization. 

Any two such categorical compactifications are related by zigzags of morphisms of categorical compactifications by \Cref{fact:weak-factorization} and \Cref{proposition:ag-compactifications}(2). Therefore, as long as one uses this class of compactifications to define the growth, it does not depend on the compactification, and is well-defined up to scaling equivalence by \Cref{corollary:growth-function-well-def}. 
\end{proof}
\defi[Growth function for coherent sheaves] 
Given a pair of objects $\scrF, \scrF' \in D^bCoh(U)$, let $\gr_{\scrF,\scrF'}$ be the growth function $\gr_{RHom_U(\scrF,\scrF')}$, which is induced by a categorical compactification $D^bCoh(X)\to D^bCoh(U)$, as in \Cref{definition:growth-function-general}.
\edefi
Notice that this definition is not just a repetition of \Cref{definition:growth-function-general}, the key is the well-definiteness statement \Cref{prop:welldefinedalggeogrowth}.

\subsubsection{Divisor complements} 
In this section, we explicitly compute the growth functions (up to equivalence) when $X\setminus U$ is a divisor. Our main tool is \Cref{thm:spherical} (or \Cref{cor:spherical}). We obtain a particularly explicit description of the growth functions when  $X \setminus U$ is \textit{ample}; see \Cref{thm:growthisdimsupp} below. 

Let $X$ be a smooth projective variety and $D\subset X$ be a Cartier divisor such that $U=X\setminus D$. In particular, $D^bCoh(X)$ is a smooth categorical compactification of $D^bCoh(U)$, with kernel generated by the image of $D^bCoh(D)$ \cite[Cor.\ 3.17]{ballard2008sheaves}. By \Cref{exmp:sphericaldivisor}, the inclusion $D^b(Coh(D))\to D^b(Coh(X))$ is a spherical functor, with corresponding twist given by $S=(\cdot)\otimes\cO(D)$. Moreover, the natural transformation $1_{D^b(Coh(X))}\to S$ is induced by multiplication by a section $\sigma$ of $\cO(D)$ that cuts out $D$. Let $\scrF,\scrF'\in D^bCoh(U)$, and fix lifts $\ov\scrF,\ov\scrF'\in D^bCoh(X)$, which always exists by \cite[Lemma 2.12 (a)]{arinkinbezrukavnikov}. \Cref{cor:spherical} implies
\begin{equation}
RHom_U(\scrF,\scrF')\cong \colim_n RHom_X(\overline{\scrF},S^n(\overline{\scrF}'))\cong \colim_n RHom_X(\overline{\scrF},\overline{\scrF}'(nD))	
\end{equation}
where the localization filtration on $RHom_U(\scrF,\scrF')$ is identified with the filtration on the right hand side induced by the colimit. In this filtration, $F^p\colim_n RHom_X(\overline{\scrF},\overline{\scrF}'(nD))$ is the image of the map \eq RHom_X(\overline{\scrF},\overline{\scrF}'(pD))\to \colim_n RHom_X(\overline{\scrF},\overline{\scrF}'(nD))\eeq As a result,
\begin{cor}\label{cor:growthsheaves}
$\gr_{\scrF,\scrF'}(p)$ is equivalent to the function given by \eq\label{equation:growth-sheaves} p\mapsto dim(F^p\colim_n RHom_X(\overline{\scrF},\overline{\scrF}'(nD)))\eeq
\end{cor}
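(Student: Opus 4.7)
The proof proceeds by directly specializing the abstract comparison between the localization filtration and the colimit filtration (\Cref{thm:spherical}, \Cref{cor:spherical}) to the concrete spherical setup of \Cref{exmp:sphericaldivisor}.

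The plan is as follows. First I would fix lifts $\widetilde{\scrF}, \widetilde{\scrF'} \in D^bCoh(X)$ of $\scrF, \scrF' \in D^bCoh(U)$ (which exist by \cite[Lem.\ 2.12(a)]{arinkinbezrukavnikov}, as already invoked). By \Cref{prop:welldefinedalggeogrowth}, $\gr_{\scrF, \scrF'}$ is well-defined up to scaling equivalence, and in particular I am free to compute it using the smooth categorical compactification $\cB := D^bCoh(X) \to D^bCoh(U)$ with kernel $\cD$ generated by the image of $j_* : D^bCoh(D) \to D^bCoh(X)$. By definition, $\gr_{\scrF, \scrF'}(p) = \dim \mathrm{im}\bigl(H(F^p (\cB/\cD)(\widetilde{\scrF}, \widetilde{\scrF'})) \to H((\cB/\cD)(\widetilde{\scrF}, \widetilde{\scrF'}))\bigr)$.

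Next, I would apply \Cref{cor:spherical} to the spherical functor $j_*$: the spherical twist is $S = (-)\otimes \cO(D)$ and the natural transformation $1 \to S$ is multiplication by the section $\sigma$ cutting out $D$ (\Cref{exmp:sphericaldivisor}). The corollary then produces an $E_1$-equivalence of filtered chain complexes between $(\cB/\cD)(\widetilde{\scrF}, \widetilde{\scrF'})$ and $\hocolim_k \cB(\widetilde{\scrF}, S^k \widetilde{\scrF'}) = \hocolim_k RHom_X(\widetilde{\scrF}, \widetilde{\scrF'}(kD))$, where the latter carries the filtration defined in \eqref{equation:hocolim-filtration}.

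Then I would invoke \Cref{lemma:E_r-equiv-map}: an $E_1$-equivalence of filtered complexes (with bounded-below, exhausting filtrations, which holds here since our filtrations are non-negative and exhausting by construction) induces, for every $p$, an isomorphism $F^p H(C_1) \xrightarrow{\sim} F^p H(C_2)$. In particular, the growth functions on the two sides are equal, hence certainly scaling equivalent. It remains to identify the filtration induced on $H(\hocolim_k RHom_X(\widetilde{\scrF}, \widetilde{\scrF'}(kD))) = \colim_k RHom_X(\widetilde{\scrF}, \widetilde{\scrF'}(kD))$ by \eqref{equation:hocolim-filtration} with the colimit filtration \eqref{equation:d-s-filtration}, namely $F^p = \mathrm{im}\bigl(RHom_X(\widetilde{\scrF}, \widetilde{\scrF'}(pD)) \to \colim_n RHom_X(\widetilde{\scrF}, \widetilde{\scrF'}(nD))\bigr)$; this identification is exactly the content of the lemma proved immediately after \Cref{definition:hocolim}. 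Finally, the colimit and filtration depend only on $\scrF, \scrF'$ (not on the lifts) because any other lifts are isomorphic modulo sheaves supported on $D$, which become acyclic after sufficiently many twists, yielding the same directed system up to cofinal reindexing; this only affects the growth function up to translation equivalence and in particular up to scaling equivalence.

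The proof is essentially a bookkeeping exercise: no step is hard once \Cref{thm:spherical} is in hand, since the whole point of that theorem was to package precisely this kind of comparison. The only mild subtlety is checking that the two filtrations on the colimit (one from the hocolim model, one from the directed system) agree, but this is a general fact about homotopy colimits over $\mathbb{N}$ recorded in the body of the paper.
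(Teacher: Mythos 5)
Your argument is correct and follows the paper's proof of this corollary essentially step for step: fix lifts, recognize the divisor compactification as the spherical setup of \Cref{exmp:sphericaldivisor} with twist $S=(-)\otimes\cO(D)$ and natural transformation $\sigma$, apply \Cref{cor:spherical}, and identify the hocolim filtration with the colimit filtration of the directed system. One factual slip in your final (and in any case redundant, given \Cref{prop:welldefinedalggeogrowth} and \Cref{lem:independencelift}) aside: an object $\scrG\in D^bCoh(X)$ supported on $D$ does \emph{not} become acyclic after twisting by $\cO(nD)$; rather, if $\scrG$ is scheme-theoretically supported on a finite thickening of $D$ then the transition maps $\scrG\to\scrG(mD)$ given by $\sigma^m$ are eventually zero, so $\scrG$ contributes nothing to the colimit.
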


When $X \setminus U$ is an ample divisor, \eqref{equation:growth-sheaves} admits a rather explicit description, which is the content of the following theorem. 

\begin{thm}\label{thm:growthisdimsupp}
Assume $D=X\setminus U$ is ample and let $\scrF,\scrF'\in D^bCoh(U)$. Then the growth function associated to the pair $(\scrF,\scrF')$ is a polynomial of degree $d=dim(supp(\scrF^\vee\otimes \scrF'))=dim(supp(\scrF)\cap supp(\scrF'))$. In other words, $\gr_{\scrF,\scrF'}(p)$ is scaling equivalent to $p^d$. 
\end{thm}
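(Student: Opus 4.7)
By \Cref{cor:growthsheaves}, it suffices to analyze the function $p \mapsto \dim F^p \colim_n RHom_X(\ov{\scrF}, \ov{\scrF}'(nD))$, i.e.\ the dimension of the image of $H^*(RHom_X(\ov{\scrF}, \ov{\scrF}'(pD)))$ in $H^*(\colim_n RHom_X(\ov{\scrF}, \ov{\scrF}'(nD))) \cong RHom_U(\scrF, \scrF')$. Since $X$ is smooth, we can set $\scrG := R\sheafhom_X(\ov{\scrF}, \ov{\scrF}')$, a bounded complex of coherent sheaves on $X$, and identify $RHom_X(\ov{\scrF}, \ov{\scrF}'(nD))$ with $R\Gamma(X, \scrG(nD))$. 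Note that $\mathrm{supp}(\mathcal{H}^k(\scrG))\cap U = \mathrm{supp}(\scrF)\cap \mathrm{supp}(\scrF')\cap \mathrm{supp}\mathcal{H}^k(\scrF^\vee \otimes \scrF')$, so $\dim \mathrm{supp}(\scrG|_U) = \dim(\mathrm{supp}(\scrF)\cap \mathrm{supp}(\scrF'))=d$.

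The first step is to reduce the hypercohomology computation to ordinary cohomology: since $D$ is ample, Serre vanishing applied to each cohomology sheaf $\mathcal{H}^k(\scrG)$ gives $H^i(X, \mathcal{H}^k(\scrG)(pD))=0$ for $i>0$ and $p\gg 0$, and the hypercohomology spectral sequence then degenerates at $E_2$, yielding $\mathbb{H}^k(X, \scrG(pD)) = H^0(X, \mathcal{H}^k(\scrG)(pD))$. Under this identification, the map to the colimit becomes the natural map $H^0(X, \mathcal{H}^k(\scrG)(pD)) \to H^0(U, \mathcal{H}^k(\scrG)|_U)$ induced by restriction.

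Next, to identify the image, I would split each $\mathcal{E}_k := \mathcal{H}^k(\scrG)$ using the subsheaf $\Gamma_D(\mathcal{E}_k)\subset \mathcal{E}_k$ of sections set-theoretically supported on $D$. The short exact sequence $0\to \Gamma_D(\mathcal{E}_k)\to \mathcal{E}_k\to \mathcal{E}_k/\Gamma_D(\mathcal{E}_k)\to 0$ and Serre vanishing on $\Gamma_D(\mathcal{E}_k)$ give, for $p\gg 0$, an identification
\begin{equation*}
\mathrm{im}\bigl(H^0(X,\mathcal{E}_k(pD))\to H^0(U,\mathcal{E}_k|_U)\bigr) = H^0(X,(\mathcal{E}_k/\Gamma_D(\mathcal{E}_k))(pD)).
\end{equation*}
Indeed, the quotient $\mathcal{F}_k := \mathcal{E}_k/\Gamma_D(\mathcal{E}_k)$ has no sections supported on $D$, so the section $\sigma\in H^0(X,\cO(D))$ cutting out $D$ acts injectively, making the transition maps $H^0(X,\mathcal{F}_k(pD))\hookrightarrow H^0(X,\mathcal{F}_k((p+1)D))$ injective. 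Thus $\colim_p H^0(X,\mathcal{F}_k(pD)) = H^0(U,\mathcal{F}_k|_U) = H^0(U,\mathcal{E}_k|_U)$, and the image at stage $p$ is exactly $H^0(X,\mathcal{F}_k(pD))$.

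Finally, by the standard Hilbert polynomial theory for ample line bundles on projective varieties, for each $k$ the function $p\mapsto \dim H^0(X,\mathcal{F}_k(pD))$ agrees with a polynomial of degree $\dim \mathrm{supp}(\mathcal{F}_k)$ for $p\gg 0$. Since $\mathcal{F}_k$ is the largest quotient of $\mathcal{E}_k$ with no $D$-supported sections, $\dim \mathrm{supp}(\mathcal{F}_k) = \dim \mathrm{supp}(\mathcal{E}_k|_U)$ when $\mathcal{E}_k|_U\neq 0$. Summing over $k$ (a finite set, since $\scrG$ is bounded) and using that the max degree is $d = \dim\mathrm{supp}(\scrG|_U)$, we conclude that $\gr_{\scrF,\scrF'}(p)$ agrees with a polynomial of degree $d$ for $p\gg 0$, hence is scaling equivalent to $p^d$.

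The main technical obstacle is the coexistence of several cohomology sheaves of $\scrG$ and the need to isolate the $D$-supported ``noise'' in each via the functor $\Gamma_D$; once this is done, the appeal to Serre vanishing and the Hilbert polynomial is routine. (The claim that $\Gamma_D$ behaves well here rests on the fact that $\mathcal{E}_k$ is coherent, so $\Gamma_D(\mathcal{E}_k)$ is the finite union of stalks of sections annihilated by a power of the ideal of $D$, and is itself coherent.)
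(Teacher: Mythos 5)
Your proposal is correct and takes essentially the same route as the paper: you reduce to hypercohomology of $\scrG(pD)$, split by cohomological degree (you via the hypercohomology spectral sequence, the paper via the truncation filtration — these are interchangeable), then for each cohomology sheaf $\mathcal{E}_k$ quotient out the $D$-torsion subsheaf $\Gamma_D(\mathcal{E}_k)$ to identify the image in the colimit with $H^0(X,\mathcal{F}_k(pD))$, and conclude via the Hilbert polynomial. The one step you state but do not justify — that $\dim\mathrm{supp}(\mathcal{F}_k)=\dim\mathrm{supp}(\mathcal{E}_k|_U)$ — is where the paper expends some effort (it shows $\mathrm{supp}(\mathcal{E}_k/\Gamma_D(\mathcal{E}_k))$ has no irreducible component contained in $D$, using that the local defining function of $D$ acts injectively on the quotient), and your proposal would need to supply this argument to be complete.
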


The statement of \Cref{thm:growthisdimsupp} merits some explanations. For $\scrG \in D^bCoh(X)$, we have by definition
\eq supp(\scrG) = \bigcup_n supp(\mc{H}^n(\scrG)).\eeq
Also by definition, $\scrG^\vee= \mc{R}\sheafhom(\scrF, \mc{O}_X)$ is the derived dual. Finally, the tensor product in the statement of \Cref{thm:growthisdimsupp} is the derived tensor product. We note that is important that $X$ is a smooth scheme for these operations to be defined at the level of $D^bCoh(-)$. We refer to \cite[Sec.\ 3.3]{huybrechts2006fourier} for an introduction to these notions. 


We now begin the proof of \Cref{thm:growthisdimsupp}. First observe that $RHom_X (\ov\scrF,\ov\scrF'(nD))\cong R\Gamma (\ov\scrF^\vee\otimes \ov\scrF'(nD))$ and $RHom_U(\scrF,\scrF')\cong R\Gamma(\scrF^\vee\otimes \scrF')$; therefore, one only needs to compute the filtration growth of $\colim_n RHom_X (\ov\scrF^\vee\otimes \ov\scrF'(nD))$. Let $\scrE=\ov\scrF^\vee\otimes \ov\scrF'\in D^b(Coh(X))$. Therefore, it suffices to prove:
\begin{prop}\label{prop:growthofsheafdimsupp}
	The growth of $\colim_n R\Gamma(\scrE(nD))$ is a polynomial of degree $dim(supp(\scrE|_U))$. 
\end{prop}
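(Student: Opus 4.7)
The plan is to combine Serre vanishing with the classical Hilbert polynomial formalism. Since $D$ is ample, Serre's theorem (\cite[\S 81 Prop.\ 6]{serrefaisceaux}) provides an integer $N \geq 0$ such that $H^i(X, \mc{H}^j(\scrE)(nD)) = 0$ for every $i>0$, every $j$, and every $n \geq N$. Via the hypercohomology spectral sequence $H^i(X, \mc{H}^j(\scrE)(nD)) \Rightarrow H^{i+j}(X, \scrE(nD))$, which degenerates at $E_2$ for $n \geq N$, this yields
\begin{equation}
H^k(X, \scrE(nD)) \cong H^0(X, \mc{H}^k(\scrE)(nD)) \qquad (n \geq N).
\end{equation}
Since the multiplication-by-$\sigma$ maps $\scrE(nD) \to \scrE((n+1)D)$ restrict to the analogous maps on each $\mc{H}^k$, this reduces the whole problem to the case of a single coherent sheaf $\scrG$ on $X$; summing the resulting polynomials over $k$ will produce a polynomial of degree $\max_k \dim(\op{supp}(\mc{H}^k(\scrE)|_U)) = \dim(\op{supp}(\scrE|_U))$.

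The main step is therefore to prove, for a coherent sheaf $\scrG$, that
\begin{equation}
p \mapsto \dim F^p \colim_n H^0(X,\scrG(nD))
\end{equation}
is eventually polynomial of degree $\dim \op{supp}(\scrG|_U)$. Let $\scrG^{\op{tors}} \sub \scrG$ be the subsheaf of local sections annihilated by some power of $\sigma$; equivalently, this is the maximal subsheaf of $\scrG$ set-theoretically supported on $D$. The quotient $\scrG/\scrG^{\op{tors}}$ has support of dimension $\dim \op{supp}(\scrG|_U)$, and $\scrG^{\op{tors}}$ is supported on $D$, hence has support of dimension at most $\dim X -1$. The kernel of $H^0(X,\scrG(pD)) \to \colim_n H^0(X,\scrG(nD))$ consists precisely of those sections killed by some power of $\sigma$, i.e.\ the sections lying in $\scrG^{\op{tors}}(pD)$. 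Thus, for $p \geq N$, Serre vanishing applied to $\scrG^{\op{tors}}$ and the short exact sequence $0 \to \scrG^{\op{tors}} \to \scrG \to \scrG/\scrG^{\op{tors}} \to 0$ gives
\begin{equation}
\dim F^p = \dim H^0(X, \scrG(pD)) - \dim H^0(X, \scrG^{\op{tors}}(pD)) = \dim H^0(X, (\scrG/\scrG^{\op{tors}})(pD)).
\end{equation}

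To conclude, apply the Hilbert polynomial theorem for ample divisors: for any coherent sheaf $\scrF$, the function $p \mapsto \chi(\scrF(pD))$ is a polynomial in $p$ of degree $\dim \op{supp}(\scrF)$, and for $p$ large equals $\dim H^0(X,\scrF(pD))$ by the vanishing already invoked. Applied to $\scrF = \scrG/\scrG^{\op{tors}}$, this gives the desired polynomial of degree $\dim \op{supp}(\scrG|_U)$. Summing over $k$ concludes the proof. The main subtlety to watch is the degenerate case $\scrE|_U = 0$ (then $F^p = 0$ eventually, which we interpret as polynomial growth of degree $-\infty$, consistent with the statement under the convention $\dim\op{supp}(\emptyset) = -\infty$); otherwise, the dominant term is controlled by the cohomology sheaf with the largest $U$-support.
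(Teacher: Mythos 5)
Your argument tracks the paper's proof almost step for step: the reduction to a single coherent sheaf via ampleness and the hypercohomology spectral sequence is a cosmetic variant of the paper's truncation filtration, your $\scrG^{\op{tors}}$ is exactly the paper's $\scrE_D$, the identification of the kernel of $H^0(X,\scrG(pD))\to\colim_n H^0(X,\scrG(nD))$ is the same, and both proofs finish by invoking the Serre/Hilbert polynomial theorem for $\scrG/\scrG^{\op{tors}}$.

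There is, however, one genuine gap. You assert without justification that $\dim \op{supp}(\scrG/\scrG^{\op{tors}}) = \dim \op{supp}(\scrG|_U)$. The inclusion $\op{supp}(\scrG|_U) \subset \op{supp}(\scrG/\scrG^{\op{tors}})$ is clear, since $(\scrG/\scrG^{\op{tors}})|_U = \scrG|_U$; but nothing you have said rules out irreducible components of $\op{supp}(\scrG/\scrG^{\op{tors}})$ contained entirely in $D$, and such a component has dimension up to $\dim X - 1$, which can strictly exceed $\dim \op{supp}(\scrG|_U)$. (The sentence "$\scrG^{\op{tors}}$ is supported on $D$" bounds the wrong term: it's the quotient's support that must be controlled.) One does need an argument here: if $V\subset D$ were such a component, localizing $\scrG/\scrG^{\op{tors}}$ at the generic point of $V$ gives a nonzero Artinian stalk killed by some power of the maximal ideal, in particular by some power of $\sigma$ (which vanishes along $V\subset D$), contradicting that $\sigma$ acts injectively on $\scrG/\scrG^{\op{tors}}$. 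This is precisely the content of the final paragraph of the paper's proof (phrased there via an auxiliary function $g$ isolating the putative component); without it, the equality of degrees you claim for the Hilbert polynomial is unjustified.
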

For $n\gg 0$, for every $q$, $\mathcal{H}^q(\mathcal{E}(nD))= \mathcal{H}^q(\mathcal{E})(nD)$ has cohomology concentrated in degree zero by Serre vanishing. It follows that $R^q\Gamma( \mathcal{E}(nD)) \cong \Gamma( \mathcal{H}^q( \mathcal{E}(nD)))$ by the hypercohomology spectral sequence. 

Therefore, the growth function of the filtered vector space $\colim_n R\Gamma (\scrE(nD))$ is the sum of the growth functions of $\colim_n \Gamma(\cH^q(\scrE)(nD))$, and it suffices to prove \Cref{prop:growthofsheafdimsupp} for the coherent sheaf $\cH^q(\scrE)$. In other words, without loss of generality, we can assume $\scrE$ is a coherent sheaf. 
To prove \Cref{prop:growthofsheafdimsupp}, we will relate the growth function of $\colim_n \Gamma(\scrE(nD))$ to the Euler-Poincar\'e polynomial $\chi(\scrE(nD))$ and apply the following:
\begin{lem}\cite[\S81 Prop.6]{serrefaisceaux}\label{lem:chipolynomial}
Let $\scrE$ be a coherent sheaf on $X$, and let $d=dim(supp(\scrE))$. Assume $D$ is ample. Then, the Euler-Poincar\'e characteristic $\chi(\scrE(nD))$ is a polynomial of $n$ of degree $d$. 
\end{lem}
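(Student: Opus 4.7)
My plan is to argue by induction on $d = \dim(\op{supp}(\scrE))$.

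For the base case $d = 0$, the sheaf $\scrE$ has zero-dimensional support and hence finite length. Locally at each point of the support, $\cO(nD)$ is a free rank-one module, so $\scrE(nD)$ and $\scrE$ agree as abelian sheaves; moreover all higher cohomology vanishes for dimension reasons. Thus $\chi(\scrE(nD)) = \chi(\scrE)$ is constant in $n$, i.e.\ a polynomial of degree $0$, matching $d=0$.

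For the inductive step I would consider the short exact sequence $0 \to \cO_X(-D) \to \cO_X \to \cO_D \to 0$, derive-tensor with $\scrE(nD)$, and take alternating Euler characteristics. This yields
\begin{equation*}
\chi(\scrE(nD)) - \chi(\scrE((n-1)D)) = \sum_{i\geq 0} (-1)^i \chi(\op{Tor}_i(\scrE(nD), \cO_D)),
\end{equation*}
where each $\op{Tor}_i$ sheaf is supported on $D \cap \op{supp}(\scrE)$. Provided this intersection has dimension exactly $d - 1$, the inductive hypothesis says each term on the right is a polynomial in $n$ of degree at most $d - 1$; a standard finite-difference argument then forces $\chi(\scrE(nD))$ to be polynomial in $n$ of degree at most $d$. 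Equality of degrees (rather than an inequality) I would extract from Serre vanishing, which gives $\chi(\scrE(nD)) = h^0(\scrE(nD))$ for $n \gg 0$, combined with the classical Hilbert-polynomial estimate on a top-dimensional component of $\op{supp}(\scrE)$ embedded in projective space via a power of $D$.

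The main obstacle is the clause ``provided the intersection has dimension exactly $d - 1$'': if some irreducible component of $\op{supp}(\scrE)$ happens to sit inside $D$, then $D \cap \op{supp}(\scrE)$ still has dimension $d$ and the induction breaks. To sidestep this I would exploit ampleness: choose $m \gg 0$ so that $mD$ is very ample, and use Bertini (we are in characteristic zero) to produce an effective divisor $D' \in |mD|$ meeting each irreducible component of $\op{supp}(\scrE)$ properly. Running the argument above with $D'$ in place of $D$ then shows that $n \mapsto \chi(\scrE(nmD)) = \chi(\scrE \otimes \cO(nD'))$ is a polynomial of degree $d$.

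The final step is to promote polynomiality in $nm$ to polynomiality in $n$. I would apply the same reasoning to each twisted sheaf $\scrE(kD)$ for $k = 0, 1, \dots, m - 1$, obtaining polynomials $P_k(n) = \chi(\scrE((k+nm)D))$ of common degree $d$; these give polynomial values of $\chi(\scrE(nD))$ on each arithmetic progression $k + m \bN$. Matching the leading coefficients across progressions (which are intersection-theoretic and visibly independent of $k$) then assembles the data into a single polynomial in $n$ of degree $d$. I expect this Bertini/ampleness bootstrap together with the arithmetic-progression gluing to be the fiddliest step; everything else is formal manipulation of Euler characteristics in the derived category.
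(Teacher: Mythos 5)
Your proposal takes a genuinely different route from the paper. The paper short-circuits the issue by quoting known results: it invokes Snapper's theorem (and Kleiman) to know that $n\mapsto \chi(\scrE(nD))$ is a polynomial of degree at most $d$ for any Cartier $D$, then uses a very ample power $\ell D$ and Serre's $\mathbb P^n$ computation to pin down the degree as exactly $d$. You instead try to prove polynomiality from scratch by induction on $\dim\op{supp}(\scrE)$, using the defining sequence of a divisor to reduce dimension, and Bertini to ensure proper intersection. The Bertini idea is the right fix for components of the support that lie in $D$, and the degree-$d$ bound via Serre vanishing plus Hilbert polynomials on a top-dimensional component is fine.

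However, the final ``arithmetic-progression gluing'' step has a real gap. After Bertini you only know that, for some $m > 1$, the function $g(n):=\chi(\scrE(nD))$ restricts to a polynomial of degree $d$ on each residue class $n\equiv k \pmod m$. Matching the leading coefficients across these progressions does \emph{not} force $g$ to be a single polynomial: the function $g(n)=n^2+(-1)^n$ with $m=2$ is a polynomial of the same leading (and even sub-leading) coefficient on each parity class, yet it is not a polynomial in $n$. So the assertion that the data ``assembles into a single polynomial'' needs an actual argument, not just coefficient matching. One clean way to close the gap inside your framework: pick two coprime step sizes $m_1,m_2$ with $m_iD$ very ample and $D'_i\in|m_iD|$ meeting $\op{supp}(\scrE)$ properly (Bertini gives infinitely many valid $m$, so this is possible). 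Your induction then shows that both $g(n+m_1)-g(n)$ and $g(n+m_2)-g(n)$ are polynomials of degree $<d$ in $n$. Writing $am_1 - bm_2 = 1$ with $a,b\geq 0$, one telescopes to get that $g(n+1)-g(n) = \bigl(g(n+am_1)-g(n)\bigr) - \bigl(g(n+1+bm_2)-g(n+1)\bigr)$ is a polynomial in $n$, and a function whose first difference is a polynomial is itself a polynomial. Alternatively, you could simply cite Snapper's theorem for the polynomiality (as the paper does) and keep your inductive argument only for the degree bound — but as written, the ``fiddliest step'' is not merely fiddly, it is missing.
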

\begin{note}
%
\cite[\S81 Prop.6]{serrefaisceaux} proves this statement when $X=\bP^n$ for some $n$. On the other hand, if $D$ is very ample and we consider the embedding into a projective space $\iota:X\to\bP^n$ determined by $D$, then the Hilbert polynomial of $\iota_*\scrE$ is the same as $\chi(\scrE(nD))$. This concludes the proof in the very ample case. Without the very ample assumption, one notes that $\chi(\scrE(nD))$ is still a polynomial of degree at most $d$, by \cite{snapperpolynomial} and \cite{kleimanampleness}. On the other hand, for $\ell\gg 0$ such that $\ell D$ is very ample, $n\mapsto \chi(\scrE(\ell nD))$ is of degree $d$ by above; thus, so is $n\mapsto \chi(\scrE(nD))$.
\end{note}
\begin{proof}[Proof of \Cref{prop:growthofsheafdimsupp}]
First, observe that $R\Gamma(\scrE(nD))=\Gamma(\scrE(nD))$ for $n\gg 0$ as $D$ is ample. 
By replacing $\scrE$ with $\scrE(n_0D)$, $n_0\gg 0$, we can assume this holds for every $n\geq 0$. Hence, we actually consider the growth of $\colim_n \Gamma(\scrE(nD))\cong \Gamma(\scrE|_U)$. This is not the same as the growth of $n \mapsto dim(\Gamma(\scrE(nD)))=\chi (\Gamma(\scrE(nD)))$; however, the kernel of $\Gamma(\scrE(nD))\to \colim_n\Gamma(\scrE(nD))$ is given by $\Gamma(\scrE_D(nD))$, where $\scrE_D$ denote the subsheaf of $\scrE$ of sections set theoretically supported on $D$. Indeed, this is automatic as one defines $\scrE_D$ to be the sheaf of sections annihilated by a power of a function locally cutting out $D$ (or by a power of $\sigma\in \Gamma(\cO(D))$). 

Hence, the dimension of $F^p\colim_n \Gamma(\scrE(nD))$ is given by $dim(\Gamma(\scrE(pD)))-dim(\Gamma(\scrE_D(pD)))$, which is the same as $dim(\Gamma((\scrE/\scrE_D)(pD)))$, for $p\gg 0$, as $R^1\Gamma(\scrE_D(pD))=0$. Similar to before, for $p\gg 0$, $dim(\Gamma((\scrE/\scrE_D)(pD)))=\chi ((\scrE/\scrE_D)(pD))$; therefore, by applying \Cref{lem:chipolynomial} to $\scrE/\scrE_D$, we see this dimension is a polynomial in $p$ of degree $dim(supp(\scrE/\scrE_D))$. To conclude the proof, we need to show that $dim(supp(\scrE/\scrE_D))=dim(supp(\scrE|_U))$. For this purpose, it suffices to show $supp(\scrE/\scrE_D)$ has no irreducible components contained entirely in $D$. If this holds, since $D=X\setminus U$, one has $dim(supp(\scrE/\scrE_D))=dim(supp(\scrE|_U/\scrE_D|_U))$. As $\scrE_D|_U=0$, this is the same as $dim(supp(\scrE|_U))$, which finishes the proof.

To see $supp(\scrE/\scrE_D)$ has no irreducible components contained entirely in $D$ is standard: the ideal sheaf of $supp(\scrE/\scrE_D)$ is given by the annihilator of $\scrE/\scrE_D$. Assume $supp(\scrE/\scrE_D)$ has a component $V$ contained entirely on $D$. On an affine chart $Spec(A)\subset X$, $\scrE/\scrE_D$ is represented by an $A$ module $M$. Consider a function $g\in A$ that vanishes on other irreducible components of $supp(\scrE/\scrE_D)$ but not on $V$. Then $gM\subset M$ is a submodule that is non-zero as $g$ does not vanish on $V$. Any function vanishing on $V\cap Spec(A)$ has a power killing $gM$. In other words, the radical of the annihilator of $gM$ contains the ideal of $V\cap Spec(A)$; hence, $gM$ has set theoretic support on $V$. In particular, a defining function of $D$ has a power that acts as $0$ on $gM$. On the other hand, such a function acts injectively on local sections of $\scrE/\scrE_D$; hence, on $M$ and $gM$. This implies $gM=0$ and this contradiction shows that no such irreducible component $V$ can exist. 
\end{proof}
As remarked, \Cref{thm:growthisdimsupp} follows from \Cref{prop:growthofsheafdimsupp} immediately by letting $\scrE=\ov\scrF^\vee\otimes \ov\scrF$. To conclude, one only has to note that the support of $\scrF^\vee\otimes \scrF$ is topologically the same as $supp\scrF\cap supp\scrF'$. This can be seen as follows: First, we have $supp(\scrF^\vee \otimes \scrF')= supp(\scrF^\vee) \cap supp(\scrF')$ (indeed, let $i: \{*\} \hookrightarrow X$ be the inclusion of a point and note that $i^*(\scrF^\vee \otimes \scr F')= i^*\scrF^\vee \otimes i^*\scrF'$). However,  by \cite[Lem.\ 3.32]{huybrechts2006fourier}, we also have $supp(\scrF)= supp(\scrF^\vee)$. 

We highlight the following corollaries of \Cref{thm:growthisdimsupp}.

\begin{cor}
The growth functions for pairs of objects $(L,L')$ on $D^b(U)$ (whose categorical compactification is given by by a pair $(X,D)$ as above) satisfy triangle inequality (up to scaling equivalence), i.e. if one has an exact triangle $L\to L'\to L''\to L[1]$, then $\gr_{K,L''}\leq\gr_{K,L}+\gr_{K,L'}$.  
\end{cor}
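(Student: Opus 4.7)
The plan is to reduce the triangle inequality to the fact that dimensions of supports of complexes on a Noetherian scheme are submultiplicative with respect to exact triangles, and then invoke \Cref{thm:growthisdimsupp}.

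First, I would observe that since $D = X\setminus U$ is ample, \Cref{thm:growthisdimsupp} applies to any pair of objects in $D^bCoh(U)$. Concretely, fixing $K \in D^bCoh(U)$ and the exact triangle $L \to L' \to L'' \to L[1]$, the growth functions $\gr_{K,L}$, $\gr_{K,L'}$, and $\gr_{K,L''}$ are scaling equivalent to $p^{d_L}$, $p^{d_{L'}}$, $p^{d_{L''}}$ respectively, where $d_M := \dim(\op{supp}(K)\cap \op{supp}(M))$ for $M \in D^bCoh(U)$.

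The next step is to verify that $\op{supp}(L'') \subseteq \op{supp}(L) \cup \op{supp}(L')$. This follows from the long exact sequence of cohomology sheaves associated to the exact triangle: the sequence $\mc{H}^n(L) \to \mc{H}^n(L') \to \mc{H}^n(L'') \to \mc{H}^{n+1}(L) \to \mc{H}^{n+1}(L')$ shows that if $x \in U$ lies outside $\op{supp}(L) \cup \op{supp}(L')$, then the stalks of $\mc{H}^n(L)$ and $\mc{H}^n(L')$ at $x$ vanish for all $n$, forcing the stalk of $\mc{H}^n(L'')$ to vanish as well. Intersecting with $\op{supp}(K)$ gives
\begin{equation}
\op{supp}(K)\cap \op{supp}(L'') \subseteq \bigl(\op{supp}(K)\cap \op{supp}(L)\bigr) \cup \bigl(\op{supp}(K)\cap \op{supp}(L')\bigr),
\end{equation}
so $d_{L''} \leq \max(d_L, d_{L'})$.

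Finally, combining this with \Cref{thm:growthisdimsupp}, we obtain constants $a \geq 1$, $b \geq 0$ (depending only on the three pairs above) such that
\begin{equation}
\gr_{K,L''}(p) \leq a\,(ap+b)^{d_{L''}} + b \leq a\,(ap+b)^{d_L} + a\,(ap+b)^{d_{L'}} + b \lesssim \gr_{K,L}(p) + \gr_{K,L'}(p),
\end{equation}
where the final inequality is up to scaling equivalence. I do not expect any serious obstacle; the only subtlety is to be careful that the triangle inequality is asserted up to scaling equivalence (which is the only sense in which the growth functions are defined), and to note that the case $d_{L''} = -\infty$ (i.e.\ $\op{supp}(K)\cap\op{supp}(L'') = \emptyset$) is trivial.
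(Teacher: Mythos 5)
Your proof is correct and follows exactly the route the paper intends: it reduces the triangle inequality to the containment $\operatorname{supp}(L'')\subseteq\operatorname{supp}(L)\cup\operatorname{supp}(L')$ (via the long exact sequence in cohomology sheaves) and then invokes \Cref{thm:growthisdimsupp} to convert the resulting bound $d_{L''}\le\max(d_L,d_{L'})$ into the inequality of growth functions. The paper states the corollary without proof, but its introduction indicates precisely this reduction to the support-dimension triangle inequality, so you have reconstructed the intended argument.
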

In particular, one can define a global growth invariant for this category, as the growth function of a generator. Up to a constant, it dominates all growth functions $\gr_{K,L}$. 

\begin{cor}
The growth $\gr_{K,L}$ is either constant, or at least linear. 	
\end{cor}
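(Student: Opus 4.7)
The plan is to deduce this dichotomy as an essentially immediate consequence of \Cref{thm:growthisdimsupp}. That theorem asserts that in the setting of a smooth affine $U$ compactified by an ample divisor $D$, for any pair of objects $\scrF_K, \scrF_L \in D^bCoh(U)$, the growth function $\gr_{K,L}$ is scaling equivalent to $p \mapsto p^d$, where $d = \dim(\supp(\scrF_K) \cap \supp(\scrF_L))$. The crucial point is that $d$ is a nonnegative \emph{integer}, since it is the dimension of an algebraic subvariety (or of the empty set, in which case we interpret the growth as $0$, which is constant).

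So the argument splits into cases according to the value of $d$. If $\supp(\scrF_K) \cap \supp(\scrF_L) = \emptyset$, then $\scrF_K^\vee \otimes \scrF_L$ is the zero sheaf on $U$, so $RHom_U(\scrF_K, \scrF_L) = 0$ and the growth function is identically zero, hence constant. If $d = 0$, then $\supp(\scrF_K) \cap \supp(\scrF_L)$ is a nonempty finite set of points in $U$, and the growth is equivalent to $p^0 = 1$, so again constant up to scaling equivalence. Finally, if $d \geq 1$, then $\gr_{K,L}$ is equivalent to $p^d$, which is bounded below by $p$ (up to a multiplicative constant for $d \geq 1$); hence the growth is at least linear.

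Since the integer $d$ cannot take values strictly between $0$ and $1$, no intermediate growth behavior (e.g.\ equivalent to $p^{1/2}$, $\log p$, etc.) can occur. This completes the proof. The only minor point to verify is that the scaling equivalence class of $p^d$ with $d \in \mathbb{N}$ is exactly ``constant'' when $d = 0$ and ``at least linear'' when $d \geq 1$, which follows directly from the definition of scaling equivalence (\Cref{definition:s-t-equiv-function}): two functions $p^{d_1}$ and $p^{d_2}$ are scaling equivalent iff $d_1 = d_2$, so the integer $d$ is a genuine invariant of the equivalence class. There is no real obstacle here; the statement is essentially a corollary of the integrality of the support dimension in \Cref{thm:growthisdimsupp}.
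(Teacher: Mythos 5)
Your proof is correct and is essentially the paper's own reasoning; the paper gives no explicit proof of this corollary because it is deemed an immediate consequence of \Cref{thm:growthisdimsupp}, exactly as you argue. The only observation needed is the integrality of $d = \dim(\operatorname{supp}(\scrF_K) \cap \operatorname{supp}(\scrF_L))$, so that $p^d$ is constant when $d \leq 0$ and at least linear when $d \geq 1$, and you have stated that cleanly.
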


\subsection{The localization filtration on wrapped Fukaya categories}\label{subsection:filtration-wfuk}
This section is an elaboration on \Cref{exmp:asidecompactificationwithzigzag}. We prove
\begin{thm}\label{thm:welldefinedsymplecticgrowth}
Given a Weinstein manifold $M$ and a pair of objects $K, L \in \tw^\pi \mc{W}(M)$, the graded vector space $H(\cW(M))(K,L)$ admits a filtration such that the associated growth function $\gr_{H(\cW(M))(K,L)}$ is well-defined up to scaling equivalence.	
\end{thm}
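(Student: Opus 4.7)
The plan is to reduce to Corollary \ref{corollary:growth-function-well-def} by exhibiting a suitable class of smooth categorical compactifications of $\mc{W}(M)$ coming from full stops, together with pseudo-compactification zigzags relating any two such choices. First I would construct the compactifications. By the Giroux--Pardon theorem, $M$ admits a Lefschetz fibration, whose fiber (pushed to the boundary) is a Weinstein hypersurface in $\d_\infty M$ with mostly Legendrian core $\fc$ having finitely many critical components (Example \ref{example:tame-construction}). The Fukaya--Seidel category $\mc{W}(M, \fc)$ is smooth by Fact \ref{fact:smoothness} and proper because, as in Example \ref{example:finite-time-weinstein}, a basis of Lefschetz thimbles gives a full exceptional collection. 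By Fact \ref{fact:stop-removal}, the stop-removal functor $\mc{W}(M,\fc) \to \mc{W}(M)$ is a localization whose kernel is split-generated by the finite collection of linking disks of $\fc^{\op{crit}}$. This realizes $(\mc{W}(M,\fc), \phi_\fc)$ as a smooth categorical compactification of $\mc{W}(M)$, and Definition \ref{definition:growth-function-general} (together with Note \ref{note:idempotentextension} if $K,L$ fail to lift) supplies the growth function $\gr^{\mc{W}(M,\fc)}_{K,L}$.

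The substantive step is to show that this growth function is independent (up to scaling equivalence) of the chosen full stop. Given two full stops $\fc_1,\fc_2$, I would use Lemma \ref{lemma:isotopy-fuk-map} to perturb by a global contact isotopy so that $\fc_1 \cap \fc_2 = \emptyset$; then $\fc_1 \cup \fc_2$ is itself a tame mostly Legendrian stop, so $\mc{W}(M, \fc_1 \cup \fc_2)$ is smooth by Fact \ref{fact:smoothness}. For $i=1,2$, stop-removal induces a localization $\mc{W}(M,\fc_1 \cup \fc_2) \to \mc{W}(M, \fc_i)$ whose kernel is split-generated by the finite set of linking disks of $(\fc_{3-i})^{\op{crit}}$ (Fact \ref{fact:stop-removal}). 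This exhibits $\mc{W}(M,\fc_1 \cup \fc_2)$ as a smooth categorical pseudo-compactification of $\mc{W}(M)$ equipped with morphisms of pseudo-compactifications to each $\mc{W}(M,\fc_i)$, yielding the zigzag
\begin{equation}
\mc{W}(M,\fc_1) \longleftarrow \mc{W}(M, \fc_1 \cup \fc_2) \longrightarrow \mc{W}(M,\fc_2).
\end{equation}
Applying Corollary \ref{corollary:growth-function-well-def} then gives scaling equivalence of $\gr^{\mc{W}(M,\fc_1)}_{K,L}$ and $\gr^{\mc{W}(M,\fc_2)}_{K,L}$, so we may define $\gr_{K,L}$ as the common scaling equivalence class.

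The main obstacle is the fact that we cannot show $\mc{W}(M,\fc_1\cup \fc_2)$ is proper: the Lefschetz fibration trick only produces properness for individual full stops, and adjoining two such stops generally destroys the exceptional collection structure. This is exactly why the zigzag above lives in pseudo-compactifications rather than compactifications. The input that saves us is Proposition \ref{prop:sodofcompactifications}: properness of the \emph{targets} $\mc{W}(M,\fc_i)$ is sufficient to produce a right adjoint to each stop-removal morphism and hence a semi-orthogonal decomposition $\mc{W}(M,\fc_1\cup \fc_2) \simeq \langle \mc{W}(M,\fc_i), \ker \rangle$ needed to invoke Corollary \ref{corollary:growth-function-well-def}. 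So the whole argument hinges on choosing compactifications from the restricted class of \emph{full} stops, for which properness is guaranteed by the Giroux--Pardon construction combined with Lemma \ref{lemma:exception-finite-time}.
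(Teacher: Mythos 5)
Your proposal follows essentially the same route as the paper: choose a full stop via Giroux--Pardon, pass through the stop-doubled pseudo-compactification $\mc{W}(M, \fc_1 \cup \fc_2)$, and invoke \Cref{corollary:growth-function-well-def}. The identification of the main obstacle (non-properness of the double-stopped category) and of \Cref{prop:sodofcompactifications} as the tool that salvages the argument is exactly right.

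There is, however, a gap in the properness step. You assert that $\mc{W}(M, \fc)$ is proper "because a basis of Lefschetz thimbles gives a full exceptional collection," citing \Cref{example:finite-time-weinstein} and \Cref{lemma:exception-finite-time}. But a full exceptional collection $\{E_1, \dots, E_l\}$ only imposes that $H(\cB)(E_i,E_i) = k$ and $H(\cB)(E_i,E_j) = 0$ for $j<i$; it says nothing about $\dim H(\cB)(E_i,E_j)$ for $i<j$, which could a priori be infinite. \Cref{example:finite-time-weinstein} and \Cref{lemma:exception-finite-time} only yield that the category is generated in finite time (a strong generator), which is a finiteness statement about generation time, not about Hom-spaces. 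The paper's proof of \Cref{lemma:full-stop} instead establishes properness geometrically: the complement of $F \times [-\e,\e]$ in $\d_\infty M$ deforms through codimension-zero submanifolds with corners to a contactization, and then \cite[Lem.\ 3.44]{gps1} gives properness of $\mc{W}(M,F)$. You need this (or some other direct finiteness argument for the Floer complexes of thimbles in the Fukaya--Seidel category) to close the gap; the purely formal "exceptional collection" argument does not suffice.
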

Similar to previous section, one can prove that the filtration on the graded vector space $H(\cW(M))(K,L)$ is well-defined up to scaling equivalence. Assuming $\cW(M)$ is $\bZ$-graded, one can consider other invariants, such as the growth of each $H^i(\cW(M))(K,L)$.

The following lemma is our main source of smooth pseudo-compactifications on wrapped Fukaya categories of Weinstein manifolds. 

\begin{lem}\label{lemma:tame-induces-compactification}
Suppose that $(M, \l)$ is Weinstein and let $\fk{c} \sub \d_\infty M$ be a tame stop. Then 
\begin{equation}
\tw^\pi \mc{W}(M, \fk{c}) \to \tw^\pi \mc{W}(M)
\end{equation}
is a smooth categorical pseudo-compactification. If $\tw^\pi \mc{W}(M, \fk{c})$ is proper, then it is a categorical compactification.
\end{lem}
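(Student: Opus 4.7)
The approach is to verify the three properties of a smooth categorical (pseudo-)compactification in turn: smoothness of the total category, the kernel being split-generated by a finite collection, and the quotient being a Morita equivalence. The key inputs are \Cref{fact:smoothness} (smoothness of partially wrapped categories) and \Cref{fact:stop-removal} (stop removal as a localization). The main technical obstacle is only notational: all the relevant statements for ordinary wrapped categories must be transported to their perfect/split-closed pre-triangulated enhancements.

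First, I would verify smoothness. By definition, a tame stop is in particular mostly Legendrian (up to deformation), so \Cref{fact:smoothness} applies to give that $\mc{W}(M, \fk{c})$ is smooth. Smoothness is a Morita-invariant property, so it is inherited by $\tw^\pi \mc{W}(M, \fk{c}) \simeq \op{Perf} \mc{W}(M, \fk{c})$.

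Next, I would locate a finite collection split-generating the kernel. Because $\fk{c}$ is tame, $\fk{c}^{\op{crit}}$ has only finitely many components, so there are only finitely many linking disks $\mc{D} \subset \mc{W}(M, \fk{c})$; these all lie in the kernel of the stop removal functor $\phi\colon \tw^\pi\mc{W}(M, \fk{c}) \to \tw^\pi \mc{W}(M)$. By \Cref{fact:stop-removal}, the induced map $\mc{W}(M, \fk{c})/\mc{D} \to \mc{W}(M)$ is a quasi-equivalence. Passing to perfect modules (using \Cref{lemma:quotient-properties}\eqref{item:quotient-can-map}), the composition
\begin{equation}
\tw^\pi\mc{W}(M, \fk{c})/\mc{D} \to \tw^\pi\bigl(\mc{W}(M, \fk{c})/\mc{D}\bigr) \xrightarrow{\sim} \tw^\pi \mc{W}(M)
\end{equation}
is cohomologically fully faithful with split-closure equal to the target. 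Now given $K \in \ker\phi$, cohomological full-faithfulness of the above composition forces $K$ to be acyclic in $\tw^\pi \mc{W}(M, \fk{c})/\mc{D}$, and then \Cref{lemma:quotient-properties}\eqref{item:split-generation-quotient} yields that $K$ is split-generated by $\mc{D}$ in $\tw^\pi\mc{W}(M, \fk{c})$. Thus $\mc{D}$ split-generates $\ker\phi$, proving condition (i).

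Finally, for condition (ii), I would apply \Cref{lemma:quotient-properties}\eqref{item:split-closure-quotient} to the inclusion $\mc{D} \sub \ker\phi$: since $\mc{D}$ split-generates $\ker\phi$, the canonical functor $\tw^\pi\mc{W}(M, \fk{c})/\mc{D} \to \tw^\pi\mc{W}(M, \fk{c})/\ker\phi$ is a quasi-equivalence. Composing with the Morita equivalence established above shows that $\tw^\pi\mc{W}(M, \fk{c})/\ker\phi \to \tw^\pi \mc{W}(M)$ is a Morita equivalence. This completes the verification that $(\tw^\pi \mc{W}(M,\fk{c}), \phi)$ is a smooth categorical pseudo-compactification. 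The final claim is immediate: if in addition $\tw^\pi \mc{W}(M, \fk{c})$ is proper, then by definition it is a smooth categorical compactification.
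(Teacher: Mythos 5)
Your proof is correct and takes essentially the same approach as the paper's, using the same inputs (\Cref{fact:smoothness}, \Cref{fact:stop-removal}, and the relevant items of \Cref{lemma:quotient-properties}), and merely spelling out some steps that the paper leaves implicit. The only small imprecision is the claim that there are ``only finitely many linking disks'': the full subcategory $\mc{D}$ appearing in \Cref{fact:stop-removal} is only essentially finite (one quasi-isomorphism class per component of $\fk{c}^{\op{crit}}$, but infinitely many isotopic representatives), and the paper handles this by introducing a genuinely finite subcollection $\mc{D}'$ of one linking disk per component and observing that $\mc{W}(M, \fk{c})/\mc{D}'$ and $\mc{W}(M, \fk{c})/\mc{D}$ agree.
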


\pf
Note first that $\tw^\pi \mc{W}(M)$ and $\tw^\pi \mc{W}(M, \fk{c})$ are smooth by \Cref{fact:smoothness}. We need to check the conditions (i) and (ii) of \Cref{definition:categorical-semi-comp}.

Since $\fk{c}$ is tame, there exists a decomposition $\fk{c}= \fk{c}^{\op{crit}} \cup \fk{c}^{\op{subscrit}}$ where $\fk{c}^{\op{crit}}$ has finitely many components. It follows from \Cref{fact:stop-removal} that $\mc{W}(M, \fk{c})/\mc{D} \to \mc{W}(M)$ is a quasi-equivalence, where $\cD$ is the full subcategory of linking discs, which verifies (ii). To check (i), let $\mc{D}'$ consist of one linking disk for each component of $\fk{c}^{\op{crit}}$. Then $\mc{W}(M, \fk{c})/\mc{D}'=\mc{W}(M, \fk{c})/\mc{D}$, so (i) follows from \Cref{lemma:quotient-properties}\eqref{item:split-generation-quotient}.
\epf

There is also a uniqueness statement. 
\begin{prop}\label{proposition:weinstein-zigzag}
Let $(M, \l)$ be Weinstein and let $\fk{c}_1, \fk{c}_2 \sub \d_\infty M$ be tame stops. Then $\tw^\pi \mc{W}(M, \fk{c}_i) \to \tw^\pi \mc{W}(M)$ are related by a zigzag of morphisms of smooth pseudo-compactifications.
\end{prop}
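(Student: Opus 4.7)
The plan is to construct an explicit zigzag through the pseudo-compactification associated to the union $\fk{c}_1 \cup \fk{c}_2$, i.e.
\begin{equation*}
(\tw^\pi \mc{W}(M, \fk{c}_1), \phi_1) \longleftarrow (\tw^\pi \mc{W}(M, \fk{c}_1 \cup \fk{c}_2), \phi_{12}) \longrightarrow (\tw^\pi \mc{W}(M, \fk{c}_2), \phi_2),
\end{equation*}
where the outer arrows are stop-removal functors and each middle-to-side arrow is verified to be a morphism of smooth pseudo-compactifications of $\tw^\pi \mc{W}(M)$.

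First I would arrange the stops to be disjoint. Since $\fk{c}_i$ are tame, each $\fk{c}_i^{\op{crit}}$ has finitely many components, so a generic global contact isotopy of $\d_\infty M$ applied to $\fk{c}_1$ will push its critical locus off of $\fk{c}_2$ (dimension counting: $\fk{c}_i^{\op{crit}}$ is $(n-1)$-dimensional inside the $(2n-1)$-dimensional contact boundary, and the subcritical parts are of dimension $\leq n-1$). By \Cref{lemma:isotopy-fuk-map}, such an isotopy induces a homotopy-commutative triangle over $\mc{W}(M)$, so we may replace $\fk{c}_1$ with its perturbation without changing the morphism class of $(\tw^\pi \mc{W}(M, \fk{c}_1), \phi_1)$. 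Thus I may assume $\fk{c}_1 \cap \fk{c}_2 = \emptyset$; as noted in the text, $\fk{c}_1 \cup \fk{c}_2$ is then itself a tame mostly-Legendrian stop.

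Second, I would verify the three categories are smooth pseudo-compactifications of $\tw^\pi \mc{W}(M)$. For $\fk{c}_1, \fk{c}_2$ this is the hypothesis; for $\fk{c}_1 \cup \fk{c}_2$ this is \Cref{lemma:tame-induces-compactification} applied to the tame stop $\fk{c}_1 \cup \fk{c}_2$. (Note $\mc{W}(M, \fk{c}_1 \cup \fk{c}_2)$ need not be proper, which is precisely why we allow pseudo-compactifications.)

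Third, I would verify that the stop-removal $\pi_i : \tw^\pi \mc{W}(M, \fk{c}_1 \cup \fk{c}_2) \to \tw^\pi \mc{W}(M, \fk{c}_i)$ is a morphism of smooth pseudo-compactifications. The commutativity $\phi_i \circ \pi_i \simeq \phi_{12}$ follows from the universal property of stop removal (or equivalently from functoriality of $\mc{W}(M,-)$ applied to the diagram of stop inclusions $\fk{c}_i \hookrightarrow \fk{c}_1 \cup \fk{c}_2 \hookleftarrow \emptyset$, coming from \Cref{lemma:quotient-properties}\eqref{item:univ-prop-quotient}). The remaining condition is that $\pi_i$ is a localization functor, i.e.\ $\mc{W}(M, \fk{c}_1 \cup \fk{c}_2)/\op{ker}(\pi_i) \to \mc{W}(M, \fk{c}_i)$ is a Morita equivalence. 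Since $\fk{c}_1 \cup \fk{c}_2 \setminus \fk{c}_i = \fk{c}_j$ (the other stop) is mostly Legendrian, \Cref{fact:stop-removal} identifies this quotient with $\mc{W}(M, \fk{c}_i)$, the kernel being generated by linking disks of components of $\fk{c}_j^{\op{crit}}$. Passing to $\tw^\pi$ preserves this, concluding the verification.

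The main obstacle is the disjointification step: one needs to ensure that the small contact perturbation producing disjoint stops preserves tameness and is realized by a global contact (equivalently, cylindrical Hamiltonian) isotopy so that \Cref{lemma:isotopy-fuk-map} applies. This is a transversality/genericity argument that is geometrically routine but technically finicky; once in place, the categorical verifications reduce cleanly to the stop-removal formalism of \cite{gps2}.
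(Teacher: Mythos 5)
Your proof is correct and follows essentially the same route as the paper: perturb $\fk{c}_1$ by a global contact isotopy (invoking \Cref{lemma:isotopy-fuk-map}) to achieve disjointness, then pass through $\tw^\pi\mc{W}(M,\fk{c}_1\cup\fk{c}_2)$ as the common pseudo-compactification, with the stop-removal functors being localizations by \Cref{fact:stop-removal}. The paper's version is terser (it doesn't spell out the dimension count or the three-condition check on $\pi_i$), but these are the same ingredients in the same order.
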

\pf
An isotopy of stops induces a morphism of smooth pseudo-compactifications according to \Cref{lemma:isotopy-fuk-map}. We may therefore assume that  $\fk{c}_1, \fk{c}_2$ are disjoint. Now $\fk{c}_1 \cup \fk{c}_2$ is a tame stop, and it follows by \Cref{fact:stop-removal} that $\tw^\pi \mc{W}(M, \fk{c}_1 \cup \fk{c}_2) \to \tw^\pi \mc{W}(M, \fk{c}_i)$ is a morphism of smooth pseudo-compactifications.
\epf

\begin{defn}\label{definition:full-stop}
Let $(M, \l)$ be Weinstein. A tame stop $\fk{c} \sub \d_\infty M$ is said to be a \emph{full stop} if $\mc{W}(M, \fk{c})$ is proper (and a fortiori smooth). 
\end{defn}

\begin{lem}\label{lemma:full-stop}
Any Weinstein manifold $(M, \l)$ admits a full stop. 
\end{lem}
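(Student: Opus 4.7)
The plan is to appeal to the existence of Lefschetz fibrations on Weinstein manifolds (due to Giroux--Pardon \cite{giroux-pardon}) and then take the stop to be the Lagrangian skeleton of a page, pushed to infinity. I will first verify tameness (which is essentially automatic) and then address the main content, which is properness of the resulting partially wrapped Fukaya category.

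More precisely, by \cite{giroux-pardon}, $(M, \l)$ admits a Lefschetz fibration with Weinstein fiber. A regular fiber, pushed to the contact boundary, determines a Weinstein hypersurface $V \sub \d_\infty M$, and I take $\fk{c} \sub \d_\infty M$ to be the core of $V$. Since $V$ is Weinstein, its core decomposes as $\fk{c}^{\op{crit}} \cup \fk{c}^{\op{subcrit}}$, where $\fk{c}^{\op{crit}}$ is Lagrangian and arises from finitely many critical handles (after a small perturbation making cocores properly embedded), while $\fk{c}^{\op{subcrit}}$ lies in a manifold of strictly lower dimension. This is exactly the setup of \Cref{example:tame-construction}, so $\fk{c}$ is a tame mostly Legendrian stop. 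In particular, \Cref{lemma:tame-induces-compactification} already gives smoothness of $\tw^\pi \mc{W}(M, \fk{c})$, and I am reduced to showing properness.

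For properness, I exploit the fact that $\mc{W}(M, \fk{c})$ is the Fukaya--Seidel category of the Lefschetz fibration. As recalled in \Cref{example:finite-time-weinstein} (citing \cite[Ex.\ 1.4]{gps1}), a basis of Lefschetz thimbles $E_1, \dots, E_k$ constitutes a full exceptional collection for $\mc{W}(M, \fk{c})$, and in particular split-generates $\tw^\pi \mc{W}(M, \fk{c})$. Since properness is preserved under passing to the pre-triangulated and split-closed envelope, it suffices to check that $H^*(\mc{W}(M, \fk{c})(E_i, E_j))$ is finite-dimensional for each pair $i, j$. For $i = j$ and for $j < i$ this is immediate from the exceptional property (the hom is $\bK$, respectively $0$). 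For $j > i$, the thimbles are Lagrangians whose ideal boundaries lie in the page's skeleton complement, and the presence of the stop $\fk{c}$ prevents Reeb chords from wrapping past the page; the wrapped Floer complex thus reduces after a compactly supported perturbation to a finite sum over transverse intersection points, yielding a finite-dimensional cohomology.

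The only real subtlety is the last step — ensuring that wrapping stopped by the page of an open book produces a finite-dimensional Floer complex between thimbles. This is however a standard feature of Fukaya--Seidel categories (compare \cite[Ex.\ 1.4]{gps1} and the discussion in \cite{seidel-book}), and once it is granted, the lemma follows. Combining smoothness (\Cref{lemma:tame-induces-compactification}) with this finite-dimensionality, $\fk{c}$ is a full stop in the sense of \Cref{definition:full-stop}, completing the proof.
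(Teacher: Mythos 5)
Your proof follows the same geometric setup as the paper (Giroux--Pardon Lefschetz fibration, page of the induced open book pushed to infinity, skeleton of the page as the tame mostly Legendrian stop), and the tameness and smoothness checks are handled identically. Where you diverge is in the properness argument, and this is where there is a real gap.

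The paper's route to properness is: the complement of $F \times [-\e,\e]$ deforms through codimension zero submanifolds with corners to a contactization (\cite[Ex.\ 2.19]{gps1}), and the wrapped Fukaya category of a stopped manifold whose stop complement is a contactization is proper by \cite[Lem.\ 3.44]{gps1}. That lemma is precisely the mechanism that makes "wrapping is stopped" into a finiteness statement on the chain level. Your route instead argues via the full exceptional collection of thimbles: properness would follow if $H^*(\cW(M,\fk{c})(E_i,E_j))$ were finite-dimensional for all $i,j$. The reduction to thimbles is valid, and the $i = j$ and $j < i$ cases follow from exceptionality, but the crux --- finiteness for $j > i$ --- is exactly where you appeal to "a standard feature of Fukaya--Seidel categories" without a precise citation. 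Note that the definition of an exceptional collection places no constraint on $H^*(\cB)(E_i,E_j)$ when $j > i$, so \cite[Ex.\ 1.4]{gps1} does not hand you finiteness there for free. Making your assertion rigorous in the GPS framework ("the stop prevents Reeb chords from wrapping past the page, so the complex is a finite sum of intersection points") requires essentially the same geometric input as the paper's contactization argument, which is why the paper leans on \cite[Lem.\ 3.44]{gps1} rather than the exceptional collection. So your strategy is workable, but as written you have deferred the decisive step; supplying the missing citation or reproducing the contactization argument would close the gap.

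One smaller omission: you pass directly from the Weinstein hypersurface $V$ to the category $\cW(M,\fk{c})$ where $\fk{c}$ is the core of $V$, but the Fukaya--Seidel interpretation and the exceptional collection of thimbles live most naturally in $\cW(M,V)$ (or $\cW(M,F)$ in the paper's notation); the equivalence $\cW(M,F) \simeq \cW(M,\fk{c})$ is \cite[Cor.\ 2.11]{gps2} and should be invoked before transferring properness from one to the other, as the paper does.
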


\pf
It follows from the main result of Giroux--Pardon \cite{giroux-pardon} that $M$ admits a Lefschetz fibration with Weinstein fibers. In particular, $\d_\infty M$ admits an open book decomposition with Weinstein pages (alternatively, this later fact follows from unpublished work of Giroux or recent work of Honda--Huang \cite[Cor.\ 1.3.1]{honda-huang}). Fix such an open book on $(\d_\infty M, \xi_\infty)$ and let $F \sub \d_\infty M$ be any page. 

As explained in \cite[Ex.\ 2.19]{gps1}, the complement of $F \tms [-\e, \e]$ can be deformed though codimension zero submanifolds with corners to a contactization. Hence $\mc{W}(M, F)$ is proper according to \cite[Lem.\ 3.44]{gps1}. 

After possibly deforming $F$ through Liouville hypersurfaces (which does not affect $\mc{W}(M, F)$), we may assume that the Liouville vector field of $(F, \l|_F)$ is gradient-like for a proper Morse function (here $\l$ denotes a contact form for $\xi_\infty$ defined near $F$). We may also assume that the cocores are properly embedded. There is then a quasi-equivalence $\mc{W}(M,F) \simeq \mc{W}(M, \fk{c})$ where $\fk{c} \sub F$ is the (mostly Legendrian) skeleton of $(F, \l|_F)$ \cite[Cor.\ 3.9]{gps2}. Smoothness of $\mc{W}(M, \fk{c})$ follows from \Cref{fact:smoothness}, and the fact that $\fk{c}$ is tame is \Cref{example:tame-construction}. 
\epf
\begin{proof}[Proof of \Cref{thm:welldefinedsymplecticgrowth}]
Choose a full stop $\fk{c} \sub \d_\infty M$, which exists according to \Cref{lemma:full-stop}.
This induces a smooth categorical compactification of $\tw^\pi \mc{W}(M)$ according to \Cref{lemma:tame-induces-compactification}. Hence, by choosing lifts, we obtain a corresponding localization filtration on $H(\cW(M))(K,L)$. By \Cref{proposition:weinstein-zigzag}, any such smooth categorical compactifications are related by a zigzag through a pseudo-compactification. It follows from \Cref{corollary:growth-function-well-def} that the growth function is well-defined, i.e. it does not depend on the choice of full stop.
\end{proof}
We now come to the main definition of this section.

\defi[Growth function for wrapped Fukaya categories]\label{definition:growth-fun-wrapped-fuk} 
Given Weinstein manifold $M$ and a pair of objects $K, L \in \tw^\pi \mc{W}(M)$, let $\gr_{K, L}$ be the induced growth function, corresponding to a smooth categorical compactification $\cW(M,\fk{c})$ as above (defined as in \Cref{definition:growth-function-general}). 
\edefi

\section{Comparison with Hamiltonian filtrations}\label{section:ham-comparison}

\subsection{Growth functions from iterated Hamiltonians}

Let $(M, \l)$ be a Liouville manifold. Choose objects $K, L \in \mc{W}(M, \l)$ and let $H: M \to \R$ be a (time-independent) cylindrical Hamiltonian which is linear at infinity. We let $\phi_H$ denote the time-$1$ flow of $H$. Building on earlier work of Seidel \cite[Sec.\ 4]{seidel-biased-view} and McLean \cite{mclean-gafa} in the context of symplectic cohomology, McLean proved the following:
\begin{prop}[\cite{mclean}]\label{proposition:iterated-hamiltonian-mclean}
The filtered directed system $\{ HF^\bu(\phi_{nH}K, L) \}_{n \in \mathbb{N}_+}$ is independent of $H$ up to weak isomorphism (see \Cref{subsection:filtered-ds}).
\end{prop}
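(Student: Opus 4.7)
The plan is to construct, for any two cylindrical Hamiltonians $H_1, H_2$ with positive slopes $a_1, a_2$ at infinity, mutually inverse weak isomorphisms between the filtered directed systems $\{HF^\bu(\phi_{nH_1}K, L)\}_n$ and $\{HF^\bu(\phi_{nH_2}K, L)\}_n$, in the sense defined in \Cref{subsection:filtered-ds}. The key tool is the standard theory of continuation maps induced by monotone homotopies of Hamiltonians, together with TQFT-type gluing to identify compositions.

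First I would fix $H_1, H_2$ and pick an integer $C_{12} \geq 1$ large enough that $C_{12}\, a_2 \geq a_1$. For each $n \in \mathbb{N}_+$, the Hamiltonians $nH_1$ and $C_{12} n H_2$ then satisfy $C_{12} n H_2 - n H_1 \geq 0$ outside a compact set, so there is a monotone (cylindrical, linear at infinity) homotopy from $nH_1$ to $C_{12} n H_2$. Standard Floer-theoretic arguments in the wrapped setting (see e.g.\ \cite{gps1}) produce a continuation map $\Phi_n\colon HF^\bu(\phi_{nH_1}K, L) \to HF^\bu(\phi_{C_{12}nH_2}K, L)$ which is independent of the monotone homotopy used to define it. Gluing monotone homotopies shows that the diagram
\begin{equation}
\begin{tikzcd}
HF^\bu(\phi_{nH_1}K,L) \ar[r]\ar[d, "\Phi_n"'] & HF^\bu(\phi_{n'H_1}K,L) \ar[d,"\Phi_{n'}"] \\
HF^\bu(\phi_{C_{12}nH_2}K,L) \ar[r] & HF^\bu(\phi_{C_{12}n'H_2}K,L)
\end{tikzcd}
\end{equation}
commutes for all $n \leq n'$, where the horizontal arrows are the directed-system structure maps. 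Thus the collection $(\Phi_n)_n$ assembles into a weak morphism $f\colon \{HF^\bu(\phi_{nH_1}K,L)\} \to \{HF^\bu(\phi_{nH_2}K,L)\}$ with constant $C_{12}$.

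Symmetrically, choosing $C_{21} \geq 1$ with $C_{21}\, a_1 \geq a_2$ yields a weak morphism $g$ in the opposite direction. To verify that $g \circ f$ and $f \circ g$ are equivalent to the canonical weak self-maps $h_{C_{21}C_{12}}$ and $h_{C_{12}C_{21}}$, I would invoke the standard concatenation principle: the composition $HF^\bu(\phi_{nH_1}K,L) \xrightarrow{\Phi_n} HF^\bu(\phi_{C_{12}nH_2}K,L) \xrightarrow{\Psi_{C_{12}n}} HF^\bu(\phi_{C_{21}C_{12}nH_1}K,L)$ equals, up to chain homotopy, the continuation map associated with a monotone homotopy from $nH_1$ directly to $C_{21}C_{12}nH_1$. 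But the latter homotopy can be chosen as a linear rescaling, and the associated continuation map is precisely the structure map of the directed system from index $n$ to index $C_{21}C_{12}n$, i.e.\ the $n$-th component of $h_{C_{21}C_{12}}$. This shows $g \circ f$ and $h_{C_{21}C_{12}}$ are equal as weak morphisms, and similarly for $f \circ g$, so $f$ is a weak isomorphism.

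The main obstacle is Floer-theoretic rather than conceptual: one must ensure that transversality, compactness, and (a priori) energy estimates apply uniformly across the family of Hamiltonians $\{nH_1\}_n$ and $\{nH_2\}_n$, so that the continuation maps $\Phi_n$ and the gluing relations above are actually well-defined. This is precisely the technical content addressed by McLean in \cite{mclean-gafa, mclean} (building on \cite{seidel-biased-view}), using a carefully chosen class of admissible Hamiltonians and homotopies; the argument adapts with no essential change to the wrapped Lagrangian setting via the machinery of \cite{gps1}. Once this technical input is in hand, the remainder of the proof is the formal manipulation of continuation maps sketched above.
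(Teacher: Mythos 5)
Your reconstruction is the standard continuation-map argument; the paper itself does not reprove the statement but simply cites \cite[Lem.\ 2.5 and 2.6]{mclean} and \cite[(4a)]{seidel-biased-view}, which is precisely the argument you sketch (compare slopes, build monotone homotopies, glue to identify compositions with the structure maps $h_C$). So this is essentially the same approach; the one small imprecision is that $C_{12}a_2 \geq a_1$ should be strict (or handle the degenerate case $a_1 = a_2$, $b_1 > b_2$ separately) to guarantee $C_{12}nH_2 - nH_1 \geq 0$ near infinity uniformly in $n$.
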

\pf
This is essentially the content of \cite[Lem.\ 2.5 and 2.6]{mclean}. A more direct argument in the context of symplectic cohomology (which adapts immediately to the setting of wrapped Floer homology) is in \cite[(4a)]{seidel-biased-view}.
\epf

On the other hand, we have by definition 
\eq\label{equation:colimit-hw} \colim_n HF(\phi_{nH} K, L) = HW(K, L)\cong H(\cW(M,\fc))(K,L).\eeq
As explained in \Cref{subsection:filtered-ds}, \eqref{equation:colimit-hw} induces a filtration on $HW(K, L)$. We let $\gr_{K, L}^{ham}$ be the associated growth function (\Cref{definition:growth-function-chain}).
It follows from \Cref{proposition:iterated-hamiltonian-mclean} and \Cref{lemma:fds-equivalence} that $\gr_{K, L}^{ham}$  is well defined up to scaling equivalence.

The same considerations apply for symplectic cohomology: we let $\gr_{SH}^{ham}$ be the growth function obtained by the equality $\colim_n HF(M; nH)= SH(M)$. It follows from \cite[Sec.\ 4]{seidel-biased-view} that this is well defined up to scaling equivalence.

The purpose of this section is to prove the following theorem, which relates the notion of iterated Hamiltonian growth in \cite{mclean} with the growth defined via the localization filtration. 

\begin{thm}\label{theorem:tensor-equiv-hamiltonian}
With the notation as above, the growth functions $\gr_{K, L}^{ham}$ and $\gr_{K, L}$ are scaling equivalent (where $\gr_{K, L}$ is defined in \Cref{definition:growth-fun-wrapped-fuk}). 
\end{thm}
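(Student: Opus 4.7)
The plan is to deploy the general criterion of \Cref{theorem:spherical-tensor-equiv} (formulated for an arbitrary sequence $K = K^0, K^1, K^2, \dots$ satisfying the two bulleted hypotheses), and then identify the resulting colimit filtration with the Hamiltonian filtration via \Cref{proposition:iterated-hamiltonian-mclean}. Concretely, I would first reduce to the situation where the smooth categorical compactification is given by $\cW(M, \fk{c}) \to \cW(M)$, where $\fk{c}$ is the (mostly Legendrian) skeleton of a page of an open book on $(\d_\infty M, \xi_\infty)$ provided by the Giroux--Pardon construction. Since $\fk{c}$ is a swappable stop in the sense of Sylvan, the Orlov functor $\tw \cW(\hat{\fk{c}}) \to \tw \cW(M, \fk{c})$ is spherical with image precisely the linking disks $\cD$ that generate $\ker(\cW(M, \fk{c}) \to \cW(M))$. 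After a contact isotopy, I may assume $\d_\infty K$ and $\d_\infty L$ are disjoint from each other and from both the page and the binding.

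The heart of the argument is geometric: I need to produce a cylindrical Hamiltonian $H$ whose lift of the time-$1$ flow, applied iteratively to $K$, produces a sequence of Lagrangians $K = K^0 \rightsquigarrow K^1 \rightsquigarrow K^2 \rightsquigarrow \dots$ with two properties. First, $\d_\infty K^{i+1}$ is obtained from $\d_\infty K^i$ by a positive isotopy that crosses the page exactly once; this can be arranged by building $H$ as the lift of a contact Hamiltonian on $\d_\infty M$ whose flow visibly pushes $\d_\infty K$ once across the page per unit time (following a construction suggested by Pardon and acknowledged in the paper). Second, the sequence must be cofinal with a standard choice of iterated Hamiltonian used to compute $HW(K,L)$, so that \Cref{proposition:iterated-hamiltonian-mclean} and \Cref{lemma:fds-equivalence} apply to identify the resulting filtered directed system $\{H(\cW(M,\fk{c}))(K^i, L)\}$ with $\{HF(\phi_{nH}K, L)\}$ up to scaling. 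Note the direction of arrows: wrapping gives a natural continuation map $L \to K^{i}$ read backwards, i.e.\ maps $\cW(M,\fk{c})(K^i, L) \to \cW(M,\fk{c})(K^{i+1}, L)$, which is exactly the setup of the left-module form of \Cref{theorem:spherical-tensor-equiv}.

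Next, I verify the two hypotheses of \Cref{theorem:spherical-tensor-equiv}. For (ii) (the cone condition), I invoke the Ganatra--Pardon--Shende wrapping exact triangle: when $\d_\infty K^i$ crosses a component of $\fk{c}^{\op{crit}}$ once, the cone of $K^{i+1} \to K^i$ in $\cW(M, \fk{c})$ is quasi-isomorphic to the corresponding linking disk, hence lies in $\cD$. For (i) (the vanishing of $\cW(M,\fk{c})(K^i, D) \to \cW(M,\fk{c})(K^{i+1}, D)$ for $D$ a linking disk), I apply the stop-doubling trick from \cite[Sec.\ 7.3]{gps3} and \cite{sylvan-orlov}: since $\fk{c}$ is swappable, the positive and negative pushoffs of $\fk{c}$ are isotopic in the complement of $\fk{c}$. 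Wrapping $K^i$ once past the page factors through an isotopy that first sends $\d_\infty K^i$ to the negative side of $\fk{c}$ and then carries it across; composing with any morphism to a linking disk $D$ shows that the resulting map must factor through a Lagrangian disjoint from $D$ at infinity. By the positivity-of-wrapping criterion for vanishing of morphisms (an application of the wrapping exact triangle again), this forces the map in cohomology to vanish.

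Granting these two verifications, \Cref{theorem:spherical-tensor-equiv} produces an $E_1$-equivalence
\begin{equation}
\hocolim_i \cW(M, \fk{c})(K^i, L) \simeq (\cW(M, \fk{c})/\cD)(K, L) \simeq \cW(M)(K, L)
\end{equation}
of filtered complexes, where the left-hand side carries the colimit filtration and the right-hand side carries the localization filtration defining $\gr_{K,L}$. Taking cohomology and applying \Cref{lem:equivalentcxequivalentfunction}, the associated growth functions agree up to translation, hence up to scaling. Combining with the weak isomorphism of filtered directed systems $\{H(\cW(M,\fk{c}))(K^i, L)\} \sim \{HF(\phi_{nH}K, L)\}$ yields the desired scaling equivalence $\gr_{K,L} \sim \gr_{K,L}^{ham}$. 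The main obstacle will be the careful construction of $H$ (ensuring a linear, controlled count of page crossings while remaining cofinal with the standard wrapping Hamiltonians) and the rigorous execution of the stop-doubling vanishing argument in the present setup; these together occupy the bulk of \Cref{section:ham-comparison}.
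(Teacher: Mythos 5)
Your proposal follows the same strategy as the paper's proof: choose a page-of-open-book stop, construct a Hamiltonian whose iterates wrap $\d_\infty K$ once past the stop per period, introduce a second auxiliary stop (the stop-doubling trick) to verify the vanishing and cone hypotheses of \Cref{theorem:spherical-tensor-equiv}, and then identify the resulting colimit filtration with the Hamiltonian filtration via \Cref{proposition:iterated-hamiltonian-mclean}. The only cosmetic discrepancy is the opening invocation of Sylvan's sphericalness theorem, which the paper deliberately avoids (it remarks on this and instead uses the non-functorial criterion of \Cref{prop:colimisloc}), but your argument also ends up routing through that more elementary criterion rather than through \Cref{thm:spherical}.
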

One can attempt to prove \Cref{theorem:tensor-equiv-hamiltonian} as follows: choose a Lefschetz fibration structure on $M$ and let $F\subset\partial_\infty M$ be a fiber (pushed to infinity). Assume without loss of generality that $K,L$ have boundary that does not intersect $F$. Then, by \cite{sylvan-orlov}, one obtains a spherical functor $\cW(F)\to \cW(M,F)\simeq \cW(M,\fc_F)$ (where $\fc_F$ is the core of $F$). The corresponding twist functor $S$ is ``wrap once negatively'' functor, roughly does the same thing as $\phi_{-H}$. Therefore, one wants to use this to prove that the directed systems $\{HF(\phi_{nH}(K),L))\}$ and $\{HF(K,S^n(L)\}$ are weakly isomorphic, and define the same growth function. Combining this with \Cref{thm:spherical} would prove \Cref{theorem:tensor-equiv-hamiltonian}.

Implementation of this idea runs into technical problems. First and foremost is the lack of a Hamiltonian that rotates the pages of the open book decomposition on $N:=\partial_\infty M$ and brings $F$ back to itself. This prevents us from relating $S$ to some $\phi_{H}$ directly. 

We will overcome this difficulty, by definining a Hamiltonian $H$ such that $\phi_H$ does not exactly rotate pages, but carries a small neighborhood of the page into itself. After this, instead of relating $\phi_H$ to an inverse spherical twist given in \cite{sylvan-orlov}, defining the transformation $1\to \phi_{-H}$, and applying \Cref{thm:spherical}, we take a more direct approach. We use the non-functorial version \Cref{prop:colimisloc} of \Cref{thm:spherical} and apply the stop doubling trick inspired by \cite{gps3} and \cite{sylvan-orlov}.

The proof of \Cref{theorem:tensor-equiv-hamiltonian} will occupy most of the remainder of \Cref{section:ham-comparison}. 

\subsection{The contact mapping torus}
In this section, we briefly remind construction and basic properties of the contact mapping torus, which describes complement of a binding on an open book decomposition on $N=\partial_\infty M$. 

Let $(W, \l)$ be a Liouville domain with Liouville vector field $Z_\l$. Let $\phi: W \to W$ be a symplectomorphism having the following properties:
\begin{itemize}
\item $\phi$ is the identity near $\d W$; 
\item $\phi^*\l = \l + df$, for some $f: W \to \R$ which is constant near $\d W$. 
\end{itemize}

We now consider the \emph{contact mapping torus} \cite[Sec.\ 2.11]{van-koert}
\eq \mc{T}:= (\R \tms W, dt + \l)/ \sim \eeq
where $(t, x) \sim (t - f(x), \phi(x))$. We let $\pi_{\mc{T}}: \R \tms W \to \mc{T}$ denote the quotient map. After possibly increasing $f$ by an additive constant, we may assume that $f>0$ and it is then straightforward to verify that $\mc{T}$ inherits the structure of a contact manifold with boundary diffeomorphic to $S^1 \tms \d W$. See \Cref{figure:contacttoruswiththreeregions}. Under $\sim$, the sets $t=0$ and $t=-f(x)$ are identified and correspond to a page of an open book. 

Let us set $A_0= \op{max}_{x \in W} f - \op{min}_{x \in W} f$ and $A:= A_0+2$. 

Let us also set $C \in \R_+$ to be the value of $f$ near $\d W$. It will be convenient to view $C$ as a free parameter which can be chosen to be arbitrarily large and will be fixed later. More precisely, the plan in the next section is to complete $\mc{T}$ to an open book decomposition by gluing on $B \tms D^2$, where $B= \d W$. The procedure for doing this is explained in full detail in \cite[Sec.\ 4.4.2]{geiges2008introduction} so will not be repeated here. We remark that replacing $f$ by $f+c$, $c>0$ (hence, $C$ by $C+c$) does not change the contactomorphism type of the closed manifold $\mc{T} \cup (B \tms D^2)$ by Gray stability. Clearly, $A_0$ and $A$ are also unchanged by this replacement.


\begin{lem}\label{lemma:projection-properties}
After possibly making $C$ larger (depending on $A$), the following properties hold for all $t \in \R$:
\begin{itemize}
\item the quotient map $\pi_{\mc{T}}$ restricts to an embedding on $[-t-A-1, -t+A+1] \tms W \sub \R \tms W$; 
\item we have $\pi_{\mc{T}}([-1,1] \tms W) \sub \pi_{\mc{T}} ([-C-A, -C+A] \tms W)$.
\end{itemize}
\end{lem}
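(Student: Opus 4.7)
The plan is to exploit the shift freedom $f \leadsto f + c$ for $c > 0$: this leaves the hypotheses on $\phi, f$ intact, translates $C$, $\min f$, and $\max f$ all by $c$, preserves $A_0 = \max f - \min f$ and hence $A$, and does not change the contactomorphism type of $\mc{T} \cup (B \tms D^2)$ (by the Gray stability remark already noted in the paper). The key observation is that the second bullet holds for every $C$, so the sole constraint on $C$ comes from the first bullet, where it reduces to a quantitative lower bound on $\min f$ that can be enforced by taking $c$ sufficiently large.

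For the first bullet, I would begin by unravelling the equivalence relation: iterating $(t, x) \sim (t - f(x), \phi(x))$, two points $(s_1, x_1), (s_2, x_2) \in \R \tms W$ are identified iff there exists $n \in \Z$ with $x_2 = \phi^n(x_1)$ and $s_2 - s_1 = -\sum_{i=0}^{n-1} f(\phi^i(x_1))$ (with the appropriate sign convention for $n < 0$). Since $f > 0$ everywhere, every $n \neq 0$ satisfies $|s_1 - s_2| \geq |n|\,\min f \geq \min f$. Consequently $\pi_{\mc{T}}$ is injective on any slab $[a, a+L] \tms W$ provided $L < \min f$. Choosing $L = 2(A+1)$ reduces the first bullet to the inequality $\min f > 2(A+1)$, which is achieved by making $C$ large.

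For the second bullet, given $(t, x)$ with $t \in [-1, 1]$, a single application of $\sim$ produces the equivalent point $(t - f(x), \phi(x))$. Writing $\min f = C - a_0$ and $\max f = C + b_0$ with $a_0, b_0 \geq 0$ and $a_0 + b_0 = A - 2$, the first coordinate satisfies
\begin{equation}
t - f(x) \in [-1 - C - b_0,\ 1 - C + a_0] \subseteq [-C - A,\ -C + A],
\end{equation}
the latter containment following from $A \geq b_0 + 2$ and $A \geq a_0 + 2$. Hence the second bullet holds unconditionally.

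The main obstacle is really just bookkeeping: one must verify that translating $f$ by a constant genuinely enlarges $\min f$ while preserving $A$, the symplectic data, and the compatibility at $\d W$. Once this is in place, choosing $c$ large enough that the shifted value of $C$ exceeds $2(A+1) + a_0$ delivers both bullets simultaneously.
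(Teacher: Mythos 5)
Your proof is correct and follows essentially the same route as the paper: the paper likewise reduces the first bullet to an injectivity criterion for slabs of length less than $C - A_0 \leq \min f$ (equivalently, less than $\min f$, as you state), and verifies the second bullet by applying the relation $(t,x)\sim(t-f(x),\phi(x))$ once and squeezing $t-f(x)$ between $-1-\max f$ and $1-\min f$. Your extra unwinding of the equivalence relation and the explicit $a_0, b_0$ bookkeeping are a bit more detailed than the paper's one-line argument, but there is no substantive difference in strategy.
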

\begin{proof}
The quotient map restricts to an embedding on any interval of length less than $C-A_0\leq \op{min}_{x\in W}	f$, implying the first property. To see the second, we use $(t, x) \sim (t - f(x), \phi(x))$, namely if $(t,x)\in [-1,1] \tms W$, then \eq -C-A\leq  -1-\op{max}_{x\in W} f \leq t-f(x)\leq 1-\op{min}_{x\in W}	f \leq -C+A  \eeq 
\end{proof}
\begin{figure}
	\centering
	\includegraphics[height=9.5 cm]{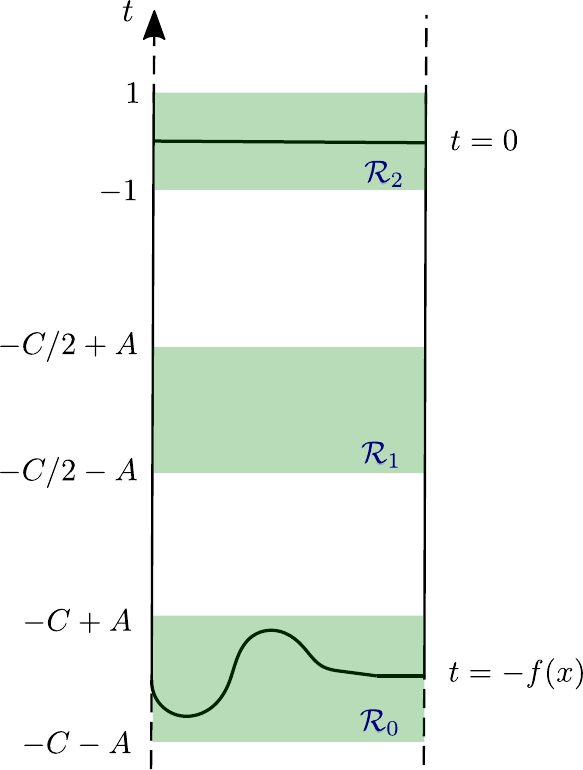}
	\caption{The contact mapping torus.}
	\label{figure:contacttoruswiththreeregions}
\end{figure}
We now begin the construction of a time-dependent Hamiltonian vector field 
which will be used later to isotope Lagrangian submanifolds. We start by constructing it on $\cT$, then we will extend it to $\cT\subset N=\partial_\infty M$, and finally cylindrically to $M$. This vector field is designed to have the property that its time-$C$ flow takes the set $\pi_\mc{T}([-C-A, -C+A] \tms W)$ into itself, except near the boundary. More precisely, let $W'\subset W$ denote a slightly smaller subdomain, and name the regions $\cR_0:=\pi_\mc{T}([-C-A, -C+A] \tms W')$, $\cR_1:=\pi_\mc{T}([-C/2-A, -C/2+A] \tms W')$, and $\cR_2:=\pi_\mc{T}([-1,1] \tms W')$. These regions are highlighted in green in \Cref{figure:contacttoruswiththreeregions}. 
\begin{prop}\label{prop:hamiltgood}
	There exists a time dependent $C$-periodic, positive Hamiltonian $\{G_s\}$ on $\cT$ such that $9/10\leq G_s\leq 11/10$ and satisfying
\vspace{-0.2cm}
\begin{enumerate}[label=(\alph*)]
	\item\label{item:ham1near} $G_s=1$ outside a small neighborhood of the compact subset $\pi_{\mc{T}}(\bR\times W') \subset \cT$ and near $\pi_{\mc{T}}(\bR\times \partial W) \subset \cT$
	\item\label{item:hamilthalfflow} The time $C/2$-flow of $\{G_s\}$ takes the set $\cR_0$ into $\cR_1$ and $\cR_1$ into $\cR_2$
	\item\label{item:hamiltotherhalf} The flow of $\{G_s\}_{s=C/2}^C$ takes $\cR_0$ into $\cR_1$ and $\cR_1$ into $\cR_2$ 
\end{enumerate}
\end{prop}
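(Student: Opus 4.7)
My strategy is to construct a time-\emph{independent} contact Hamiltonian $G\colon \cT\to\R$ fulfilling (a) and (b). Condition (c) is then automatic: for a time-independent Hamiltonian, the flow over $[C/2, C]$ coincides with the flow over $[0, C/2]$, so (b) applied to either half gives the same statement. The Hamiltonian will be a small perturbation of the constant $G\equiv 1$, which generates the Reeb flow $\partial_t$; making the perturbation sufficiently small will require taking $C$ large compared to $A$.

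For $C$ sufficiently large, \Cref{lemma:projection-properties} implies that a neighborhood of the Reeb trajectories between $\cR_1$ and $\cR_2$ lifts to an embedded chart in $\R\times W$. I will define $G(t,x)=1+\eta(t,x)$ on this chart and extend by $1$ outside, where $\eta$ is smooth, supported in a neighborhood of $\pi_\cT(\R\times W')$ that avoids a collar of $\pi_\cT(\R\times \partial W)$ (the latter via a cutoff $\chi(x)$ equal to $1$ on $W'$ and vanishing near $\partial W$, which gives (a)). For the easy inclusion $\cR_0\to\cR_1$, the Reeb trajectories from $\cR_0$ sit at $t\in[-C-A,-C/2+A]$ in the cover and can be arranged to lie entirely outside the support of $\eta$; on them the flow is purely Reeb (translation by $C/2$), and the first bullet of \Cref{lemma:projection-properties} identifies this translate with $\cR_1$ in $\cT$. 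The difficult inclusion is $\cR_1\to\cR_2$: the pure Reeb image of $\cR_1$ is $\pi_\cT([-A,A]\times W')$, which strictly contains $\cR_2$, so $\eta$ must be designed to bring the trajectory endpoints into $\cR_2$. I will prescribe the desired time-$C/2$ flow on trajectories as the linear compression $(t_0,x_0)\mapsto\bigl(\tfrac{t_0+C/2}{A},x_0\bigr)$ and solve for $G$ using the relation $G(t_f)=\Phi'(t_0)G(t_0)$ along trajectories, obtaining a $\eta$ whose $C^0$-amplitude is $O(A\log A/C)$, hence bounded by $1/10$ as soon as $C\gtrsim A\log A$.

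The main obstacle is a real tension in (b): the $t$-derivative of the Hamiltonian flow at a point equals the ratio $G(t_f)/G(t_0)$, which lies in $[9/11,11/9]$ as long as $|G-1|\leq 1/10$, so a single trajectory of the flow cannot directly compress the $2A$-thick slab $\cR_1$ into the $2$-thick slab $\cR_2$. The resolution exploits that $\cT$ is the twisted mapping torus: the contact vector field $X_G$ of $G=1+\eta$ has a transverse component $X^{\op{sym}}_{-\eta}+\eta_t Z_\lambda$ in the $W$-direction, which need not be small and which acts on trajectory endpoints by a Liouville-type shift. Combined with the gluing $(t,x)\sim(t-f(x),\phi(x))$, this shift can be arranged so that the endpoints sit in a different $\phi$-translate of the fundamental domain in the cover, and after applying $\sim$ land inside the narrow slab representing $\cR_2$. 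Verifying this uniformly across $\cR_1$ is the technical heart of the proof and reduces to matching the Liouville displacement on $W'$ against the required compression factor, all controlled in terms of the ratio $A/C$.
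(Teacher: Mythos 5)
Your proof plan is built on a foundation that cannot support it: you aim to use a time\emph{-independent} contact Hamiltonian, and you correctly observe the obstruction this creates --- the linearized $t$-compression of the flow at time $T$ is $\delta t_f/\delta t_0 = G(t_f)/G(t_0) \in [9/11,\,11/9]$, so a slab of $t$-width $2A$ cannot be brought into one of width $2$ once $A>11/9$. Where the proposal goes wrong is in the claimed rescue. The transverse $W$-component of $X_G$ pushes points along Liouville-type directions in $W$, but it does not produce additional compression in the $t$-direction; and the gluing $(t,x)\sim(t-f(x),\phi(x))$ shifts $t$ by quantities on the order of $f\approx C$, which is enormous. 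In the lifted chart where both $\cR_1$ (around $t\approx -C/2$) and $\cR_2$ (around $t\approx 0$) live, there is exactly one $\sim$-translate of each slab visible, and the trajectory from $\cR_1$ toward $\cR_2$ does not cross a gluing wall. So the mapping-torus identification is simply not available to ``relocate'' endpoints into the narrow slab, and the plan stalls exactly at the point you flag as ``the technical heart.''

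The paper avoids this obstruction by making the Hamiltonian genuinely $s$-dependent, and the dependence is essential, not cosmetic. The model Hamiltonian is $G_s = 1+\e(s-t)$ near the moving slab: because the slab travels with $\dot t\approx 1$, the relevant range of $|s-t|$ stays of size $O(A)$, so the amplitude condition $|G_s-1|\le 1/10$ only requires $\e \lesssim 1/A$ --- independent of $C$. The compression at each instant is a small fixed factor $\sim(1-\e)$, and over time $C$ it compounds to $e^{-\e C}$, which can be made arbitrarily small by taking $C$ large. A time-independent $G(t)$ cannot emulate this: fixing the center $t_c$ forces $\e \lesssim 1/C$ to keep $G$ bounded along the whole trajectory, and then $e^{-\e C}$ is bounded away from $0$. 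Your observation that ``(c) is automatic'' for a time-independent $G$ is true but moot, since (b) is unattainable in that setting; in the paper's time-dependent construction (c) is handled by building the Hamiltonian separately on $[-C,-C/2]$ and $[-C/2,0]$, making it $C$-periodic, and smoothing the junctions.

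So the gap is concrete: the passage from the bounded, non-accumulating compression of a time-independent flow to the required unbounded compression is exactly what the time-dependence buys, and neither the $W$-component of $X_G$ nor the mapping-torus gluing provides a substitute.
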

In particular, the time $C$-flow sends $\cR_0$ into $\cR_2$ and by \Cref{lemma:projection-properties}, $\cR_2\subset \cR_0$. Therefore, the time $C$-flow sends $\cR_0$ into $\cR_0$. Similarly, it sends $\cR_1$ into $\cR_1$.
\begin{proof}
First fix a sufficiently small $\e>0$ so that $\e A<1/100$. For notational simplicity, we start at negative time $s=-C$. For $s\in [-C,0]$ consider the contact Hamiltonian $1+\e(s-t)$ where the corresponding Hamiltonian vector field can be computed to be 
\begin{equation}\label{eq:initialvf}
\partial_t+\e ((s-t)\partial_t -Z_\lambda)	
\end{equation}
Notice that at each time $s\in[-C,0]$, $\e (s-t)\partial_t$ factor compresses the set $[s-a,s+a]\times W$ from bottom and the top, while $\partial_t$ moves it upwards (in \Cref{figure:contacttoruswiththreeregions}). Similarly, $\e Z_\l$ factor pushes the $W$ component inside. In particular, the $s$-dependent vector field $\partial_t+\e (s-t)\partial_t$ carries the interval $[-C-A,-C+A]$ to $[s-Ae^{-\e(s+C)},s+Ae^{-\e(s+C)}]$ at time $s\in[-C,0]$. 

The flow of this vector field from time $s=-C$ to $s=-C/2$ sends $[-C-A,-C+A]$ into $[-C/2-A,-C/2+A]$. Moreover, as mentioned, we can assume $C$ is large, and guarantee the flow of this vector field (from time $s=-C$ to $s=-C/2$) compresses what is initially $[-C/2-A,-C/2+A]$ and sends it into $[-1,1]$. 

Similarly, for a function $\e(x)\in C^\infty(W)$ such that $\e(x)$ is $0$ on a small neighborhood of $\partial W$, constant and positive on a small neighborhood of $W'$, $Z_\lambda(\e)\leq 0$, and $0\leq \e(x)<1/100$, we can consider the contact Hamiltonian $1+\e(x)(s-t)$ whose corresponding Hamiltonian vector field is given by 
\begin{equation}\label{eq:secondvf}
	\partial_t+\e(x) ((s-t)\partial_t -Z_\lambda)+(s-t)X^W_\e-(s-t)Z_\lambda(\epsilon)\partial_t
\end{equation}
where $X^W_\e$ is the Hamiltonian vector field on $W$, i.e. the vector field satisfying $\iota_{X^W_\e}d\lambda=-d\epsilon$. Observe that this vector field matches \eqref{eq:initialvf} on a small neighborhood of $\bR\times W'$ where $\e$ is constant, and it is equal to the Reeb vector field $\partial_t$ on a small neighborhood of $\bR\times \partial W$. Moreover $Z_\lambda(\epsilon)\leq 0$ implies that the $\partial_t$ component $\partial_t+(\e(x)-Z_\lambda(\epsilon)) (s-t)\partial_t $ of \eqref{eq:secondvf} still has the same compressing effect on rectangles $[s-a,s+a]\times W$. 

For simplicity, we first explain how to define an Hamiltonian as in \Cref{prop:hamiltgood} that carries $\cR_0$ into $\cR_1$ at time $C$ (instead of the stronger conditions \ref{item:hamilthalfflow} and \ref{item:hamiltotherhalf}). For $s\in[-C,0]$, we want the Hamiltonian $G_s$ to be equal to $1+\e(x)(s-t)$ near a small neighborhood of $[s-Ae^{-\e(s+C)},s+Ae^{-\e(s+C)}]\times W$, and $1$ outside a slightly larger neighborhood. The corresponding Hamiltonian vector field compresses $[s-Ae^{-\e(s+C)},s+Ae^{-\e(s+C)}]\times W$ at time $s$, and the speed of compression over $[s-Ae^{-\e(s+C)},s+Ae^{-\e(s+C)}]\times W'$ is uniform (as $\e$ is constant on $W'$). Therefore, as before, at time $s$, $[-C-A,-C+A]\times W'$ is carried into $[s-Ae^{-\e(s+C)},s+Ae^{-\e(s+C)}]\times W'$. In particular, it is carried into $\cR_2$ at time $C$. It is easy to see $G_s$ descends onto $\cT$ (or rather, can be extended to $\bR\times W$ so that it descends). One can extend it to $s\in [0,C]$ so that it becomes periodic. Notice, however, this can be discontinuous in $s$. Therefore, we smooth the function in a very small neighborhood of $s=0$ without effecting the condition that $\cR_0$ maps into $\cR_2$ at $s=0$ (we start the flow at $s=-C$). We modify $G_s$ near $s=-C$ and $s=C$ and extend to all $s\in\bR$ periodically. The conditions $9/10\leq G_2\leq 11/10$ and \ref{item:ham1near} can still be assumed to be true.

The general case is similar: we first define it for $s\in[-C,-C/2]$ as \eqref{eq:secondvf} on a small neighborhood of  $[s-Ae^{-\e(s+C)},s+Ae^{-\e(s+C)}]$ and $[C/2+s-Ae^{-\e(s+C)},C/2+s+Ae^{-\e(s+C)}]$. By possibly making $C$ larger, one can guarantee the condition \ref{item:hamilthalfflow} is satisfied, while keeping $9/10\leq G_2\leq 11/10$ and \ref{item:ham1near}. Then one defines a similar Hamiltonian for $s\in[-C/2,0]$, removes discontinuity at $s=-C/2$ and apply the procedure above to extend periodically. 
\end{proof}

\subsection{Growth rates on Liouville manifolds}

Let $(M, \l)$ be a Liouville manifold. Suppose now that the ideal boundary $(N:=\d_\infty M, \xi_\infty)$ is equipped with an open-book decomposition (for instance, a Lefschetz fibration structure on $M$ would equip $N$ with this structure). For an appropriate choice of contact form $\xi_\infty = \op{ker} \a$, we can assume that there is a \emph{strict} contact embedding 
\begin{equation}
\mc{T} \hookrightarrow (N, \a),
\end{equation} 
where $\mc{T}$ is the mapping torus constructed in the previous section (for some Liouville domain $(W, \l)$ and compactly supported symplectomorphism $\phi: W \to W$ depending on $(M, \l)$). We emphasize that we are free to make the parameter $C \in \R_+$ considered in the previous section arbitrarily large.

In \Cref{prop:hamiltgood} we have defined an Hamiltonian function $G_s$. Since $G_s=1$ near the boundary $\partial\cT=\pi_{\mc{T}}(\partial W\times \bR)$, we can extend it to all $N$ by $1$. Let $\{H_s\}$ be a $C$-periodic time dependent Hamiltonian on $M$ which agrees outside a compact set with the cylindrical lift of $H_s$ (i.e. $H_s=rG_s$ on the cylindrical end, where $r$ is the Liouville parameter).

%
%
%
%
The following lemma follows from standard properties of continuation maps (cf.\ \cite[(4a)]{seidel-biased-view} and \cite[Sec.\ 2.3]{mclean}).
\begin{lem}\label{lemma:positivity-continuation}
For $u \in [0,1]$, let $F_u: [0,C]_s \tms N \to \R$ be a family of non-negative, time-dependent contact Hamiltonians on $(N, \a)$. Let $\tilde{F}_u$ be a family of Hamiltonians on $M$ which agree outside a fixed compact set with the cylindrical lift of $F_u$, and for a fixed $u$ let $\phi_{\tilde F_u^s}$ denote the time $s$-flow of $\tilde F_u$. 
Suppose that $\d_uF_u \geq 0$. Then given objects $K, L \in \mc{W}(M,\l)$, we have a weak morphism of directed systems:
\eq \{HF^\bu(\phi^n_{\tilde{F}_0^C} K,L) \}_{n \in \mathbb{N}} \to \{HF^\bu(\phi^n_{\tilde{F}_1^C} K, L)\}_{n \in \mathbb{N}}. \eeq
\qed
\end{lem}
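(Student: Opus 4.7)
The argument is a routine application of the Floer continuation principle applied levelwise and then checked to be compatible with the directed system structure. I would proceed in three steps.

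First, I would interpret $HF(\phi^n_{\tilde F_u^C} K, L)$ as the Floer cohomology over the time interval $[0, nC]$ of the $C$-periodic Hamiltonian $\tilde F_u$ with Lagrangian boundary conditions $K$ and $L$. Since each $F_u$ is positive and bounded away from $0$, the lift $\tilde F_u$ has linear slope at infinity uniformly bounded (above and below) in both the time coordinate and the parameter $u$; combined with the cylindricity of $K$ and $L$ at infinity, the standard integrated maximum principle yields the $C^0$-bounds needed for the Floer complex $CF(\phi^n_{\tilde F_u^C} K, L)$ to be well-defined, after a generic time-dependent perturbation to achieve transversality.

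Next, for each fixed $n$ I would construct the continuation map by taking the tautological homotopy $(\tau, s) \mapsto \tilde F_{\tau, s}$ for $\tau \in [0,1]$ and $s \in [0, nC]$. The hypothesis $\d_u F_u \geq 0$ gives $\d_\tau \tilde F_\tau \geq 0$ outside a compact subset of $M$, which is precisely the monotonicity condition guaranteeing an a priori energy estimate and an integrated maximum principle for the $\tau$-dependent perturbed Cauchy--Riemann equation. Counting rigid solutions then yields a chain map
\[
c_n: CF^\bu(\phi^n_{\tilde F_0^C} K, L) \to CF^\bu(\phi^n_{\tilde F_1^C} K, L),
\]
inducing the desired map on cohomology.

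Finally, to upgrade the collection $\{c_n\}$ to a weak morphism of directed systems (with scaling constant equal to $1$), one must verify that each $c_n$ commutes with the structure maps $HF(\phi^n_{\tilde F_u^C} K, L) \to HF(\phi^{n+1}_{\tilde F_u^C} K, L)$ of the two directed systems. These structure maps are themselves monotone continuation maps, and both composites $c_{n+1} \circ (\text{structure})$ and $(\text{structure}) \circ c_n$ arise from monotone two-parameter homotopies of Hamiltonians sharing the same corner data, namely $\tilde F_0$ over time $[0, nC]$ and $\tilde F_1$ over time $[0, (n+1)C]$; hence they induce chain-homotopic continuation maps by the standard parametric Gromov--Floer moduli space argument. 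The principal technical input throughout is uniform $C^0$-compactness across the $n$-indexed families of moduli spaces, which is secured by the global slope bounds on $F_u$ exactly as in \cite[Sec.\ 4]{seidel-biased-view} and \cite{mclean}.
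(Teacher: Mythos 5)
Your proposal is correct, and it gives the standard monotone Floer-continuation argument that the paper treats as routine — the lemma is stated with an immediate \qed, so there is no paper proof to compare against. The three steps (levelwise well-definedness via uniform slope bounds, levelwise continuation from the monotone homotopy $\partial_u F_u \geq 0$, compatibility with the structure maps via a two-parameter parametric moduli argument) are exactly what is needed, and the observation that the monotonicity hypothesis only holds outside a compact set but still suffices for the a priori energy estimate is the right point to flag. One small imprecision: the lemma hypothesizes $F_u \geq 0$, not $F_u$ bounded away from $0$; this does not affect the argument (the required uniform slope bounds are $0 \leq F_u \leq \max F_u$, and degeneracies from zero slope are handled by the usual generic perturbations), and in every application in the paper the relevant contact Hamiltonian is in fact pinched between $9/10$ and $11/10$, but the phrasing should not claim more than the hypotheses give.
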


\subsection{The stop doubling trick}

Carrying over the notation from the previous section, let $P_0= \pi_\cT(\{-3C/4\} \tms W') \sub \mc{T} \sub \d_\infty M$ and let $P_1 = \pi_\cT(\{-C/4\} \tms W' )\sub \mc{T} \sub \d_\infty M$. Let $\fk{c}_0, \fk{c}_1$ be the cores of $P_0, P_1$ respectively. Note $P_1$ will have an auxiliary role here. 

We have natural functors 
\begin{equation}\label{equation:orlov-stop-double}
\begin{tikzcd}
\mc{W}(\hat{P_0}) \ar{r} \ar{dr} & \mc{W}(M, P_0 \cup P_1) \ar{d} \ar[r, "\simeq"] &\mc{W}(M, \fk{c}_0 \cup \fk{c}_1) \ar{d} \\
& \mc{W}(M, P_0) \ar[r, "\simeq"] & \mc{W}(M, \fk{c}_0)
\end{tikzcd}
\end{equation}
The functors from $\cW(\hat P_0)$ are Orlov functors, and the vertical arrows are the stop removal functors. We let $\cD$ be the essential image of the induced functor $\tw \mc{W}(\hat{P_0}) \to \tw\mc{W}(M, P_0)$. The following proposition is sometimes known as the ``stop-doubling trick".  
\begin{prop}[Prop.\ 7.9 in \cite{gps3}]\label{lemma:doubling-ff}
The Orlov functor $\cW(\hat P_0)\to \cW(M,P_0\cup P_1)$ is fully faithful. (Hence the same is true upon passing to the twisted envelope.)
\qed
\end{prop}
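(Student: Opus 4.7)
The plan is to compare the morphism complex in $\cW(M, P_0 \cup P_1)$ directly with the morphism complex in $\cW(\hat{P_0})$ for Lagrangians lifted through the Orlov functor. Fix $L, L' \in \cW(\hat{P_0})$ and let $L^+, L'^+$ denote their images in $\cW(M, P_0 \cup P_1)$, obtained via the standard contact Weinstein neighborhood of $P_0 \subset \d_\infty M$: this identifies a neighborhood $\cN$ of $P_0$ with a neighborhood of the zero section in the contactization $\hat{P_0} \times (-\delta, \delta)_s$, and $L^+, L'^+$ are small pushoffs of $L, L'$ in the $s$-direction. Because $P_1$ is a disjoint page of the mapping torus, we can arrange $\cN$ to be bounded away from $P_1$.

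First I would choose a convenient cofinal wrapping. Since partially wrapped Floer cohomology is defined as a colimit over positive isotopies whose generating Hamiltonians vanish near the stops, any such Hamiltonian with positive Reeb slope at infinity produces a cofinal sequence. I would pick a Hamiltonian $H$ on $M$ supported in $\cN$ which, under the identification $\cN \cong \hat{P_0} \times (-\delta, \delta)_s$, is the pullback of a linear cylindrical Hamiltonian $\tilde H$ on $\hat{P_0}$. Then $\phi_{nH}$ preserves $\cN$ and acts there as $\phi_{n\tilde H} \times \mathrm{id}_s$, so $\phi_{nH}(L^+)$ remains entirely inside $\cN$ for every $n$.

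The next step is to identify Floer complexes iterate by iterate. All intersections $\phi_{nH}(L^+) \cap L'^+$ lie in $\cN$ and are in canonical bijection with $\phi_{n\tilde H}(L) \cap L'$ in $\hat{P_0}$. For the differential, one picks an almost complex structure on $M$ which on $\cN$ agrees with the product extension of an admissible $J$ on $\hat{P_0}$, and which is of contact type near $P_1$. A monotonicity / maximum principle argument then confines every index-$1$ holomorphic strip to $\cN$, giving a bijection with strips in $\hat{P_0}$ and hence an isomorphism $CF(\phi_{nH}L^+, L'^+) \cong CF(\phi_{n\tilde H}L, L')$. The same confinement applies to discs governing higher $A_\infty$ operations. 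Passing to the colimit in $n$ produces the desired quasi-isomorphism $\hom_{\cW(M, P_0 \cup P_1)}(L^+, L'^+) \simeq \hom_{\cW(\hat{P_0})}(L, L')$, i.e.\ full-faithfulness of the Orlov functor.

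The main obstacle is the confinement of holomorphic discs: this is the step that uses the second stop $P_1$ in an essential way. Without $P_1$, a disc with boundary on the iterated wrappings of $L^+$ could in principle wrap once around the mapping torus and exit $\cN$, destroying the bijection with discs in $\hat{P_0}$. The doubling trick introduces $P_1$ precisely as a geometric barrier: a contact-type almost complex structure near $P_1$ turns the page into a wall through which no holomorphic curve can pass, by the standard no-escape lemma for stops in \cite{gps1, sylvan-wrapped}. Setting up this almost complex structure coherently over the full $A_\infty$ moduli (with consistent choices of perturbation data respecting both stops) is the technical heart of the argument, and is precisely what is carried out in \cite[Prop.\ 7.9]{gps3}.
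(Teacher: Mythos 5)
The paper does not actually prove this proposition: the \texttt{\textbackslash qed} appears immediately after the statement, so the ``paper's proof'' is simply the citation to \cite[Prop.~7.9]{gps3}. You are therefore supplying an argument where the paper uses a black box.

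Your sketch — identify a neighborhood $\cN$ of $P_0$ with the contactization $\hat{P_0}\times(-\delta,\delta)_s$, wrap by a Hamiltonian pulled back from $\hat{P_0}$, and confine holomorphic curves via a no-escape/maximum-principle argument at $P_1$ — does capture the geometric spirit of the stop-doubling trick. But there is a genuine gap at the cofinality step, and it is exactly the hard part. You open by (correctly) observing that cofinal wrappings come from contact Hamiltonians with positive slope at infinity, and then choose $H$ supported in $\cN$. This $H$ vanishes identically outside $\cN$, so it does \emph{not} satisfy the cofinality criterion of \cite[Lem.~3.29]{gps1}: on any compact subset of $\d_\infty M\setminus(P_0\cup P_1)$ disjoint from $\cN$, the contact Hamiltonian is zero, so the sequence $\phi_{nH}(L^+)$ cannot eventually dominate a positive isotopy of $L^+$ passing through that region. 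The observation that $\phi_{nH}(L^+)$ stays inside $\cN$ is true but, by itself, irrelevant — the colimit defining $\hom_{\cW(M,P_0\cup P_1)}(L^+,L'^+)$ is over the entire positive wrapping category of $L^+$, and to restrict to wrappings inside $\cN$ you must first show that $L^+$ admits \emph{no} positive isotopy avoiding $P_0\cup P_1$ that escapes $\cN$. That forward-stopping property is precisely the substantive content of \cite[Prop.~7.9]{gps3}; in the present setup it is not even obvious, since $P_0=\pi_\cT(\{-3C/4\}\times W')$ uses the strictly smaller domain $W'\subsetneq W$, leaving a gap near the binding through which a Legendrian isotopy could in principle slip. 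Asserting cofinality here is assuming what one needs to prove.

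Two secondary points: (i) the pulled-back Hamiltonian $\tilde H$ is linear on $\hat{P_0}$, hence unbounded, so $H=\tilde H\circ\mathrm{pr}$ cannot literally be extended by zero to a smooth function supported in $\cN$ without a cutoff, and the cutoff region is exactly where the maximum principle needs attention; (ii) you correctly identify the disc-confinement step as the technical heart and defer it to \cite{gps3}, which is fine, but it means the sketch does not stand on its own. The cofinality issue is the one I would want fixed before accepting this as a proof.
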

Recall
\begin{itemize}
\item $\mc{R}_0:= \pi_\mc{T}([-C-A, -C+A] \tms W') \sub N$;
\item $\mc{R}_1:= \pi_\mc{T}([-C/2-A, -C/2+A] \tms W') \sub N$. 
\end{itemize}

\begin{figure}
	\centering
	\includegraphics[height=9.5 cm]{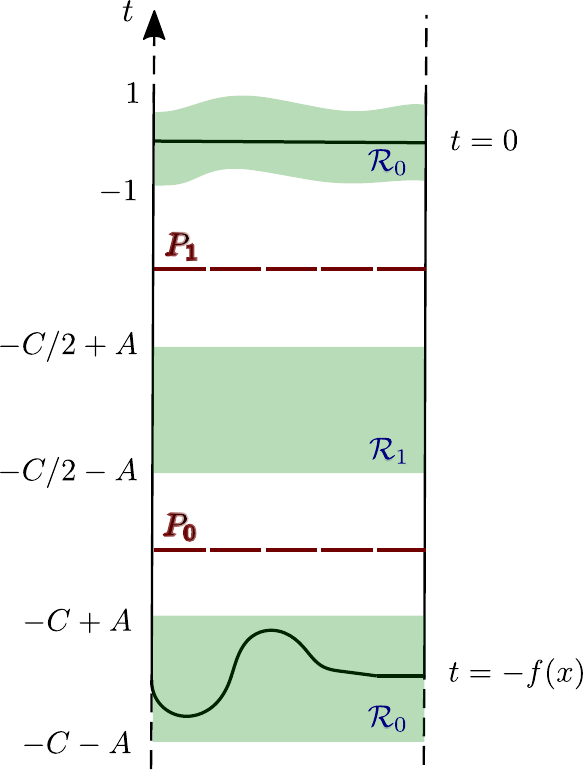}
	\caption{The contact mapping torus embedded in $\d_\infty M$; a Legendrian contained in $\mc{R}_i$ cannot wrap past the stop $P_i$.}
	\label{figure:drawingtry}
\end{figure}
We record the following technical lemma which can safely be skipped on first reading.
\begin{lem}\label{lemma:cofinal-wrappings} 
\begin{itemize}
\item[(i)] Given any Legendrian $\Lambda \sub \mc{R}_0$, it admits a (positive) cofinal wrapping in the wrapping categories $(\Lambda \leadsto-)^+_{(M, P_0)}$ and $(\Lambda \leadsto -)^+_{(M, P_0 \cup P_1)}$ which stays contained in $\pi_\mc{T}( [-C-A, -3C/4] \tms W') \sub N$
\item[(ii)] Given any Legendrian $\Lambda \sub \mc{R}_1$, it admits a (negative) cofinal wrapping in the wrapping categories $(\Lambda \leadsto-)^-_{(M, P_0)}$ and $(\Lambda \leadsto-)^-_{(M, P_0 \cup P_1)}$ which stays contained in $\pi_\mc{T}( [-3C/4, -C/2+A] \tms W') \sub N$
\end{itemize}
\end{lem}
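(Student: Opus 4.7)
We treat part (i); part (ii) is entirely analogous after reversing the sign of the Hamiltonian to generate a negative wrapping supported near $-3C/4$ from above.

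The plan is to construct a time-independent contact Hamiltonian $h$ on $N$ (extended cylindrically to $M$) whose positive flow on $\Lambda$ (a) stays inside the prescribed region $\pi_\mc{T}([-C-A, -3C/4] \tms W')$ and (b) asymptotically approaches the stop $P_0 = \pi_\mc{T}(\{-3C/4\} \tms W')$ as time $\to \infty$. Using \Cref{lemma:projection-properties} (with $C$ chosen large enough), the quotient map $\pi_\mc{T}$ is injective on the slab $(-C-A-1, -3C/4) \tms W$. I would choose a smooth cutoff $\beta: W \to [0,1]$ supported in $W'$ and equal to $1$ on a neighborhood of the $W$-projection of $\Lambda$, together with a smooth non-negative function $\rho: \bR \to [0, \infty)$ supported in $(-C-A-1, -3C/4)$, equal to a positive constant on $[-C-A, -3C/4 - 1]$, and vanishing at $t = -3C/4^{-}$ slowly enough that the equation $\dot t = \rho(t)$ with $t(0) \in [-C-A, -C+A]$ reaches $-3C/4$ only asymptotically as $s \to \infty$ (e.g.\ $\rho(t)$ behaves like $-3C/4 - t$ near the endpoint). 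Setting $h(t, x) = \rho(t)\beta(x)$, pushing forward to $\mc{T}$, extending by zero in $\mc{T}$ and $N$, and then extending cylindrically to $M$, one obtains a smooth Hamiltonian. On the region where $\beta \equiv 1$, the contact vector field is $\rho(t)\partial_t + \rho'(t) Z_\l$, so its positive flow $\phi_h^s(\Lambda)$ remains in $\pi_\mc{T}([-C-A, -3C/4) \tms W')$; the $W$-component moves in the $\rho'(t) Z_\l$ direction, which is inward (along $-Z_\l$) on the transition region where $\rho$ decreases, keeping $\Lambda$ inside $W'$.

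The main obstacle is cofinality. Set $\Lambda_n = \phi_h^n(\Lambda)$; the sequence $\Lambda = \Lambda_0 \leadsto \Lambda_1 \leadsto \Lambda_2 \leadsto \cdots$ is a diagram in the wrapping category $(\Lambda \leadsto -)^+_{(M, P_0)}$, with the morphism $\Lambda_n \to \Lambda_{n+1}$ given by the time-$1$ flow of $h$. Given any object $(\Lambda \leadsto \Lambda')$ of this category, the Legendrian $\Lambda'$ is contained in a compact subset of $M \setminus P_0$; in particular, the part of $\Lambda'$ lying inside $\mc{T}$ has $t$-coordinate bounded above by some $t_0 < -3C/4$. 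I would then verify cofinality by producing a positive isotopy $\Lambda' \leadsto \Lambda_n$ for $n$ large enough that the $t$-coordinate of $\Lambda_n$ pointwise exceeds $t_0$. The key input is that on the complement of $P_0$ inside $\mc{T}$, the Reeb vector field $\partial_t$ provides a well-defined notion of ``Reeb time past $\Lambda$'' along any positive isotopy avoiding $P_0$, and two such isotopies ordered by Reeb time are comparable in the wrapping category via an interpolating positive Hamiltonian that can be constructed explicitly (compare the standard cofinal wrapping arguments in \cite{gps1}). Because $h$ vanishes near $P_1 = \pi_\mc{T}(\{-C/4\} \tms W')$, the entire sequence also defines valid objects of $(\Lambda \leadsto -)^+_{(M, P_0 \cup P_1)}$; its cofinality there is then automatic, since any positive isotopy avoiding the larger stop $P_0 \cup P_1$ is a fortiori one avoiding $P_0$ and is hence dominated by some $\Lambda_n$ in the larger wrapping category as well.
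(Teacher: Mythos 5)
Your geometric picture---flow $\Lambda$ by a nonnegative contact Hamiltonian that keeps it inside the slab and accumulates on the stop---is the right one and matches the paper's intent, but your cofinality argument has a genuine gap, and the way you set up the Hamiltonian is what creates it.

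The clean way to obtain cofinality, and the one the paper uses, is to notice that a \emph{positive} contact Hamiltonian $H$ is the same data as the contact form $(1/H)\alpha$, so that $X_H$ is its Reeb vector field; cofinality of Reeb-generated wrappings is then exactly \cite[Lem.\ 3.29]{gps1}. Concretely, the paper takes $H$ on $\pi_\mc{T}([-C-A,-3C/4)\times W')$ depending only on $t$, positive and non-increasing, decaying like $(-3C/4-t)$ near the stop. Then $X_H = H\partial_t + (\partial_t H)Z_\lambda$; the first term pushes toward the stop and never reaches it (decay), the second is inward-pointing (since $\partial_t H\le 0$), so the flow of $\Lambda$ stays in the slab. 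After extending $H$ to a positive function on all of $N\setminus P_0$ --- which changes nothing on $\Lambda$'s trajectory --- the form $(1/H)\alpha$ is a genuine contact form on $N\setminus P_0$, and \cite[Lem.\ 3.29]{gps1} gives cofinality directly.

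Your $h=\rho(t)\beta(x)$ is compactly supported and so vanishes on most of $N\setminus P_0$, including far from the stop. Then $(1/h)\alpha$ is not a contact form on $N\setminus P_0$, \cite[Lem.\ 3.29]{gps1} does not apply, and the ``Reeb time past $\Lambda$'' / ``interpolating positive Hamiltonian'' you invoke is precisely the nontrivial content of that lemma, not something that can be asserted without a replacement argument. The specific failure mode: an object $\Lambda\leadsto\Lambda'$ of the wrapping category is a positive isotopy avoiding $P_0$, and there is no reason for $\Lambda'$ to sit inside $\pi_\mc{T}([-C-A,-3C/4]\times W')$ at all (it can leave $\mc{T}$, or wind around the mapping torus so that ``the $t$-coordinate of $\Lambda'$ is bounded above by some $t_0 < -3C/4$'' is not even a well-posed claim). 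Since $h$ is identically zero outside your slab, your flow supplies no morphism $\Lambda'\to\Lambda_n$ for such $\Lambda'$, and the cofinality claim is unsupported.

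A secondary, fixable issue: because you cut off in the $W$-direction by $\beta$, your contact Hamiltonian depends on $x\in W$ and the contact vector field picks up extra terms involving $X^W_\beta$ and $Z_\lambda(\beta)\partial_t$ on the transition region --- compare the analogous terms in \eqref{eq:secondvf}. You only compute $X_h$ where $\beta\equiv 1$ and assert the flow stays there; that is plausible if $\{\beta=1\}$ is invariant under the inward Liouville flow, but it needs to be stated, whereas a $t$-only Hamiltonian avoids the issue entirely.
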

\pf
We will only prove the first statement as the argument for the second one is analogous. Let $H: [-C-A, -3C/4)_t \tms W' \to \R$ be a function, which can also be viewed as a function on $\pi_\mc{T}([-C-A, -3C/4] \tms W')$ since $\pi_\mc{T}$ is injective in this region. If $H$ only depends on the $t$ coordinate, then one can check that the contact vector field induced by $H$ is $X_H= H \d_t + (\d_t H) Z_\l$.

Observe that if $H$ is non-increasing in $t$, then $\mc{R}_0$ remains inside $\pi_\mc{T}( [-C-A, -3C/4) \tms W)$ under the flow of $X_H$. Assume therefore that $H$ is non-increasing, and assume also that $H$ decays sufficiently fast near ${-3C/4} \tms W$ so that the flow is complete (for example, taking $H= (-3C/4 -t)$ near ${-3C/4} \tms W$ works). 

Let $\Lambda^t$ be obtained by flowing $\Lambda$ under $H$ for time $t$. The Hamiltonian vector field $X_H$ is the Reeb vector field for the contact form $1/H \a$. Hence the cofinality of $\Lambda^t$ follows from the criterion in \cite[Lem.\ 3.29]{gps1}. 
\epf

Let us now assume that $K$ is a Lagrangian with boundary in $\mc{R}_0 \sub N$. Let $K^k$ denote the Lagrangian obtained by isotoping $K$ via the flow of $\{H_s\}_{s=0}^{kC}$. As $H_s$ is periodic, $K^{k+1}=\phi_{H^C}(K^k)$, i.e. $K^{k+1}$ is obtained by applying time $C$-flow of $\{H_s\}$ to $K^k$. Observe that $K^k$ also has boundary in $\cR_0$, for $k\in\bN_+$ and it has boundary in $\cR_1$ for $k\in 1/2+\bN$. 

As mentioned we will apply (opposite category version of) \Cref{prop:colimisloc} to prove \Cref{theorem:tensor-equiv-hamiltonian}. The following verifies condition \eqref{colimpropcond:cone} in \Cref{prop:colimisloc}:
\begin{lem}\label{lemma:bar-cond-2}
The cone in $\mc{W}(M, P_0)$ of the continuation map $K^{k+1} \to K^k$ is isomorphic to an object in $\cD=\op{im}(\cW(\hat P_0)\to\cW(M,P_0))$. 
\end{lem}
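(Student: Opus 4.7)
The plan is to decompose the time-$C$ Hamiltonian flow carrying $K^k$ to $K^{k+1}$ into two consecutive time-$(C/2)$ flows, introducing an intermediate Lagrangian $K^{k+1/2}$ obtained from $K^k$ by applying the flow of $\{H_s\}_{s=0}^{C/2}$. By the two parts of \Cref{prop:hamiltgood} governing the half-flows, the boundary of $K^{k+1/2}$ lies in $\cR_1$, while that of $K^{k+1}$ lies in $\cR_2\subset \cR_0$. I will analyze each half-flow separately, tracking intersections of the moving boundary with the stops $P_0=\pi_\cT(\{-3C/4\}\times W')$ and $P_1=\pi_\cT(\{-C/4\}\times W')$.

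An inspection of the explicit Hamiltonian vector field constructed in the proof of \Cref{prop:hamiltgood} shows that during the first half-flow ($s\in[0,C/2]$) the $t$-coordinate of the boundary evolves from a neighbourhood of $-C$ to a neighbourhood of $-C/2$, crossing the value $-3C/4$ exactly once transversely and never reaching $-C/4$; thus $\partial_\infty K^{\bullet}$ crosses $P_0$ once and avoids $P_1$. Conversely, during the second half-flow ($s\in[C/2,C]$) the $t$-coordinate evolves from $-C/2$ to a neighbourhood of $0$, crossing $-C/4$ once but never $-3C/4$; the boundary thus crosses $P_1$ once and avoids $P_0$ entirely.

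Since $P_1$ is \emph{not} a stop of $\cW(M,P_0)$, the second half-isotopy $K^{k+1/2}\leadsto K^{k+1}$ is a cylindrical Lagrangian isotopy disjoint from the stops of $\cW(M,P_0)$. Such an isotopy induces an isomorphism $K^{k+1}\simeq K^{k+1/2}$ in $\cW(M,P_0)$; equivalently, this is a vacuous instance of the wrapping exact triangle \cite[Thm.\ 1.9]{gps2} with empty crossing data. On the other hand, the first half-isotopy $K^k\leadsto K^{k+1/2}$ crosses $P_0$ once transversely, and the wrapping exact triangle identifies the cone of the associated continuation map $K^{k+1/2}\to K^k$ in $\cW(M,P_0)$ with a shift of the image under the Orlov functor $\cW(\hat{P_0})\to \cW(M,P_0)$ of the linking disk along $\partial_\infty K^k\cap P_0\subset P_0$. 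This cone lies in $\cD$ by the very definition of $\cD$.

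Combining, the continuation map factors as $K^{k+1}\xrightarrow{\sim} K^{k+1/2}\to K^k$, and its cone coincides with that of $K^{k+1/2}\to K^k$, which lies in $\cD$. The principal technical point will be the transversality and exact count of crossings of the moving boundary with $P_0$ and $P_1$ in the two half-flows; once these are secured by direct inspection of the vector field of \Cref{prop:hamiltgood}, the argument is a formal application of the wrapping exact triangle together with the fact that cylindrical isotopies disjoint from the stop yield isomorphisms in the partially wrapped Fukaya category.
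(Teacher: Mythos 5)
Your factorization $K^{k+1}\xrightarrow{\sim} K^{k+1/2}\to K^k$ and the observation that the second half-isotopy avoids $P_0$ (hence yields an isomorphism in $\cW(M, P_0)$) match the paper exactly, and the geometry of the two half-flows is correctly read off from \Cref{prop:hamiltgood}. The gap is in how you treat the first half-isotopy.

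You invoke the wrapping exact triangle \cite[Thm.\ 1.9]{gps2} to identify $\op{cone}(K^{k+1/2}\to K^k)$ as a single Orlov image. This step does not go through as written. First, that theorem concerns a Legendrian isotopy that passes transversely through the \emph{critical (Legendrian) locus} of a mostly Legendrian stop at a single point; but crossing the slice $t=-3C/4$ once is a statement about intersecting the Weinstein \emph{hypersurface} $P_0$, not about intersecting the stop $\fc_0^{\op{crit}}\subset \fc_0$ at one point. Generically $\d_\infty K^s$ meets $\fc_0^{\op{crit}}$ at finitely many points (possibly zero, possibly several) as $s$ runs over the first half-interval, and the upshot of the wrapping triangle is an \emph{iterated} cone of linking disks, not a single one. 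Second, even granting such an iterated-cone description, a twisted complex built out of Orlov images $F(L_1),\dots,F(L_m)$ need not itself lie in $\cD=\op{im}(\tw\cW(\hat{P_0})\to\tw\cW(M,P_0))$: for that one must know the differentials in the twisted complex come from morphisms in $\cW(\hat{P_0})$, which requires the Orlov functor to be (cohomologically) fully faithful. This is false for $\cW(\hat{P_0})\to\cW(M,\fc_0)$ in general, and is precisely what the stop doubling trick (\Cref{lemma:doubling-ff}) buys you in the doubled category $\cW(M,\fc_0\cup\fc_1)$. Your argument never invokes full faithfulness nor ever works in the doubled category, so you only obtain $\op{cone}(K^{k+1/2}\to K^k)\in\tw\cD$, which is strictly weaker than the statement of the lemma and would change the localization filtration (and hence the growth function).

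The paper avoids both problems by never computing the cone explicitly. It observes that the first half-isotopy stays away from $P_1$, so $K^{k+1/2}\to K^k$ is already an isomorphism in $\cW(M,\fc_1)$; hence the cone, regarded in $\cW(M,\fc_0\cup\fc_1)$, lies in $\ker\bigl(\cW(M,\fc_0\cup\fc_1)\to\cW(M,\fc_1)\bigr)$, which by \Cref{fact:stop-removal} is generated by Orlov images. Then full faithfulness of $\tw\cW(\hat{P_0})\to\tw\cW(M,\fc_0\cup\fc_1)$ (stop doubling) upgrades ``a twisted complex of Orlov images'' to ``the image of an object of $\tw\cW(\hat{P_0})$,'' and one pushes down along $\tw\cW(M,\fc_0\cup\fc_1)\to\tw\cW(M,\fc_0)$. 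You should replace the wrapping-exact-triangle step with this stop-removal plus stop-doubling argument; in particular, the fact that the first half-isotopy \emph{avoids} $P_1$ is needed, not merely that the second half-isotopy crosses it.
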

\pf
It's enough to show that the cone of $K^{k+1/2} \to K^k$ is isomorphic to an object in $\cD$, since we can factor the map $K^{k+1} \to K^{k+1/2} \to K^k$ in cohomology where the first arrow is an isomorphism. 

As $K^{k+1/2}\to K^k$ is similarly an isomorphism in $\cW(M,P_1)$ (and thus in $\cW(M,\fc_1)$), and the kernel of $\cW(M,\fc_0\cup \fc_1)\to \cW(M,\fc_1)$ is generated by the image of $\cW(\hat P_0)\to \cW(M,\fc_0\cup \fc_1)$, the cone of $K^{k+1/2} \to K^k$ is a twisted complex in the image of $\cW(\hat P_0)\to \cW(M,\fc_0\cup \fc_1)$. This functor is fully faithful by \Cref{lemma:doubling-ff}; therefore, this twisted complex lies in the image of $\tw\cW(\hat P_0)\to \tw\cW(M,\fc_0\cup \fc_1)$. Hence, if we further apply $\tw\cW(M,\fc_0\cup \fc_1)\to \tw\cW(M,\fc_0)$ to $cone(K^{k+1/2} \to K^k)$, we find that it is in the image of $\tw\cW(\hat P_0)\to \tw\cW(M,\fc_0)$.
%
\epf

We now want to verify condition \eqref{colimpropcond:vanishing} in \Cref{prop:colimisloc}. Consider the following diagram, where the horizontal arrows are induced by continuation maps and the vertical arrows are induced by stop removal.
\begin{equation}
\begin{tikzcd}
HW_{\mc{W}(M, P_0 \cup P_1)}(K^k, L) \ar{r} \ar{d} & HW_{\mc{W}(M, P_0 \cup P_1)}(K^{k+1/2}, L) \ar{r} \ar{d} & HW_{\mc{W}(M, P_0 \cup P_1)}(K^{k+1}, L) \ar{d} \\
HW_{\mc{W}(M, P_0)}(K^k, L) \ar{r}  & HW_{\mc{W}(M, P_0)}(K^{k+1/2}, L) \ar{r} & HW_{\mc{W}(M, P_0)}(K^{k+1}, L) 
\end{tikzcd}
\end{equation}
\begin{lem}\label{lemma:bar-cond-1}
If $L \in \cD=\op{im}(\cW(\hat P_0)\to \cW(M,P_0))$, the first and last columns are isomorphisms and $HW_{\mc{W}(M, P_0 \cup P_1)}(K^{k+1/2}, L)=0$. Hence, the composition 
\begin{equation}
    HW_{\mc{W}(M, P_0)}(K^k, L) \to HW_{\mc{W}(M, P_0)}(K^{k+1}, L)
\end{equation}
vanishes. 
\end{lem}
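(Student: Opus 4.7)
The plan is to establish the three substatements in sequence---that the first and third columns are isomorphisms, that the middle entry vanishes, and that the bottom composition is zero---and then deduce the final claim formally from the first two: the top-row composition $HW_{\cW(M, P_0 \cup P_1)}(K^k, L) \to HW_{\cW(M, P_0 \cup P_1)}(K^{k+1}, L)$ factors through the vanishing middle term, and the bottom composition agrees with the top one via the isomorphisms in the outer columns. The content lies in verifying (a) that stop removal is a quasi-isomorphism on $HW(K^k, L)$ and $HW(K^{k+1}, L)$, and (b) that $HW_{\cW(M, P_0 \cup P_1)}(K^{k+1/2}, L)$ vanishes.

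For (a), the key input is Lemma \ref{lemma:cofinal-wrappings}(i). Since $K^k$ and $K^{k+1}$ have ideal boundaries in $\mc{R}_0$, that lemma provides a positive cofinal wrapping of $\partial_\infty K^k$ (respectively $\partial_\infty K^{k+1}$) staying inside $\pi_{\mc{T}}([-C-A, -3C/4] \tms W')$, which is disjoint from $P_1$. Such a wrapping is therefore simultaneously cofinal in the positive wrapping categories for $(M, P_0)$ and $(M, P_0 \cup P_1)$. Computing $HW$ as the colimit of $HF$ along this common cofinal system---and using that the Floer complex of a pair of cylindrical Lagrangians with wrapping data disjoint from a stop does not depend on whether that stop is included---the stop-removal map is a quasi-isomorphism, for every $L$ (not only for $L \in \cD$).

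For (b), we first reduce to the case where $L$ is a linking disk of a component of $\fk{c}_0^{\op{crit}}$. By Lemma \ref{lemma:doubling-ff} and the equivalences in \eqref{equation:orlov-stop-double}, $\cD$ lifts through stop removal to the essential image of the Orlov functor $\tw \cW(\hat{P_0}) \to \tw \cW(M, P_0 \cup P_1)$, which is split-generated by the linking disks of the finitely many components of $\fk{c}_0^{\op{crit}}$; exactness of $HW$ in the $L$ variable then reduces the vanishing to the case of a single linking disk. For such $L$, $\partial_\infty L$ may be assumed to lie in an arbitrarily small neighborhood of $P_0 = \pi_{\mc{T}}(\{-3C/4\} \tms W')$, while $\partial_\infty K^{k+1/2}$ sits in $\mc{R}_1$ near $t = -C/2$; in the positive Reeb direction these two regions are separated by the stop $P_1 = \pi_{\mc{T}}(\{-C/4\} \tms W')$. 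We then combine a positive cofinal wrapping of $\partial_\infty K^{k+1/2}$ in $\cW(M, P_0 \cup P_1)$, bounded on the positive side by $P_1$, with a negative cofinal wrapping of $\partial_\infty L$ bounded on the negative side by $P_1$ (constructed in analogy with Lemma \ref{lemma:cofinal-wrappings}(ii)), and use $P_1$ as a separating barrier to conclude that the associated directed system of Floer cohomologies has vanishing colimit.

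The main obstacle is step (b). Geometrically, $P_1$ is a barrier between $K^{k+1/2}$ and the linking disks of $P_0$ on both sides of the Reeb circle, so any morphism between them ought to die in the colimit; but translating this intuition into a rigorous computation in the GPS framework requires careful bookkeeping of cofinal wrappings, continuation maps, and the colimit defining $HW$. This will be the technical heart of the proof.
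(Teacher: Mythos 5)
Your approach is essentially the paper's: Lemma \ref{lemma:cofinal-wrappings}(i) for the outer columns (and yes, that part of the argument holds for any $L$, as you note), and a barrier argument at $P_1$ for the vanishing of the middle term. For that middle vanishing the paper is somewhat leaner than you are: it constructs only a positive cofinal wrapping of $K^{k+1/2}$ (in the style of Lemma \ref{lemma:cofinal-wrappings}, now stuck at $P_1$ rather than $P_0$), observes that this wrapping stays in the $t$-strip between $\mc{R}_1$ and $P_1$, and concludes directly that it is eventually disjoint from $L$, whose ideal boundary sits near $P_0$ at $t=-3C/4$. Your reduction to linking disks, and the accompanying negative cofinal wrapping of $L$ together with the bi-indexed colimit, are a legitimate way to make the disjointness concrete, but they introduce bookkeeping (split-generation of the Orlov image by linking disks, cofinality of the doubly wrapped system) that the one-sided argument sidesteps. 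The geometric content — $P_1$ separates $K^{k+1/2}$ from $L$ in the positive Reeb direction and the cofinal wrapping of $K^{k+1/2}$ cannot cross it — is the same in both versions.
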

\pf
Since $\d_\infty K^k, \d_\infty K^{k+1} \sub \mc{R}_0$, we may apply \Cref{lemma:cofinal-wrappings}(i): the conclusion is that $K^k$ and $ K^{k+1}$ do not wrap past $P_0$; thus, removing $P_1$ does nothing. For the second claim, recall $\d_\infty K^{k+1/2}\subset \cR_1$, and construct a positive cofinal wrapping of $K^{k+1/2}$ (in exactly the same way) which eventually gets stuck at $P_1$ and becomes disjoint from $L$. (See \Cref{figure:drawingtry}.)
\epf
Finally: 
\begin{cor}\label{corollary:partially-wrapped-hf}
Suppose that $\d_\infty L \sub \mc{R}_1$. Then the following diagram commutes and the vertical arrows are isomorphisms: 
\begin{equation}
\begin{tikzcd}
HF(K^k, L) \ar{r} \ar{d} & HF(K^{k+1}, L) \ar{d} \\
HW_{\mc{W}(M, P_0)}(K^k, L) \ar{r} &HW_{\mc{W}(M, P_0)}(K^{k+1}, L) 
\end{tikzcd}
\end{equation}
Hence the directed systems $\{HW_{\mc{W}(M, P_0)}(K^k, L)\} $ and $\{ HF(K^k, L)\}$ are isomorphic. 
\end{cor}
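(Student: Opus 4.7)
The plan is to establish two separate claims---that the vertical stop-removal arrows are isomorphisms, and that the square commutes---and then obtain the isomorphism of directed systems by passing to the colimit in $k$.

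To show the vertical arrows are isomorphisms, I would invoke \Cref{lemma:cofinal-wrappings}(i): since $\d_\infty K^k \subset \mc{R}_0$, there is a cofinal positive wrapping $\{K^k \leadsto K^k_t\}_{t \geq 0}$ in the wrapping category $(K^k \leadsto -)^+_{(M, P_0)}$ whose ideal boundary remains inside $\pi_{\mc{T}}([-C-A, -3C/4] \times W')$. For $C$ chosen large enough (which the setup allows, by \Cref{lemma:projection-properties}), this region is disjoint from $\mc{R}_1 \supset \d_\infty L$, with $P_0$ separating the two inside $\d_\infty M$. Evaluating the defining colimit of wrapped Floer cohomology along this cofinal wrapping then gives
\[
HW_{\mc{W}(M, P_0)}(K^k, L) \;=\; \colim_{t} HF(K^k_t, L).
\]
For any $t_1 \leq t_2$, the isotopy $K^k_{t_1} \leadsto K^k_{t_2}$ keeps $\d_\infty K^k_t$ uniformly separated from $\d_\infty L$ by $P_0$, so the wrapping creates no new generators for $HF(-, L)$ and the associated continuation map $HF(K^k_{t_1}, L) \to HF(K^k_{t_2}, L)$ is a quasi-isomorphism (its inverse is the continuation along a compactly supported Hamiltonian isotopy, together with a piece at infinity that does not interact with $L$). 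Consequently the colimit stabilizes at the initial term $HF(K^k, L)$, and the vertical arrow is an isomorphism.

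Commutativity of the square is then essentially formal: the horizontal arrows in both rows are induced by the same Hamiltonian isotopy $K^k \leadsto K^{k+1}$ (the flow of $\{H_s\}$ over one period), and the stop-removal map is natural with respect to continuation. Taking the colimit over $k$ of the resulting commuting diagram yields the final claim that the directed systems $\{HF(K^k, L)\}$ and $\{HW_{\mc{W}(M, P_0)}(K^k, L)\}$ are isomorphic.

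The principal technical content has already been packaged into \Cref{lemma:cofinal-wrappings}(i); the main point that will require care is the assertion that the continuation maps along the cofinal wrapping are isomorphisms. This is the geometric heart of the argument and is exactly what the careful placement of $P_0$ between $\mc{R}_0$ (containing $\d_\infty K^k$) and $\mc{R}_1$ (containing $\d_\infty L$) was designed to enable: the stop blocks the wrapping of $K^k$ from ever reaching $L$, so all the complexity at infinity is trivialized.
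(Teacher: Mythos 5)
Your argument is essentially correct, but it takes a genuinely different (mirror-image) route from the paper's. You invoke \Cref{lemma:cofinal-wrappings}(i) and wrap $K^k$ \emph{positively}, observing that the cofinal wrapping stays in $\pi_\mc{T}([-C-A,-3C/4]\tms W')$, disjoint from $\mc{R}_1 \supset \d_\infty L$. The paper instead invokes part (ii) and wraps $L$ \emph{negatively}, observing that the wrapping stays in $\pi_\mc{T}([-3C/4,-C/2+A]\tms W')$, disjoint from $\mc{R}_0 \supset \d_\infty K^k, \d_\infty K^{k+1}$. Both separations require $C>4A$ and both give the vertical isomorphism. The real advantage of the paper's choice shows up in the commutativity argument: since it wraps $L$, the same continuation element $c_L \in HW(L',L)$ describes both vertical maps as left multiplication $\mu^2(c_L, -)$, while the horizontal arrows are right multiplication $\mu^2(-, c)$ by the continuation element $c \in HW(K^{k+1},K^k)$ — so commutativity is simply associativity. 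In your version the vertical map becomes the acceleration map of a colimit over wrappings of a \emph{varying} Lagrangian ($K^k$ in one column, $K^{k+1}$ in the other), and although commutativity still holds, it requires unpacking the identification of continuation maps with module multiplication and is not literally an associativity statement. Two smaller points: (1) the vertical arrows are \emph{not} stop-removal functors — stop removal connects $\mc{W}(M,P_0\cup P_1)$ to $\mc{W}(M,P_0)$, whereas these vertical arrows are the acceleration (unit) maps $HF\to HW$ of the defining colimit; citing naturality of ``stop-removal'' here is the wrong justification even if the underlying naturality claim is true. (2) The parenthetical about the inverse being a ``compactly supported'' continuation is imprecise: the wrapping is by a positive cylindrical Hamiltonian, hence not compactly supported; the standard way to see the continuation maps are isomorphisms here is that disjointness from $\d_\infty L$ forces the Floer complexes to have stable generators, not that one can write down an explicit inverse continuation.
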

\pf
The horizontal arrows are induced by (right) multiplication by the continuation map $K^{k+1} \to K^k$. The vertical arrows are induced by (left) multiplication by the continuation maps associated to a negative cofinal wrapping of $L$ (\Cref{lemma:cofinal-wrappings}(ii)). The commutativity of the diagram follows. 

Such a negative wrapping of $L$ sends $\d_\infty L$ towards $P_0$ without ever intersecting $\d_\infty K^k, \d_\infty K^{k+1} \sub \mc{R}_0$. Hence the vertical arrows are isomorphism. 
\epf
%
%
%
%
\pf[Proof of \Cref{theorem:tensor-equiv-hamiltonian}]
Let $K,L$ be two exact Lagrangians that are cylindrical at infinity. After possibly isotoping $K, L$ through cylindrical Lagrangians, we can assume that $\d_\infty K \sub \mc{R}_0$ and $\d_\infty L \sub \mc{R}_1$. 
We now apply \Cref{theorem:spherical-tensor-equiv}, the conditions \eqref{colimpropcond:vanishing} and \eqref{colimpropcond:cone} are verified in \Cref{lemma:bar-cond-1} and \Cref{lemma:bar-cond-2} respectively. 

\Cref{theorem:spherical-tensor-equiv} implies that the growth function of $\{HW_{\mc{W}(M, P_0)}(K^k, L)\}$ is equivalent to the localization growth function of $\tw \mc{W}(X, P_0)/\cD (K, L)$. Now, recall that $\cD$ is by definition the essential image of the Orlov functor $\tw \cW(P_0) \to \tw \cW (M, P_0)$. According to \Cref{example:finite-time-weinstein}, $\tw \cW(P_0)$ is generated in finite time (i.e.\ admits a strong generator). Hence $\cD$ also has this property. It follows from \Cref{corollary:indep-gen-set} (and \Cref{note:corollaryextendfinitetime}) that the growth function of $\tw \mc{W}(X, P_0)/\cD (K, L)$ is the same as that of $\tw \mc{W}(X, P_0)/\cD' (K, L)$, where $\cD' \sub \cD$ is some finite collection of objects which generates $\cD$. By definition, the latter growth function is precisely $\gr_{K, L}$.

Thus, $\gr_{K, L}$ agrees with the growth function of the directed system $\{HW_{\mc{W}(M, P_0)}(K^k, L)\}$. Applying \Cref{corollary:partially-wrapped-hf}, this is equivalent to the growth of $\{ HF(K^k, L)\}= \{HF(\phi_{H^C}^kK,L)\}$. Finally, by utilizing \Cref{lemma:positivity-continuation}, we conclude $\{ HF(K^k, L)\}$ and $\{HF(\phi_h^kK,L)\}$ are weakly isomorphic, for any positive Hamiltonian $h$ that is linear at infinity. \Cref{theorem:tensor-equiv-hamiltonian} follows. 
\epf
\subsection{Symplectic cohomology and the diagonal Lagrangian}
The purpose of this section is to explain how to relate the growth of symplectic cohomology to the growth of the wrapped Floer cohomology of the diagonal Lagrangian. More precisely, we have the following proposition.

\begin{prop}\label{proposition:sh-diag}
The growth functions $\gr_{SH}^{ham}$ and $\gr_{\Delta}^{ham}$ are scaling equivalent.
\end{prop}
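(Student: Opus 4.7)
The plan is to exploit the chain-level identification between symplectic cohomology and the wrapped Floer cohomology of the diagonal Lagrangian, and check that it respects the relevant directed system structures, so that it descends to a scaling equivalence between the two growth functions.

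More precisely, let $(M^-, -\l)$ denote $M$ equipped with the negated Liouville structure, and endow $M \times M^-$ with the product Liouville structure $\l \oplus (-\l)$. A small neighborhood of the diagonal $\Delta \sub M \times M^-$ is symplectomorphic to a neighborhood of the zero section in $T^*M$; after a compactly supported modification (e.g.\ using a cotangent fiber-wise cut-off) we may view $\Delta$ as a cylindrical exact Lagrangian in $M \times M^-$, hence as an object of $\mc{W}(M \times M^-)$. Given a cylindrical Hamiltonian $H: M \to \R$ of positive slope, consider the Hamiltonian $H \oplus 0$ on $M \times M^-$. Then there is a canonical bijection between time-$1$ orbits of $H$ and intersection points of $\phi_{H \oplus 0}(\Delta)$ with $\Delta$, sending $x: [0,1] \to M$ to $(x(0), x(0)) \in \Delta \cap \phi_{H \oplus 0}(\Delta)$. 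Standard arguments identifying the moduli spaces of Floer trajectories (see e.g.\ the proofs of the open-closed map and PSS-type constructions) upgrade this to a chain-level isomorphism $CF^\bu(M; nH) \cong CF^\bu(\phi_{nH \oplus 0}\Delta, \Delta)$, which moreover intertwines the continuation maps $CF^\bu(M; nH) \to CF^\bu(M; (n+1)H)$ with the continuation maps $CF^\bu(\phi_{nH \oplus 0}\Delta, \Delta) \to CF^\bu(\phi_{(n+1)H \oplus 0}\Delta, \Delta)$.

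From this, the directed systems $\{HF(M; nH)\}_{n}$ and $\{HF(\phi_{nH \oplus 0}\Delta, \Delta)\}_{n}$ are isomorphic. In particular, the induced colimit filtrations on $SH(M) \cong HW(\Delta, \Delta)$ coincide, and so the growth function of the first directed system equals that of the second. By \Cref{proposition:iterated-hamiltonian-mclean} and the discussion preceding \Cref{theorem:tensor-equiv-hamiltonian}, these growth functions are $\gr_{SH}^{ham}$ and $\gr_{\Delta}^{ham}$, respectively, each defined up to scaling equivalence, so the conclusion follows from \Cref{lemma:fds-equivalence}.

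The main technical subtlety lies in the fact that $\Delta$ is not a priori cylindrical at infinity in $M \times M^-$, and the Hamiltonian $H \oplus 0$ is only degenerately linear on the boundary. To make the above chain-level identification rigorous, one has to choose compatible cofinal sequences of perturbations on both sides: a quadratic-at-infinity Hamiltonian (or a sequence of linear Hamiltonians with increasing slope) on $M$, and a corresponding cylindrical cut-off of $\Delta$ together with the Hamiltonian $H \oplus 0$ on $M \times M^-$. The correspondence between orbits and intersection points is robust under these perturbations, but one must verify that the continuation maps match up on the nose (or at least up to the weak equivalences of filtered directed systems introduced in \Cref{subsection:filtered-ds}). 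This is essentially the argument used to prove $SH(M) \cong HW(\Delta, \Delta)$ in the literature (e.g.\ following the strategy of Abouzaid or Ganatra); the only new ingredient here is the observation that the existing proofs preserve the directed system structure, hence preserve the growth function up to scaling equivalence.
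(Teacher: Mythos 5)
Your approach is in the same spirit as the paper's: identify $SH(M)$ with a version of the diagonal's wrapped Floer cohomology at the chain level, carrying the directed-system structure along. But there are two differences worth flagging, one of which is a genuine gap. First, the paper works with Ganatra's product-type data $\tilde H(x_1,x_2)=H(x_1)+H(x_2)$ and $\tilde J = -J\oplus J$, producing a clean chain-level bijection $CF(M;nH,J)\equiv CF(\Delta;n\tilde H,\tilde J)$ in which the Floer moduli spaces themselves match. You instead use $H\oplus 0$. This recovers the generator-level bijection (fixed points of $\phi_{nH}$ versus $\Delta\cap\phi_{nH\oplus 0}\Delta$), but $H\oplus 0$ is highly degenerate in the second factor, and invoking "open-closed / PSS-type constructions" for the matching of differentials and continuation maps is not quite the right reference — the relevant technology is Ganatra's product-type framework (as cited in the paper), which uses $H\oplus H$ precisely to avoid the degeneracy issues you mention.

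The serious gap is your final step. You write that the growth function of $\{HF(\phi_{nH\oplus 0}\Delta,\Delta)\}_n$ equals $\gr_\Delta^{ham}$ "by \Cref{proposition:iterated-hamiltonian-mclean} and the discussion preceding \Cref{theorem:tensor-equiv-hamiltonian}", but those results only give invariance of the growth function over the class of \emph{cylindrical} positive-slope Hamiltonians on $\ov{M}\times M$. Neither $H\oplus 0$ nor $H\oplus H$ belongs to that class — they are not cylindrical for the product Liouville structure. So the claim that these directed systems compute $\gr_\Delta^{ham}$ is exactly the statement that needs proof. You acknowledge this ("one must verify that the continuation maps match up... the only new ingredient here is the observation that the existing proofs preserve the directed system structure") but offer no argument for that observation, and it is not automatic. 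The paper isolates this as the crux and points to McLean's \emph{growth rate admissibility} machinery (\cite[Thm.\ 4.1]{mclean2011computability}, building on \cite[Sec.\ 4]{mclean-gafa}): one shows cylindrical and product-type data both belong to a class of "growth rate admissible" Floer data for which the growth rate is insensitive to the choice, modulo a low-energy part that only affects the growth function by a bounded error. Without appealing to something of this form, your proof does not close.
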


We will not prove \Cref{proposition:sh-diag} since  closely related arguments already appear in the literature. However, we wish to give an indication to the reader of how to reconstruct the proof. We note that \Cref{proposition:sh-diag} is needed for \Cref{corollary:sh-computation} but is otherwise entirely independent of the rest of the paper.

To begin with, we need to discuss wrapped Floer cohomology with product-type data. Let $\tilde{M}= (\ov{M} \tms M, - \lambda \oplus \lambda)$ and let $\Delta \sub \tilde{M}$ be the diagonal. We consider pairs $(\tilde{H}, \tilde{J})$, such that $\tilde{H}(x_1,x_2)= H(x_1)+H(x_2)$ and $\tilde{J}= -J \oplus J$. Here $H: M \to \R$ is a positive, cylindrical at infinity Hamiltonian and $J$ is a cylindrical at infinity, compatible almost-complex structure on $(M, \l)$. We may now define a group $HW_{prod}(\Delta)$ by taking direct limits over pairs according to the ordering $(\tilde{H}_1, \tilde{J}_1) \leq (\tilde{H}_s, \tilde{J}_2)$ iff $\tilde{H}_1 \leq \tilde{H}_2$ pointwise. Note that one needs a compactness argument to control Floer trajectories. See \cite[Sec.\ 8.2]{sheel-thesis} for a discussion of Floer homology with product-type data (using however a somewhat different technical setup). 

It can then be shown that $SH(M) = HW_{prod}(\Delta)$. More precisely, let $(H, J)$ be a pair as above and let $(\tilde{H}, \tilde{J})$ be the corresponding pair on $\ov{M} \tms M$. Then there is a canonical bijection of chain complexes $CF(M; nH, J) \equiv CF(\Delta; n\tilde{H}, \tilde{J})$ identifying orbits, differentials, and continuation maps. Passing to cohomology and taking direct limits, the left hand side is symplectic cohomology of $M$ while the right hand side is a version of wrapped Floer cohomology defined using product-type Hamiltonians and almost-complex structures. 

In particular, we have an identification of filtered directed systems $\{HF(M; nH)\}= \{HF(\Delta; n\tilde{H})\}$. To prove \Cref{proposition:sh-diag}, one only needs to show that the filtered directed system $\{HF(\Delta; n\tilde{H})\}$ is scaling equivalent to $\{HF(\Delta; nG)\}$, where $G$ is any positive Hamiltonian on $\ov{M} \tms M$ which is cylindrical at infinity. Said differently, we need to show that the growth rate of the diagonal defined with respect to product-type Hamiltonians agrees with the growth rate defined with respect to cylindrical Hamiltonians. 

Here, we refer the reader to the work of McLean; more precisely, to the proof of Thm.\ 4.1 in \cite{mclean2011computability}, which is the closed-string analog of the statement we need. The basic structure of the argument is as follows: first, McLean proved in \cite[Sec.\ 4]{mclean-gafa} that the positive-action part of the growth-rate of $SH(M)$ can be defined with respect to a large class of pairs $(H, J)$ which he called ``growth rate admissible". Cylindrical-type data is growth-rate admissible, and he verifies in \cite[Sec.\ 4]{mclean2011computability} that product-type data is also admissible. Finally, one observes that restricting to the positive-action part does not change the filtered directed systems up to weak equivalence, since the only difference is the finite-dimensional ``low-energy" part. 

\section{Applications}\label{sec:applications}

In this section, we discuss some applications of the theory developed in this paper. The most important ones are probably \Cref{corollary:hms-main-equiv} and \Cref{corollary:growth-most-exponential}.

\subsection{Computations}
\subsubsection{Homological Mirror Symmetry}
Let $(M, \fstop)$ be a Weinstein pair. Let $(X, D)$ be a proper algebraic variety equipped with a Cartier divisor. We say that this data satisfies homological mirror symmetry for pairs if there is a diagram
\eq
\begin{tikzcd}
\op{Perf} \mc{W}(M, \fstop) \ar[r] \ar[d, equal] & \op{Perf}\mc{W}(M) \ar[d, equal] \\
D^bCoh(X) \ar{r} & D^bCoh(X - D)
\end{tikzcd}
\eeq
(Most formulations of homological mirror symmetry for pairs also require an equivalence between the compact Fukaya category of $\fstop$ and $\op{Perf}(D)$, but this is unnecessary for our purposes.)

Here are some known instances of homological mirror symmetry (HMS) for pairs.
\begin{itemize}
\item The work of Hacking--Keating \cite{hacking-keating} (generalizing earlier work of Keating \cite{keating}) establishing HMS for Log Calabi--Yau surfaces;  
\item The work of Gammage--Shende \cite{gammage-shende} establishing HMS for toric stacks.
\end{itemize}

\begin{cor}\label{corollary:hms-main-equiv}
Suppose that the pairs $(M, \fstop)$ and $(X, D)$ are homologically mirror. Let $K, L \in \tw^\pi \mc{W}(M)$ be objects and let $\scrF_K, \scrF_L$ be their image in $D^bCoh(X -D)$. Then $ \gr_{K, L}$ and  $\gr_{\scrF_K, \scrF_L}$ coincide up to scaling equivalence.
\end{cor}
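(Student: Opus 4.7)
The plan is to compute both growth functions from a single smooth categorical compactification furnished by the HMS equivalence itself. First, I would verify that $\fstop$ is a full stop in the sense of \Cref{definition:full-stop}: smoothness of $\tw^\pi \mc{W}(M, \fstop)$ follows from \Cref{fact:smoothness} under the standard assumption (implicit in the known HMS instances cited above) that $\fstop$ is mostly Legendrian up to deformation, and properness is immediate from the equivalence $\tw^\pi \mc{W}(M,\fstop)\simeq D^bCoh(X)$ since $X$ is proper. \Cref{lemma:tame-induces-compactification} then supplies the smooth categorical compactification $\tw^\pi \mc{W}(M,\fstop) \to \tw^\pi \mc{W}(M)$, which under HMS is identified with the restriction compactification $D^bCoh(X) \to D^bCoh(X-D)$ via the HMS square.

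Next, I would fix lifts $\tilde{K}, \tilde{L} \in \tw^\pi \mc{W}(M,\fstop)$ of $K, L$ and choose an essentially finite collection $\mc{D}$ split-generating $\ker(\text{stop removal})$. The HMS equivalence carries $\tilde K, \tilde L$ to lifts $\tilde\scrF_K, \tilde\scrF_L \in D^bCoh(X)$ of $\scrF_K, \scrF_L$, and sends $\mc{D}$ to an essentially finite subset $\mc{D}' \sub D^bCoh(X)$ whose split-closure agrees (after transport) with the kernel of restriction. The Lyubashenko--Ovsienko quotient (\Cref{defn:quotientcategory}) and its length filtration depend only on the $A_\infty$-pair $(\cB, \mc{D})$, so any quasi-equivalence of such pairs induces a filtered quasi-equivalence of quotients. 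Applying this principle to the HMS equivalence produces a filtered quasi-isomorphism
\begin{equation}
\mc{W}(M,\fstop)/\mc{D}(\tilde K,\tilde L) \;\simeq\; D^bCoh(X)/\mc{D}'(\tilde\scrF_K, \tilde\scrF_L),
\end{equation}
and by \Cref{lem:equivalentcxequivalentfunction} the associated growth functions then agree on the nose. The left-hand side is by definition a representative of $\gr_{K,L}$ (\Cref{definition:growth-fun-wrapped-fuk}) and the right-hand side is a representative of $\gr_{\scrF_K,\scrF_L}$, each well-defined up to scaling equivalence by \Cref{corollary:growth-function-well-def} and \Cref{prop:welldefinedalggeogrowth}. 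Hence the two growth functions are scaling equivalent.

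The main obstacle will be arranging that the HMS equivalence intertwines a chosen split-generating collection on the $A$-side with a split-generating collection on the $B$-side, rather than merely identifying the two kernel subcategories abstractly. This is handled by \Cref{corollary:indep-gen-set}: growth is insensitive (up to scaling) to the choice of essentially finite split-generating collection, so one may simply transport $\mc{D}$ across the HMS equivalence to \emph{define} $\mc{D}'$, and then invoke independence of generators on the $B$-side if one wishes to compare with a preferred geometric choice such as a finite collection of vector bundles supported along the components of $D$. The only other point to be careful about is passing between $\tw^\pi$ and its underlying category when taking lifts, which is handled by \Cref{note:idempotentextension} at the cost of an additional translation ambiguity that is absorbed into the scaling equivalence.
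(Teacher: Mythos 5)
Your proof is correct and follows the route the paper leaves implicit (the corollary is stated without a written proof). You correctly identify the two key ingredients: the HMS square intertwines the two compactification functors, and \Cref{corollary:indep-gen-set} absorbs the discrepancy between the transported collection $\mc{D}'$ and any preferred geometric choice of split-generators for $\ker(D^bCoh(X)\to D^bCoh(X-D))$.

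One step deserves a slightly sharper justification than "the LO quotient depends only on the $A_\infty$-pair": what you actually need is that an $A_\infty$ quasi-equivalence $F:\cB\to\cB'$ with $\cD'=F(\cD)$ induces an $E_1$-equivalence $\cB/\cD(K,L)\to\cB'/\cD'(FK,FL)$, so that the growth functions agree on the nose (and not merely up to some unspecified scaling). This does hold: the induced map preserves the length filtration, and on $E_1$-pages it becomes the map $H(\cB)/H(\cD)(K,L)\to H(\cB')/H(\cD')(FK,FL)$ between the formal LO quotients of the cohomology categories (exactly as in the proof of \Cref{lemma:summand-e1}), which is a quasi-isomorphism by \Cref{lemma:quotient-properties}\eqref{item:properties-pres-qcat} since $H(F)$ is an equivalence carrying $H(\cD)$ onto $H(\cD')$. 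Stating this explicitly would make the argument airtight; otherwise the remaining bookkeeping (full stop verification, passage through $\tw^\pi$ via \Cref{note:idempotentextension}) is handled exactly as it should be.
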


Combining \Cref{corollary:hms-main-equiv} with \Cref{cor:growthsheaves} and \Cref{theorem:tensor-equiv-hamiltonian}, we find:

\begin{cor}\label{corollary:hms-main-equiv2}
Under the assumptions of \Cref{corollary:hms-main-equiv}, the Hamiltonian growth function $\gr_{K, L}^{ham}$ is scaling equivalent to the function \eq p\mapsto dim(F^p\colim_n RHom_X(\scrF,\scrF'(nD)))\eeq 
\end{cor}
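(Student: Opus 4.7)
The proof is essentially a straightforward chaining of three previously established results, all of which are invoked up to scaling equivalence. The plan proceeds in three clean steps, each corresponding to a bridge between two different avatars of the growth function attached to the pair $(K, L)$.

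First, I would start on the symplectic side with $\gr_{K,L}^{ham}$, the growth function arising from the Hamiltonian (Reeb length) filtration on $HW(K,L)$ obtained as the colimit $\colim_n HF(\phi_{nH}K, L)$. Applying \Cref{theorem:tensor-equiv-hamiltonian} directly, this growth function is scaling equivalent to the categorical growth function $\gr_{K,L}$ defined via the localization filtration associated to a smooth categorical compactification of $\tw^\pi \mc{W}(M)$ (which exists by \Cref{lemma:full-stop} and \Cref{lemma:tame-induces-compactification}). This is the only genuinely nontrivial step in the proof and is where the delicate analysis of \Cref{section:ham-comparison} (the open-book Hamiltonian construction and the stop-doubling trick) is doing real work; however, by hypothesis it is already available.

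Second, I would transport the equivalence across mirror symmetry. By \Cref{corollary:hms-main-equiv}, under the assumed HMS diagram for the pairs $(M, \fstop)$ and $(X, D)$ (which gives a compatible identification of smooth categorical compactifications $\tw^\pi \mc{W}(M, \fstop) \simeq D^bCoh(X)$ localizing to $\tw^\pi \mc{W}(M) \simeq D^bCoh(X \setminus D)$), the categorical growth function $\gr_{K,L}$ is scaling equivalent to $\gr_{\scrF_K, \scrF_L}$, where $\scrF_K, \scrF_L \in D^bCoh(X \setminus D)$ are the mirror objects. No additional verification is needed here beyond observing that the HMS diagram is exactly what is required to match the two localization filtrations on the nose (up to the scaling ambiguity already built into the definition).

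Finally, since $D \subset X$ is a Cartier divisor in a proper variety and $X$ is smooth (as ensured by the HMS hypothesis and the fact that $D^bCoh(X)$ is a smooth compactification of $D^bCoh(X \setminus D)$), \Cref{cor:growthsheaves} applies and identifies $\gr_{\scrF_K, \scrF_L}(p)$ with the function
\begin{equation}
    p \mapsto \dim\bigl(F^p \colim_n RHom_X(\scrF_K, \scrF_L(nD))\bigr)
\end{equation}
up to scaling equivalence. Chaining the three scaling equivalences (which is legitimate because scaling equivalence of functions $\mathbb{N} \to \mathbb{R}_{\geq 0} \cup \{\infty\}$ is transitive, as is evident from \Cref{definition:s-t-equiv-function}) yields the claim. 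The main obstacle, to the extent that there is one, is purely bookkeeping: one must check that the lifts chosen to define $\gr_{K,L}$ via \Cref{definition:growth-function-general} correspond under the HMS equivalence to valid lifts $\ov{\scrF}_K, \ov{\scrF}_L \in D^bCoh(X)$ of $\scrF_K, \scrF_L$ used to define $\gr_{\scrF_K, \scrF_L}$, but this is immediate from the commutativity of the HMS square.
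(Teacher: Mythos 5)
Your proposal is correct and is precisely the paper's argument: the paper proves the corollary exactly by chaining \Cref{theorem:tensor-equiv-hamiltonian}, \Cref{corollary:hms-main-equiv}, and \Cref{cor:growthsheaves} via transitivity of scaling equivalence. There is no divergence in method or any subtlety you have missed.
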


\begin{exmp}\label{example:anti-can-2d}
Let $X= \mathbb{CP}^2$ and let $D$ be the union of a line and a conic. Homological mirror symmetry for pairs holds in this setting (Pascaleff \cite{pascaleff2014floer}, Hacking--Keating \cite{hacking-keating}). The Weinstein mirror is $X= \{(u, v) \mid uv \neq 1\}$. Since $D$ is ample, \Cref{thm:growthisdimsupp} applies. We find that $\gr_{K, L}$ (and hence also $\gr_{K, L}^{ham}$) is scaling equivalent to a polynomial of degree $dim (supp(\scrF_K) \cap supp(\scrF_L))$. 
\end{exmp}


\begin{lem}\label{lemma:kunneth-hms}
Suppose that $(M, \fstop)$ is a Weinstein pair which is homologically mirror to $(X, D)$. Then $(\ov{M} \tms M, \fstop \tms \fk{c}_M \cup \fk{c}_M \tms \fstop \cup \fk{c}_M \times \fk{c}_M \times \mathbb{R})$ is a Weinstein pair which is mirror to $(X \tms X, D \tms X \cup X \tms D)$. Moreover, the mirror functor takes the diagonal Lagrangian to the diagonal coherent sheaf.
\end{lem}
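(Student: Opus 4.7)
The plan is to combine the given homological mirror symmetry equivalence for $(M, \fstop)$ with two Künneth-type theorems: the Ganatra--Pardon--Shende Künneth formula for partially wrapped Fukaya categories \cite{gps1}, and the standard Künneth equivalence for derived categories of coherent sheaves on smooth proper varieties. The identification of the diagonal Lagrangian with the structure sheaf of the diagonal will be the most delicate point.

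On the symplectic side, the GPS Künneth theorem provides a natural quasi-equivalence
\begin{equation}
\tw^\pi \mc{W}(\ov{M} \tms M,\ \fstop \tms \fk{c}_M \cup \fk{c}_M \tms \fstop) \simeq \tw^\pi \mc{W}(\ov{M}, \fstop) \otimes \tw^\pi \mc{W}(M, \fstop),
\end{equation}
where the product stop is precisely the one appearing in loc.\ cit. In particular, $(\ov M \tms M,\ \fstop \tms \fk{c}_M \cup \fk{c}_M \tms \fstop)$ is a Weinstein pair in the relevant sense, and after stop removal this turns into the fully wrapped equivalence $\tw^\pi \mc{W}(\ov M \tms M) \simeq \tw^\pi \mc{W}(\ov M) \otimes \tw^\pi \mc{W}(M)$. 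On the algebraic side one has $D^bCoh(X \tms X) \simeq D^bCoh(X) \otimes D^bCoh(X)$ for smooth proper $X$ (in an appropriate dg/$A_\infty$-enhanced sense), together with the parallel equivalence on the open complements $U \tms U = (X \tms X) \setminus (D \tms X \cup X \tms D)$. Combining these two Künneth formulas with the given mirror equivalence for $(M, \fstop)$ and its dual version for $(\ov M, \fstop)$ yields the desired HMS equivalence of pairs, and compatibility with stop removal/restriction to $U \tms U$ follows from naturality.

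The identification of the diagonal Lagrangian with $\mc{O}_\Delta$ is the main obstacle. The diagonal $\Delta_M \sub \ov M \tms M$ is not cylindrical at infinity and so must first be cylindrized near infinity (for instance via a small Hamiltonian perturbation generated by a function with positive slope in the Liouville coordinate) in order to define an honest object of the partially wrapped Fukaya category; see \cite{sheel-thesis} for a careful treatment. Granted this, the key observation is that the cylindrized diagonal corepresents the identity $(\mc{W}(M, \fstop), \mc{W}(M, \fstop))$-bimodule under the symplectic Künneth equivalence, while $\mc{O}_\Delta$ similarly corepresents the identity $(D^bCoh(X), D^bCoh(X))$-bimodule under the algebraic Künneth equivalence. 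Since any equivalence of $A_\infty$ categories carries the identity bimodule to the identity bimodule, and the HMS equivalence is natural in this sense, the two diagonals are matched. The bulk of the technical verification thus reduces to these two corepresentability statements, together with the compatibility of the given HMS equivalence with the product structure.
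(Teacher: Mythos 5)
Your proposal follows essentially the same route as the paper: combining the GPS K\"unneth theorem (on the symplectic side) with the K\"unneth equivalence for $D^bCoh$ (cited in the paper from \cite{gaitsgory2011ind}), and then observing that the diagonal identification follows formally. The paper's proof is a one-line citation argument treating the diagonal matching as a ``formal consequence''; your corepresentability-of-the-identity-bimodule observation is precisely the content that the paper leaves implicit, and your remark on cylindrizing $\Delta_M$ correctly identifies the one genuine technical issue (handled in Ganatra's thesis).
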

\pf
This is a formal consequence of the fact that both sides of the mirror equivalence satisfy a K\"{u}nneth formula. On the symplectic side, see \cite[Thm.\ 1.5]{gps2}. For coherent sheaves, see \cite[Prop.\ 4.6.2]{gaitsgory2011ind} (in \textit{loc}.\ \textit{cit}.\ note that $D^bCoh(X)$ can be recovered from $\op{IndCoh}(X)$ by passing to compact objects). 
\epf

\begin{cor}\label{corollary:sh-computation}
Suppose that $(M, \fstop)$ and $(X, D)$ are homologically mirror. Then $\gr_{SH}^{ham}$ is scaling equivalent to the function $p \mapsto \sum_{k=1}^n dim(H( \O^k_X(2pD))[k])$. In particular, if $D$ is ample, this is equivalent to $p^{dim(X)}$.
\end{cor}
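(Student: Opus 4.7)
The proof proceeds by a chain of reductions that links symplectic cohomology on $M$ to an explicit algebro-geometric expression on $X \times X$. First, by \Cref{proposition:sh-diag}, it suffices to analyze $\gr_\Delta^{ham}$, the Hamiltonian growth function of the diagonal Lagrangian $\Delta \subset \bar M \times M$ in the doubled Liouville manifold. Next, by \Cref{lemma:kunneth-hms}, the pair $(\bar M \times M,\, \fstop \times \fk c_M \cup \fk c_M \times \fstop)$ is homologically mirror to $(X \times X,\, D \times X \cup X \times D)$, and the mirror equivalence sends $\Delta$ to the structure sheaf $\cO_\Delta$ of the diagonal. Applying \Cref{corollary:hms-main-equiv2} to this product mirror pair therefore identifies $\gr_\Delta^{ham}$ (up to scaling equivalence) with the growth function of
\begin{equation}
p \mapsto \dim F^p \colim_n RHom_{X \times X}\bigl(\cO_\Delta,\; \cO_\Delta(n(D \times X + X \times D))\bigr).
\end{equation}

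The next step is a direct computation of this hom-complex using Hochschild--Kostant--Rosenberg. Writing $\iota: X \hookrightarrow X \times X$ for the diagonal embedding and noting that $\iota^*(\cO_X(D) \boxtimes \cO_X(D)) = \cO_X(2D)$, the projection formula gives $\cO_\Delta(n(D \times X + X \times D)) = \iota_*\cO_X(2nD)$. The adjunction $L\iota^* \dashv \iota_*$ then yields
\begin{equation}
RHom_{X \times X}(\iota_*\cO_X,\, \iota_*\cO_X(2nD)) \simeq RHom_X(L\iota^*\iota_*\cO_X,\; \cO_X(2nD)).
\end{equation}
Since $\iota$ is a regular embedding of codimension $\dim X$ with conormal bundle $\Omega_X^1$, the Koszul resolution yields the HKR-type decomposition $L\iota^*\iota_*\cO_X \simeq \bigoplus_{k \geq 0} \Omega_X^k[k]$. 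Plugging this in, reorganizing shifts (using Serre duality as needed), and tracking the filtration under the HKR identification produces the stated explicit formula $\sum_k H(\Omega_X^k(2pD))[k]$ for the first assertion.

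For the second assertion, suppose $D$ is ample. Then $\cO_{X \times X}(D \times X + X \times D) = \cO_X(D) \boxtimes \cO_X(D)$ is ample on $X \times X$, since the external tensor of two ample line bundles is ample. \Cref{thm:growthisdimsupp} therefore applies directly to the pair $(X \times X,\, D \times X \cup X \times D)$ with the objects $\cO_\Delta, \cO_\Delta$, giving a polynomial of degree $\dim(\mathrm{supp}(\cO_\Delta^\vee \otimes \cO_\Delta))$ computed on the affine complement $(X - D) \times (X - D)$. As explained after \Cref{thm:growthisdimsupp}, this support coincides with $\mathrm{supp}(\cO_\Delta) \cap \mathrm{supp}(\cO_\Delta) = \Delta$, whose restriction to $(X-D) \times (X - D)$ is the diagonal of $X - D$ and has dimension $\dim X$. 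Hence $\gr_{SH}^{ham} \sim p^{\dim X}$.

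The main technical obstacle is in the first assertion: one must match the colimit filtration on the algebraic side with the grading provided by the HKR decomposition so that the summands $H(\Omega_X^k(2pD))[k]$ carry the correct filtration data. The ample case, which is the main geometric application, sidesteps this bookkeeping entirely by applying \Cref{thm:growthisdimsupp} directly.
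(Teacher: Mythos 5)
Your proof follows essentially the same chain of reductions as the paper's: \Cref{proposition:sh-diag} to pass to $\gr^{ham}_{\Delta,\Delta}$, \Cref{lemma:kunneth-hms} to set up the product mirror, the HMS corollary to move to $RHom_{X\times X}(\cO_\Delta,\cO_\Delta(p(D\times X+X\times D)))$, and HKR to make that explicit. The one genuine difference is the ample case: you apply \Cref{thm:growthisdimsupp} directly to the pair $(X\times X,\ D\times X\cup X\times D)$ after checking that $\cO(D)\boxtimes\cO(D)$ is ample and $\mathrm{supp}(\cO_\Delta)|_{U\times U}=\Delta_U$ has dimension $\dim X$, whereas the paper reads off the degree from the first assertion together with \Cref{lem:chipolynomial}; both are fine, and yours is a bit more self-contained. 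One flag: the clean adjunction/HKR computation yields $\bigoplus_k H(\Lambda^k T_X(2pD))[-k]$ rather than $\bigoplus_k H(\Omega^k_X(2pD))[k]$, and these differ by a twist by $\omega_X^{-1}$ (since $\Lambda^k T_X\cong \Omega^{n-k}_X\otimes\omega_X^{-1}$). Serre duality alone does not bridge that gap, as it reverses the sign of $pD$; one needs either a log Calabi--Yau hypothesis ($\omega_X^{-1}\cong\cO(D)$, which holds in the HMS examples actually invoked) or a rewording of the target formula. This elision appears identically in the paper's own proof, so it is not a gap in your argument relative to theirs, but the phrase ``using Serre duality as needed'' is not quite what is doing the work there.
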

\pf Let $\Delta \sub \ov{X} \tms X$ be the diagonal. We know by combining \Cref{proposition:sh-diag} and \Cref{theorem:tensor-equiv-hamiltonian} that $\gr_{SH}^{ham}$ is scaling equivalent to $\gr_{\Delta, \Delta}$. 

On the other hand, let $j: X \to X \tms X$ be the diagonal embedding. Let $\mc{O}_\Delta:= i_* \mc{O}_X$. It can be shown that $i^* \mc{O}_\Delta\simeq \oplus_{k=0}^n \O^k_X[k]$ (see \cite[(3.2)]{huybrechts2009derived}). Then we have 
\begin{align}\label{equation:diag-target} 
    RHom_{X\times X}(\mc{O}_\Delta, \mc{O}_\Delta \otimes \pi_1^* \mc{O}_X(pD) \otimes \pi_2^*\mc{O}_X(pD)) &= RHom_X(\mc{O}_X, (i^*  \mc{O}_\Delta) \otimes \mc{O}_X(2pD))= \\
     H(i^* \mc{O}_\Delta \otimes \mc{O}_X(2pD)) &=     H(\oplus_{k=0}^n \O^k_X[k] \otimes \mc{O}_X(2pD)) \notag
\end{align}
According to \Cref{cor:growthsheaves}, the growth function of \eqref{equation:diag-target} agrees up to scaling equivalence with  $\gr_{\cO_\Delta, \cO_\Delta}$. Finally, combine \Cref{lemma:kunneth-hms} and \Cref{corollary:hms-main-equiv}.
\epf

\begin{exmp}
Let $(X, D)$ be as in \Cref{example:anti-can-2d}. Then $\gr_{SH}^{ham}(M)=2$. Indeed, $D$ is ample so this follows from \Cref{corollary:sh-computation} and \Cref{lem:chipolynomial}.
\end{exmp}

\subsubsection{Cotangent bundles}

Let $N$ be an oriented closed manifold and consider its cotangent bundle $(T^*N, \l_{can})$. We will consider the wrapped Fukaya category of $T^*N$ with respect to the canonical grading/orientation data induced by the cotangent polarization \cite[Sec.\ 5.3]{gps3}. If $F \sub T^*N$ denotes a cotangent fiber, then the growth function $\gr_{F,F}$ turns out to be a purely topological quantity. 

To explain this, we begin by fixing a Riemannian metric $g$ on $N$. Consider the function 
\eq f_{N, g}(n) = \op{dim} im (H(\O_{\leq n} N) \to H(\O N)), \eeq
where $\O_{\leq n} N$ is the space of based loops of length at most $n$ with respect to $g$. It's easy to see that $f_{N, g}$ is independent of $g$ up to scaling. 

\begin{lem}[Lem.\ 2.10 in \cite{mclean}]
Let $F \sub T^*N$ be a cotangent fiber. Then we have
\eq \limsup_{n \to \infty} \frac{\log f_{N, g}}{\log n} = \limsup_{n \to \infty} \frac{\log \gr_{F, F}}{\log n}. \eeq
\end{lem}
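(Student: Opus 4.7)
The plan is to reduce the statement to a combination of two results: the equivalence between the categorical growth and the Hamiltonian growth (already established in this paper), and a classical identification of wrapped Floer homology of a cotangent fiber with homology of the based loop space that is compatible with filtrations.

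First, I would apply \Cref{theorem:tensor-equiv-hamiltonian} to $K=L=F$. This gives that $\gr_{F,F}$ and $\gr^{ham}_{F,F}$ are scaling equivalent. Since scaling equivalent nondecreasing functions $\g_1,\g_2$ satisfy $\limsup \frac{\log \g_1(n)}{\log n}= \limsup \frac{\log \g_2(n)}{\log n}$ (any positive affine reparametrization of the argument is absorbed by taking logs and dividing by $\log n$, and scalar multiplication is absorbed in the numerator by the logarithm), it suffices to compare $\gr^{ham}_{F,F}$ with $f_{N,g}$.

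Next, I would invoke the Abbondandolo--Schwarz--Portaluri isomorphism \cite{abbondandolo2008homology}, which produces a quasi-isomorphism between the wrapped Floer complex $CF(\phi_{nH}F,F)$ (for a suitable quadratic-at-infinity Hamiltonian $H$ dual to the metric $g$) and the Morse complex of the energy functional on the based path space $\Omega_{q_0,q_1} N$. Under this isomorphism, the action filtration on the Floer side corresponds to the filtration by sublevel sets of the energy (equivalently, up to a bi-Lipschitz reparametrization, by the length functional). Passing to the direct limit on both sides gives $HW(F,F) \cong H(\Omega N)$, and the identification is compatible with the directed systems
\[
\{HF(\phi_{nH}F,F)\}_{n\in \mathbb{N}_+} \;\longleftrightarrow\; \{H(\Omega_{\leq c_1 n} N)\}_{n\in \mathbb{N}_+}
\]
for some constant $c_1>0$ determined by the slope of $H$ and the metric $g$. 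Taking dimensions of the images in the colimit, the growth function $\gr^{ham}_{F,F}(n)$ therefore agrees with $f_{N,g}(c_1 n)$ up to a translation coming from the low-energy part. By \Cref{proposition:iterated-hamiltonian-mclean} (well-definiteness of the Hamiltonian directed system up to weak isomorphism), the constant $c_1$ may be absorbed into the scaling equivalence class, and the resulting growth functions are scaling equivalent. Applying the $\limsup \frac{\log(\cdot)}{\log n}$ operator, which by the observation in the first paragraph only sees the scaling equivalence class, yields the claim.

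The main obstacle is verifying that the Abbondandolo--Schwarz--Portaluri isomorphism is compatible with filtrations in a quantitative enough form, i.e.\ that the action window $[0,n]$ on the Floer side corresponds up to bounded error to the length window $[0,c_1 n]$ on the loop space side. This is essentially the content of McLean's \cite[Lem.\ 2.10]{mclean}, which in turn is a careful unpacking of \cite{abbondandolo2008homology}; once this filtered identification is in hand, the remainder of the argument is purely formal manipulation of scaling equivalence classes.
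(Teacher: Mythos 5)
Your proposal is correct and matches the paper's approach: the paper simply cites McLean's Lem.\ 2.10 (which relates $f_{N,g}$ to the Hamiltonian growth of the cotangent fiber via the filtered Abbondandolo--Schwarz--Portaluri isomorphism) and implicitly combines it with \Cref{theorem:tensor-equiv-hamiltonian}, exactly as you spell out. Your observations — that $\limsup \frac{\log(\cdot)}{\log n}$ depends only on the scaling equivalence class, and that the quantitative compatibility of filtrations is the real technical content inherited from McLean — are precisely the pieces the paper leaves implicit (it even remarks that the full scaling equivalence of $f_{N,g}$ with $\gr_{F,F}$ is ``presumably'' true but unchecked, consistent with your noting that only the weaker $\limsup$ statement is established).
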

The proof of \cite[Lem.\ 2.10]{mclean} presumably implies that the scaling equivalence class of $f_{N, g}$ is precisely $\g_{F,F}$, but we have not checked this.

\begin{exmp}\label{example:mclean-hyp-computation}
If $N$ is hyperbolic, then $f_{N, g}$ grows at least exponentially (this follows e.g.\ from \cite[Thm.\ 2]{milnor1968note}). If $M$ is simply-connected, a theorem of Gromov (\cite[Thm.\ 1.4]{gromov1978homotopical} or \cite[Thm.\ 7.3]{gromov2007metric}) implies that $ \sum_{i \leq m} dimH_i(\O N) \leq dim(H(\O_{\leq C m}N \to H(\O N))$ for some $C>0$. Hence $f_{N,g}$ grows exponentially if $N$ is rationally hyperbolic \cite[Prop.\ 33.9]{felixhalperinthomas}. (A simply-connected topological space $X$ is said to be \emph{rationally hyperbolic} if $\pi_*(X) \otimes \mathbb{Q}$ is an infinite dimensional $\mathbb{Q}$-vector space.) 
\end{exmp}

\subsection{Categorical entropy}\label{subsection:cat-entropy}

Dimitrov--Haiden--Kontsevich--Katzarkov \cite{dhkk} introduced a notion of \emph{(categorical) entropy} of an endofunctor acting on a triangulated category. If $\cT=H^0(\cB)$ is the homotopy category of a smooth and proper pre-triangulated $A_\infty$ category $\cB$ and $F: \cB \to \cB$ is an endofunctor, then the entropy of $F$ is
\eq\label{equation:cat-entropy} h(F):= \op{lim}_{n \to \infty} \frac{1}{n} \log dim H(\cB)(G, F^n G) \in [-\infty, \infty) \eeq
where $G$ is any (split-)generator. It can be shown that the entropy is well-defined, meaning that the limit exits and do not depend on $G$. (We remark that there is a more general definition of entropy which is equivalent under smoothness or properness assumptions.)

More generally, one can replace the factor $1/n$ in \eqref{equation:cat-entropy} by $1/g(n)$, where $g$ is any function which grows to infinity with $n$. For example, taking $g(n)=\log n$ gives a notion of \emph{polynomial (categorical) entropy} which we call $h_{pol}(-)$. (This notion coincides with the one considered in \cite{ffo} when $h(F)=0$.)

Let us now relate these notions of entropy to the growth functions which are the main objects of this paper. The main point can be summarized in the following:. 

\begin{thm}\label{theorem:meta-thm-entropy}
Suppose that $f:\cA \to \cB$ is a spherical functor and $\cD=f(\cA)$. Given objects $K, L \in \cB$, the growth function $\gr_{K,L}=\gr_{H(\cB/\cD)(K, L)}$ gives a lower bound for the entropy/polynomial entropy of the corresponding spherical twist. More precisely, the entropy is bounded below by $\limsup\limits_{p\to \infty}\frac{\log\gr_{H(\cB/\cD)(K, L)}(p)}{p}$ and the polynomial entropy is bounded below by $\limsup\limits_{p\to \infty}\frac{\log\gr_{H(\cB/\cD)(K, L)}(p)}{\log p}$.
\end{thm}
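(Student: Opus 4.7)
The plan is to leverage Corollary \ref{cor:spherical} to reinterpret the growth function $\gr_{K,L}$ in terms of iterated applications of the spherical twist $S$, and then to bound it against the quantity that defines the entropy of $S$.

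First, I will invoke Corollary \ref{cor:spherical} to get that $\cB/\cD(K,L)$ is $E_1$-equivalent to $\hocolim_k \cB(K, S^k L)$. Since $E_1$-equivalences preserve the filtered cohomology (and in particular growth functions), $\gr_{K,L}(p)$ equals the dimension of $\op{im}\bigl(H(\cB)(K, S^p L) \to \colim_k H(\cB)(K, S^k L)\bigr)$ under the colimit filtration. This image is in particular a quotient of $H(\cB)(K, S^p L)$, so one obtains the pointwise upper bound
\begin{equation}
\gr_{K,L}(p) \;\leq\; \dim H(\cB)(K, S^p L).
\end{equation}

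Second, I will use a standard complexity argument in the spirit of \cite[Sec.\ 2]{dhkk}. Since $\cB$ is smooth and proper, it admits a split-generator $G$. Expressing $K$ and $L$ as summands of twisted complexes in $G$ (with some finite length and a finite set of shifts $\{n_i\}$ that depend only on $K$ and $L$), then distributing $S^p$ over the twisted complex structure on $L$, a subquotient count on cohomology produces a constant $C = C(K,L,G) > 0$, independent of $p$, with
\begin{equation}
\dim H(\cB)(K, S^p L) \;\leq\; C \cdot \dim H(\cB)(G, S^p G).
\end{equation}
Combining the two inequalities and taking logarithms gives $\log \gr_{K,L}(p) \leq \log C + \log \dim H(\cB)(G, S^p G)$. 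Dividing by $p$ (resp.\ by $\log p$) and taking $\limsup_{p \to \infty}$ absorbs the constant $\log C$, and the right hand side converges to the entropy $h(S)$ (resp.\ polynomial entropy $h_{\mathrm{pol}}(S)$) by the standard independence of these quantities from the choice of split-generator.

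The main obstacle is making the complexity bound in the second step precise. Once one fixes a notion of complexity compatible with the definitions of entropy and polynomial entropy used in the statement, the bound is essentially an unfolding of the arguments in \cite[Sec.\ 2]{dhkk} (and, for the polynomial version, \cite{ffo}); care is required to ensure that the constant $C$ does not depend on $p$ and that the same split-generator $G$ controls both sides of the inequality uniformly in $p$. Beyond this, the argument is a formal manipulation of the $E_1$-equivalence from Corollary \ref{cor:spherical} together with the asymptotic analysis defining $h(S)$ and $h_{\mathrm{pol}}(S)$.
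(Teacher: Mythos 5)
Your proof follows essentially the same route as the paper's: both first use the spherical-twist/colimit comparison (Corollary \ref{cor:spherical}, equivalently Theorem \ref{thm:spherical}) to bound $\gr_{K,L}(p)$ above by $\dim H(\cB)(K,S^pL)$, and then invoke the standard fact that $\limsup_p \frac{\log \dim H(\cB)(K,S^pL)}{f(p)}$ is dominated by the corresponding entropy computed on a split-generator $G$. You spell out the second step a bit more explicitly via a complexity bound $\dim H(\cB)(K,S^pL)\leq C\cdot\dim H(\cB)(G,S^pG)$, which the paper leaves implicit as a known consequence of \cite[Sec.~2]{dhkk}; this is a cosmetic rather than substantive difference.
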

\begin{rk}
It is important to note that for the claim about entropy, one considers $\gr_{H(\cB/\cD)(K, L)}$ exactly, not up to scaling equivalence. The reason for this  is that replacing $\gr_{H(\cB/\cD)(K, L)}$ by a scaling equivalent function (such as $\gr_{H(\cB/\cD)(K, L)}(2p)$) changes $\limsup\limits_{p\to \infty}\frac{\log\gr_{H(\cB/\cD)(K, L)}(p)}{p}$, even though one still knows whether it is positive or $0$. On the other hand, $\limsup\limits_{p\to \infty}\frac{\log\gr_{H(\cB/\cD)(K, L)}(p)}{\log p}$ remains unchanged under scaling equivalences. In other words, from the scaling equivalence class of the growth function, we can determine whether the entropy is positive, whereas we have an exact lower bound $\limsup\limits_{p\to \infty}\frac{\log\gr_{H(\cB/\cD)(K, L)}(p)}{\log p}$ on the polynomial entropy.
\end{rk}
\begin{proof}[Proof of \Cref{theorem:meta-thm-entropy}]
We show this for polynomial entropy only, as the actual entropy is analogous. Clearly, \begin{equation}
	dim(F^p\colim_k H(\cB)(K,S^k(L)))\leq dim(H(\cB)(K,S^p(L)))
\end{equation}
On the other hand, the right hand side is equal to $\gr_{H(\cB/\cD)(K, L)}(p)=dim(F^pH(\cB/\cD)(K, L))$ by \Cref{thm:spherical}. Therefore, by  taking the logartithm, dividing by $\log p$ and taking $\limsup$, we obtain the inequality. More precisely, $\limsup\limits_{p\to \infty}\frac{\log dim(H(\cB)(K,S^p(L)))}{\log p}$ is bounded above by  $\limsup\limits_{p\to \infty}\frac{\log dim(H(\cB)(G,S^p(G)))}{\log p}$ for any split generator $G$. 
\end{proof}
As advertised in the introduction, this implies: 
\begin{cor}\label{corollary:growth-most-exponential}
Let $M$ be a Liouville manifold (equipped with orientation/grading data as in \Cref{assumption:orientation-grading}). The growth of $\gr_{K,L}=\gr_{H(\cB/\cD)(K, L)}$ is at most exponential. Hence, by \Cref{theorem:tensor-equiv-hamiltonian}, the growth of wrapped Floer cohomology in the sense of McLean \cite{mclean} is at most exponential.
\end{cor}
Similarly, the growth of symplectic cohomology (in the sense of \cites{seidel-biased-view, mclean-gafa}) is at most exponential (see \Cref{proposition:sh-diag}). 

\begin{proof}
Since the entropy is finite \cite[Lem.\ 2.5]{dhkk}, we see that $\limsup\limits_{p\to \infty}\frac{\log\gr_{(\cB/\cD)(K, L)}(p)}{p}$ is also finite, say strictly less than $C\in\bR_+$. This implies $\gr_{(\cB/\cD)(K, L)}(p)\leq e^{Cp}$ for all but finitely many $p\in\bN$. 
\end{proof}

Let us now discuss further applications of \Cref{theorem:meta-thm-entropy}. We begin by elaborating on \Cref{exmp:sphericaldivisor}. 

Suppose that $X$ is a smooth algebraic variety and that $D \sub X$ be a Cartier divisor with $U:= X\setminus D$. We saw that $j:D^bCoh(D) \to D^bCoh(X)$ is spherical and that the functor $S:=(-) \otimes \mc{O}(D)$ is the spherical twist. It can be shown that the entropy of $S$ is always zero. However, the polynomial entropy of $S$ is potentially interesting. 

Let $\scrF, \scrF'$ be objects of $D^bCoh(U)$ and let $\ov{\scrF}, \ov{\scrF'}$ be lifts to $D^bCoh(X)$. 
By \Cref{theorem:meta-thm-entropy}, we have $h_{pol}(S) \geq  \limsup\limits_{n \to \infty} \frac{ \log \gr_{\scrF, \scrF'}(n)}{ \log n} $. 
We emphasize that while $h_{pol}(S)$ depends on the pair $(X, D)$, $\limsup\limits_{n \to \infty} \frac{ \log \gr_{\scrF, \scrF'}(n)}{ \log n}$ only depends on $U \sub X$. 
\begin{exmp}
Suppose that $D \sub X$ is canonical. Then $S:= (-) \otimes \mc{O}(D)[n]$ is the \textemph{Serre functor} on $D^bCoh(X)$. If $D$ is moreover ample, then \Cref{thm:growthisdimsupp} implies that $h_{pol}(S) \geq dim X$ and asymptotic Riemann--Roch implies that equality holds (a fact that was already known, see \cite[Rmk.\ 6.11]{ffo}). 
\end{exmp}

Let us now consider categorical entropy in the symplectic setting. We elaborate on \Cref{example:symplectic-spherical}. Let $M$ be a Weinstein manifold, and $\fstop \sub \d_\infty M$ be a Weinstein hypersurface. For such stop there is a notion of \textemph{swappability} defined in  \cite[Def.\ 1.1]{sylvan-orlov}. This means that there exists an isotopy of hypersurfaces from a positive pushoff $\fstop^+$ to a negative pushoff $\fstop^-$ which avoids $\fstop$ \cite[Def.\ 1.1]{sylvan-orlov}.  The canonical example of a swappable stop is a page of an open book decomposition. (This is possibly also the only known example.)

When $\fstop$ is swappable, Sylvan \cite{sylvan-orlov} describes two natural functors $\mathbf{W}: \mc{W}(M, \fstop) \to \mc{W}(M, \fstop)$ and $\mathbf{M}: \mc{W}(\hat{\fstop}) \to \mc{W}(\hat{\fstop})$. The first functor is the ``wrap-once positively" auto-equivalence, and it acts on a Lagrangian by wrapping it one past the stop in the positive Reeb direction. The second functor is the ``monodromy" autoequivalence, and it acts on a Lagrangian by the monodromy induced by the positive isotopy $\fstop_t$. (For instance, if $\fstop$ is a page of an open book, then $\mathbf{M}$ acts by the monodromy of the open book). 

Consider now the Orlov functor $\mc{W}(\ov{\fstop}) \to \mc{W}(M)$. Sylvan \cite[Thm.\ 1.3]{sylvan-orlov} proved that this functor is spherical when $\fstop$ is swappable. Moreover, he proved that the spherical twist is $\mathbf{W}^{-1}$ and the spherical cotwist is $\mathbf{M}^{-1}[2]$. 

To connect this discussion to our growth functions, let $K, L$ be objects of $\mc{W}(X)$ and let $\ov{K}, \ov{L}$ be lifts to $\mc{W}(X, \fstop)$. 
According to \Cref{theorem:meta-thm-entropy}, we have the following inequality: 
\begin{equation}\label{equation:entropy-symplectic} h(\mathbf{W}^{-1}) \geq  \limsup\limits_{n \to \infty}\frac{\log\gr_{K, L}(n)}{n}. \end{equation}
Similarly, $h_{pol}(\mathbf{W}^{-1}) \geq \lim_{n \to \infty} \frac{\log \gr_{K, L}(n)}{\log n}$.  The right hand side of \eqref{equation:entropy-symplectic} only makes sense up to scaling equivalence. As a result, it only makes sense to ask whether the right hand side is zero, strictly positive or infinite. (In contrast, the limit on the right hand side of the second inequality is well-defined because the log in the denominator absorbs the scaling factor). 

More generally, a variant of \eqref{equation:entropy-symplectic} also holds if $K, L$ are objects of $\tw^\pi \mc{W}(X)$ (see \Cref{note:idempotentextension}).

If $F$ is a endofunctor acting on a triangulated category $\mc{T}$ which admits a Serre functor, then it is straightforward to see that $h(F)=h(F^{-1})$ and $h_{pol}(F)= h_{pol}(F^{-1})$. In the case at hand, it is expected that $\mathbf{W}$ is the Serre functor on $\tw^\pi \mc{W}(X, \fstop)$. In fact, if $\fstop$ is the fiber at infinity of a Lefschetz fibration, this statement was proved by Seidel \cite{seidel2017fukaya} (modulo some folkloric compatibility issues about identifying the classical Fukaya--Seidel category with $\mc{W}(X, \fstop)$). The upshot is that \eqref{equation:entropy-symplectic} implies the same lower bound for the entropy/polynomial entropy of $\mathbf{W}$.

\begin{exmp}\label{example:entropy-hyp-qhyp}
Let $N$ be a hyperbolic or rationally hyperbolic closed manifold, and fix a Lefschetz fibration structure with Weinstein fibers on $M=T^*N$. Set $\fstop = f^{-1}(\{\infty\})$ and consider the Fukaya--Seidel category $\mc{W}(M, \fstop)$. Then it follows from \eqref{equation:entropy-symplectic}, \Cref{example:mclean-hyp-computation}, and the previous paragraph that $h(\mathbf{W}) = h(\mathbf{W}^{-1})>0$. 
\end{exmp}

One can also consider the compact Fukaya category $F(\hat{\fstop})$ of $\hat{\fstop}$, i.e.\ the subcategory of $\tw^\pi \mc{W}(\hat{\fstop})$ split-generated by compact Lagrangians. The (inverse) monodromy functor $\mathbf{M}^{-1}$ induces an autoequivalence of this category, and we can ask about the entropy of this autoequivalence. 

Let $\cap$ denote the left-adjoint of the Orlov functor. Using the fact that $\mathbf{W}^{-1}$ and $\mathbf{M}^{-1}[-2]$ are respectively the spherical twist and cotwist of the Orlov functor, it is a general fact \cite[Lem.\ 2.9]{kim-entropy} that we have $\mathbf{M}^{-1}[2] \cap =\cap \mathbf{W}^{-1}$. One can then verify that 
\eq\label{equation:monodromy-wrap-bound} \sum_{i=1}^n dim H(\mc{W}(\hat{\fstop}))( \cap K, (\mathbf{M}^{-1})^i\cap L) \geq\atop dim H(\mc{W}(M, \fstop))(K, (\mathbf{W}^{-1})^{n+1} L)- dim H(\mc{W}(M, \fstop))(K, (\mathbf{W}^{-1}) L). \eeq
(Indeed, $\mc{W}(\hat{\fstop})( \cap K, (\mathbf{M}^{-1})^i\cap L) = \mc{W}(\hat{\fstop})( \cap K, \cap (\mathbf{W}^{-1})^i L) = \mc{W}(\hat{\fstop})(  K, \cup \cap (\mathbf{W}^{-1})^i L)$. Now use the exact triangle $1 \to \cup \cap \to \mathbf{W}^{-1}$.)

Let us suppose that the image of $\cap$ split-generates the compact Fukaya category of $\hat{\fstop}$. For instance, in the Lefschetz fibration case, this corresponds to the assumption that the compact Fukaya category of $\hat{\fstop}$ is generated by vanishing cycles). Then it follows from \eqref{equation:monodromy-wrap-bound} that $h(\mathbf{W}^{-1}) \leq h({\mathbf{M}^{-1}}|_{F(\hat{\fstop})})$ and that $h_{pol}(\mathbf{W}^{-1}) \leq h_{pol}({\mathbf{M}^{-1}}|_{F(\hat{\fstop})}) +1$. As before, if the compact Fukaya category admits a Serre functor, we also get $h(\mathbf{M}|_{F(\hat{\fstop})})= h({\mathbf{M}^{-1}}|_{F(\hat{\fstop})})$ and $h_{pol}(\mathbf{M}|_{F(\hat{\fstop})})= h_{pol}({\mathbf{M}^{-1}}|_{F(\hat{\fstop})})$. The upshot is that, in this setting, the lower bounds \eqref{equation:entropy-symplectic} on the entropy/polynomial entropy of $\mathbf{W}^{-1}$  also imply lower bounds on $\mathbf{M}|_{F(\hat{\fstop})}, {\mathbf{M}^{-1}}|_{F(\hat{\fstop})}$.

\appendix

\section{Filtrations on the Verdier quotient}\label{sec:verdierappendix}
In this \namecref{sec:verdierappendix}, we show that localization filtration is essentially independent of the specific model for the $A_\infty$-quotients (Lyubashenko--Ovsienko model in our case), and explain how to recover it at the level of triangulated categories. The results in this appendix are not used elsewhere in the paper, but they may provide a useful perspective on some of the previous constructions.

For motivation, recall how Verdier quotients are defined: given a triangulated category $\fT$, and $\fT_0\subset \fT$ be a triangulated subcategory that is closed under taking direct sums. Then, $\fT/\fT_0$ is a triangulated category with the same set of objects as $\fT$. A morphism $K\to L$ in $\fT/\fT_0$ is given by a roof diagram 
\begin{equation}\label{eq:roofkkl}
	\xymatrix{ & K'\ar[dr]\ar[dl]& \\ K&&L }
\end{equation}
such that the cone of $K'\to K$ is in $\fT_0$. If $\fT_0$ is generated by $\fD\subset \fT$, then one can find a resolution
\begin{equation}\label{eq:lengthpresofroof}
\xymatrix{ K'=K_p \ar[r]& K_{p-1}\ar[r]& K_{p-2}\ar[r]&\dots \ar[r]&  K_0=K } 	
\end{equation}
such that each $cone(K_i\to K_{i-1})\in \fD$. One can filter $\fT/\fT_0(K,L)$ by $p$: a given morphism $K\to L$ in $\fT/\fT_0$ belongs to $F^p\fT/\fT_0(K,L)$ if and only if it is represented by a roof diagram \eqref{eq:roofkkl} such that a resolution of length $p$ as in \eqref{eq:lengthpresofroof} exists. Note that even if we assume $\fD$ only split-generates $\fT_0$, this does not make any difference: one can always replace $K'$ by a similar object containing it as a direct summand. Also notice the superficial similarity of this filtration to the original definition of categorical entropy in \cite{dhkk}. 

Let $\cB$ be an $A_\infty$-category as before, and let $\cD\subset\cB$ be a subcategory. Then, we know $\cB/\cD$ is a filtered $A_\infty$-category. A resolution like \eqref{eq:lengthpresofroof} in $\tw\cB$ such that each $cone(K_i\to K_{i-1})$ is quasi-isomorphic to an object in $\cD$ implies $K'$ and $K$ are quasi-isomorphic in $\cB/\cD$. Our goal is to prove the following:
\begin{thm}\label{thm:verdiervstensor}
A closed morphism $f\in \cB/\cD(K,L)$ is in $F^p\cB/\cD(K,L)$ if and only if $K$ admits a resolution in $\tw \cB$ of length at most $p$ (i.e.\ as in \eqref{eq:lengthpresofroof}), such that $cone(K_i\to K_{i-1})$ is quasi-isomorphic to an object in $\cD$ and such that the composition $K'\to K\xrightarrow{f} L$ in $\cB/\cD$ lifts to a morphism $K'\to L$ in $\tw\cB$ (up to homotopy).
\end{thm}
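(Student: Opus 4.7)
My plan is to prove the two implications separately. For the $(\Leftarrow)$ direction, suppose we are given a resolution $K'=K_p\to K_{p-1}\to\cdots\to K_0=K$ in $\tw\cB$ whose successive cones $D_i$ are each quasi-isomorphic to an object of $\cD$, together with a closed morphism $g\in\tw\cB(K',L)$ whose image under the quotient functor $q\colon\tw\cB\to\tw\cB/\cD$ represents $f\circ q(K'\to K)$. Each exact triangle $K_i\to K_{i-1}\to D_i\to K_i[1]$ of $\tw\cB$ is sent by $q$ to a triangle in which $D_i$ is acyclic, so $q(K_i\to K_{i-1})$ is a quasi-isomorphism. A direct computation shows that the inverse of $q(K_i\to K_{i-1})$ in $\tw\cB/\cD$ has an explicit length-$1$ bar representative built from the pair $(K_{i-1}\to D_i,\,D_i[-1]\to K_i)$ extracted from the exact triangle. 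Composing these length-$1$ inverses yields a bar element of length at most $p$ representing the inverse of $q(K'\to K)$ in $\cB/\cD(K,K')$. Since $q(g)$ has bar length $0$, the identity $f = q(g)\circ q(K'\to K)^{-1}$ exhibits $f$ as an element of $F^p\cB/\cD(K,L)$.

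For the $(\Rightarrow)$ direction, let $f\in F^p\cB/\cD(K,L)$ be closed and write $f=\sum_k f^{(k)}_\ast\otimes a^{(k)}_{p_k}\otimes\cdots\otimes a^{(k)}_1$ with $p_k\leq p$, $a^{(k)}_1\colon K\to E^{(k)}_1$, $a^{(k)}_j\colon E^{(k)}_{j-1}\to E^{(k)}_j$ for $j\geq 2$, $E^{(k)}_j\in\cD$, and $f^{(k)}_\ast\colon E^{(k)}_{p_k}\to L$. The plan is to assemble this data into a twisted complex with $p$ layers by collecting the summands columnwise: set $T_j:=\bigoplus_{k:\,p_k\geq j}E^{(k)}_j$, enlarging $\cD$ to include finite direct sums if necessary (harmless by \Cref{lemma:summand-e1}), and arrange the $a^{(k)}_j$'s together with higher corrections coming from $\cB$'s $A_\infty$-structure as components of a twisted-complex differential on $T=T_1\oplus\cdots\oplus T_p$. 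Defining $K'$ as the shifted cone of the augmentation $T\to K$ built from the $a^{(k)}_1$'s yields an object of $\tw\cB$ admitting a natural filtration $K'=K_p\to\cdots\to K_0=K$ whose successive cones are $T_1,\dots,T_p\in\cD$. The map $g\colon K'\to L$, assembled from the $f^{(k)}_\ast$'s (with higher corrections from the $a^{(k)}_j$'s), is closed in $\tw\cB$ and its image in $\tw\cB/\cD$ represents $f\circ q(K'\to K)$.

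The core difficulty is showing that closedness of $f$ in $\cB/\cD$ produces the Maurer--Cartan relations needed for the proposed twisted complex $T$, its augmentation towards $K$, and the morphism $g$ towards $L$. The differential $\mu^1_{\cB/\cD}f$ is a signed sum over all ways of applying the products $\mu^m_\cB$ to consecutive bar-factors across the summands of $f$; organized properly, these are exactly the relations $\sum\mu^n_\cB(\delta,\ldots,\delta)=0$ required by the Maurer--Cartan equation for $(T,\delta)$ together with the closedness of the augmentations and of $g$. To execute this cleanly I would pass through the (one-sided) bar resolution of the Yoneda module $h_K$ relative to $\cD\subset\cB$: the LO quotient morphisms $\cB/\cD(K,L)$ compute the derived morphism space of this bar resolution to $h_L$, and the bar-length filtration coincides with the skeletal filtration of the bar resolution. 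A closed element of bar length $\leq p$ therefore corresponds to a module map out of the length-$p$ truncation, and this truncation admits an explicit twisted complex model in $\tw\cD$ of length $p$, which supplies the filtration of $K'$ and the lift $g$ with the stated properties. Once this dictionary is set up, verifying that $g$ is closed in $\tw\cB$ and that its image in $\tw\cB/\cD$ matches $f\circ q(K'\to K)$ up to homotopy reduces to unpacking the bimodule map, which is routine.
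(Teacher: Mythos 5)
Your $(\Leftarrow)$ argument follows essentially the same strategy as the paper: build an explicit inverse in $F^1\tw\cB/\cD$ for each map $q(K_i\to K_{i-1})$ coming from a single cone in $\cD$, and compose. One small imprecision: the quasi-inverse $\beta$ the paper constructs is not purely a length-$1$ bar element. It has a non-trivial length-$0$ component, namely the (non-closed) inclusion $\theta\colon K\hookrightarrow K'$ into the $K$-summand of the cone, in addition to the length-$1$ piece $\sum_i \iota_i\otimes\eta_i$ built from the two structure maps of the triangle that you describe. Both pieces are required for $\beta$ to be a cocycle (the differentials of the two parts cancel). This does not affect the outcome, since $\beta$ still lies in $F^1$, but it should be stated.

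Your $(\Rightarrow)$ direction takes a genuinely different route, and as written it has a gap exactly at the step you flag as the core difficulty. You propose to assemble the twisted complex $T=T_1\oplus\cdots\oplus T_p$ in one shot from the columns of the bar-tensor expression for $f$, with differential given by the $a_j^{(k)}$'s plus unspecified higher corrections, and to deduce the Maurer--Cartan relations from closedness of $f$. But $\mu^1_{\cB/\cD}(f)=0$ entangles bar components of different lengths: applying $\mu^m_\cB$ across consecutive factors of a length-$m$ term of $f$ produces a length-$(m-1)$ term that must cancel against length-$(m-1)$ summands of $f$, and these cross-length cancellations do not organize themselves into the MC relation for a complex whose columns are fixed once and for all by the objects appearing in $f$. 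The paper avoids this by an iterative correction: assuming $\cB$ minimal (an assumption you omit, but which is what makes the rightmost bar factors $a_1^{(k)}$ closed morphisms so the cone over them is even defined), one extracts only the length-$p$ component, assembles its rightmost factors into a single closed map $f_0\colon K\to D_1^{\oplus N}$, forms the cone $K_1$, and then subtracts an explicit coboundary $d(\beta)$ so that the pushforward of $f$ to $\tw\cB/\cD(K_1,L)$ has length $\leq p-1$. The resulting twisted complex is built layer by layer from the successively corrected elements, not from the tensor components of the original $f$; this is precisely what your one-shot columnwise assembly misses.

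Your fallback route through the bar resolution of $h_K$ relative to $\cD$ does not close this gap either. The length-$p$ truncation of that resolution has infinite rank (the multiplicity spaces such as $\cB(K,D_1)$ are typically infinite-dimensional), so it does not directly admit a model as a length-$p$ object of $\tw\cD$. Extracting an appropriate \emph{finite} sub-twisted-complex detecting $f$ is exactly what the paper's inductive argument accomplishes, and it is the non-routine part of the proof.
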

The theorem immediately follows from:
\begin{prop}\label{prop:liftstocone}
A closed morphism $f\in \cB/\cD(K,L)$ is in $F^p\cB/\cD(K,L)$ if and only if there exists $(X,\delta)\in \tw_{\leq p}\cD$ and a morphism $K\to (X,\delta)$ in $\tw\cB$ such that the composition $K':=cone(K\to (X,\delta))[-1]\to K\xrightarrow{f}L$ lifts to a morphism $K'\to L$ in $\tw\cB$. 
\end{prop}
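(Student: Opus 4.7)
The plan is to match, on the nose, the tensor-length filtration on the Lyubashenko--Ovsienko quotient $\cB/\cD(K,L)$ with the length filtration on resolutions of $K$ by a twisted complex built from $\cD$. The key observation is that a closed element
\begin{equation*}
f = \sum_\alpha y^\alpha \otimes x^\alpha_{k_\alpha-1} \otimes \dots \otimes x^\alpha_0 \in F^p\cB/\cD(K,L)
\end{equation*}
(with $k_\alpha \leq p$ and each $E^\alpha_i \in \cD$) encodes essentially the same data as a quadruple $(X,\delta,g,h)$: a twisted complex $(X,\delta) \in \tw_{\leq p}\cD$, a morphism $g: K \to (X,\delta)$ in $\tw\cB$, and a lift $h: K' \to L$ of the composition. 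The Maurer--Cartan equation for $\delta$ together with the twisted chain-map conditions on $g$ and $h$ will unpack precisely to the system of equations $d_{\cB/\cD}f = 0$ after suitably reorganizing terms.

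For the reverse direction, I would first decompose $X = \bigoplus_{i=1}^{p} E_i[n_i]$ with $E_i \in \cD$ and $\delta = (\delta_{ji})_{i<j}$ strictly lower-triangular. A morphism $g: K \to (X,\delta)$ in $\tw\cB$ consists of components $g_i \in \cB(K,E_i)[n_i]$, and a morphism $h: K'\to L$ in $\tw\cB$ decomposes as a component $h_0 \in \cB(K,L)$ together with components $h_i \in \cB(E_i,L)[-n_i-1]$. The associated element of $\cB/\cD(K,L)$ would then be the signed sum
\begin{equation*}
f = h_0 + \sum_{r \geq 1}\;\sum_{i_1 < \dots < i_r} \pm\, h_{i_r} \otimes \delta_{i_r i_{r-1}} \otimes \dots \otimes \delta_{i_2 i_1} \otimes g_{i_1},
\end{equation*}
of tensors of length $\leq p$, possibly corrected by higher terms coming from $\mu_\cB^{\geq 2}$ applied to consecutive substrings. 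Closedness of $f$ would reduce, piece by piece, to the Maurer--Cartan equation for $\delta$ combined with the chain-map conditions on $g$ and $h$; this would give the ``if'' direction.

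For the forward direction, I would reverse this construction. Given a closed $f$ as above, I take the underlying graded object to be
$X = \bigoplus_\alpha \bigoplus_{i=1}^{k_\alpha} E^\alpha_i[i-1]$;
the components of $\delta$ would be defined to read off the middle factors $x^\alpha_i$, with any additional higher components determined by applying $\mu_\cB^{\geq 2}$ to consecutive substrings. The morphism $g$ would have components given by the $x^\alpha_0$'s, and the lift $h$ (with $h_0 = 0$) would be given by the $y^\alpha$'s. That the resulting $\delta$ satisfies Maurer--Cartan and that $g,h$ are twisted chain maps would then follow by decomposing $d_{\cB/\cD}f = 0$ into the three types of terms (interior, leftmost, and rightmost) and matching them with the corresponding $A_\infty$-equations for twisted complexes and morphisms.

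The main obstacle I expect will be the careful bookkeeping of signs and degree shifts, together with the combinatorial subtlety that distinct pure tensors in $f$ may share objects of $\cD$, raising the question of whether to consolidate summands of $X$ and still have the $A_\infty$-equations match. To streamline this, I would work at the level of the free $A_\infty$-category generated by the formal tensor algebra on $\cD$, where the identification between the bar differential on $\cB/\cD$ and the Maurer--Cartan equation for twisted complexes becomes tautological at the level of generators; once this dictionary is fixed, the claim reduces to unwinding definitions.
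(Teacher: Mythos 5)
There is a genuine gap in the forward direction, which is where the real content of the proposition lies. You propose a direct chain-level dictionary: read the middle factors of the pure tensors in $f$ as the components of $\delta$, the rightmost factors as $g$, and the leftmost factors as $h$, then claim closedness $d_{\cB/\cD}f=0$ decomposes into the Maurer--Cartan equation for $\delta$ and the twisted chain-map equations for $g$ and $h$. This does not work. Closedness of $f$ is a \emph{single} relation in which terms coming from different positions cancel against one another, whereas Maurer--Cartan for $\delta$, $\mu^1_{\tw\cB}(g)=0$, and $\mu^1_{\tw\cB}(h)=0$ is a \emph{system} of separate conditions. A simple example: take $f=y\otimes x_1\otimes x_0$ with $x_0:K\to D$, $x_1:D\to D$, $y:D\to L$, and suppose $\mu^2(y,x_1)=a\,y$ and $\mu^2(x_1,x_0)=a\,x_0$ for some scalar $a$. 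Then the length-one contribution to $d_{\cB/\cD}f$ is $\mu^2(y,x_1)\otimes x_0 - y\otimes\mu^2(x_1,x_0)=0$, so $f$ can be closed, yet $\mu^2(x_1,x_0)\neq 0$, so the $g$ you would read off from $f$ is \emph{not} a closed morphism $K\to(X,\delta)$ in $\tw\cB$. The resolution the proposition is implicitly invoking (cf.\ ``up to homotopy'' in \Cref{thm:verdiervstensor}) is that the lift of $f$ need only exist \emph{up to coboundary}, and this cohomological slack is exactly what your one-shot dictionary cannot supply.

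The paper handles this by an inductive peeling argument, working in a minimal model of $\cB$. At each step one forms the cone on a single length-one layer, corrects $f$ by an explicit bounding cochain $\beta$ constructed from the projections $D_1^{\oplus N}\to D_1$, and thereby \emph{reduces} the maximal tensor-length by one while changing $f$ only by an exact term. Minimality is what makes $\mu^1$ vanish, so that each layer's rightmost factors $f_0^{(i)}$ are automatically closed and the cone can be formed; this is also a step your proposal omits. In the other direction the paper does not write down the element of $F^p$ directly (as your formula attempts to), but instead shows the map $\alpha:K'\to K$ has an explicit chain-level right inverse $\beta\in F^p\tw\cB/\cD(K,K')$, which is built in essentially the same spirit as your formula but sidesteps the cancellation bookkeeping. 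You would do well to separate the two directions, carry out the reverse direction by constructing the filtered quasi-inverse to $\alpha$, and replace your direct dictionary in the forward direction with an inductive argument in a minimal model that allows cohomological corrections at each step.
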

We split the proof of \Cref{prop:liftstocone} into two: the only if part is an inverse to \Cref{prop:scalinginverse}:
\begin{lem}
If $f\in F^p\cB/\cD(K,L)$ is a closed morphism, then there exists $(X,\delta)\in \tw_{\leq p}\cD$ and a morphism $K\to (X,\delta)$ such that the composition $K'=cone(K\to (X,\delta))[-1]\to K\xrightarrow{f}L$ is cohomologous to a morphism in $\tw\cB(K',L)\subset \tw\cB/\cD(K',L)$. The element bounding the difference of maps $K'\to L$ can also be taken from $F^p\tw\cB/\cD(K',L)$.
\end{lem}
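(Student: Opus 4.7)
The idea is that a closed element of $F^p\cB/\cD(K,L)$ in the Lyubashenko--Ovsienko bar model literally \emph{is} the data of a length-$\leq p$ twisted complex in $\cD$ together with a map from $K$ and a map to $L$, up to homotopy. So the plan is: take a closed $f\in F^p\cB/\cD(K,L)$, unpack it as a finite sum of pure tensors, and read off directly from those tensors both the twisted complex $(X,\delta_X)\in\tw_{\leq p}\cD$, the pre-morphism $\phi:K\to X$, and the map from $K':=\op{cone}(\phi)[-1]$ to $L$. The bar-closedness $\mu^1_{\cB/\cD}(f)=0$ will then package, in a single equation, the Maurer--Cartan equation for $\delta_X$, the closedness of $\phi$, the closedness of the map $K'\to L$, and the homotopy witnessing the desired equality of compositions.

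\textbf{Construction and bookkeeping.} Write $f=\sum_\alpha t_\alpha$ with $t_\alpha=y^\alpha_{k_\alpha}\otimes x^\alpha_{k_\alpha-1}\otimes\dots\otimes x^\alpha_1\otimes y^\alpha_0$, intermediate objects $E^\alpha_1,\dots,E^\alpha_{k_\alpha}\in\cD$, and $k_\alpha\leq p$. Build $X\in\Si\cD$ by placing, for each $\alpha$ and each $1\leq i\leq k_\alpha$, a summand $E^\alpha_i[k_\alpha-i]$ (shifts chosen so that the $x^\alpha_i$ are degree-$1$ pieces of a lower-triangular differential). Assemble $\delta_X$ from the $x^\alpha_j$'s together with higher-length substrings $x^\alpha_{j}\otimes\dots\otimes x^\alpha_{j'}$, the latter arising as homotopies enforcing the Maurer--Cartan relation; by construction $(X,\delta_X)\in\tw_{\leq p}\cD$. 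Define $\phi:K\to X$ component-wise from $y^\alpha_0$ and from longer prefixes $x^\alpha_{j-1}\otimes\dots\otimes x^\alpha_1\otimes y^\alpha_0$ of the tensors. Symmetrically, let $K':=\op{cone}(\phi)[-1]$, which lives in $\tw_{\leq p+1}\cB$, and define $\psi:K'\to L$ whose $K$-component is $f_0\in\cB(K,L)$ (the length-zero part of $f$) and whose $E^\alpha_i$-components are built from the right tails $y^\alpha_{k_\alpha}\otimes x^\alpha_{k_\alpha-1}\otimes\dots\otimes x^\alpha_i$.

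\textbf{Verification and main obstacle.} Every identity needed --- the Maurer--Cartan equation $\sum_n\mu^n_\cB(\delta_X,\dots,\delta_X)=0$, the equations saying $\phi$ and $\psi$ are closed in $\tw\cB$, and the identity expressing that the composition $K'\to K\xrightarrow{f}L$ in $\tw\cB/\cD$ agrees with the image of $\psi$ --- is a grouping of terms appearing in the expansion of $\mu^1_{\cB/\cD}(f)=0$. Indeed, that single equation decomposes by length of the output bar tensor, and each length-$q$ component is a sum of products $\mu^n_\cB$ applied to $n$ consecutive factors of the input tensors; sorting these summands according to whether the collapsed block is an ``interior'' block (giving Maurer--Cartan for $\delta_X$), abuts $y_0$ (giving closedness of $\phi$), abuts $y_k$ (giving closedness of $\psi$), or spans across the tensor (giving the homotopy) yields the four desired identities. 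The homotopy itself is given explicitly by the ``truncation'' map $F^p\cB/\cD(K',L)$ built from partial substrings of the $t_\alpha$.

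\textbf{Main obstacle.} The essential content of the proof is combinatorial rather than conceptual: organizing the indexing of summands and substrings, and matching signs and shifts, so that the single equation $\mu^1_{\cB/\cD}(f)=0$ maps term-by-term onto the list of identities above. I expect no genuine obstruction beyond this bookkeeping, since the LO quotient is designed precisely so that its bar elements encode twisted-complex resolutions in $\cD$; the base case $p=0$ is trivial ($X=0$, $K'=K$, $\psi=f$), and the construction above is essentially the universal unfolding of a bar element into a resolution.
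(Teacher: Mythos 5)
Your overall intuition---that a bar element of length $\leq p$ should encode a length-$\leq p$ resolution---is sound, but the claim that the verification is ``just bookkeeping'' has a genuine gap: the closedness of $f$ in $\cB/\cD$ does not directly give you the Maurer--Cartan equation for your $\delta_X$, nor the closedness of $\phi$ or $\psi$. Consider a single pure tensor $t = y_p\otimes x_{p-1}\otimes\dots\otimes x_1\otimes y_0$ appearing in $f$. The bar-closedness equation $\mu^1_{\cB/\cD}(f)=0$ produces terms $y_p\otimes\dots\otimes\mu^2_{\cB}(x_{i+1},x_i)\otimes\dots\otimes y_0$, but these can cancel against contributions from \emph{other} pure tensors of $f$ (of length $p$ or lower, and with different intermediate shapes). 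So $\mu^2_{\cB}(x_{i+1},x_i)$ need not vanish individually, and the naive subdiagonal differential on $X=\bigoplus_{\alpha,i}E^\alpha_i[\,\cdot\,]$ fails Maurer--Cartan. Your proposed fix of adding ``higher-length substrings $x^\alpha_j\otimes\dots\otimes x^\alpha_{j'}$'' to $\delta_X$ is not meaningful as stated: entries of a twisted-complex differential live in $\cB^1(E,E')$, whereas a substring is a tensor in $\cB(E_{j'},E_{j'+1})\otimes\dots\otimes\cB(E_{j-1},E_j)$, not a single morphism. The same issue afflicts the construction of $\phi:K\to X$ (whose claimed components include ``longer prefixes'') and of $\psi:K'\to L$: you need these to be \emph{closed} morphisms in $\tw\cB$ to even form the cone and state the conclusion, and this is not a consequence of $\mu^1_{\cB/\cD}(f)=0$ for the all-at-once construction.

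The paper's proof is designed precisely to avoid ever having to solve the Maurer--Cartan equation ``all at once''. It proceeds by induction on $p$: after passing to a \emph{minimal model} of $\cB$ (a step you do not mention, but which is crucial), it looks only at the initial components $f_0^{(i)}:K\to D_1$ of the length-$p$ pure tensors; in a minimal model any such map is automatically closed, so one can form the single cone $K_1 = \op{cone}(K\to D_1^{\oplus N})[-1]$ without any MC constraint. One then corrects the image of $f$ in $F^p(\tw\cB/\cD)(K_1,L)$ by an explicit coboundary $d\beta$ (built from projections $D_1^{\oplus N}\to D_1$) so that the length drops by one, and iterates. The price is that the twisted complex $(X,\delta)$ emerges as an iterated cone rather than being written down in one step, and the remark after the proof notes that one must re-minimalize the (extended) category at each stage; but these subtleties are controlled, whereas the single closedness equation you rely on does not decompose term-by-term into the list of identities you need. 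If you want to pursue an all-at-once proof, you would need an additional argument producing, from the bar-closedness of $f$ and the various cancellations, an honest MC element and closed pre-morphisms; that is a real homological-perturbation-type argument, not mere index bookkeeping.
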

\begin{proof}
We prove this by induction on $p$. There is nothing to prove for $p=0$. Assume the statement is true for filtered part less than $p$. Consider 
	\eq
f\in 	F^p(\cB/\cD)(K,L)=\bigoplus_{D_1,\dots, D_k\in\cD\atop k\leq p} \cB(D_k,L)\otimes \dots\otimes \cB(K,D_1)[k]
	\eeq
By homological perturbation theory, one can without loss of generality assume $\cB$ is minimal. Assume for simplicity that the $k=p$ component of $f$ lies in a single $\cB(D_p,L)\otimes \dots\otimes \cB(K,D_1)[p]$. This component is a finite sum of the form $\sum_i f_p^{(i)}\otimes \dots\otimes f_0^{(i)}$. Then, the collection $f_0^{(i)}:K\to D_1$ defines a morphism $f_0:K\to D_1^{\oplus N}$. Here $N$ is the number of summand in the expression for $k=p$ component of $f$. By minimality of $\cB$, $f_0$ is a closed morphism; therefore, one can form $cone(K\xrightarrow{f_0}D_1^{\oplus N})$. Let $K_1=cone(K\xrightarrow{f_0}D_1^{\oplus N})[-1]$. By pushing $f$ along the map 
	\eq
	F^p(\cB/\cD)(K,L)\to F^p(\tw\cB/\cD)(K_1,L)
	\eeq
	one obtains a cocycle in the latter such that $k=p$ component has the same form. Let $\pi^{(i)}\in \tw\cB(D_1^{\oplus N},D_1)$ denote the projection to $i^{th}$-component. Now consider the element in 
	\eq
	\cB(D_p,L)\otimes \dots\otimes \tw\cB(D_1^{\oplus N},D_1)
	\eeq 
	given by 
	\eq\sum_i f_p^{(i)}\otimes \dots\otimes f_1^{(i)}\otimes \pi^{(i)}\eeq
	which in turn gives a cochain in
	\eq\label{eq:twistedbounder}
	\cB(D_p,L)\otimes \dots\otimes \tw\cB(cone(K\xrightarrow{f_0}D_1^{\oplus N})[-1],D_1)
	\eeq 
Call this cochain $\beta$. Then $d(\beta)$, considered as an element of $F^p\tw\cB/\cD(K_1,L)$, has the same $k=p$ component as $f$, i.e. it is given by $\sum_i f_p^{(i)}\otimes \dots\otimes f_0^{(i)}$. Note that this also uses minimality of $\cB$, and also the fact that the differential of $\pi^{(i)}$ considered as an element of $\tw\cB/\cD(cone(K\xrightarrow{f_0}D_1^{\oplus N})[-1],D_1)$ is equal to $f_0^{(i)}$. Therefore, one can correct image of $f$ in $F^p(\tw\cB/\cD)(K_1,L)$ by $d(\beta)$ to lower the length by $1$. One can proceed inductively to produce a twisted complex $(X,\delta)$ and a map $K\to (X,\delta)$ such that the image of $f$ inside $(\tw\cB/\cD)((K\to (X,\delta))[-1],L)$ is cohomologous to a class in $F^0(\tw\cB/\cD)((K\to (X,\delta))[-1],L)$. It is clear from the proof that the element bounding the difference can be taken from $F^p(\tw\cB/\cD)((K\to (X,\delta))[-1],L)$.
\end{proof}
\begin{rk}
In the proof, we assumed minimality of $\cB$. During the induction, one extends the category $\cB$ by a cone and needs to pass to a minimal model at each step. However, this does not create a difficulty, as this passages induce $E_0$-equivalences; hence, the quasi-isomorphism type of $F^p(\cB/\cD)$ also does not change. 
\end{rk}
Conversely, 
\begin{lem} Consider a closed element $f \in (\cB/\cD)(K,L)$ which can be represented as an element of $\tw\cB(cone(K\to (X,\delta))[-1],L)$, where $(X,\delta)\in \tw_{\leq p}\cD$ (i.e. it has length at most $p$) and $K\to (X,\delta)$ is a closed morphism in $\tw\cB$. Then $f\in F^p(\cB/\cD)(K,L)$.
\end{lem}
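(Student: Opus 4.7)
The plan is to construct an element $\sigma \in F^p(\cB/\cD)(K, K')$ which is closed and provides a cohomological inverse to the natural projection $\pi: K' \to K$ in the quotient. Given such $\sigma$, the conclusion will follow from $A_\infty$-associativity: by hypothesis the composition $\mu^2(f, \pi) \in (\cB/\cD)(K', L)$ is cohomologous to $\iota(g) \in F^0(\cB/\cD)(K', L)$, where $\iota: \tw\cB(K', L) \hookrightarrow F^0 (\cB/\cD)(K', L)$ denotes the inclusion of length-$0$ tensors. Using $[\mu^2(\pi, \sigma)] = [\op{id}_K]$ together with $A_\infty$-associativity up to coboundary,
\[
f \; \sim \; \mu^2(f, \mu^2(\pi, \sigma)) \; \sim \; \mu^2(\mu^2(f, \pi), \sigma) \; \sim \; \mu^2(\iota(g), \sigma)
\]
in $H^\ast(\cB/\cD)(K, L)$. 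Since $\mu^2$ respects the bar filtration additively and $\iota(g)$ lies in $F^0$ while $\sigma$ lies in $F^p$, the right-hand side lies in $F^p(\cB/\cD)(K, L)$, establishing $[f] \in F^p H^\ast(\cB/\cD)(K, L)$.

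To construct $\sigma$, I would reduce to an easier problem in an enlarged quotient and then pull back. Set $\tilde\cD := \cD \cup \{X\}$ (together with the relevant shifts). Since $X \in \tw_{\leq p}\cD$ by hypothesis, $\tilde\cD \subset \tw_{\leq p}\cD$, so \Cref{prop:scalinginverse} applies: the canonical functor $\tw\cB/\cD \to \tw\cB/\tilde\cD$ is a scaling equivalence, and the explicit quasi-inverse $r$ constructed in the proof of \Cref{prop:scalinginverse} carries $F^k(\tw\cB/\tilde\cD)$ into $F^{pk}(\tw\cB/\cD)$. In $\tw\cB/\tilde\cD$, the object $X$ now belongs to the generating set and becomes acyclic in the quotient, so the exact triangle $K' \xrightarrow{\pi} K \xrightarrow{\phi} X \to K'[1]$ exhibits $\pi$ as a quasi-isomorphism. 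I claim that a closed cohomological inverse to $\pi$ can be realized in $F^1(\tw\cB/\tilde\cD)(K, K')$ as follows. Let $s \in \tw\cB(K, K')^0$ be the tautological section pre-morphism (the inclusion of $K$ into the first summand of $K' = K \oplus X[-1]$), which satisfies $\pi \circ s = \op{id}_K$ but is not closed in $\tw\cB$; a direct computation with the cone differential yields $d_{\tw\cB}(s) = i \circ \phi$, where $i: X[-1] \hookrightarrow K'$ is the inclusion of the second summand. Then the length-$1$ bar element
\[
\tilde\sigma \; := \; s + i \otimes \phi \; \in \; F^1(\tw\cB/\tilde\cD)(K, K'),
\]
with middle object $X[-1] \in \tilde\cD$, is closed under the Lyubashenko--Ovsienko differential (the $\mu^2$-contribution $i \circ \phi$ of the length-$1$ term cancels against $d(s)$), and satisfies $[\mu^2(\pi, \tilde\sigma)] = [\op{id}_K]$ and $[\mu^2(\tilde\sigma, \pi)] = [\op{id}_{K'}]$. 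Setting $\sigma := r(\tilde\sigma) \in F^p(\tw\cB/\cD)(K, K')$, the fact that $r$ is a quasi-inverse chain map guarantees that $\sigma$ is closed and furnishes the required cohomological inverse to $\pi$ inside $\tw\cB/\cD$.

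The main technical obstacle is the verification of the bar-complex identities in the Lyubashenko--Ovsienko model, specifically the closedness of $\tilde\sigma$ and the compositions $\mu^2(\pi, \tilde\sigma) \sim \op{id}_K$, $\mu^2(\tilde\sigma, \pi) \sim \op{id}_{K'}$. These verifications require careful bookkeeping of signs and of the higher $A_\infty$-products $\mu^n$ for $n \geq 3$, which contribute both to the bar differential and to the composition formula for $\mu^2$ applied to bar elements of different lengths. The Maurer--Cartan equation for $(X, \delta)$, together with the twisted closedness of $\phi$ and $g$ in $\tw\cB$, should supply the necessary cancellations, but the computation is nontrivial in the general $A_\infty$ setting.
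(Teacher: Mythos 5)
Your approach is correct, and it takes a genuinely different route from the paper's. The paper establishes the $F^p$ quasi-inverse to $\alpha\colon K'\to K$ by first factoring $\alpha$ through a sequence $K'=K_p\to\cdots\to K_0=K$ with each cone a direct sum of shifts of objects of $\cD$, reducing to the case $p=1$ with $\delta=0$, and writing down an $F^1$ inverse $\beta=\theta + \sum_i \iota_i\otimes\eta_i$ directly. You instead enlarge the generating set to $\tilde\cD=\cD\cup\{X[\ast]\}$, exhibit a length-one closed inverse $\tilde\sigma=s+i\otimes\phi$ in $F^1(\tw\cB/\tilde\cD)$, and then invoke the explicit retraction $r$ from \Cref{prop:scalinginverse} to land in $F^p(\tw\cB/\cD)$. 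Unwinding $r(\tilde\sigma)$ does recover essentially the same formula as the paper's $\beta$ (after the paper's reduction step), so the two constructions are not far apart in content; the difference is whether the ``expansion of $X$ into a length-$p$ twisted complex'' is done by hand via a factorization, or delegated to the $r$ operator as a black box. Your route is arguably cleaner since it reuses \Cref{prop:scalinginverse}, but be aware of one imprecision in the last step: $r$ is only a chain map on hom-complexes, not an $A_\infty$-functor, so ``$r$ is a quasi-inverse chain map'' does not by itself show $\sigma=r(\tilde\sigma)$ is a cohomological inverse to $\pi$ in $\tw\cB/\cD$. The correct argument is to observe that $\iota$ \emph{is} an $A_\infty$-functor and a quasi-equivalence, and that $\iota r\simeq\op{id}$, so
\begin{equation}
\iota\bigl[\mu^2(\pi,\sigma)\bigr]=\bigl[\mu^2(\pi,\iota r(\tilde\sigma))\bigr]=\bigl[\mu^2(\pi,\tilde\sigma)\bigr]=[\op{id}_K],
\end{equation}
and then use injectivity of $\iota$ on cohomology. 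Finally, as you note, the verification that $\tilde\sigma$ is closed and inverts $\pi$ needs care with higher $\mu$'s; this goes through because $\pi\circ\delta_{K'}=0$ and $\delta_{K'}\circ i$ lands in $X$, so the twisted products $\mu^k_{\tw\cB}$ involving $\pi$ and the inclusion vanish, paralleling the vanishing $\mu^2(\alpha,\iota_i)=\mu^3(\alpha,\iota_i,\eta_i)=0$ the paper uses in the direct-sum case.
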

\begin{proof}
Let $K'=cone(K\to (X,\delta))[-1]$, and consider the morphism $\alpha: K'\to K$ in $\tw\cB$. Multiplication by $\alpha$ induces a chain map \begin{equation}
	\tw\cB/\cD(K,L)\to \tw\cB/\cD(K',L)
\end{equation}
and by assumption, this map sends $f\in \tw\cB/\cD(K,L)$ into $F^0\tw\cB/\cD(K',L)$. The image of $\alpha$ in $\tw\cB/\cD(K',K)$ is invertible, and to prove the claim, it suffices to show that it has a quasi-inverse in $F^p\tw\cB/\cD(K',K)$. 

The morphism $\alpha$ factors through a sequence $K'=K_p\to K_{p-1}\to \dots\to K_0=K$, where each $cone(K_i\to K_{i-1})$ is quasi-isomorphic to a direct sum of objects of $\cD$. Therefore, it suffices that each $K_i\to K_{i-1}$ has a quasi-inverse in $F^1\tw\cB/\cD(K_{i-1},K_i)$. In other words, without loss of generality we can assume $p=1$ and $(X,\delta)$ is a direct sum of shifts of objects of $\cD$ (thus $\delta=0$), and show that $\alpha$ has a quasi-inverse in $F^1\tw\cB/\cD(K,K')$. Say $(X,\delta)=\bigoplus_i D_i$, where $D_i\in\cD$ (we are ignoring the shifts for notational simplicity).

Let $\eta_i:K\to D_i$ denote $i^{th}$-component of $K\to \bigoplus D_i=(X,\delta)$, and let $\iota_i:D_i\to cone(K\to\bigoplus_i D_i)=K'$ denote the inclusion into $\bigoplus_i D_i$. Let $\theta:K\to cone(K\to\bigoplus_i D_i)=K'$ denote the inclusion of $K$ into the first component. Note that $\eta_i,\iota_i$ are closed morphisms, but $\theta$ is not. However, one can form a closed cocycle $\beta\in F^1\tw\cB/\cD(K,K')$ with $k=0$ component given by $\theta\in\tw\cB(K,K')$ and $k=1$ component (up to sign) given by 
\begin{equation}
	\sum_i \iota_i\otimes\eta_i\in \bigoplus_i \tw\cB(D_i,K')\otimes \cB(K,D_i)[-1]\subset F^1\tw\cB(K,K')
\end{equation}
It is easy to check that $\beta$ is indeed a cocycle, i.e. it is closed. Using the fact that $\mu^2(\alpha, \iota_i)=0$ and $\mu^3(\alpha,\iota_i,\eta_i)=0$, we see that $\alpha\circ \beta=1$. As $\alpha$ in an invertible class, $\beta$ is a quasi-inverse (one can check it is also a right inverse directly, but this is not needed). This completes the proof.
\end{proof}
One can use \Cref{thm:verdiervstensor}, or \Cref{prop:liftstocone} to reprove \Cref{corollary:indep-gen-set} (or its extension given in \Cref{note:corollaryextendfinitetime}). This follows from
\begin{cor}\label{corollary:appendix-conclusion}
Let $\cD\subset\cD'$, and assume that every object of $\cD'$ is quasi-isomorphic to an object of $\cD$ obtained by taking cones at most $l$ times. Then if a closed element $f\in \cB/\cD(K,L)$ has image cohomologous to an element of $F^p\cB/\cD'(K,L)$, then $f$ cohomologous to an element in $F^{pl}\cB/\cD(K,L)$.
\end{cor}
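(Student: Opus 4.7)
The plan is to deduce the corollary from \Cref{prop:scalinginverse} by inserting an intermediate quotient whose localizing collection lies inside $\tw_{\leq l}\cD\subset \tw\cB$. Concretely, for each $D'\in \cD'$ I would pick a quasi-isomorphic refinement in $\tw_{\leq l}\cD$ provided by the hypothesis; let $\widetilde{\cD}\subset \tw_{\leq l}\cD$ denote the resulting collection, and set $\cD_1:=\cD\cup\widetilde{\cD}$. This yields a commutative square of filtered functors
\begin{equation*}
\begin{tikzcd}
\tw\cB/\cD \arrow[r, "\pi"] \arrow[d, "\iota"'] & \tw\cB/\cD' \arrow[d, "\pi'"] \\
\tw\cB/\cD_1 \arrow[r, "\pi_1"'] & \tw\cB/(\cD_1\cup \cD'),
\end{tikzcd}
\end{equation*}
and the argument will chase $f$ around this square.

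For the right column and the bottom row I would invoke \Cref{lemma:summand-e1}: both $\pi'$ and $\pi_1$ adjoin a collection of objects (from $\widetilde{\cD}$ and from $\cD'$, respectively) each of which is quasi-isomorphic to, hence trivially a direct summand of, some object already present. Thus $\pi'$ and $\pi_1$ are $E_1$-equivalences, and \Cref{lemma:E_r-equiv-map} then gives canonical identifications
\begin{equation*}
F^p H(\tw\cB/\cD')(K,L)\;\xrightarrow{\sim}\;F^p H\bigl(\tw\cB/(\cD_1\cup\cD')\bigr)(K,L)\;\xleftarrow{\sim}\; F^p H(\tw\cB/\cD_1)(K,L).
\end{equation*}
Combined with commutativity of the square and the hypothesis that $[\pi(f)]\in F^pH(\tw\cB/\cD')(K,L)$, this translates to $[\iota(f)]\in F^pH(\tw\cB/\cD_1)(K,L)$.

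For the left column I would invoke \Cref{prop:scalinginverse} applied to the inclusions $\cD\subset \cD_1\subset \tw_{\leq l}\cD$: this provides a chain-level quasi-inverse $r\colon \tw\cB/\cD_1(K,L)\to \tw\cB/\cD(K,L)$ of $\iota$ satisfying $r\circ \iota=\op{id}$ and $r(F^p)\subseteq F^{pl}$. Writing $\iota(f)=g+dh$ with $g\in F^p\tw\cB/\cD_1(K,L)$ and applying $r$ then gives $f=r(g)+d(r(h))$ with $r(g)\in F^{pl}\tw\cB/\cD(K,L)=F^{pl}\cB/\cD(K,L)$ (the last equality because $K,L\in \cB$ and the bar complex involves only morphism spaces in $\cB$), as required.

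The step requiring the most care is confirming that the $E_1$-equivalences $\pi'$ and $\pi_1$ truly promote to bijections of $F^pH$. This amounts to checking that the bar filtrations are bounded below and exhausting (both are immediate) so that \Cref{proposition:convergence} and \Cref{lemma:E_r-equiv-map} apply; once that is done the rest is formal. Conceptually, the corollary isolates the dichotomy between cheaply enlarging the generating collection by quasi-isomorphic objects (an $E_1$-equivalence, contributing nothing) versus enlarging it by length-$l$ iterated cones (which costs exactly the scaling factor $l$ of \Cref{prop:scalinginverse}).
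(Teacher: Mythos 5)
Your proof is correct, but it deliberately bypasses the machinery that the corollary is meant to showcase. The paper's own proof is a two-line application of \Cref{thm:verdiervstensor}: the hypothesis gives, via the resolution characterization, a length-$p$ tower $K' = K_p \to \dots \to K_0 = K$ with cones in $\cD'$ and a lift of $f$ to $\tw\cB(K',L)$; each stage $K_i \to K_{i-1}$ is then refined into a length-$l$ tower with cones in $\cD$ (since $\op{cone}(K_i \to K_{i-1})$ is quasi-isomorphic to an object of $\tw_{\leq l}\cD$), producing a length-$pl$ tower with cones in $\cD$; applying \Cref{thm:verdiervstensor} again gives the conclusion. The corollary is explicitly placed in the appendix as an illustration that the Verdier-style characterization re-proves \Cref{corollary:indep-gen-set}, so the expected argument goes through resolutions, not through the bar-complex chain level.

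Your route instead inserts an intermediate generating set $\cD_1 = \cD \cup \widetilde{\cD}$ inside $\tw_{\leq l}\cD$ and chases the cohomology class around a commutative square of localizations, invoking \Cref{lemma:summand-e1} for the two $E_1$-equivalences and the explicit retraction $r$ of \Cref{prop:scalinginverse} to scale the filtration degree from $p$ to $pl$ on the left edge. This is essentially a re-run of the paper's own proof of \Cref{corollary:indep-gen-set}, specialized to recover the cohomology-level membership statement rather than just the growth function; the argument is sound (the identification $\tw\cB/\cD(K,L) = \cB/\cD(K,L)$ for $K,L \in \cB$, $\cD \subset \cB$ holds, and $r$ being a chain map with $r \circ \iota = \op{id}$ and $r(F^p) \subset F^{pl}$ does exactly what you need). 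What your approach buys is a proof independent of the resolution characterization \Cref{thm:verdiervstensor}; what it loses is the point of the corollary, which is precisely to exhibit the Verdier model as an alternative mechanism for deriving this kind of independence-of-generators statement.
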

\begin{proof}
The assumption implies there is a resolution as in \eqref{eq:lengthpresofroof} with $cone(K_i\to K_{i-1})$ in $\cD'$ such that $f$ lifts to a morphism $K'\to L$. As elements of $\cD'$ are iterated cones of length at most $l$ in $\cD$, one can further factorize $K_i\to K_{i-1}$ to a sequence of length at most $l$ and with cones in $\cD$. Therefore, the map $K'\to K$ factorizes to $pl$ maps with cones in $\cD$, and by using \Cref{thm:verdiervstensor} again, we conclude the result.
\end{proof}
Therefore, $\gr_{\cB/\cD(K,L)}(p)\leq \gr_{\cB/\cD'(K,L)}(p)\leq \gr_{\cB/\cD(K,L)}(pl)$, reproving \Cref{corollary:indep-gen-set}.

\bibliographystyle{alpha}
\bibliography{bibliofiltration}	

\end{document}